\newtheorem{theorem}{Theorem}[section]
\newtheorem{lemma}[theorem]{Lemma}
\newtheorem{fact}[theorem]{Fact}
\newtheorem{hypothesis}[theorem]{Hypothesis}
\newtheorem{remarks}[theorem]{Remarks}
\newtheorem{remark}[theorem]{Remark}
\newtheorem{corollary}[theorem]{Corollary}
\newcommand{\Alt}{\mathop{\mathrm{Alt}}}
\newcommand{\Aut}{\mathop{\mathrm{Aut}}}
\newcommand{\Sym}{\mathop{\mathrm{Sym}}}
\newcommand{\PGL}{\mathop{\mathrm{PGL}}}
\def\nor#1#2{{\bf N}_{{#1}}({{#2}})}
\def\FF#1#{{{\bf F}^*}{{({{#1}})}}}
\def\Zent#1#{{{\bf Z}}{{({{#1}})}}}
\begin{document}
\title[Boolean lattices]{Boolean lattices in finite alternating and symmetric groups} 
\author[A. Lucchini]{Andrea Lucchini}
\address{Andrea Lucchini, Dipartimento di Matematica \lq\lq Tullio Levi-Civita\rq\rq,\newline
 University of Padova, Via Trieste 53, 35121 Padova, Italy} 
\email{lucchini@math.unipd.it}
         
\author[M. Moscatiello]{Mariapia Moscatiello}
\address{Mariapia Moscatiello, Dipartimento di Matematica \lq\lq Tullio Levi-Civita\rq\rq,\newline
 University of Padova, Via Trieste 53, 35121 Padova, Italy} 
\email{mariapia.moscatiello@math.unipd.it}
\author[S. Palcoux]{Sebastien Palcoux}
\address{Sebastien Palcoux, Yau Mathematical Sciences Center, Tsinghua University, Beijing, China}
\email{sebastien.palcoux@gmail.com}
\author[P. Spiga]{Pablo Spiga}
\address{Pablo Spiga, Dipartimento di Matematica e Applicazioni, University of Milano-Bicocca, Via Cozzi 55, 20125 Milano, Italy} 
\email{pablo.spiga@unimib.it}
\begin{abstract}
Given a group $G$ and a subgroup $H$, we let $\mathcal{O}_G(H)$ denote the lattice of subgroups of $G$ containing $H$. 
This paper provides a classification of the subgroups $H$ of $G$ such that $\mathcal{O}_{G}(H)$ is Boolean of rank at least $3$, when $G$ is a finite alternating or symmetric group. Besides some sporadic examples and some twisted versions, there are two different types of such lattices. One type arises by taking stabilizers of  chains of regular partitions, and the other type arises by taking  stabilizers of chains of regular product structures. As an application, we prove in this case a conjecture on Boolean overgroup lattices, related to the dual Ore's theorem and to a problem of Kenneth Brown.
\end{abstract}
\subjclass[2010]{20B25}

\keywords{Subgroup lattice, Boolean lattice, symmetric group, alternating group, almost simple group}
\maketitle

\section{Introduction}\label{sec:intro}

Given a finite group $G$ and a subgroup $H$ of $G$, we are interested in the set $$\mathcal{O}_G(H):=\{K\mid K\textrm{ subgroup of }G \textrm{ with }H\le K\}$$
of subgroups of $G$ containing $H$. Clearly, $\mathcal{O}_G(H)$ is a lattice under the operations of taking ``intersection'' and taking ``subgroup generated''; it is called the \emph{overgroup lattice}.  The problem of determining whether every finite lattice is isomorphic to some $\mathcal{O}_{G}(H)$ with $G$ finite arose originally in universal algebra with the work of P\'{a}lfy-Pudl\'{a}k \cite{PaPu}.

\O ystein Ore proved in 1938 that a finite group is cyclic if and only if its subgroup lattice is distributive \cite[Theorem~4]{or}, and he extended one way as follows: let $G$ be a finite group and $H$ a subgroup such that the overgroup lattice $\mathcal{O}_{G}(H)$ is distributive, then there is a coset $Hg$ generating $G$ \cite[Theorem 7]{or}. Eighty years later, the third author extended Ore's theorem to subfactor planar algebras \cite{pPJM,pQT} and applied it back to finite group theory as a dual version of Ore's theorem \cite{pJA} stating that if $\mathcal{O}_{G}(H)$ is distributive then there is an irreducible complex representation (irrep) $V$ of $G$ such that $G_{(V^H)} = H$, with $V^H$ the fixed point subspace and $G_{(X)}$ the point-wise stabilizer subgroup. An other way to prove this application (explored with Mamta Balodi \cite{bpJCTA}) is to show that the \emph{dual Euler totient} is nonzero. Let us explain what it means. Let $G$ be a finite group, its Euler totient $\varphi(G)$ is the number of elements $g$ such that $\langle g \rangle = G$. Then $\varphi(G)$ is nonzero if and only if $G$ is cyclic, and $\varphi(C_n) = \varphi(n)$, the usual Euler's totient function. For a subgroup $H \subset G$, the Euler totient $\varphi(H,G)$ is the number of cosets $Hg$ such that $\langle Hg \rangle = G$. Hall \cite{hal} reformulated it using the M\"obius function $\mu$ on the overgroup lattice $\mathcal{O}_{G}(H)$ as follows:  
$$ \varphi(H,G) = \sum_{K \in \mathcal{O}_{G}(H)} \mu(K,G) |K:H|.$$
In particular, $\varphi(H,G)$ is nonzero (if and) only if there is a coset $Hg$ generating $G$. Again that was extended to subfactor planar algebra \cite{pPAMS} and applied back as a dual version stating that for any subgroup $H \subset G$, if the \emph{dual Euler totient}
$$ \hat{\varphi}(H,G) := \sum_{K \in \mathcal{O}_{G}(H)} \mu(H,K) |G:K|,$$
is nonzero then there is an irrep $V$ such that $G_{(V^H)} = H$ (in particular, if $\hat{\varphi}(G):=\hat{\varphi}(1,G)$ is nonzero then $G$ is \emph{linearly primitive}, i.e. admits a faithful irrep). So the dual Ore's theorem appears as a natural consequence of \cite[Conjecture~1.5]{bpJCTA} stating that $\hat{\varphi}(H,G)$ is nonzero if $\mathcal{O}_{G}(H)$ is Boolean. More strongly, one asked \cite[page~58]{bpJCTA} whether the lower bound $\hat{\varphi}(H,G) \ge 2^{\ell}$ holds when $\mathcal{O}_{G}(H)$ is Boolean of rank $\ell+1$; if so, it is optimal because $\hat{\varphi}(S_1 \times S_2^{\ell}, S_2 \times S_3^{\ell}) = 2^{\ell}$. Now, this conjecture is a particular case of a relative version of a problem \textit{essentially} due to Kenneth S. Brown asking whether the M\"obius invariant of the bounded coset poset $P$ of a finite group (which is equal to the reduced Euler characteristic of the order complex of the proper part of $P$) is nonzero (\cite[page~760]{sw} and \cite[Question~4]{br}). In its relative version, the reduced Euler characteristic is given by
$$ \chi(H,G) = -\sum_{K \in \mathcal{O}_{G}(H)} \mu(K,G) |G:K|$$
but in the (rank $\ell$) Boolean case $\mu(H,K) = (-1)^{\ell} \mu(K,G)$, so $\chi(H,G) = \pm \hat{\varphi}(H,G)$, and the first is nonzero if and only if the second is so. We recalled in \cite[Example~4.21]{bpJCTA} that if $H$ is the Borel subgroup of a BN-pair structure (of rank $\ell$) on $G$, then $\mathcal{O}_{G}(H)$ is Boolean (of rank $\ell$), and  $\chi(H,G)$ is nonzero, and if moreover $G$ is a finite simple group of Lie type (over a finite field of characteristic $p$) then its absolute value $\hat{\varphi}(H,G)$ is the $p$-contribution in the order of $G$, which is at least $p^{\frac{1}{2}\ell(\ell+1)}$.   
A first step in an approach for this conjecture could be to prove the case where $G$ is a finite simple group, and for so, we can try to first classify the inclusions $H \subset G$ with $\mathcal{O}_{G}(H)$ Boolean and $G$ finite simple. But does the BN-pair structure cover everything at rank $\ge 3$, or large enough? John Shareshian answered no in an exchange on MathOverflow, by suggesting examples of any rank when $G$ is alternating, involving stabilizers of non-trivial regular partitions. This paper proves the existence of these examples for $G$ alternating (or symmetric),  but mainly proves that (besides some sporadic cases) there is just one other infinite family of examples arising from stabilizers of regular product structures. As a consequence, we can prove the above conjecture in this case (together with the lower bound).

We consider the case that $G$ is an almost simple group with socle an alternating group $\Alt(n)$, for some $n\in\mathbb{N}$. When $n\le 5$, nothing interesting happens: the largest Boolean lattice of the form $\mathcal{O}_G(H)$ has rank at most $1$. Moreover, since the case $n=6$ is rather special, we deal with this case separately. When $G=\Alt(6)$, the largest Boolean lattice has rank $2$ and it is of the form $(D_4,\Sym(4),\Sym(4))$ or $(D_{5},\Alt(5),\Alt(5))$. When $G=\PGL_2(9)$, $G=M_{10}$ or $\mathrm{P}\Gamma\mathrm{L}_2(9)$, the largest Boolean lattice has rank $1$. When $G=\Sym(6)\cong \mathrm{P}\Sigma\mathrm{L}_2(9)$, the largest Boolean lattice has rank $2$ and it is of the form $(D_4\times C_2,2.\Sym(4),2.\Sym(4))$ or $(C_5\rtimes C_4,\Sym(5),\Sym(5))$.

For the rest of the argument, we may suppose $n\ne 6$ and hence for the rest of this paper we assume $G=\Alt(\Omega)$ or $G=\Sym(\Omega)$, where $\Omega$ is a finite set.

\begin{theorem}\label{thrm:main}Let $\Omega$ be a finite set, let $G$ be $\Alt(\Omega)$ or $\Sym(\Omega)$, let $H$ be a subgroup of $G$ and suppose that the lattice $\mathcal{O}_G(H)=\{K\mid H\le K\le G\}$ is Boolean of rank $\ell\ge 3$. Let $G_1,\ldots,G_\ell$ be the maximal elements of $\mathcal{O}_G(H)$. Then one of the following holds:
\begin{enumerate}
\item\label{main1} For every $i\in \{1,\ldots,\ell\}$, there exists a non-trivial regular partition $\Sigma_i$ with $G_i=\nor G{\Sigma_i}$; moreover, relabeling the indexed set $\{1,\ldots,\ell\}$ if necessary, $\Sigma_1< \cdots <\Sigma_\ell$.
\item\label{main2} $G=\Sym(\Omega)$. Relabeling the indexed set $\{1,\ldots,\ell\}$ if necessary, $G_\ell=\Alt(\Omega)$, for every $i\in \{1,\ldots,\ell-1\}$, there exists a non-trivial regular partition $\Sigma_i$ with $G_i=\nor G{\Sigma_i}$; moreover, relabeling the indexed set $\{1,\ldots,\ell-1\}$ if necessary, $\Sigma_1< \cdots <\Sigma_{\ell-1}$.
\item\label{main3} $|\Omega|$ is odd. For every $i\in \{1,\ldots,\ell\}$, there exists a non-trivial regular product structure $\mathcal{F}_i$ with $G_i=\nor G{\mathcal{F}_i}$; moreover, relabeling the indexed set $\{1,\ldots,\ell\}$ if necessary, $\mathcal{F}_1< \cdots < \mathcal{F}_\ell$.
\item\label{main4} $|\Omega|$ is an odd and $G=\Sym(\Omega)$. Relabeling the indexed set $\{1,\ldots,\ell\}$ if necessary, $G_\ell=\Alt(\Omega)$, for every $i\in \{1,\ldots,\ell-1\}$, there exists a non-trivial regular product structure $\mathcal{F}_i$ with $G_i=\nor G{\mathcal{F}_i}$; moreover, relabeling the indexed set $\{1,\ldots,\ell-1\}$ if necessary, $\mathcal{F}_1< \cdots < \mathcal{F}_{\ell-1}$.
\item\label{main5} $|\Omega|$ is an odd prime power. Relabeling the indexed set $\{1,\ldots,\ell\}$ if necessary, $G_\ell$ is maximal subgroup of O'Nan-Scott type HA, for every $i\in \{1,\ldots,\ell-1\}$, there exists a non-trivial regular product structure $\mathcal{F}_i$ with $G_i=\nor G{\mathcal{F}_i}$; moreover, relabeling the indexed set $\{1,\ldots,\ell-1\}$ if necessary, $\mathcal{F}_1< \cdots < \mathcal{F}_{\ell-1}$.
\item\label{main6} $|\Omega|$ is odd prime power and $G=\Sym(\Omega)$. Relabeling the indexed set $\{1,\ldots,\ell\}$ if necessary, $G_\ell=\Alt(\Omega)$ and $G_{\ell-1}$ is a maximal subgroup of O'Nan-Scott type HA, for every $i\in \{1,\ldots,\ell-2\}$, there exists a non-trivial regular product structure $\mathcal{F}_i$ with $G_i=\nor G{\mathcal{F}_i}$; moreover, relabeling the indexed set $\{1,\ldots,\ell-2\}$ if necessary, $\mathcal{F}_1< \cdots < \mathcal{F}_{\ell-2}$.

\item\label{main7} $\ell=3$, $G=\Sym(\Omega)$ and, relabeling the indexed set $\{1,2,3\}$ if necessary,  $G_1$ is the stabilizer of a subset $\Gamma$ of $\Omega$ with $1\le |\Gamma|<|\Omega|/2$, $G_2$ is the stabilizer of a non-trivial regular partition $\Sigma$ with $\Gamma\in \Sigma$ and $G_3=\Alt(\Omega)$; 
\item\label{main8} $\ell=3$, $G=\Sym(\Omega)$ and, relabeling the indexed set $\{1,2,3\}$ if necessary,  $G_1$ is the stabilizer of a subset $\Gamma$ of $\Omega$ with $|\Gamma|=1$, $G_2\cong \mathrm{PGL}_2(p)$ for some prime number $p$, $|\Omega|=p+1$ and $G_3=\Alt(\Omega)$; 
\item $\ell=3$, $G=\Alt(\Omega)$, $|\Omega|=8$ and the Boolean lattice $\mathcal{O}_G(H)$ is in Figure~$\ref{fig1}$. 
\item\label{main9} $\ell=3$, $G=\Alt(\Omega)$, $|\Omega|=24$, and, relabeling the indexed set $\{1,2,3\}$ if necessary,  $G_1$ is the stabilizer of a subset $\Gamma$ of $\Omega$ with $|\Gamma|=1$, $G_2\cong G_3\cong M_{24}$.
\end{enumerate}
\end{theorem}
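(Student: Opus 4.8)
The plan is to exploit the rigidity of Boolean overgroup lattices together with the O'Nan--Scott description of the maximal subgroups of $\Sym(\Omega)$ and $\Alt(\Omega)$. Since $\mathcal{O}_G(H)$ is Boolean of rank $\ell$, its $\ell$ coatoms $G_1,\ldots,G_\ell$ are exactly the maximal subgroups of $G$ containing $H$, we have $H=G_1\cap\cdots\cap G_\ell$, and every element of the lattice is $\bigcap_{i\in T}G_i$ for some $T\subseteq\{1,\ldots,\ell\}$. Indexing the lattice by subsets so that $G_i$ corresponds to the complement of $\{i\}$, the Boolean law yields two facts I would use constantly. First, any two distinct coatoms generate $G$; indeed $\langle G_i,G_j\rangle=G$ for all $i\neq j$. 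Second, every interval is Boolean: if $A_k:=\bigcap_{i\neq k}G_i$ is the atom below $G_k$, then $\mathcal{O}_G(A_k)=[A_k,G]$ is Boolean of rank $\ell-1\ge 2$, with coatoms $\{G_j\mid j\neq k\}$, and likewise each $[H,G_i]$ is Boolean of rank $\ell-1$ with coatoms $\{G_i\cap G_j\mid j\neq i\}$. This gives an induction on $\ell$, reducing the classification to understanding rank-$3$ configurations and how they can be amalgamated.

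First I would determine, for each $i$, the O'Nan--Scott type of the maximal subgroup $G_i$: intransitive (subset stabilizer), imprimitive (regular partition stabilizer $\nor G{\Sigma_i}$), primitive of product-action type (regular product structure stabilizer $\nor G{\mathcal{F}_i}$), affine of type HA, diagonal, or almost simple. The principal structural input is a family of intersection lemmas computing $G_i\cap G_j$ for each pair of types and deciding which pairs satisfy $\langle G_i,G_j\rangle=G$; the Boolean condition forces the latter for all $i\neq j$ and already eliminates most combinations. The backbone is the imprimitive case: if $G_i=\nor G{\Sigma_i}$ and $G_j=\nor G{\Sigma_j}$, then $G_i\cap G_j$ is governed by the partitions generated by $\Sigma_i$ and $\Sigma_j$ under join and refinement, and the demand that this intersection again be maximal in each interval forces the partitions to be \emph{totally ordered} by refinement, producing the chain $\Sigma_1<\cdots<\Sigma_\ell$ of conclusion~\eqref{main1}. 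The analogous analysis for product structures, which survives the maximality and intersection constraints precisely when $|\Omega|$ is odd, yields the chain of conclusion~\eqref{main3}. Conclusions~\eqref{main2},\eqref{main4},\eqref{main5},\eqref{main6} then arise by grafting onto such a chain one extra coatom equal to $\Alt(\Omega)$ (when $G=\Sym(\Omega)$) or to an affine HA-subgroup (when $|\Omega|$ is a prime power); I would treat these as controlled extensions, checking that the extra coatom generates $G$ with every chain member and meets the chain in the prescribed way so that the augmented lattice stays Boolean.

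The main obstacle --- and the origin of the sporadic conclusions~\eqref{main7}--\eqref{main9} and the degree-$8$ case --- is ruling out every remaining combination of types, especially those involving primitive maximal subgroups of almost simple or diagonal type, for which no generic intersection lemma is available. Here I would argue by bounding: using order estimates together with permutation-theoretic invariants (minimal degree, base size, fixed-point data) of the candidate subgroups, I would show that two primitive maximal subgroups of ``small'' type can coexist as coatoms of a rank-$\ge 3$ Boolean lattice only for a short explicit list of degrees, each then settled by direct computation in the relevant group --- $M_{24}$ at degree $24$, $\mathrm{PGL}_2(p)$ at degree $p+1$, and the exceptional configuration at degree $8$. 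The genuinely awkward case is~\eqref{main7}, combining a subset stabilizer, a partition stabilizer, and $\Alt(\Omega)$, since it does not fit the chain template; I would isolate it by analysing when the intersection of a subset stabilizer with a partition stabilizer remains maximal in each factor, which happens exactly when the distinguished subset $\Gamma$ is a block of $\Sigma$.

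Throughout, the hardest single point is certifying that no further exotic coatom can be inserted into a chain of partition (or product-structure) stabilizers without destroying the Boolean property. This amounts to showing that $\nor G{\Sigma}$ (respectively $\nor G{\mathcal{F}}$) has, among its own maximal overgroups inside $G$, only the ``next'' stabilizer in the refinement order together with $\Alt(\Omega)$; equivalently, that the upper intervals produced by the induction are not secretly larger than the chain predicts. Establishing this requires the most delicate case-checking, as it must simultaneously exclude new intransitive, diagonal, and almost simple overgroups and confirm the maximality of the claimed chain members in both $\Sym(\Omega)$ and $\Alt(\Omega)$. Once this rigidity of chains is in place, assembling the pieces and matching each surviving configuration to the corresponding clause of the theorem is a bookkeeping exercise.
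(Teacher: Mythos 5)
Your architecture does coincide with the paper's: a trichotomy according to whether some coatom is intransitive, some coatom is imprimitive, or all coatoms are primitive; reduction to rank-$2$ and rank-$3$ intervals followed by induction on $\ell$ (this is exactly how Corollaries~\ref{cor:111} and~\ref{thrm:AS4} and Corollary~\ref{cor:2} are organized); chains of partition or product-structure stabilizers as the generic outcome; and sporadic cases settled by explicit computation. However, two of the steps you designate as load-bearing would fail as stated. First, it is \emph{not} true that maximality of $G_i\cap G_j$ in both $G_i=\nor G{\Sigma_i}$ and $G_j=\nor G{\Sigma_j}$ forces $\Sigma_i$ and $\Sigma_j$ to be comparable: by Aschbacher--Shareshian (the paper's Theorem~\ref{thrm:AS}), when $G=\Alt(\Omega)$ and $n\equiv 0\pmod 4$, $n>8$, there are rank-$2$ Boolean intervals whose two coatoms stabilize \emph{lattice-complementary} $(2,n/2)$- and $(n/2,2)$-regular partitions, and when $n=2^{a+1}$ an imprimitive coatom can pair with an affine primitive one. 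Comparability of the partitions only becomes true at rank $\ge 3$, after these exceptional pairs are eliminated, and that elimination is precisely the case analysis (Cases B--F) in the proof of Theorem~\ref{thrm:AS1}; your plan never confronts it.

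Second, the primitive case cannot be handled by ``order estimates together with permutation-theoretic invariants'' plus computation. Rank-$2$ Boolean intervals with almost simple coatoms genuinely exist (the $M_{12}$, $M_{24}$ and octal $\mathrm{PSL}_2(7)$ configurations in Remarks~\ref{rem:1} and before Theorem~\ref{thrm:A}), so no crude bound can exclude such coatoms; the entire content is that they do not extend to rank $3$, and the paper obtains this from overgroup classification machinery --- Aschbacher's Theorem~\ref{thrm:A}, Praeger's inclusion theorem, the Liebeck--Praeger--Saxl tables, and Li's classification of primitive groups containing an abelian regular subgroup, assembled in Lemmas~\ref{c:orcoboia},~\ref{c:orcoboiaboia} and~\ref{c:orcoboiaboia3} --- not from bounds. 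In the HA case one additionally needs a genuinely arithmetic argument (the analysis of $p=(q'^{d'}-1)/(q'-1)$ via Fermat and Zsigmondy inside Lemma~\ref{c:orcoboiaboia}) to exclude a $\mathrm{P}\Gamma\mathrm{L}_{d'}(q')$ coatom alongside $\mathrm{AGL}_1(p)$; nothing in your proposal produces this. Likewise, the intransitive case does not reduce to ``a short explicit list of degrees settled by direct computation'': $\mathrm{PGL}_2(p)$ in degree $p+1$ is an infinite family, and Theorem~\ref{thrm:AS2} finds it by showing that $H$ acts transitively, indeed primitively, on $\Omega\setminus\Gamma$, hence that $G_2$ is $2$-transitive ($3$-transitive when $|\Gamma|=2$), and then running through the CFSG classification of $2$-transitive groups --- a classification argument, not a bounding one.
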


In Section~\ref{sec:construction}, we show that the cases in Theorem~\ref{thrm:main}~\eqref{main1} and~\eqref{main2} do occur for arbitrary values of $\ell$.  In Section~\ref{sec:construction2}, we show that there exist Boolean lattices of arbitary large rank whose maximal elements are stabilizers of regular product structures. 

Finally, Section~\ref{sec:brown} is dedicated to the proof of the following theorem where (\ref{bound3}) is a consequence of Theorem \ref{thrm:main}, and where the proof for (\ref{bound4}) was already mentioned above. 

\begin{theorem} \label{thrm:bound}
Let $G$ be a finite group and $H$ a subgroup such that the overgroup lattice $\mathcal{O}_{G}(H)$ is Boolean of rank $\ell$. Then the lower bound on the dual Euler totient $\hat{\varphi}(H,G) \ge 2^{\ell-1}$ holds in each of the following cases:  
\begin{enumerate}
\item\label{bound1}  $\ell \le 3$,
\item\label{bound2'} $\mathcal{O}_{G}(H)$ group-complemented,
\item\label{bound2}  $G$ solvable,
\item\label{bound3}  $G$ alternating or symmetric,
\item\label{bound4}  $G$ of Lie type and $H$ a Borel subgroup.
\end{enumerate}
As a consequence, the reduced Euler characteristic $\chi(H,G)$ is nonzero, i.e. it is a positive answer to the relative Brown's problem in these cases.
\end{theorem}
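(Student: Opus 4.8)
The plan is to reduce Theorem~\ref{thrm:bound} to a single inequality for the dual Euler totient. Since the introduction records that in a rank-$\ell$ Boolean interval one has $\mu(H,K)=(-1)^{\ell}\mu(K,G)$, and hence $\chi(H,G)=\pm\hat\varphi(H,G)$, it suffices to prove $\hat\varphi(H,G)\ge 2^{\ell-1}$ in each of the five cases; positivity then gives $\chi(H,G)\ne0$. Writing $G_1,\dots,G_\ell$ for the coatoms and $G_U:=\bigcap_{i\in U}G_i$ for $U\subseteq\{1,\dots,\ell\}$ (so $G_\emptyset=G$ and $G_{\{1,\dots,\ell\}}=H$), the element of $\mathcal O_G(H)$ indexed by $T$ is $\bigcap_{i\notin T}G_i$ with $\mu(H,\cdot)=(-1)^{|T|}$; reindexing by $U=\{1,\dots,\ell\}\setminus T$ turns the defining sum into the working formula
\[
\hat\varphi(H,G)=(-1)^{\ell}\sum_{U\subseteq\{1,\dots,\ell\}}(-1)^{|U|}\,|G:G_U|.
\]
Case~\eqref{bound4} is already settled in the introduction, where $\hat\varphi(H,G)$ is identified with the $p$-part of $|G|$, which exceeds $p^{\ell(\ell+1)/2}\ge 2^{\ell-1}$.

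For the combinatorial cases~\eqref{bound1}, \eqref{bound2'} and~\eqref{bound2} I would isolate a \emph{multiplicative} situation: if $G=G_UG_j$ for every $U$ and every $j\notin U$ (which is exactly what the group-complemented hypothesis provides), then the indices factor, $|G:G_U|=\prod_{i\in U}|G:G_i|$, the alternating sum splits, and $\hat\varphi(H,G)=\prod_{i=1}^{\ell}(a_i-1)$ with $a_i:=|G:G_i|\ge 2$. The decisive point is that the Boolean shape forbids two coatoms of index $2$: were $a_i=a_j=2$, then $G_i,G_j\trianglelefteq G$, so $G_{ij}\trianglelefteq G$ and the interval $[G_{ij},G]$ would be the full subgroup lattice of $G/G_{ij}$, Boolean of rank $2$ with both maximal subgroups of index $2$; but a group with Boolean rank-$2$ subgroup lattice is cyclic of squarefree order $pq$ with $p\ne q$, a contradiction. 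Hence at most one factor $a_i-1$ equals $1$ and the rest are $\ge 2$, giving $\hat\varphi(H,G)\ge 2^{\ell-1}$. For~\eqref{bound2} I would first show that a Boolean overgroup lattice inside a solvable group is group-complemented, reducing it to~\eqref{bound2'}; and~\eqref{bound1} (rank $\le 3$) I would finish by the same index estimates applied to the Boolean sub-intervals, the unique tight configuration again being forced into the normal situation handled by the cyclic classification.

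The heart of the paper is case~\eqref{bound3}, which I would attack through the classification in Theorem~\ref{thrm:main}. In each infinite family the coatoms are the normalizers of a \emph{chain}: for~\eqref{main1}--\eqref{main2} the normalizers of regular partitions $\Sigma_1<\dots<\Sigma_\ell$, whose common refinement makes $H$ an iterated imprimitive (wreath) tower whose overgroups in $\Sym(\Omega)$ are precisely the coarsenings of the chain; for~\eqref{main3}--\eqref{main6} the normalizers of regular product structures $\mathcal F_1<\dots<\mathcal F_\ell$, giving an analogous product-action tower. For such a tower every $G_U$ is again a tower of the same kind, so the indices $|G:G_U|$ are explicit multinomial quantities and $\hat\varphi(H,G)$ becomes a completely determined alternating sum that I would bound below by $2^{\ell-1}$; the extra $\Alt(\Omega)$ coatom in~\eqref{main2}, \eqref{main4} and~\eqref{main6} has index $2$ and contributes the single permitted factor $1$ (this is why $2^{\ell-1}$ is optimal, matching the value of the rank-$(\ell+1)$ example quoted in the introduction), while the HA-type coatom in~\eqref{main5}--\eqref{main6} contributes a genuine factor. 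The finitely many sporadic configurations~\eqref{main7}--\eqref{main9} (the subset/partition mixtures, the $\PGL_2(p)$ case, and the $\Alt(8)$ and $M_{24}$ lattices) I would dispatch by computing $\hat\varphi$ directly from the orders involved.

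The main obstacle is the bound within case~\eqref{bound3}, and specifically the product-structure families~\eqref{main3}--\eqref{main6}. Unlike the group-complemented situation, these towers are \emph{not} multiplicative: the intersection indices do not factor, and the atoms $G_{\{1,\dots,\ell\}\setminus\{i\}}$ can have only moderate index over $H$, so a crude triangle-inequality estimate fails once $\ell$ grows and one must genuinely exploit the alternating cancellation. The plan here is to set up a recursion on the number of steps of the chain, peeling off the coarsest (or finest) factor of the tower so that the sum for rank $\ell$ reduces to one for rank $\ell-1$ times a controlled positive correction, and to verify the base estimates for the product-action indices, which are combinatorially heavier than the wreath ones. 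Once $\hat\varphi(H,G)\ge 2^{\ell-1}$ is established in all five cases, positivity yields $\chi(H,G)\ne 0$, i.e.\ the asserted positive answer to the relative Brown problem.
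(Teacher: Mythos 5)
You reproduce the paper's architecture faithfully in most cases: the reduction $\chi(H,G)=\pm\hat\varphi(H,G)$, quoting the introduction for part~\eqref{bound4}, the factorization $\hat\varphi(H,G)=\prod_i(|G:G_i|-1)$ plus an Ore-type exclusion of two index-$2$ coatoms for part~\eqref{bound2'} (this is exactly the content of Lemmas~\ref{lem:d.d}, \ref{lem:2.2} and~\ref{lem:grpcpted}), and the reduction of part~\eqref{bound2} to part~\eqref{bound2'} (which the paper does by citing Lucchini's theorem \cite{Luc}; you would still have to invoke or reprove that result). However, there are two genuine gaps. The first is in part~\eqref{bound1} at rank $3$: when some atom satisfies $|M_i^{\complement}:H|=2$, the lattice need not be group-complemented, so your multiplicative argument is unavailable, and the direct estimate collapses because $1-\tfrac12-\tfrac13-\tfrac13<0$. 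The paper needs genuinely group-theoretic input here: a conjugation argument (Lemma~\ref{2<->2}, propagated along maximal chains by Lemma~\ref{top2}) showing that an index-$2$ atom forces the complementary coatom to have index $2$ in $G$, whence $\hat\varphi(H,G)=\hat\varphi(H,M_i)$ (Lemma~\ref{lem:split2}) and one is reduced to a rank-$2$ interval with both atom indices at least $3$. Your phrase about the ``tight configuration being forced into the normal situation'' does not supply this: $H\trianglelefteq M_i^{\complement}$ gives no normality in $G$ and no cyclic quotient to which Ore's theorem could be applied.

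The second, and central, gap is in part~\eqref{bound3} for $\ell\ge 4$, which you leave at your self-declared ``main obstacle'' --- and your diagnosis of that obstacle is incorrect. In both the partition towers and the product-structure towers the atoms do \emph{not} have moderate index over $H$: the paper computes $|A_j:H|$ explicitly as a multinomial coefficient raised to a power, for instance $|A_j:H|\ge 3^{n_{j+1}}$ in case~\eqref{main1}, so that after reordering the atoms one gets $|B_i:H|\ge 3^{2^{i-1}}>2^i$. This enormous growth is precisely what makes a direct, non-recursive argument succeed: the paper's key Lemma~\ref{lem:2^i} states that $|A_i:H|\ge 2^i$ for all $i$ already yields $\hat\varphi(H,G)\ge 2^{\ell-1}$, and its proof rests on the monotonicity Lemma~\ref{decre} (the index $|K\vee A:K|$ is nondecreasing in $K$, a consequence of the product formula and distributivity), which lets every odd-size term of the alternating sum be dominated by its even-size neighbours. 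You state neither this lemma nor the monotonicity fact, and the recursion you propose in their place is not carried out. Moreover, for the families~\eqref{main2}, \eqref{main4} and~\eqref{main6} containing the $\Alt(\Omega)$ coatom, your remark that it ``contributes the single permitted factor $1$'' tacitly assumes the multiplicativity that fails outside the group-complemented case; the paper instead needs the refined Lemma~\ref{lem:2^i+} together with the estimate $1-\left(\tfrac12+\sum_{i}3^{-2^{i-1}}\right)\ge\tfrac32\,3^{-\ell}>0$ to absorb the index-$2$ atom. So the quantitative heart of part~\eqref{bound3} is missing from your proposal.
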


\tableofcontents

\section{Notation, Terminology and basic facts}\label{sec:basic}
Since we need fundamental results from the work of Aschbacher~\cite{1,2}, we follow the notation and the terminology therein. We let $G$ be the finite alternating group $\Alt(\Omega)$ or the finite symmetric group $\Sym(\Omega)$, where $\Omega$ is a finite set of cardinality $n\in\mathbb{N}$. Given a subgroup $H$ of $G$, we write $$\mathcal{O}_G(H):=\{K\mid H\le K\le G\}$$ for the set of subgroups of $G$ containing $H$. We let $$\mathcal{O}_G(H)':=\mathcal{O}_G(H)\setminus\{H,G\},$$ that is, $\mathcal{O}_G(H)'$ consists of the lattice $\mathcal{O}_G(H)$ with its minimum and its maximum elements removed. (Given a group $X$, we denote by ${\bf F}^*(X)$ the {\em generalized Fitting} subgroup of $X$. Observe that, when $X$ is a primitive subgroup of $\Sym(\Omega)$, ${\bf F}^*(X)$ coincides with the {\em socle} of $X$.)
We write
$$\mathcal{O}_G(H)'':=\{M\in\mathcal{O}_G(H)\mid {\bf F}^*(G)\nleq M\}$$
and we denote by $$\mathcal{M}_G(H)\quad\textrm{ the set of maximal members of }\mathcal{O}_G(H)''.$$ This notation is due to Aschbacher~\cite{1,2} and it is rather important for the results we need to recall from his work. Therefore, we start with familiarizing with this new terminology.
\begin{itemize}
\item When $G=\Alt(\Omega)$, ${\bf F}^*(G)=G$ and hence $\mathcal{O}_G(H)''$ is simply $\mathcal{O}_G(H)$ with its maximum element $G=\Alt(\Omega)$ removed. Therefore $\mathcal{M}_G(H)$ consists of the maximal subgroups of $G=\Alt(\Omega)$ containing $H$. 
\item When $G=\Sym(\Omega)$ and $\Alt(\Omega)\nleq H$,  $\mathcal{O}_G(H)''$ is obtained from $\mathcal{O}_G(H)$ by removing $G=\Sym(\Omega)$ only, because if $M\in \mathcal{O}_G(H)$ and $\Alt(\Omega)={\bf F}^*(G)\le M$, then $\Sym(\Omega)=H{\bf F}^*(G)\le M$ and $M=\Sym(\Omega)$. Therefore, also in this case $\mathcal{M}_G(H)$ consists simply of the maximal subgroups of $G=\Sym(\Omega)$ containing $H$. 
\item When $G=\Sym(\Omega)$ and $H\le \Alt(\Omega)$, $\mathcal{O}_G(H)''$  is obtained from $\mathcal{O}_G(H)$ by removing $\Sym(\Omega)$ and $\Alt(\Omega)$. Therefore $\mathcal{M}_G(H)$ consists of two types of subgroups: the maximal subgroups of $G=\Sym(\Omega)$ containing $H$ and the maximal subgroups $M$ of $\Alt(\Omega)$ containing $H$ and that are not contained in any other maximal subgroup of $\Sym(\Omega)$ other then $\Alt(\Omega)$. For instance, when $H:=M_{12}$ in its transitive action of degree $12$, we have $H\le \Alt(12)$, $\mathcal{O}_{\Sym(\Omega)}(M_{12})=\{M_{12},\Alt(12),\Sym(12)\}$, $\mathcal{O}_{\Sym(12)}(H)'=\{\Alt(12)\}$, $\mathcal{O}_{\Sym(12)}(M_{12})''=\{M_{12}\}$ and $\mathcal{M}_{\Sym(12)}(M_{12})=\{M_{12}\}$.
\end{itemize}

Some of the material that follows can be traced back to~\cite{1,2} or~\cite{16,18}. However, we prefer to repeat it here because it helps to set some more notation and terminology. Using the action of $\Sym(\Omega)$ on the domain $\Omega$, we can divide the subgroups $X$ of $\Sym(\Omega)$ into three classes:
\begin{description}
\item[Intransitive]$X$ is  {\em intransitive} on $\Omega$,
\item[Imprimitive]$X$ is {\em imprimitive} on $\Omega$, that is, $X$ is transitive on $\Omega$ but it is not primitive on $\Omega$,
\item[Primitive]$X$ is {\em primitive} on $\Omega$.
\end{description}
In particular, every maximal subgroup $M$ of $G$ can be referred to as intransitive, imprimitive or primitive, according to the division above.

In what follows we need detailed information on the overgroups of a primitive subgroup of $G$. This information was obtained independently by Aschbacher~\cite{1,2} and Liebeck, Praeger and Saxl~\cite{16,18}. Both investigations are important in what follows.

\subsection{Intransitive subgroups}A maximal subgroup $M$ of $G$  is intransitive if and only if $M$ is the stabilizer in $G$ of a subset $\Gamma$ of $\Omega$ with $1\le |\Gamma|<|\Omega|/2$ (see for example~\cite{16}), that is,
$$M=G\cap (\Sym(\Gamma)\times \Sym(\Omega\setminus \Gamma)).$$ 
Following~\cite{1,2}, we let $\nor G\Gamma$ denote the setwise stabilizer of $\Gamma$ in $G$, that is,
$$\nor G\Gamma:=\{g\in G\mid \gamma^g\in\Gamma, \forall\gamma\in \Gamma\}.$$
(More generally, given a subgroup $H$ of $G$, we let $\nor H \Gamma=\nor G\Gamma\cap H$ denote the setwise stabilizer of $\Gamma$ in $H$.) 
The case $|\Gamma|=|\Omega|/2$ is special because the action of $\Sym(\Omega)$ on the subsets of $\Omega$ of cardinality $|\Omega|/2$ is imprimitive. Indeed, $\{\{\Gamma,\Omega\setminus \Gamma\}\mid \Gamma\subseteq \Omega, |\Gamma|=|\Omega|/2\}$ is a system of imprimitivity for the action of $\Sym(\Omega)$ on the subsets of $\Omega$ of cardinality $|\Omega|/2$. 
Summing up, we have the following fact. 

\begin{fact}\label{fact1}
{\rm 
Let $\Gamma$ be a subset of $\Omega$ with $1\le |\Gamma|<|\Omega|/2$. Then, the intransitive  subgroup $\nor G \Gamma$ of $G$ is a  maximal subgroup of $G$. Moreover, every maximal subgroup of $G$ which is intransitive is of this form.
}
\end{fact}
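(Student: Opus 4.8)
The plan is to treat the two assertions separately, beginning with the easier claim that every intransitive maximal subgroup of $G$ has the form $\nor G\Gamma$. Let $M$ be a maximal subgroup of $G$ that is intransitive, so that $M$ has at least two orbits on $\Omega$. Every union of $M$-orbits is an $M$-invariant subset which $M$ fixes setwise, and I would choose among the nonempty proper $M$-invariant subsets one, say $\Gamma$, of smallest possible cardinality. Since $G$ is transitive and $0<|\Gamma|<|\Omega|$, the setwise stabiliser $\nor G\Gamma$ is a proper subgroup of $G$ containing $M$, so maximality forces $M=\nor G\Gamma$. Replacing $\Gamma$ by its complement if $|\Gamma|>|\Omega|/2$ (which does not change the setwise stabiliser), I may assume $|\Gamma|\le|\Omega|/2$; and the borderline value $|\Gamma|=|\Omega|/2$ cannot occur, because then $\nor G\Gamma$ would be properly contained in the stabiliser of the unordered pair $\{\Gamma,\Omega\setminus\Gamma\}$, itself a proper subgroup of $G$, contradicting maximality. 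Hence $1\le|\Gamma|<|\Omega|/2$, as required.

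For the converse I fix $\Gamma$ with $k:=|\Gamma|$ satisfying $1\le k<|\Omega|/2$, set $M:=\nor G\Gamma$, and let $K$ be any subgroup with $M<K\le G$; the aim is to show $K=G$. First I would check that $K$ is transitive on $\Omega$: the subgroup $M$ already acts transitively on each of its two orbits $\Gamma$ and $\Omega\setminus\Gamma$, while any element of $K\setminus M$ carries some point of $\Gamma$ to a point of $\Omega\setminus\Gamma$, so the two $M$-orbits fuse into a single $K$-orbit.

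The main step, and the point I expect to be the real obstacle, is to promote transitivity to primitivity. Here I would exploit that $M$ induces the full symmetric group on $\Gamma$ and on $\Omega\setminus\Gamma$ (in the alternating case one induces $\Sym(\Gamma)$, resp.\ $\Sym(\Omega\setminus\Gamma)$, after a parity correction on the opposite part, which is harmless since only the action on the tracked part matters). Suppose $K$ preserved a nontrivial block system $\mathcal P$. Because $M\le K$ induces the full symmetric group on $\Gamma$ and preserves $\mathcal P$, a short transposition argument shows that each block meets $\Gamma$ in $0$, $1$, or $k$ points, and symmetrically each block meets $\Omega\setminus\Gamma$ in $0$, $1$, or $|\Omega\setminus\Gamma|$ points. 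A case analysis over these possibilities, using $k<|\Omega|/2$ to compare block sizes, then eliminates every configuration: no block can contain $\Gamma$ (or $\Omega\setminus\Gamma$) without collapsing $\mathcal P$ to a trivial partition, and if every block meets each part in at most one point then all blocks have size $2$, forcing $k=|\Omega|/2$ against our hypothesis. This contradiction establishes that $K$ is primitive.

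Finally, with $K$ primitive I would invoke Jordan's theorem. When $G=\Sym(\Omega)$, the inequality $|\Omega\setminus\Gamma|\ge 2$ (a consequence of $k<|\Omega|/2$) ensures that $M$, and hence $K$, contains a transposition supported on $\Omega\setminus\Gamma$, so the primitive group $K$ equals $\Sym(\Omega)=G$. When $G=\Alt(\Omega)$ and $|\Omega\setminus\Gamma|\ge 3$, the subgroup $M$ contains a $3$-cycle supported on $\Omega\setminus\Gamma$, whence the primitive group $K$ contains $\Alt(\Omega)$ and thus equals $G$; the finitely many remaining cases, which force $|\Omega|\le 3$, are verified directly. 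This completes the strategy: the delicate part is the block-intersection analysis proving primitivity, whereas the concluding appeal to Jordan's theorem is entirely standard.
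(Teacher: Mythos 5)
Your proposal is correct, but it necessarily takes a different route from the paper, because the paper does not prove Fact~\ref{fact1} at all: the statement is imported from the Liebeck--Praeger--Saxl classification of maximal subgroups of finite alternating and symmetric groups (the paper's reference~\cite{16}), where intransitive maximal subgroups are identified as exactly the stabilizers of subsets of size different from $|\Omega|/2$. Your argument is the standard self-contained one: the converse direction by choosing a smallest nonempty proper $M$-invariant subset (minimality already forces $|\Gamma|\le|\Omega|/2$, so your complement-swap step is redundant), and maximality of $\nor G \Gamma$ by showing that any proper overgroup $K$ is transitive, then primitive via the block-intersection analysis, then equal to $G$ by Jordan's theorem. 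The block analysis does complete as you sketch it: a block $B$ containing $\Gamma$ meets $\Omega\setminus\Gamma$ in $0$, $1$ or all points by the same trichotomy, so either $B=\Omega$ (trivial partition) or every other block lies inside $\Omega\setminus\Gamma$ and hence has size $n-k$, forcing $n-k\le k+1$; combined with $k<n/2$ this gives $n=2k+1$ with block size $k+1$ dividing $2k+1$, which is impossible; and the all-intersections-at-most-one case forces blocks of size $2$ and $k=n/2$. As for what each approach buys: the citation obtains the Fact as a trivial by-product of much heavier machinery that the paper needs anyway for the imprimitive and primitive cases, whereas your proof is elementary and independent of the classification. Two points to tighten: (i) when $G=\Alt(\Omega)$ and $|\Gamma|=1$ there is no room for a parity correction inside $\Gamma$, so $M$ induces only $\Alt(\Omega\setminus\Gamma)$ on the large part, not $\Sym(\Omega\setminus\Gamma)$; your transposition argument survives because it only needs the point stabilizer of the induced group to be transitive on the remaining points, which holds for $\Alt(m)$ with $m\ge 4$, the leftover degrees $|\Omega|\le 4$ joining your list of direct verifications; (ii) both the Fact and your pair-partition step in the converse implicitly require $|\Omega|\ge 3$ (for $|\Omega|=2$ the trivial subgroup is an intransitive maximal subgroup of $\Sym(2)$ stabilizing a set of size exactly $|\Omega|/2$), which is harmless given the paper's standing assumptions on $|\Omega|$.
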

 
\subsection{Regular partitions and imprimitive subgroups}The collection of all partitions of $\Omega$ is a poset: given two partitions $\Sigma_1$ and $\Sigma_2$ of $\Omega$, we say that $\Sigma_1\le \Sigma_2$ if $\Sigma_2$ is a {\em refinement} of $\Sigma_1$, that is, every element in $\Sigma_1$ is a union of elements in $\Sigma_2$. For instance, when $\Omega:=\{1,2,3,4\}$, $\Sigma_1:=\{\{1,3,4\},\{2\}\}$ and $\Sigma_2:=\{\{1\},\{2\},\{3,4\}\}$, we have $\Sigma_1\le \Sigma_2$.

A partition $\Sigma$ of $\Omega$ is said to be {\em regular} or {\em uniform} if all parts in $\Sigma$ have the same cardinality. Following~\cite{1,2}, we say that the partition $\Sigma$ is an $(a,b)$-{\em regular partition} if $\Sigma$ consists of $b$ parts each having cardinality $a$. In particular, $n=|\Omega|=ab$.
  
A partition $\Sigma$ of $\Omega$ is said to be {\em trivial} if $\Sigma$ equals the universal relation $\Sigma=\{\Omega\}$ or if $\Sigma$ equals the equality relation $\Sigma=\{\{\omega\}\mid \omega\in \Omega\}$.  

We let $$\nor G {\Sigma}:=\{g\in G\mid \Gamma^g\in \Sigma,\forall \Gamma\in \Sigma\}$$
denote the stabilizer in $G$ of the partition $\Sigma$. Moreover, when $H$ is a subgroup of $G$, we write $\nor H\Sigma:=\nor G \Gamma\cap H$.

Let $M$ be a maximal subgroup of $G$. If $M$  is imprimitive, then $M$ is the stabilizer in $G$ of a non-trivial regular partition. Therefore, there exists an $(a,b)$-regular partition $\Sigma$ with $a,b\ge 2$ and with $M=\nor G \Sigma$. From~\cite{16,18}, we see that when $G=\Sym(\Omega)$ the converse is also true. That is, for every non-trivial $(a,b)$-regular partition $\Sigma$, the subgroup $\nor G\Sigma$ is a maximal subgroup of $\Sym(\Omega)$. When $G=\Alt(\Omega)$, the converse is not quite true in general. We summarize what we need in the following fact.

\begin{fact}\label{fact2}
{\rm 
Let $\Sigma$ be a non-trivial regular partition of $\Omega$. Except when $G=\Alt(\Omega)$, $|\Omega|=8$ and $\Sigma$ is a $(2,4)$-regular partition, the imprimitive  subgroup $\nor G \Sigma$ of $G$ is a  maximal subgroup of $G$.}
\end{fact}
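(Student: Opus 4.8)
The symmetric case is already in hand: for $G=\Sym(\Omega)$ the maximality of $\nor G\Sigma$ is exactly the imprimitive case of the classification of maximal subgroups of $\Sym(\Omega)$ recalled from \cite{16,18} just before the statement. So the plan is to reduce the alternating case to this and to isolate the unique exception. Write $n=|\Omega|=ab$ with $a,b\ge 2$ (the partition is non-trivial), put $W:=\nor{\Sym(\Omega)}\Sigma\cong \Sym(a)\wr\Sym(b)$, and let $H:=\nor{\Alt(\Omega)}\Sigma=W\cap\Alt(\Omega)$. A transposition inside a single part of $\Sigma$ is an odd permutation lying in $W$, so $W\nleq\Alt(\Omega)$ and $|W:H|=2$. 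Assume for contradiction that $H$ is not maximal in $\Alt(\Omega)$ and fix $K$ with $H<K<\Alt(\Omega)$.

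First I would pin down the permutation type of $K$. Since $a,b\ge 2$, a point stabilizer in $W$ contains an odd permutation (a transposition inside a part not containing the fixed point), so $H$ is already transitive on $\Omega$, and hence so is $K$. Because the bottom component $\Sym(a)$ and the top group $\Sym(b)$ are both primitive, $\Sigma$ is the \emph{unique} non-trivial system of imprimitivity of $W$, and a fortiori of $H$. Now any block system of $K$ is a block system of the transitive subgroup $H$; if $K$ were imprimitive it would therefore preserve $\Sigma$, forcing $K\le\nor{\Alt(\Omega)}\Sigma=H$, a contradiction. Hence $K$ is primitive.

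So $K$ is a primitive subgroup of $\Alt(\Omega)$ properly containing the large imprimitive subgroup $H$ of order $\tfrac12(a!)^b\,b!$. Here I would invoke the classification of primitive groups: running through the O'Nan--Scott types and using the order bounds for primitive groups not containing $\Alt(\Omega)$ (Bochert/Praeger--Saxl), the requirement $H\le K<\Alt(\Omega)$ with $K$ primitive eliminates all parameters except the single configuration $n=8$, $(a,b)=(2,4)$. In that case $H\cong(\Sym(2)\wr\Sym(4))\cap\Alt(8)$ has order $192$ and embeds in the affine primitive group $K\cong 2^3{:}\mathrm{L}_3(2)\cong\mathrm{AGL}_3(2)$ of order $1344$, via the exceptional isomorphism $\Alt(8)\cong\mathrm{L}_4(2)$; this is precisely the excluded case. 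Equivalently, one may read this lone exception straight off the tables classifying the maximal subgroups of the alternating and symmetric groups in \cite{16,18}, where the imprimitive-in-alternating failure of maximality is recorded as exactly $(a,b,n)=(2,4,8)$. In all other cases $H$ is therefore maximal in $\Alt(\Omega)$.

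The hard part is this last step, namely controlling \emph{all} primitive overgroups of $H$. Carrying it out from scratch means traversing every O'Nan--Scott type and producing order estimates sharp enough to simultaneously kill the affine, product-action, diagonal and almost-simple families, leaving only the degree-$8$ affine survivor; in practice the cleanest route, and the one I expect the paper to take, is to quote the explicit maximality statements together with their exhaustive list of exceptions from \cite{16,18}, so that the entire verification reduces to checking that the only imprimitive exception listed there is $\Alt(8)$ with a $(2,4)$-regular partition.
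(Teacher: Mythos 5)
Your ``cleanest route'' is in fact the paper's entire argument: Fact~\ref{fact2} is recorded as a fact, not proved; the symmetric case and the alternating case are both read off from the classification of maximal subgroups of the finite alternating and symmetric groups in \cite{16,18}, whose list of exceptions contains $\nor {\Alt(8)}{\Sigma}<\mathrm{AGL}_3(2)<\Alt(8)$ (for $\Sigma$ a $(2,4)$-regular partition) as the unique imprimitive failure of maximality --- the paper explicitly notes that this exception ``was already observed in \cite{16}''. So your conclusion, and the citation you ultimately lean on, coincide with the paper.

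The self-contained reduction you interpose, however, contains a genuine error. You claim that $\Sigma$ is the unique non-trivial system of imprimitivity of $W=\Sym(a)\mathrm{wr}\Sym(b)$ ``and a fortiori of $H$''. The a fortiori goes the wrong way: systems of imprimitivity are inherited by transitive \emph{subgroups} from their overgroups, so the proper subgroup $H=W\cap\Alt(\Omega)$ may preserve strictly more partitions than $W$, not fewer. Concretely, for $(a,b)=(2,2)$ the group $H$ is the Klein four-group in $\Alt(4)$, which preserves three non-trivial partitions while $W=D_8$ preserves only one. The step is repairable when $(a,b)\neq(2,2)$: the orbits of a point stabilizer $H_\omega$ are $\{\omega\}$, the rest of the part containing $\omega$, and the complement of that part, and among the unions of these containing $\omega$ only the part itself has cardinality dividing $n$; hence $\Sigma$ is then the unique non-trivial block system of $H$ and your primitivity conclusion for $K$ follows (the case $(a,b)=(2,2)$ must be checked separately: $V_4$ is maximal in $\Alt(4)$). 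Separately, your final step --- that Bochert/Praeger--Saxl order bounds eliminate every primitive $K$ with $H\le K<\Alt(\Omega)$ except the degree-$8$ affine configuration --- is asserted rather than carried out, and generic bounds are too weak for it: for $a=2$ the bound $|K|<4^n$ does not even exceed $|H|=2^{b-1}b!$ until $b$ is around $20$, so what is really needed there is precisely the finer analysis performed in \cite{16}. Since you close by deferring to the tables of \cite{16,18}, the proposal stands, but the interposed argument should either be fixed as above or dropped.
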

The case $G=\Alt(\Omega)$, $|\Omega|=8$ and $\Sigma$ is a $(2,4)$-regular partition is a genuine exception here. Indeed,  $\nor G\Sigma<\mathrm{AGL}_3(2)<\Alt(\Omega)$, where $\mathrm{AGL}_3(2)$ is the affine general linear group of degree $2^3=8$.  (This was already observed in~\cite{16}.) The case $G=\Alt(\Omega)$ and $n=8$  is combinatorially very interesting: the largest Boolean lattice in $\Alt(8)$ has rank $3$ and it is drawn in Figure~\ref{fig1}.
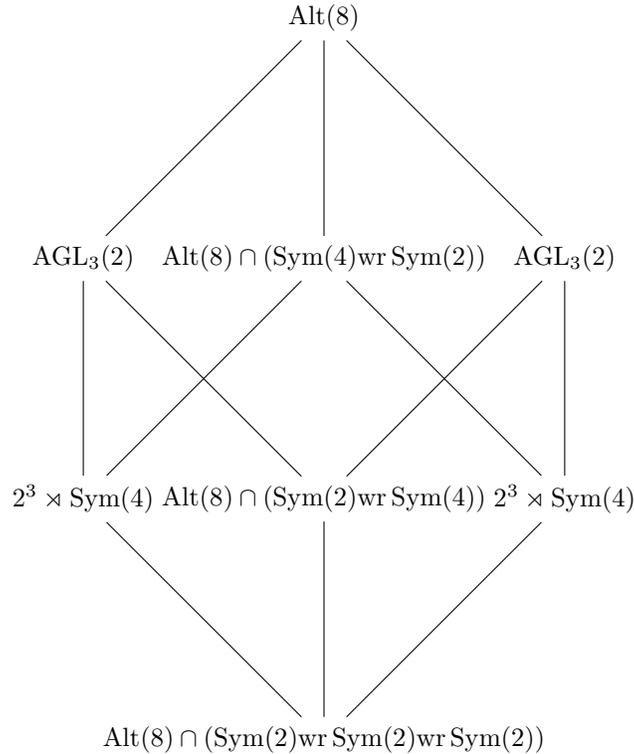
\begin{figure}
\begin{tikzpicture}[node distance=3.2cm]
\node(A0){$\Alt(8)$};
\node(A1)[below of=A0]{$\Alt(8)\cap (\Sym(4)\mathrm{wr}\Sym(2))$};
\node(A2)[left of=A1]{$\mathrm{AGL}_3(2)$};
\node(A3)[right of=A1]{$\mathrm{AGL}_3(2)$};
\node(A4)[below of=A1]{$\Alt(8)\cap(\Sym(2)\mathrm{wr}\Sym(4))$};
\node(A5)[below of=A2]{$2^3\rtimes\Sym(4)$};
\node(A6)[below of=A3]{$2^3\rtimes\Sym(4)$};
\node(A7)[below of=A4]{$\Alt(8)\cap (\Sym(2)\mathrm{wr}\Sym(2)\mathrm{wr}\Sym(2))$};
\draw(A0)--(A1);
\draw(A0)--(A2);
\draw(A0)--(A3);
\draw(A1)--(A5);
\draw(A1)--(A6);
\draw(A2)--(A5);
\draw(A2)--(A4);
\draw(A3)--(A6);
\draw(A3)--(A4);
\draw(A7)--(A6);
\draw(A7)--(A5);
\draw(A7)--(A4);
\end{tikzpicture}
\caption{The Boolean lattice of largest cardinality in $\Alt(8)$} \label{fig1}
\end{figure}

\subsection{Regular product structures and primitive subgroups}The modern key for analysing a finite primitive permutation group $L$ is to
study the \textit{socle} $N$ of $L$, that is, the subgroup generated
by the minimal normal subgroups of $L$. The socle of an arbitrary
finite group is isomorphic to the non-trivial direct product of simple
groups; moreover, for finite primitive groups these simple groups are
pairwise isomorphic. The O'Nan-Scott theorem describes in details the
embedding of $N$ in $L$ and collects some useful information about the
action of $L$. In~\cite[Theorem]{LPSLPS} five types of primitive groups
are defined (depending on the group- and action-structure of the
socle), namely HA (\emph{Affine}), AS (\emph{Almost Simple}), SD
(\emph{Simple Diagonal}), PA (\emph{Product Action}) and TW
(\emph{Twisted Wreath}), and it is shown that  every primitive group
belongs to exactly one of these types. We remark that in~\cite{C3}
this division into types is refined, namely the PA type
in~\cite{LPSLPS} is partitioned in four parts, which are called HS (\emph{Holomorphic simple}), HC (\emph{Holomorphic compound}), CD
(\emph{Compound Diagonal}) and PA.  For what follows it is convenient to use this division into eight types of the finite primitive primitive groups.

It follows from the results in~\cite{16,18} that, if $M$ is a maximal subgroup of $G$ and $M$ is primitive, then $M$ has O'Nan-Scott type HA, SD, PA or AS.

Since an overgroup of a primitive group is still primitive, the analogue of Facts~\ref{fact1} and~\ref{fact2} is obvious.
\begin{fact}\label{fact3}
{\rm  A primitive subgroup $M$ of $G$ is maximal if and only if $M$ is maximal among the primitive subgroups of $G$.}
\end{fact}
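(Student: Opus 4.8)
The plan is to reduce everything to a single elementary observation, namely that primitivity is inherited by overgroups; this is exactly the remark made just before the statement. Concretely, I would first isolate the claim: if $M\le K\le \Sym(\Omega)$ and $M$ is primitive on $\Omega$, then $K$ is primitive on $\Omega$. Transitivity passes upward for free, since $M$ transitive together with $M\le K$ forces $K$ transitive. For primitivity, recall that a transitive group is primitive precisely when it preserves no non-trivial partition of $\Omega$ into blocks; but any $K$-invariant partition is a fortiori $M$-invariant, and $M$ being primitive admits only the two trivial partitions as $M$-invariant partitions. Hence $K$ preserves no non-trivial block system and is primitive. A small terminological point must be fixed here: since $G$ itself is primitive (indeed $2$-transitive for the relevant values of $|\Omega|$), the phrase \emph{maximal among the primitive subgroups of $G$} has to be read as \emph{a maximal element, under inclusion, of the set of proper primitive subgroups of $G$}.

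With the claim in hand, the forward implication is immediate. If $M$ is primitive and maximal in $G$, then any primitive subgroup $K$ with $M\le K< G$ satisfies $K=M$ by the maximality of $M$ in $G$; thus $M$ is a maximal element among the proper primitive subgroups. For the converse, suppose $M$ is primitive and maximal among the proper primitive subgroups of $G$, and let $K$ be any subgroup with $M\le K\le G$. If $K=G$ there is nothing to prove, so assume $K<G$; by the claim $K$ is then a proper primitive subgroup of $G$ containing $M$, whence $K=M$ by the maximality of $M$. Therefore no subgroup lies strictly between $M$ and $G$, i.e. $M$ is maximal in $G$.

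There is essentially no hard step in this argument: the whole content is the upward-inheritance claim, which is routine, and the forward direction is formal. The only point demanding a little care is the correct reading of ``maximal among the primitive subgroups'', precisely because $G$ is itself primitive and so the word \emph{proper} cannot be omitted.
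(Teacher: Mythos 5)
Your proof is correct and is exactly the paper's argument: the paper states the fact with no further proof beyond the remark immediately preceding it (``Since an overgroup of a primitive group is still primitive, the analogue of Facts~\ref{fact1} and~\ref{fact2} is obvious''), and your write-up simply spells out that upward-inheritance observation and the two formal implications it yields. Your terminological caveat about reading ``maximal among the primitive subgroups'' as maximal among the \emph{proper} primitive subgroups is the intended reading and matches the paper's usage.
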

We recall the definition of a {\em regular product structure}  on $\Omega$ from~\cite[Section~$2$]{2}. Let $m$ and $k$ be integers with $m \ge 5$ and $k \ge 2$. There are two natural ways to do that. First, a regular $( m , k )$-product structure on $\Omega$ is a bijection $f : \Omega \to \Gamma^{I}$, where $I := \{ 1 , \ldots , k \}$ and $\Gamma$ is an $m$-set. The function $f$ consists of a family of functions $( f_i : \Omega \to \Gamma \mid i \in I )$ where  $f ( \omega ) = ( f_1 ( \omega ), \ldots , f_k ( \omega ))$, for each $\omega \in \Omega$. There is a more intrinsict way to define it. A product structure is a set $\mathcal{F} := \{\Omega_i \mid i \in I \}$ of partitions $\Omega_i$ of $\Omega$ into $m$ blocks of size $m^{k - 1}$, such that, for each pair of distinct points $\omega,\omega'\in \Omega$, we have $\mathcal{F} ( \omega )\ne\mathcal{F} ( \omega' )$, where
$\mathcal{F} ( \omega ):=\{[ \omega ]_i \mid i \in I\}$ consists of the blocks defined by $\omega \in [ \omega ]_i$ and $[\omega]_i \in \Omega_i$ (here $[\omega]_i$ denotes the block of $\Omega_i$ containing the point $\omega$).
Clearly the two definitions are equivalent. Indeed, given a function $f:\Omega\to \Gamma^{I}$, we let  $\mathcal{F} ( f )$ be the set of partitions of $\Omega$ defined by $f$, where the $i^{\mathrm{th}}$ partition $\Omega_i := \{ f_i^{-1} ( \gamma ) \mid  \gamma\in \Gamma \}$ consists of the the fibers
of $f_i$. 
The product structure $\mathcal{F}$ can also be regarded as a chamber system in the sense of Tits~\cite{boobs}.

Following~\cite{1}, we let $\nor G{\mathcal{F}}$ denote the stabilizer of a regular $(m,k)$-product structure $\mathcal{F}=\{\Omega_1,\ldots,\Omega_k\}$ in $G$, that is,
$$\nor G{\mathcal{F}}:=\{g\in G\mid \Omega_i^g\in\mathcal{F}, \forall i\in \{1,\ldots,k\}\}.$$
(More generally, given a subgroup $H$ of $G$, we let $\nor H {\mathcal{F}}:=\nor G{\mathcal{F}}\cap H$ denote the  stabilizer of $\Gamma$ in $H$.) Clearly,
$$\nor G{\mathcal{F}}\cong G\cap \left(\Sym(m)\mathrm{wr}\Sym(k)\right),$$
where $\Sym(m)\mathrm{wr}\Sym(k)$ is endowed of its primitive product action of degree $m^k$. Moreover, $\nor {\Sym(\Omega)}{\mathcal{F}}$ is a typical primitive maximal subgroup of $\Sym(\Omega)$ of PA type according to the O'Nan-Scott theorem.

Let $\bm{\mathcal{F}}(\Omega)$ be the
set of all regular product structures on $\Omega$. The set $\bm{\mathcal{F}}(\Omega)$ is endowed of a natural partial order. Let $\mathcal{F} := \{\Omega_i \mid i \in I\}$ and  $\tilde{\mathcal{F}} := \{ \tilde{\Omega}_j \mid j \in\tilde{I}\}$ be regular $(m,k)$- and $(\tilde{m},\tilde{k})$-product structures on $\Omega$, respectively.
Set $I := \{1, \ldots ,k\}$ and $\tilde{I} := \{1, \ldots ,\tilde{k}\}$, and define $\mathcal{F} \le \tilde{\mathcal{F}}$ if there exists a positive integer $s$ with $\tilde{k} =ks$,
and a regular $(s,k)$-partition $\Sigma = \{\sigma_i \mid i \in I\}$ of $\tilde{I}$, such that for each $i \in I$ and each $j \in \sigma_i$, $\tilde{\Omega}_j \le \Omega_i$, that is, the partition $\Omega_i$ is a refinement of the partition $\tilde{\Omega}_j$. From~\cite[(5.1)]{1}, the relation $\le$ is a partial order on $\bm{\mathcal{F}}(\Omega)$.

We conclude this preliminary observations on regular product structures by recalling~\cite[(5.10)]{2}.
\begin{lemma}\label{l:aschbacher}
Let $M=\nor {\Sym(\Omega)}{\mathcal{F}}$ be the stabilizer in $\Sym(\Omega)$ of a regular $(m,k)$-product structure on $\Omega$ and let $K$ be the kernel of the action of $M$ on $\mathcal{F}$. Then
\begin{enumerate}
\item\label{asch1}$K\le \Alt(\Omega)$ if and only if $m$ is even;
\item\label{asch2}$M\le \Alt(\Omega)$ if and only if $m$ is even and either $k>2$, or $k=2$ and $m\equiv 0\pmod 4$;
\item\label{asch3}if $k=2$ and $m\equiv 2\pmod 4$, then $M\cap \Alt(\Omega)=K$, so $M\cap \Alt(\Omega)$ is not primitive on $\Omega$ (and hence $M\cap \Alt(\Omega)$ is not a maximal subgroup of $\Alt(\Omega)$). Otherwise $M\cap \Alt(\Omega)$ induces $\Sym(\mathcal{F})$ on $\mathcal{F}$.
\end{enumerate}
\end{lemma}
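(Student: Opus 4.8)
The plan is to work concretely with the isomorphism $M=\nor{\Sym(\Omega)}{\mathcal{F}}\cong\Sym(m)\mathrm{wr}\Sym(k)$ in its product action on $\Omega\cong\Gamma^k$, where $|\Gamma|=m$ and $|\Omega|=m^k$, and to reduce the whole statement to two sign computations. Here $K$, the kernel of the action of $M$ on $\mathcal{F}$, is exactly the base group $\Sym(m)^k$ acting coordinatewise, while $M/K\cong\Sym(k)\cong\Sym(\mathcal{F})$ permutes the $k$ coordinate partitions. Since membership in $\Alt(\Omega)$ is detected by the sign homomorphism $\Sym(\Omega)\to\{\pm1\}$, everything will follow once the signs of a base element and of a coordinate transposition are known.

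First I would compute the sign of base elements. An element of $K$ supported on a single coordinate $i$, acting there as $g\in\Sym(m)$, induces on $\Gamma^k$ the disjoint union of $m^{k-1}$ copies of $g$ (one for each choice of the other $k-1$ entries), hence has sign $\mathrm{sgn}(g)^{m^{k-1}}$. As $k\ge2$, the exponent $m^{k-1}$ is even exactly when $m$ is even; so for $m$ even every base element is even and $K\le\Alt(\Omega)$, whereas for $m$ odd a single transposition in one coordinate is an odd permutation of $\Omega$ and $K\nleq\Alt(\Omega)$. This is precisely~\eqref{asch1}.

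Next I would treat the coordinate transposition $\tau=(i\ j)$ swapping the $i$th and $j$th coordinates. A point of $\Gamma^k$ is fixed iff its $i$th and $j$th entries agree, giving $m^{k-1}$ fixed points, and the remaining $m^{k-1}(m-1)$ points are interchanged in pairs, so $\mathrm{sgn}(\tau)=(-1)^{m^{k-1}(m-1)/2}$. Since $\Sym(k)$ is generated by transpositions, $M\le\Alt(\Omega)$ holds iff $K\le\Alt(\Omega)$ and every such $\tau$ is even. Writing $m=2u$ when $m$ is even, the exponent becomes $2^{k-2}u^{k-1}(m-1)$, which is even for $k>2$ and has the parity of $m/2$ when $k=2$; this gives~\eqref{asch2}. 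For~\eqref{asch3}, if $k=2$ and $m\equiv2\pmod4$ then $m$ is even so $K\le\Alt(\Omega)$, but the nontrivial transposition is odd and $M=K\cup K\tau$ with every element of $K\tau$ odd, whence $M\cap\Alt(\Omega)=K$; this group preserves each $\Omega_i$, so it is imprimitive on $\Omega$ and not maximal in $\Alt(\Omega)$. In every other case either $M\le\Alt(\Omega)$ by~\eqref{asch2}, or $m$ is odd, and then~\eqref{asch1} furnishes an odd element of $K$ that can be multiplied into any transposition-lift to correct its parity without changing its image in $\Sym(\mathcal{F})$; since transpositions generate $\Sym(k)$, the group $M\cap\Alt(\Omega)$ induces all of $\Sym(\mathcal{F})$.

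The computations are elementary and the main work is bookkeeping: separating cleanly the base-group and top-group contributions to the sign and handling the parity split between $k=2$ and $k>2$. The one point needing a little care is the parity-correction in the odd-$m$ case of~\eqref{asch3}, where one uses that $K\nleq\Alt(\Omega)$ supplies the required odd element and that, since $K$ acts trivially on $\mathcal{F}$, multiplying by it does not disturb the induced permutation of the coordinates.
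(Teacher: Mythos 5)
Your proof is correct, but it is worth pointing out that the paper itself contains no proof of this lemma at all: it is recalled verbatim from Aschbacher, \cite[(5.10)]{2}, as part of the preliminary material. So your sign-theoretic argument is not a variant of the paper's proof but a self-contained replacement for an external citation. Your route — identifying $M$ with $\Sym(m)\mathrm{wr}\Sym(k)$ in product action, identifying $K$ with the base group $\Sym(m)^k$, and reducing everything to the two sign computations $\mathrm{sgn}(g)^{m^{k-1}}$ for base elements and $(-1)^{m^{k-1}(m-1)/2}$ for a coordinate transposition — is the natural elementary verification, and the parity bookkeeping (exponent $2^{k-2}u^{k-1}(m-1)$ for $m=2u$, splitting $k>2$ from $k=2$) is accurate; the parity-correction trick for odd $m$ in part~\eqref{asch3}, using an odd element of $K$ which acts trivially on $\mathcal{F}$, is exactly what is needed and you correctly isolate it as the one delicate step. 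Two small points deserve a line each if this were written out in full. First, the identification of $K$ with the base group uses that the $k$ coordinate partitions are pairwise distinct and are permuted by the top group according to its action on coordinates, so no element with nontrivial top component can lie in the kernel. Second, in part~\eqref{asch3} the parenthetical conclusion that $M\cap\Alt(\Omega)=K$ is not maximal does not follow from imprimitivity alone (full stabilizers of regular partitions are imprimitive yet maximal); one should note that $K\cong\Sym(m)\times\Sym(m)$ is \emph{properly} contained in the full stabilizer of the partition $\Omega_1$ in $\Alt(\Omega)$, which in turn is proper in $\Alt(\Omega)$ — this is the same slight compression present in the lemma's own statement, and it is a one-line fix.
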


\medskip

\subsection{Preliminary lemmas}
A lattice $\mathcal{L}$ is said to be {\em Boolean} if $\mathcal{L}$ is isomorphic to the lattice of subsets of a set $X$, that is, $\mathcal{L}\cong \mathcal{P}(X)$, where $\mathcal{P}(X):=\{Y\mid Y\subseteq X\}$. We also say that $|X|$ is the {\em rank} of the Boolean lattice $\mathcal{L}$.

\begin{lemma}\label{l:-01}Let $X$ be a subgroup of $Y$. If $\mathcal{O}_Y(X)$ is Boolean of rank $\ell$, then every maximal chain from $X$ to $Y$ has length $\ell+1$. In particular, if $|Y:X|$ is divisible by at most $\ell$ primes, then $\mathcal{O}_Y(X)$ is not Boolean of rank $\ell$.
\end{lemma}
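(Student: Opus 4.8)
The plan is to reduce everything to the combinatorial structure of the model lattice $\mathcal{P}(\{1,\dots,\ell\})$ and then translate back through group indices. First I would fix a lattice isomorphism $\varphi\colon \mathcal{O}_Y(X)\to \mathcal{P}(\{1,\dots,\ell\})$, which exists by hypothesis; necessarily it sends the bottom element $X$ to $\emptyset$ and the top element $Y$ to $\{1,\dots,\ell\}$. Since a lattice isomorphism preserves the order relation, it preserves covering relations as well (the property ``$A<B$ and there is no $C$ with $A<C<B$'' is purely order-theoretic), and hence it carries maximal chains from $X$ to $Y$ bijectively onto maximal chains from $\emptyset$ to $\{1,\dots,\ell\}$ in the model.

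Next I would pin down the length of a maximal chain in $\mathcal{P}(\{1,\dots,\ell\})$. There, a covering relation $S\subsetneq T$ holds precisely when $T=S\cup\{x\}$ for a single $x\notin S$, so along any maximal chain the cardinality of the set grows by exactly $1$ at each step. A maximal chain therefore passes through subsets of sizes $0,1,\dots,\ell$ and consists of exactly $\ell+1$ elements, equivalently $\ell$ covering steps. By the correspondence above, the same holds for every maximal chain from $X$ to $Y$ in $\mathcal{O}_Y(X)$; this is the first assertion. (In essence the content here is just that a finite Boolean lattice is graded by the rank function $|{\cdot}|$.)

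For the ``in particular'' clause I would read off a divisibility bound from such a chain. Writing a maximal chain as $X=K_0<K_1<\cdots<K_\ell=Y$, the multiplicativity of indices gives $|Y:X|=\prod_{i=1}^{\ell}|K_i:K_{i-1}|$, and each factor is at least $2$ because the inclusions are strict. Hence $|Y:X|$ is a product of $\ell$ integers each $\ge 2$, so the number of prime divisors of $|Y:X|$ counted with multiplicity is at least $\ell$; the stated consequence is exactly the contrapositive, since if $|Y:X|$ had too few prime factors no maximal chain could have the required $\ell$ covering steps, forcing $\mathcal{O}_Y(X)$ not to be Boolean of rank $\ell$. I do not expect a genuine obstacle here: the only two points requiring care are that a lattice isomorphism preserves covering relations (so that chain lengths may legitimately be computed in the model $\mathcal{P}(\{1,\dots,\ell\})$) and the elementary prime-factor count along a maximal subgroup chain; both are routine.
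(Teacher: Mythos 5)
Your treatment of the first assertion is correct and is precisely what the paper leaves implicit (its entire proof is ``This is clear.''): a lattice isomorphism preserves covering relations, $\mathcal{P}(\{1,\dots,\ell\})$ is graded by cardinality, so every maximal chain from $X$ to $Y$ has $\ell+1$ elements, i.e. $\ell$ covering steps; multiplicativity of indices then shows $|Y:X|$ is a product of $\ell$ integers each at least $2$, hence has at least $\ell$ prime factors counted with multiplicity. There is no real difference of approach here, since the paper offers no argument to differ from.

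The one place you go astray is the last sentence, where you assert that the ``in particular'' clause ``is exactly the contrapositive'' of your count. It is not: the contrapositive of ``Boolean of rank $\ell$ implies $|Y:X|$ has at least $\ell$ prime factors counted with multiplicity'' is ``if $|Y:X|$ has at most $\ell-1$ such factors, then $\mathcal{O}_Y(X)$ is not Boolean of rank $\ell$'', whereas the lemma as printed says ``at most $\ell$ primes''. These differ by one, and the printed version is in fact false: $\mathcal{O}_{C_6}(1)=\{1,C_2,C_3,C_6\}$ is Boolean of rank $2$, yet $|C_6:1|=6$ is divisible by exactly two primes. So the defect is an off-by-one slip in the paper's wording rather than a flaw in your reasoning; note that every invocation of the lemma in the paper (index $2$ against rank $2$, index $6=2\cdot 3$ against rank $3$) uses only the weaker statement you actually proved. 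You should either restate the conclusion with ``fewer than $\ell$ primes counted with multiplicity'' or explicitly flag the discrepancy; claiming exact equivalence between your (correct) count and the printed statement is the only wrong step in your write-up.
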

\begin{proof}
This is clear.
\end{proof}

\begin{lemma}\label{l:0}Let $H$ be a subgroup of $G$ with $\mathcal{O}_G(H)$ Boolean. If every maximal element in $\mathcal{O}_G(H)$ is transitive, then either $H$ is transitive or $\mathcal{O}_G(H)$ contains the stabilizer of a $(|\Omega|/2,2)$-regular partition.
\end{lemma}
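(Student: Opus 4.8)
The plan is to argue the dichotomy directly: assuming that $H$ is \emph{intransitive}, I will exhibit a concrete element of $\mathcal{O}_G(H)$ stabilizing a $(|\Omega|/2,2)$-regular partition. The starting observation, which is where the Boolean hypothesis enters, is that $\mathcal{O}_G(H)$ is the full interval of subgroups between $H$ and $G$, so its maximal elements (the co-atoms) are exactly the maximal subgroups of $G$ that contain $H$. Thus the hypothesis translates into the statement that \emph{every} maximal subgroup of $G$ containing $H$ is transitive on $\Omega$, and this is the property I intend to play against the assumed intransitivity of $H$.

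Next I would exploit the orbit decomposition of $H$. Writing $\Omega=\Delta_1\sqcup\cdots\sqcup\Delta_r$ for the partition of $\Omega$ into $H$-orbits, intransitivity gives $r\ge 2$, so I may pick an orbit $\Delta$ of smallest cardinality; then $|\Delta|\le|\Omega|/2$. Since $H$ fixes $\Delta$ setwise we have $H\le\nor G\Delta$, and $\nor G\Delta$ is proper in $G$ because the transitive group $G$ cannot fix a non-empty proper subset of $\Omega$. The whole argument now turns on whether $|\Delta|<|\Omega|/2$ or $|\Delta|=|\Omega|/2$.

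In the first case, $|\Delta|<|\Omega|/2$, Fact~\ref{fact1} guarantees that $\nor G\Delta$ is a maximal subgroup of $G$; being an intransitive maximal subgroup containing $H$, it would be a maximal (hence intransitive) element of $\mathcal{O}_G(H)$, contradicting the hypothesis. Therefore $|\Delta|=|\Omega|/2$. Since $\Delta$ has minimal size among at least two orbits that partition $\Omega$, an elementary counting argument forces $H$ to have exactly two orbits $\Delta_1,\Delta_2$, each of cardinality $|\Omega|/2$. Then $\Sigma:=\{\Delta_1,\Delta_2\}$ is a $(|\Omega|/2,2)$-regular partition with $H\le\nor G\Sigma\le G$, so $\nor G\Sigma\in\mathcal{O}_G(H)$, which is exactly the desired conclusion.

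The only point requiring care — and really the heart of the statement — is the boundary value $|\Delta|=|\Omega|/2$, which is precisely where Fact~\ref{fact1} ceases to apply because the setwise stabilizer of a half-set is no longer maximal (the pair $\{\Gamma,\Omega\setminus\Gamma\}$ is a block system). I expect the main conceptual step to be recognizing that this failure of maximality is not an obstruction but exactly what produces the regular $(|\Omega|/2,2)$-partition; everything else reduces to the orbit-counting above, and the Boolean hypothesis is used only to identify the maximal elements of the lattice with the maximal subgroups of $G$ over $H$.
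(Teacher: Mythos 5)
Your proof is correct and follows essentially the same route as the paper's: take a smallest $H$-orbit, rule out the case of cardinality less than $|\Omega|/2$ because Fact~\ref{fact1} would then produce an intransitive maximal element of $\mathcal{O}_G(H)$, and conclude that $H$ has exactly two orbits of size $|\Omega|/2$, whose associated $(|\Omega|/2,2)$-regular partition has stabilizer in $\mathcal{O}_G(H)$. One minor remark: the identification of the maximal elements of $\mathcal{O}_G(H)$ with the maximal subgroups of $G$ containing $H$ holds by the very definition of the overgroup lattice and does not use Booleanness, so the Boolean hypothesis is in fact never needed in this argument (nor is it used in the paper's proof).
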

\begin{proof}
Suppose that $H$ is intransitive and let $\Gamma$ be an orbit of $H$ of smallest possible cardinality. Assume $1\le |\Gamma|<|\Omega|/2$. Then $M:=G\cap (\Sym(\Gamma)\times \Sym(\Omega\setminus \Gamma))$ is a maximal element of $\mathcal{O}_G(H)$ and $M$ is intransitive, which is a contradiction. This shows that $H$ has two orbits on $\Omega$ both having cardinality $|\Omega|/2$. In particular, $M:=\nor {G}{\{\Gamma,\Omega\setminus\Gamma\}}$ is a member of $\mathcal{O}_G(H)$. 
\end{proof}

\begin{lemma}\label{l:1}Let $H$ be a subgroup of $G$ with $\mathcal{O}_G(H)$ Boolean. If every maximal element in $\mathcal{O}_G(H)$ is primitive, then either $H$ is primitive, or $G=\Alt(\Omega)$, $|\Omega|=8$, $H=\nor G\Sigma$ for some $(2,4)$-regular partition $\Sigma$ and $\mathcal{O}_G(H)$ has rank $2$.
\end{lemma}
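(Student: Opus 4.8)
The plan is to assume that $H$ is not primitive and deduce the exceptional configuration. Since every maximal element of $\mathcal{O}_G(H)$ is primitive, it is in particular transitive, so Lemma~\ref{l:0} applies: either $H$ is transitive, or $\mathcal{O}_G(H)$ contains the stabilizer $M:=\nor{G}{\{\Gamma,\Omega\setminus\Gamma\}}$ of a $(|\Omega|/2,2)$-regular partition. In the second case $M$ is transitive and preserves the non-trivial block system $\{\Gamma,\Omega\setminus\Gamma\}$, so it is imprimitive; moreover a $(|\Omega|/2,2)$-regular partition is never the $(2,4)$-partition appearing in the exception of Fact~\ref{fact2}, so $M$ is a maximal subgroup of $G$ and hence a maximal \emph{imprimitive} element of $\mathcal{O}_G(H)$ --- contradicting the hypothesis. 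Therefore $H$ is transitive, and as we are assuming it is not primitive, $H$ is imprimitive.

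Being transitive and imprimitive, $H$ preserves a non-trivial system of imprimitivity; since all blocks of a transitive group have the same size, this is a non-trivial regular partition $\Sigma$ with $H\le\nor{G}{\Sigma}$. The subgroup $\nor{G}{\Sigma}$ is transitive and preserves $\Sigma$, hence imprimitive, so were it maximal in $G$ it would again be a maximal imprimitive element of $\mathcal{O}_G(H)$, a contradiction. By Fact~\ref{fact2} the only way $\nor{G}{\Sigma}$ can fail to be maximal is the genuine exception $G=\Alt(\Omega)$, $|\Omega|=8$, with $\Sigma$ a $(2,4)$-regular partition. Thus $G=\Alt(\Omega)$ and $|\Omega|=8$; and, applying the same argument to every non-trivial regular partition preserved by $H$, each such partition must be of this exceptional type. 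Since on $8$ points the only non-trivial regular partitions are those of type $(2,4)$ and $(4,2)$, the group $H$ preserves no $(4,2)$-partition and therefore induces a primitive subgroup of $\Sym(4)$, i.e. $\Alt(4)$ or $\Sym(4)$, on the four blocks of $\Sigma$.

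It remains to prove that, writing $K:=\nor{\Alt(8)}{\Sigma}$, the only subgroup $H\le K$ for which $\mathcal{O}_G(H)$ is Boolean with all maximal elements primitive is $H=K$, and that then $\mathcal{O}_G(H)$ has rank $2$. For this I would use the explicit structure of $\Alt(8)$. The subgroup $K\cong 2^3{:}\Sym(4)$ is transitive and, acting as $\Sym(4)$ on the four blocks, preserves no $(4,2)$-partition; hence it lies in no intransitive and no imprimitive maximal subgroup of $\Alt(8)$, and its only maximal overgroups are the two conjugates $\mathrm{AGL}_3(2)$ and $\mathrm{AGL}_3(2)'$ of O'Nan--Scott type HA, with $K=\mathrm{AGL}_3(2)\cap\mathrm{AGL}_3(2)'$. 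Consequently $\mathcal{O}_G(K)=\{K,\mathrm{AGL}_3(2),\mathrm{AGL}_3(2)',\Alt(8)\}$ is Boolean of rank $2$ with both coatoms primitive --- exactly the asserted conclusion. If instead $H<K$, I would verify, using the subgroup lattice of $\Alt(8)$ recorded in Figure~\ref{fig1}, that $H$ either lies in an imprimitive maximal subgroup of $\Alt(8)$ --- such as the $(4,2)$-stabilizer $\Alt(8)\cap(\Sym(4)\mathrm{wr}\Sym(2))$, which is what happens for the minimum of the rank-$3$ lattice in Figure~\ref{fig1} --- so that $\mathcal{O}_G(H)$ acquires a non-primitive coatom, or else the interval $[H,\mathrm{AGL}_3(2)]$ fails to be Boolean; either outcome violates the hypotheses, forcing $H=K$.

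I expect the last step to be the main obstacle. The reduction to $G=\Alt(8)$ and to $\Sigma$ of type $(2,4)$ is clean and uniform, being driven entirely by Lemma~\ref{l:0} and Fact~\ref{fact2}; but ruling out the proper subgroups $H<K$ does not follow from these general facts and requires the concrete subgroup structure of $\Alt(8)\cong\mathrm{L}_4(2)$. Indeed an imprimitive subgroup whose maximal overgroups are all primitive is precisely the sort of coincidence the lemma must exclude, and it is the Boolean hypothesis, rather than primitivity of the coatoms alone, that ultimately pins down $H=K$.
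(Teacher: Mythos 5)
Your proposal is correct and follows essentially the same route as the paper: Lemma~\ref{l:0} gives transitivity of $H$ (the $(|\Omega|/2,2)$-stabilizer would otherwise be a maximal imprimitive overgroup), Fact~\ref{fact2} then forces $G=\Alt(\Omega)$, $|\Omega|=8$ and $\Sigma$ of type $(2,4)$, and the concrete structure of $\Alt(8)$ pins down $H=\nor{G}{\Sigma}$ and the rank. The paper's own proof is exactly this reduction and is even terser on the final step---it simply cites Fact~\ref{fact2} and Figure~\ref{fig1}---so your explicit verification plan for ruling out $H<\nor{G}{\Sigma}$ (which indeed requires slightly more of the subgroup structure of $\Alt(8)$ than Figure~\ref{fig1} alone records) matches the level of detail the paper itself leaves to computation.
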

\begin{proof}
From Lemma~\ref{l:0}, $H$ is transitive. Suppose that $H$ is imprimitive and let $\Sigma$ be a non-trivial regular partition with $H\le \nor G\Sigma$. If $\nor G\Sigma$ is a maximal subgroup of $G$, we obtain a contradiction. Thus $\nor G\Sigma$ is not maximal in $G$. This implies $G=\Alt(\Omega)$, $|\Omega|=8$, $\Sigma$ is a $(2,4)$-regular partition and $\mathcal{O}_G(H)$ has rank $2$: see Fact~\ref{fact2} and Figure~\ref{fig1}. 
\end{proof}

Lemma~\ref{l:2} is needed in Remark~\ref{rem:1} and Lemma~\ref{l:4} is needed in Theorem~\ref{thrm:AS2}.
\begin{lemma}\label{l:2}
Let $\Omega$ be the set of all pairs from a finite set $\Delta$. Then, in the permutation representation of $\Sym(\Delta)$ on $\Omega$, $\Sym(\Delta)\leq \Alt(\Omega)$ if and only if $|\Delta|$ is even. 
\end{lemma}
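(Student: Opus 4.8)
The plan is to reduce the claim to a single sign computation for a transposition. The action of $\Sym(\Delta)$ on $\Omega$ composed with the sign homomorphism of $\Sym(\Omega)$ yields a homomorphism $\varepsilon\colon \Sym(\Delta)\to\{\pm 1\}$, and by definition $\Sym(\Delta)\le\Alt(\Omega)$ if and only if $\varepsilon$ is trivial. Since $\Sym(\Delta)$ is generated by its transpositions and all transpositions are conjugate in $\Sym(\Delta)$, the element $\varepsilon$ is trivial precisely when the permutation of $\Omega$ induced by one (equivalently every) transposition is even. So the entire lemma comes down to computing the parity of the permutation of $\Omega$ induced by a single transposition.

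First I would fix a transposition $\tau=(a\,b)$ with $a,b\in\Delta$ and write $d:=|\Delta|$, and describe explicitly how $\tau$ permutes the $2$-subsets of $\Delta$. The pair $\{a,b\}$ is fixed, and every pair $\{c,e\}$ with $c,e\in\Delta\setminus\{a,b\}$ is fixed. The only pairs that are moved are those of the form $\{a,c\}$ and $\{b,c\}$ with $c\in\Delta\setminus\{a,b\}$, and $\tau$ interchanges $\{a,c\}$ with $\{b,c\}$ for each such $c$. As $c$ ranges over the $d-2$ elements of $\Delta\setminus\{a,b\}$, this exhibits $\tau$ as a product of exactly $d-2$ disjoint transpositions on $\Omega$. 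Hence $\varepsilon(\tau)=(-1)^{d-2}=(-1)^{d}$.

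Finally I would read off the conclusion: $\varepsilon(\tau)=(-1)^{d}$ equals $+1$ if and only if $d=|\Delta|$ is even. When $d$ is even, every transposition maps into $\Alt(\Omega)$, so $\varepsilon$ is trivial and $\Sym(\Delta)\le\Alt(\Omega)$; when $d$ is odd, a transposition maps to an odd permutation, so $\varepsilon$ is the sign map and $\Sym(\Delta)\nleq\Alt(\Omega)$. The degenerate cases $d\le 1$ (where $\Omega=\emptyset$ and $\Sym(\Delta)$ is trivial) and $d=2$ (where $|\Omega|=1$) are covered by the same count and cause no trouble. There is no genuine obstacle here: the only nontrivial point is the cycle-structure bookkeeping in the middle step, which is the routine verification that a transposition contributes $d-2$ swaps to the pair action.
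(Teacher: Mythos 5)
Your proposal is correct and follows essentially the same route as the paper's proof: the paper reduces to checking that a transposition of $\Sym(\Delta)$ acts evenly on $\Omega$ if and only if $|\Delta|$ is even, and leaves this as an ``easy computation,'' which is exactly the cycle-structure count (a product of $d-2$ disjoint transpositions, namely $\{a,c\}\leftrightarrow\{b,c\}$) that you carry out explicitly. Your write-up simply makes the paper's implicit bookkeeping explicit, including the reduction via the sign homomorphism and the fact that transpositions generate $\Sym(\Delta)$.
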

\begin{proof}
It is an easy computation to see that, if $g$ is a transposition of $\Sym(\Delta)$ (for its action on $\Delta$), then $g$ is an even permutation in its action on $\Omega$ if and only if $|\Delta|$ is even. Therefore, the proof follows.
\end{proof}

\begin{lemma}\label{l:4}
Let $H$ be a transitive permutation group on $\Omega$, let $\omega\in\Omega$ and let $H_\omega$ be the stabilizer of the point $\omega$ in $H$. Then $\{\omega'\in\Omega\mid \omega'^g=\omega',\forall g\in H_\omega\}$ is a block of imprimitivity for $H$. In particular, if $H$ is primitive, then either $H_\omega=1$, or $\omega$ is the only point fixed by $H_\omega$. 
\end{lemma}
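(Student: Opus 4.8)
The plan is to show that the set $\Delta:=\{\omega'\in\Omega\mid \omega'^g=\omega',\ \forall g\in H_\omega\}$ — the fixed-point set of $H_\omega$ — is a block of imprimitivity, and then to read off the primitive case. First note that $\Delta$ is non-empty, since $\omega\in\Delta$. The crucial observation is that for every $\beta\in\Delta$ one in fact has $H_\beta=H_\omega$: by definition of $\Delta$, each element of $H_\omega$ fixes $\beta$, so $H_\omega\le H_\beta$; since $H$ is transitive all point-stabilizers have the same order, whence $H_\omega=H_\beta$. Consequently $\Delta$ is exactly the fixed-point set of $H_\beta$ for every $\beta\in\Delta$.

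Next I would use the elementary identity $\Delta^g=\{\omega'\in\Omega\mid \omega'^h=\omega',\ \forall h\in H_{\omega^g}\}$, the fixed-point set of the conjugate stabilizer $H_{\omega^g}=g^{-1}H_\omega g$ (this is routine to verify under the right-action convention $\omega'^g$). To check the block condition, suppose $g\in H$ satisfies $\Delta\cap\Delta^g\ne\emptyset$ and pick $\beta$ in the intersection. From $\beta\in\Delta$ we get $H_\beta=H_\omega$ by the previous paragraph; from $\beta\in\Delta^g$, i.e. $\beta$ fixed by $H_{\omega^g}$, we get $H_{\omega^g}\le H_\beta=H_\omega$, and again equality of orders forces $H_{\omega^g}=H_\omega$. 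Hence $\Delta^g$ and $\Delta$ are the fixed-point sets of the same subgroup, so $\Delta^g=\Delta$. Thus for every $g\in H$ either $\Delta^g=\Delta$ or $\Delta^g\cap\Delta=\emptyset$, which is precisely the statement that $\Delta$ is a block of imprimitivity for $H$.

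For the final assertion, recall that in a primitive group the only blocks are the singletons and the whole of $\Omega$. Applying this to the block $\Delta\ni\omega$: if $\Delta=\{\omega\}$ then $\omega$ is the unique fixed point of $H_\omega$, whereas if $\Delta=\Omega$ then $H_\omega$ fixes every point of $\Omega$ and therefore $H_\omega=1$. This yields exactly the dichotomy stated.

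The argument is short, and the only real point requiring care is the implication $H_\omega\le H_\beta\Rightarrow H_\omega=H_\beta$: this is where transitivity — guaranteeing $|H_\omega|=|H_\beta|$ — is indispensable. Apart from that, the one place an index slip could creep in is the conjugation bookkeeping behind $\Delta^g=\mathrm{Fix}(H_{\omega^g})$ under the right-action convention, so I would write that computation out explicitly rather than invoking it.
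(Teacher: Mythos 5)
Your proof is correct and complete. Note that the paper does not actually prove this lemma: its ``proof'' is a citation to Exercise~1.6.5 of Dixon--Mortimer, so your argument supplies exactly the content the paper delegates to the reader, and it is the standard solution to that exercise: set $\Delta=\mathrm{Fix}(H_\omega)$, verify the conjugation identity $\Delta^g=\mathrm{Fix}(H_{\omega^g})$, observe that any $\beta\in\Delta$ satisfies $H_\beta=H_\omega$, so that a non-empty intersection $\Delta\cap\Delta^g$ forces $H_{\omega^g}=H_\beta=H_\omega$ and hence $\Delta^g=\Delta$; the primitive dichotomy then falls out since the block $\Delta$ containing $\omega$ must be $\{\omega\}$ or $\Omega$. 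One refinement to your closing remark: the step $H_\omega\le H_\beta\Rightarrow H_\omega=H_\beta$ needs finiteness in addition to transitivity. Transitivity gives that $H_\beta$ is conjugate to $H_\omega$, and in a finite group an inclusion between subgroups of the same finite order is an equality; but in an infinite transitive group a stabilizer can be properly contained in a conjugate of itself, and the lemma genuinely fails there. For instance, in $\mathrm{BS}(1,2)=\langle a,t\mid t^{-1}at=a^2\rangle$ acting faithfully and transitively on the right cosets of $\langle a\rangle$, the fixed-point set of the stabilizer of the coset $\langle a\rangle$ contains $\langle a\rangle t^{-1}$ but not $\langle a\rangle t$, and translating by $t$ shows this fixed-point set is not a block. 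Since $\Omega$ is finite throughout the paper this is harmless here, but attributing the key step to ``transitivity'' alone slightly understates what is used.
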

\begin{proof}
This is an exercise, see~\cite[Exercise~$1.6.5$, page~$19$]{DM}.
\end{proof}


\section{Results for almost simple groups}\label{sec:AS}
In this section we collect some results from~\cite{1,2} on primitive groups. Our ultimate goal is deducing some structural results on Boolean lattices $\mathcal{O}_G(H)$, when $H$ is an almost simple primitive group 

We start with a rather technical result of Aschbacher on the overgroups of a primitive group which is {\em product indecomposable} and not {\em octal} . We prefer to give only a broad description of these concepts here and we refer the interested reader to~\cite{1,2}. These deep results have already played an important role in algebraic combinatorics; for instance, they are the key results for proving that most primitive groups are automorphism groups of edge-transitive hypergraphs~\cite{mine}.

 A primitive group $H\le G$ is said to be {\em product decomposable} if the domain $\Omega$ admits the structure of a Cartesian product (that is, $\Omega\cong \Delta^\ell$, for some finite set $\Delta$ and for some $\ell\in\mathbb{N}$ with $\ell\ge 1$) and the group $H$ acts on $\Omega$ preserving this Cartesian product structure. We are allowing $\ell=1$ here, to include the case that $H$ is almost simple. Moreover, for each component $L$ of the socle of $H$ one of the following holds:
\begin{description}
\item[(i)]$L\cong \Alt(6)$ and $|\Delta|=6^2$,
\item[(ii)]$L\cong M_{12}$ and $|\Delta|=12^2$,
\item[(iii)]$L\cong\mathrm{Sp}_4(q)$ for some $q>2$ even and $|\Delta|=(q^2(q^2-1)/2)^2$.
\end{description}
We also refer to~\cite{PS} for a recent thorough investigation on permutation groups admitting Cartesian decompositions, where each of these peculiar examples are thoroughly investigated.

Following~\cite{1,2}, a primitive group $H$ is said to be {\em octal} if each component $L$ of the socle of $H$  is isomorphic to $\mathrm{PSL}_3(2)\cong \mathrm{PSL}_2(7)$, the orbits of $L$ have order $8$ and the action of $L$ on each of its orbits is primitive. For future reference, we report here that a simple computation reveals that, when $H=\mathrm{PSL}_3(2)$  is octal,  $\mathcal{O}_{\Alt(8)}(H)$ is Boolean of rank $2$, whereas $\mathcal{O}_{\Sym(8)}(H)$ is a lattice of size $6$.

\begin{theorem}\label{thrm:A}{{\cite[Theorem~A]{2}}}
Let $\Omega$ be a finite set of cardinality $n$ and let $H$ be an almost simple primitive subgroup of $\Sym(\Omega)$ which is product indecomposable and not octal. Then all members of $\mathcal{O}_{\Sym(\Omega)}(H)$ are almost simple, product indecomposable, and not octal, and setting $U:={\bf F}^*(H)$, one of the following holds:
\begin{enumerate}
\item\label{thrmAenu1}$|\mathcal{M}_{\Sym(\Omega)}(H)|=1$.
\item\label{thrmAenu2}$U=H$, $|\mathcal{M}_{\Sym(\Omega)}(H)|=3$, $\Aut(U)\cong \nor {\Sym(\Omega)}U\in\mathcal{M}_{\Sym(\Omega)}(U)$, $\nor {\Sym(\Omega)}U$ is transitive on $\mathcal{M}_{\Sym(\Omega)}(H)\setminus\{\nor {\Sym(\Omega)}{U}\}$ and $U$ is maximal in $V$, where $K\in \mathcal{M}_{\Sym(\Omega)}(H)\setminus \{\nor {\Sym(\Omega)}{U}\}$ and $V={\bf F}^*(K)$. Further $(U,V,n)$ is one of the following:
\begin{description}
\item[(a)]$(HS,\Alt(m),15400)$, where $m=176$ and $n={m\choose 2}$.
\item[(b)]$(\mathrm{G}_2(3), \Omega_7(3),3159)$.
\item[(c)]$(\mathrm{PSL}_2(q),M_n,n)$, where $q\in \{11,23\}$, $n=q+1$ and $M_n$ is the Mathieu group of degree $n$.
\item[(d)]$(\mathrm{PSL}_2(17),\mathrm{Sp}_8(2),136)$.
\end{description}
\item\label{thrmAenu3}$U\cong\mathrm{PSL}_3(4)$, $n=280$, $|\mathcal{M}_{\Sym(\Omega)}(U)|=4$, $\Aut(U)\cong \nor {\Sym(\Omega)}U\in \mathcal{M}_{\Sym(\Omega)}(U)$, $\nor {\Sym(\Omega)}{U}$ is transitive on $\mathcal{M}_{\Sym(\Omega)}(U)\setminus \{\nor {\Sym(\Omega)}U\}$ and $K\in \mathcal{M}_{\Sym(\Omega)}(H)\setminus \{\nor {\Sym(\Omega)}U\}$ is isomorphic to $\Aut(\mathrm{PSU}_4(3))$.
\item\label{thrmAenu4}$U\cong \mathrm{Sz}(q)$, $q=2^k$, $n=q^2(q^2+1)/2$, $\mathcal{M}_{\Sym(\Omega)}(U)=\{K_1,K_2\}$ where $K_i=\nor {\Sym(\Omega)}{V_i}\cong \Aut(V_i)$, $V_1\cong \Alt(q^2+1)$, $V_2\cong \mathrm{Sp}_{4k}(2)$ and $\nor {\Sym(\Omega)}U\cong \Aut(U)$ is maximal in $V_1$.
\item\label{thrmAenu5}$H\cong\mathrm{PSL}_2(11)$, $n=55$, $\mathrm{PGL}_2(11)\cong \nor {\Sym(\Omega)}H$ and $\mathcal{M}_{\Sym(\Omega)}(H)=\{\nor {\Sym(\Omega)}{H},K,K^t\}$, $t\in \nor {\Sym(\Omega)}{H}\setminus H$, where $K\cong \Sym(11)$ and $\mathcal{O}_K(H)=\{H<L<V<K\}$, with $L\cong M_{11}$ and $V\cong \Alt(11)$.
\end{enumerate}
\end{theorem}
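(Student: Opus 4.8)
The statement is a deep result of Aschbacher that rests on the classification of finite simple groups, so my plan is to reduce the description of $\mathcal{O}_{\Sym(\Omega)}(H)$ to the classification of \emph{factorizations} of almost simple groups and then to read the lattice data off the corresponding tables. Fix $\omega\in\Omega$ and write $U:={\bf F}^*(H)$, the socle of $H$, a nonabelian simple group acting transitively on $\Omega$. Let $K\in\mathcal{O}_{\Sym(\Omega)}(H)$. Since an overgroup of a primitive group is primitive, $K$ is primitive, and the first task is to pin down its O'Nan--Scott type. The group $U$ is a transitive nonabelian simple subgroup of $K$, and as $U$ is simple while ${\bf F}^*(K)$ is normal in $K$, the intersection $U\cap{\bf F}^*(K)$ is either trivial or all of $U$. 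The crux of the first step is to show that, under the hypotheses that $H$ is \emph{product indecomposable} and \emph{not octal}, the alternative $U\cap{\bf F}^*(K)=1$ cannot occur: a transitive simple subgroup meeting the socle trivially would force, through the O'Nan--Scott description of $K$, either a $K$-invariant Cartesian decomposition of $\Omega$ (the product-decomposable configurations (i)--(iii)) or the octal $\mathrm{PSL}_3(2)$-configuration recalled before the statement. Hence $U\le V:={\bf F}^*(K)$, the group $K$ is almost simple with socle $V$, and the transitivity of $U$ yields the factorization $V=UV_\omega$, where $V_\omega$ is the stabiliser of $\omega$ in $V$ and is (close to) a maximal subgroup of $V$ determined by the primitive action.

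With this reduction in hand, the heart of the argument is the classification of the factorizations $V=AB$ of an almost simple group in which one factor is itself simple; this is exactly the content of the maximal-factorizations theorem of Liebeck--Praeger--Saxl, which in turn depends on CFSG. Running through the tables with $A=U$ and $B=V_\omega$ produces, after the exclusions above, precisely the finite list of pairs $(U,V)$ recorded in cases~\eqref{thrmAenu2}--\eqref{thrmAenu4}: the embedding $\mathrm{HS}<\Alt(176)$ in its action on pairs, $\mathrm{G}_2(3)<\Omega_7(3)$, $\mathrm{PSL}_2(q)<M_{q+1}$ for $q\in\{11,23\}$, $\mathrm{PSL}_2(17)<\mathrm{Sp}_8(2)$, the $\mathrm{PSL}_3(4)$-embedding of case~\eqref{thrmAenu3} with $\Aut(\mathrm{PSU}_4(3))$, and $\mathrm{Sz}(q)<\Alt(q^2+1),\,\mathrm{Sp}_{4k}(2)$. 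The degree $n$ in each case is computed directly from the index $|V:V_\omega|$; for the embeddings whose natural action is on pairs, such as the $\mathrm{HS}$ case with $n=\binom{176}{2}=15400$, Lemma~\ref{l:2} is the tool that fixes whether the relevant group lands inside $\Alt(\Omega)$. Whenever the only available factorizations are the ``trivial'' ones with $V_\omega$ the natural maximal subgroup, one lands in case~\eqref{thrmAenu1} with a unique maximal member.

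It then remains to translate the factorization data into the lattice statements: the size of $\mathcal{M}_{\Sym(\Omega)}(H)$, the identification of $\nor{\Sym(\Omega)}{U}\cong\Aut(U)$ as one of its members, its transitivity on the others, and the chains $\mathcal{O}_K(H)$ inside each maximal member. Here I would work inside $\nor{\Sym(\Omega)}{U}\cong\Aut(U)$ and study how $\mathrm{Out}(U)$ permutes the distinct $U$-invariant factorizations: distinct but $\Aut(U)$-conjugate factorizations yield the conjugate maximal members (the pair $K,K^t$ of case~\eqref{thrmAenu5} and the transitive actions of $\nor{\Sym(\Omega)}{U}$ asserted in~\eqref{thrmAenu2}--\eqref{thrmAenu4}), while the maximality of $U$ in $V$ and the explicit subgroup chains (for instance $H<L<V<K$ with $L\cong M_{11}$ and $V\cong\Alt(11)$ in the $\mathrm{PSL}_2(11)$ case) are verified from the known maximal-subgroup structure of the simple groups involved.

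The principal obstacle is twofold. First, the reduction of the opening paragraph --- proving that \emph{every} overgroup is again almost simple, product indecomposable and not octal --- is not formal: it requires the full O'Nan--Scott machinery together with the precise delimitation of the product-decomposable and octal configurations, since these are exactly the places where a transitive simple subgroup can sit inside a primitive group that is not almost simple. Second, and more seriously, the finiteness and exact shape of the list in~\eqref{thrmAenu2}--\eqref{thrmAenu5} is inseparable from CFSG and from the complete, case-by-case determination of the maximal factorizations of the almost simple groups; there is no shortcut avoiding this classification, and the delicate bookkeeping of outer automorphisms, of conjugacy among the factorizations, and of the induced permutation action on $\mathcal{M}_{\Sym(\Omega)}(H)$ is what distinguishes, for example, the case $|\mathcal{M}_{\Sym(\Omega)}(H)|=1$ from the case $|\mathcal{M}_{\Sym(\Omega)}(H)|=3$.
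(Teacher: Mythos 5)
The first thing to note is that the paper contains no proof of this statement: Theorem~\ref{thrm:A} is imported verbatim from Aschbacher (\cite[Theorem~A]{2}), and the citation is the paper's entire justification. So your sketch must be measured against Aschbacher's actual argument, not anything internal to this paper. Your general plan --- primitivity of overgroups, O'Nan--Scott, extracting a factorization $V=UV_\omega$ from transitivity of the simple socle, feeding it to the Liebeck--Praeger--Saxl classification of maximal factorizations, then bookkeeping the $\mathrm{Out}(U)$-action to count and relate the maximal members --- is the right family of tools and is broadly the machinery used in this area.

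However, your reduction step contains a genuine logical flaw. You correctly observe that $U\cap{\bf F}^*(K)$ is trivial or equal to $U$, you claim the trivial case is excluded by the product-indecomposable and non-octal hypotheses, and you conclude ``hence $U\le V:={\bf F}^*(K)$, the group $K$ is almost simple with socle $V$.'' That last inference is false: containment of $U$ in the socle does not make $K$ almost simple, and the exceptional configurations do not sort themselves into your dichotomy the way you assert. In the product-decomposable configuration (i), $U\cong\Alt(6)$ acts on $6^2$ points and the overgroup $K=\Sym(6)\,\mathrm{wr}\,\Sym(2)$ in product action is primitive of type PA with socle $\Alt(6)\times\Alt(6)$, into which $U$ embeds \emph{diagonally}; so here $U\le {\bf F}^*(K)$ and yet $K$ is not almost simple --- product-decomposability is an obstruction precisely in the containment branch, not the trivial-intersection branch. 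Conversely, the octal configuration $\mathrm{PSL}_3(2)<\mathrm{AGL}_3(2)$ in degree $8$ is the trivial-intersection case, with an abelian socle and $K$ of type HA. The correct division is: trivial intersection leads to HA-type overgroups and is excluded by non-octality; $U$ contained diagonally in a socle $T^k$ with $k\ge 2$ leads to PA-type (and the SD/CD types, which need their own elimination) and is excluded by product-indecomposability; only after both eliminations does $k=1$ survive, giving $K$ almost simple. As written, the step of your proof that establishes the assertion ``all members of $\mathcal{O}_{\Sym(\Omega)}(H)$ are almost simple'' --- which is itself part of the theorem's content --- does not go through. The remainder of your proposal (running the LPS tables, verifying the chains such as $H<L<V<K$ with $L\cong M_{11}$) is an honest plan rather than a proof, and for a result of this depth that deferral is unavoidable and you flag it yourself; the misrouted O'Nan--Scott dichotomy is the concrete gap.
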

\begin{remarks}\label{rem:1}{\rm
\begin{enumerate}
\item In Case~\eqref{thrmAenu1}, since $\mathcal{M}_G(H)$ contains only one element, we deduce that the lattice $\mathcal{O}_{\Sym(\Omega)}(H)$ is not Boolean, unless it has rank $1$.
\item In Case~\eqref{thrmAenu2}~(a), all elements in $\mathcal{M}_{\Sym(\Omega)}(H)$ are maximal subgroups of $\Alt(\Omega)$. This is because the permutation representations of $\Aut(HS)=HS.2$ and of $\Sym(m)$ of degree ${m\choose 2}$ are the natural permutation representations on the ordered pairs of points from a set of cardinality $m$. Since $m=176$ is even, these permutation representations embed in $\Alt({m\choose 2})=\Alt(\Omega)$, see Lemma~\ref{l:2}. From this, we deduce that $\mathcal{O}_{\Sym(\Omega)}(H)$ is not Boolean because $\Alt(\Omega)$ is the only maximal element of $\mathcal{O}_{\Sym(\Omega)}(H)$. When $G=\Alt(\Omega)$, $\mathcal{O}_G(M)$ has three maximal elements and one of these maximal elements is $\Aut(H)\cong HS.2$. If $\mathcal{O}_G(H)$ is Boolean, then it has rank $3$ and hence $\mathcal{O}_{HS.2}(HS)$ is Boolean of rank $2$: however this is a contradiction because $|\Aut(HS):HS|=|HS.2:HS|=2$, see Lemma~\ref{l:-01}.

In Case~\eqref{thrmAenu2}~(b), the group $\Aut(\Omega_7(3))\cong \Omega_7(3).2$ has no faithful permutation representations of degree $3159$. Since $|\mathcal{M}_{\Sym(\Omega)}(H)|=3$, we deduce $\mathcal{M}_{\Sym(\Omega)}(H)$ contains two subgroups isomorphic to $\Omega_7(3)$ which are contained in $\Alt(\Omega)$ and $\Aut(U)\cong \mathrm{G}_2(3).2$ which is not contained in $\Alt(\Omega)$ (the fact that $\mathrm{G}_2(3).2\nleq \Alt(\Omega)$ can be easily verified with the computer algebra system \texttt{magma}~\cite{magma}).  When $G=\Sym(\Omega)$, we obtain that $\mathcal{O}_G(H)$ is not Boolean.
When $G=\Alt(\Omega)$, we were not able to determine whether $\mathcal{O}_G(H)$ is Boolean, but if it is Boolean, then it has rank $2$ having maximal elements two subgroups isomorphic to $\Omega_7(3)$.

In Case~\eqref{thrmAenu2}~(c) and $n=12$, we see that $M_{12}.2$ does not admit a permutation representation of degree $12$. Therefore, as above, since $|\mathcal{M}_{\Sym(\Omega)}(H)|=3$, we deduce that $\mathcal{M}_{\Sym(\Omega)}(H)$ contains two subgroups isomorphic to $M_{12}$ which are contained in $\Alt(\Omega)$ and $\Aut(U)\cong \mathrm{PGL}_2(11)$ which is not contained in $\Alt(\Omega)$. Therefore, $\mathcal{O}_{\Sym(\Omega)}(H)$ is not Boolean.  When $G=\Alt(\Omega)$, we have verified with the help of a computer that $\mathcal{O}_G(H)$ is indeed Boolean of rank $2$. In Case~\eqref{thrmAenu2}~(c) and $n=24$, we see that $\Aut(M_{24})=M_{24}$. Therefore, since $|\mathcal{M}_{\Sym(\Omega)}(H)|=3$, we deduce $\mathcal{M}_{\Sym(\Omega)}(H)$ contains two subgroups isomorphic to $M_{24}$ which are contained in $\Alt(\Omega)$ and $\Aut(U)\cong \mathrm{PGL}_2(23)$ which is not contained in $\Alt(\Omega)$. Therefore, $\mathcal{O}_{\Sym(\Omega)}(H)$ is not Boolean. When $G=\Alt(\Omega)$, we have verified with the help of a computer that $\mathcal{O}_G(H)$ is indeed Boolean of rank $2$.

In Case~\eqref{thrmAenu2}~(d), we see that $\Aut(\mathrm{Sp}_8(2))=\mathrm{Sp}_{8}(2)$. Therefore, since $|\mathcal{M}_{\Sym(\Omega)}(H)|=3$, we deduce that $\mathcal{M}_{\Sym(\Omega)}(H)$ contains two subgroups isomorphic to $\mathrm{Sp}_{8}(2)$ which are contained in $\Alt(\Omega)$ and $\Aut(U)\cong \mathrm{PGL}_2(17)$ which is not contained in $\Alt(\Omega)$. Therefore, $\mathcal{O}_{\Sym(\Omega)}(H)$ is not Boolean. When $G=\Alt(\Omega)$, we were not able to determine whether $\mathcal{O}_G(H)$ is Boolean, but if it is Boolean, then it has rank $2$.

\item In Case~\eqref{thrmAenu3}, we use a computer to deal with this case. None of the four elements in $\mathcal{M}_{\Sym(\Omega)}(U)$ is contained in $\Alt(\Omega)$. Therefore, if $\mathcal{O}_{\Sym(\Omega)}(H)$ is Boolean, then it has rank $4$. Moreover, the intersection of these four subgroups is $H$ and we see that $|H:U|=2$. As $|\Aut(\mathrm{PSL}_3(4)):\mathrm{PSL}_3(4)|=12$, we deduce $|\nor {\Sym(\Omega)}U:H|=6=2\cdot 3$. 
Therefore $\mathcal{O}_{\nor {\Sym(\Omega)}H}(H)$ cannot be a rank $3$ Boolean 
lattice (see Lemma~\ref{l:-01}), contradicting the fact that we assumed $\mathcal{O}_{\Sym(\Omega)}(H)$ to be Boolean. Assume then $G=\Alt(\Omega)$. Define 
$M_0:=\nor{\Alt(\Omega)}U$ and let $M_1,M_2,M_3$ be the intersections with $\Alt(\Omega)$ of the three maximal subgroups of $\Sym(\Omega)$ isomorphic to $\mathrm{Aut}(\mathrm{PSU}_4(3))$. Assume that $\mathcal{O}_{\Alt(\Omega)}(H)$ is Boolean. If $H<M_0$, then $\mathcal{O}_{\Alt(\Omega)}(H)$ is Boolean of rank $4$ and hence $\mathcal{O}_{M_0}(H)$ is Boolean of rank $3$. However this is impossible because $|M_0:U|=6=2\cdot 3$. Therefore $H=M_0=\nor {\Alt(\Omega)}U$. However this is another contradiction because $M_0$ is maximal in $\Alt(\Omega)$.

\item In Case~\eqref{thrmAenu4}, $k$ is odd and hence $H$ is a subgroup of $\Alt(\Omega)$. The action under consideration arises using the standard $2$-transitive action of $\mathrm{Sz}(q)$ of degree $q^2+1$. Here, the action of degree $q^2(q^2+1)/2$ is the action on the pairs of points from the set $\{1,\ldots,q^2+1\}$. Here $K_1\nleq \Alt(\Omega)$ because $q^2+1$ is odd, see Lemma~\ref{l:2}. Moreover, $\Aut(\mathrm{Sp}_{4k}(2))=\mathrm{Sp}_{2k}(2)$ and $K_2=V_2$, hence $V_2\le \Alt(\Omega)$. From this we deduce that the maximal elements in $\mathcal{O}_{\Sym(\Omega)}(H)$ are $K_1\cong\Sym(q^2+1)$ and $\Alt(\Omega)$. However this lattice is not Boolean because $H\ne K_1\cap \Alt(\Omega)=V_1\cong\Alt(q^2+1)$. When $G=\Alt(\Omega)$, the maximal elements in $\mathcal{O}_G(H)$ are $V_1\cong \Alt(q^2+1)$ and $V_2\cong \mathrm{Sp}_{4k}(2)$. Therefore, if $\mathcal{O}_G(H)$ is Boolean, then its rank is $2$.

\item  In Case~\eqref{thrmAenu5}, $\mathcal{O}_{\Sym(\Omega)}(H)$ is not Boolean because $\mathcal{O}_K(H)$ is not Boolean. When $G=\Alt(\Omega)$, $\mathcal{O}_{\Alt(\Omega)}(H)$ contains two maximal elements $V$ and $V^t$ both isomorphic to $\Alt(11)$. Therefore, if $\mathcal{O}_{\Alt(11)}(H)$ were Boolean, then $\mathcal{O}_{\Alt(\Omega)}(H)$ would have rank $2$. However this is not the case because $\mathcal{O}_V(H)=\{H<M<V\}$ and $\mathcal{O}_{V^t}=\{H<M^t<V^t\}$ with $M\cong M^t\cong M_{11}$. Therefore $\mathcal{O}_{\Alt(\Omega)}(H)$ is not Boolean.
\end{enumerate}}
\end{remarks}

\begin{corollary}\label{cor:1}
Let $H$ be an almost simple primitive subgroup of $G$ which is product indecomposable and not octal. If $\mathcal{O}_{G}(H)$ is Boolean, then it has rank at most $2$.
\end{corollary}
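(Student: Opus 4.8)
The plan is to feed $H$ into Theorem~\ref{thrm:A} and to dispose of each of its five cases using two elementary properties of a Boolean lattice of rank $\ell$: it has exactly $\ell$ maximal elements (coatoms), and the interval below each coatom is Boolean of rank $\ell-1$. Combined with Lemma~\ref{l:-01}, which prevents an interval $\mathcal{O}_Y(X)$ from being Boolean of rank $r$ when $|Y:X|$ has too few prime factors, this reduces the corollary to counting maximal overgroups and bounding a single index in each case.

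Since $H$ is almost simple, primitive, product indecomposable and not octal, Theorem~\ref{thrm:A} describes $\mathcal{M}_{\Sym(\Omega)}(H)$ exactly, and all members of $\mathcal{O}_{\Sym(\Omega)}(H)$ inherit these properties. I would then treat $G=\Sym(\Omega)$ and $G=\Alt(\Omega)$ in parallel, using Lemma~\ref{l:2} to decide, from the parity of the relevant degree, which members of $\mathcal{M}_{\Sym(\Omega)}(H)$ actually lie in $\Alt(\Omega)$; this is what pins down the coatoms of $\mathcal{O}_G(H)$. Concretely: in Case~\eqref{thrmAenu1} there is a unique maximal overgroup, so no Boolean lattice of rank $\ge 2$ can occur. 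In Cases~\eqref{thrmAenu2}(b)--(d) one has $|\mathcal{M}_{\Sym(\Omega)}(H)|=3$ with the two socle-type members lying in $\Alt(\Omega)$ and $\Aut(U)=\nor{\Sym(\Omega)}{U}$ not; for $G=\Sym(\Omega)$ the coatom $\Alt(\Omega)$ then sits above two distinct intermediate subgroups, so the lattice is not Boolean, whereas for $G=\Alt(\Omega)$ only the two socle-type coatoms survive and the rank is at most $2$. In Case~\eqref{thrmAenu2}(a) all three members lie in $\Alt(\Omega)$ (as $m=176$ is even), so $\mathcal{O}_{\Sym(\Omega)}(H)$ has the single coatom $\Alt(\Omega)$, while a rank-$3$ alternating lattice would force $\mathcal{O}_{\Aut(U)}(U)$ to be Boolean of rank $2$, impossible since $|\Aut(U):U|=2$. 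Case~\eqref{thrmAenu3} has four maximal overgroups, so a Boolean lattice would have rank $4$ with an interval below the coatom $\nor{\Sym(\Omega)}{U}$ forced to be Boolean of rank $3$; this fails because $|\nor{\Sym(\Omega)}{U}:H|=6=2\cdot 3$ has only two prime factors. Finally, Cases~\eqref{thrmAenu4} and~\eqref{thrmAenu5} are excluded by noting that the would-be rank-$2$ diamond is violated: in~\eqref{thrmAenu4} because $K_1\cap\Alt(\Omega)=V_1\ne H$, and in~\eqref{thrmAenu5} because the interval below each $\Alt(11)$-coatom is the chain $\{H<M<V\}$ rather than a single covering.

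I expect the main obstacle to be the $\Sym/\Alt$ bookkeeping: reliably determining which maximal subgroups of $\Sym(\Omega)$ remain in $\Alt(\Omega)$, and which pairs coincide after intersecting with $\Alt(\Omega)$, via the parity computation of Lemma~\ref{l:2}; and settling the exact shape of the small lattices in the Mathieu degrees $n\in\{12,24\}$, where a direct computer check is the cleanest confirmation that the alternating lattices are Boolean of rank exactly $2$. Once these verifications are in place---carried out case by case in Remarks~\ref{rem:1}---every case yields that $\mathcal{O}_G(H)$ is either non-Boolean or Boolean of rank at most $2$, which is the claim.
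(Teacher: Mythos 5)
Your proposal is correct and is essentially the paper's own proof: the paper disposes of Corollary~\ref{cor:1} by citing Theorem~\ref{thrm:A} together with the case-by-case analysis of Remarks~\ref{rem:1}, and your treatment of the cases (unique coatom in case~\eqref{thrmAenu1}, the index obstructions $|\Aut(HS):HS|=2$ and $|\nor{\Sym(\Omega)}U:H|=6=2\cdot 3$ via Lemma~\ref{l:-01}, the parity bookkeeping of Lemma~\ref{l:2}, the computer checks at the Mathieu degrees, and the failure of the rank-$2$ diamond in cases~\eqref{thrmAenu4} and~\eqref{thrmAenu5}) reproduces that remark. The only point you leave implicit is the $G=\Alt(\Omega)$ subcase of case~\eqref{thrmAenu3}, which the paper settles by the same $6=2\cdot 3$ index argument of Lemma~\ref{l:-01} applied to $\nor{\Alt(\Omega)}U$, together with the observation that the remaining possibility $H=\nor{\Alt(\Omega)}U$ is excluded because that subgroup is maximal in $\Alt(\Omega)$.
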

\begin{proof}
It follows from Theorem~\ref{thrm:A} and Remark~\ref{rem:1}. 
\end{proof}

\section{Boolean intervals $\mathcal{O}_G(H)$ with $H$ primitive}\label{sec:primitive}

\begin{lemma}\label{l:orcoboia}
Let $M$ be a maximal subgroup of $G$ of O'Nan-Scott type $\mathrm{SD}$ and let  $H$ be a maximal subgroup of $M$ acting primitively on $\Omega$. Then $M$ and $H$ have the same socle.
\end{lemma}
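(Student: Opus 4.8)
The plan is to combine the O'Nan--Scott description of the SD-type subgroup $M$ with the primitivity of $H$, and to show that the socle of $H$ is forced to coincide with $N:={\bf F}^*(M)$. I would first record the structure of a maximal SD-type $M$. Its socle $N$ is a minimal normal subgroup of the form $T^k$ with $T$ nonabelian simple and $k\ge 2$; the point stabiliser $N_\omega$ is a full diagonal subgroup $\cong T$, so $|\Omega|=|T|^{k-1}$; one has $C_M(N)=1$ (since $C_M({\bf F}^*(M))=Z({\bf F}^*(M))=Z(N)=1$); and if $C$ denotes the kernel of the action of $M$ on the set of $k$ simple factors of $N$, then $N\le C$, the quotient $C/N$ embeds in $\mathrm{Out}(T)^k$ and is therefore solvable, while $M/C$ is a primitive subgroup $P\le\Sym(k)$ (this primitivity of the factor action is exactly what makes $M$ maximal of type SD).

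Next I would analyse $X:=\mathrm{soc}(H)={\bf F}^*(H)$. As $H$ is primitive, $X$ is transitive on $\Omega$ and $C_H(X)=1$; moreover $|T|$ is divisible by at least three primes, so the degree $|T|^{k-1}$ is not a prime power, $H$ is not of affine type, and $X$ is nonabelian, say $X=R^m$ with $R$ nonabelian simple. The key object is $S:=X\cap N$. Since $X\trianglelefteq H$ and $N\cap H\trianglelefteq H$, we get $S\trianglelefteq H$, so the $S$-orbits form a block system for the primitive group $H$; hence $S$ is either trivial or transitive. If $S=1$, then $X$ embeds in $M/N$; its image in the solvable quotient $C/N$ is trivial (as $X$ is perfect), so $X$ embeds in $P\le\Sym(k)$, giving $|X|\le k!<|T|^{k-1}$ and contradicting the transitivity of $X$. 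Therefore $S$ is transitive, whence $|S|\ge|T|^{k-1}$. As $1\ne S\le N=T^k$ we get $R\cong T$, so $X=T^m$ and $S=T^{m'}$ for some $m'$ with $k-1\le m'\le k$.

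If $m'=k$, then $S=N$, so $N\le X$. Writing $X=S\times Y$ with $Y$ the product of the remaining simple factors, $Y$ centralises $S=N$, so $Y\le C_M(N)=1$; thus $X=N$ and $\mathrm{soc}(H)=N=\mathrm{soc}(M)$, as required.

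The remaining case $m'=k-1$ is the step I expect to require the most care, and where the argument really uses the geometry. Here $|S|=|T|^{k-1}=|\Omega|$, so $S$ is regular and in particular $S\cap N_\omega=1$. First I would note that $N\not\le H$: otherwise the case $m'=k$ applied to $H$ would give $\mathrm{soc}(H)=N\ne T^{k-1}$. Since $H$ is maximal in $M$ and $N\trianglelefteq M$, it follows that $HN=M$, and hence the image of $H$ under the factor action $M\to P$ is all of the primitive (so transitive) group $P$. On the other hand, I would invoke the structure of subgroups of $T^k$ (Scott's lemma): a subgroup isomorphic to $T^{k-1}$ that meets the diagonal $N_\omega$ trivially must be a coordinate subproduct $\prod_{i\ne j}T_i$ (the only other shape, a two-coordinate diagonal, meets $N_\omega$ in a nontrivial fixed-point subgroup). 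The setwise stabiliser of such an $S$ under the factor action fixes the index $j$, so it is an intransitive subgroup of $\Sym(k)$. But $S\trianglelefteq H$ forces the factor-image of $H$ to normalise $S$, hence to fix $j$, contradicting transitivity of $P$. This rules out $m'=k-1$ and completes the proof. The delicate point throughout is precisely this interaction between the regularity of $S$, the triviality of $C_M(N)$, and the primitivity of the top group $P$, together with the fact that $H$ need not contain $N$ a priori.
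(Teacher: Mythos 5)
Your proposal takes a genuinely different route from the paper: the paper's proof of this lemma is a one-line appeal to Praeger's inclusion theorem \cite[Theorem, Proposition~8.1]{18}, whereas you attempt a self-contained structural argument. Unfortunately two steps fail, and one of them is exactly the point where the cited machinery is needed. First, the smaller gap: in the case $S=1$ you conclude via $|X|\le k!<|T|^{k-1}$, but this inequality is false when $k$ is large compared with $|T|$. For $T=\Alt(5)$ and $k=200$ one has $200!>60^{199}$ (factorials eventually beat any exponential), and SD-type maximal subgroups with $T=\Alt(5)$ and arbitrarily large $k$ do occur, so the order count gives no contradiction there. This step is repairable: since $X$ is transitive, $|\Omega|=|T|^{k-1}$ divides $|X|$, and $|X|$ divides $k!$; choosing a prime $p\ge 5$ dividing $|T|$ (one exists by Burnside) gives $v_p(k!)\le (k-1)/(p-1)\le (k-1)/4<k-1\le v_p(|T|^{k-1})$, a contradiction by divisibility rather than by size.

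The serious gap is the sentence ``As $1\ne S\le N=T^k$ we get $R\cong T$.'' This is not a valid inference, even granting that $S\cong R^{m'}$ and that $S$ is transitive. A counterexample to the inference: take $T=\Alt(6)$, $k=2$, $N_\omega$ the diagonal. Using the factorization $\Alt(6)=A\cdot B$ with $A$ a point stabilizer isomorphic to $\Alt(5)$ and $B\cong\mathrm{PSL}_2(5)$ transitive of degree $6$ (here $|A\cap B|=10$, so $|AB|=360$), the subgroup $S:=A\times B\le T\times T$ satisfies $SN_\omega=N$, i.e.\ $S$ is transitive on the $360$ points of $\Omega$, yet $S\cong\Alt(5)\times\Alt(5)$ has components not isomorphic to $T$. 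So transitivity of $S$ together with $S\le T^k$ does not force the components of $S$ to be isomorphic to $T$; what it does force is a factorization of $T$ with factors related to $R$, and ruling out all such factorization configurations (showing they cannot arise from a primitive $H$ maximal in $M$) is precisely the hard content of the lemma. This is where the paper invokes Praeger's inclusion theorem, which in turn rests on the Liebeck--Praeger--Saxl classification of maximal factorizations; your argument assumes away exactly this difficulty, and the subsequent case division $m'\in\{k-1,k\}$ (whose endgame, via strips and fixed points of automorphisms, is otherwise sound) therefore does not cover all cases.
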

\begin{proof}
This follows from~\cite[Theorem]{18} (using the notation in~\cite{18}, applied with $G_1:=M$, see also~\cite[Proposition~$8.1$]{18}).
\end{proof}

\begin{lemma}\label{c:orcoboia}
Let $H$ be a primitive subgroup of $G$ with $\mathcal{O}_G(H)$ Boolean of rank $\ell$. Suppose that there exists a maximal element $M\in\mathcal{O}_G(H)$ of O'Nan-Scott type $\mathrm{SD}$. Then $\ell\le 2$.
\end{lemma}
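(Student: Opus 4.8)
The plan is to suppose $\ell\ge 3$ and derive a contradiction, by first forcing $\mathrm{soc}(M)$ inside $H$ and then showing that the interval $[H,M]$ cannot be Boolean of rank $\ell-1\ge 2$.

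First I would work inside $\mathcal{O}_M(H)=[H,M]$, which is Boolean of rank $\ell-1\ge 2$ since it is the principal ideal below the coatom $M$ of the Boolean lattice $\mathcal{O}_G(H)$. Write $N:=\mathrm{soc}(M)=T^k$ with $T$ nonabelian simple and $k\ge 2$. The coatoms of $[H,M]$ are maximal subgroups of $M$, each of which contains the primitive group $H$ and is therefore primitive; hence Lemma~\ref{l:orcoboia} applies and shows that each of them has socle $N$. A primitive group contains its socle, so every coatom of $[H,M]$ contains $N$; since the meet of all coatoms of a Boolean lattice of rank $\ge 1$ is its bottom element $H$, I conclude $N\le H$. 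In particular $N\trianglelefteq H$, $\mathrm{soc}(H)=N$, and $M=\nor{G}{N}$ is the full stabiliser of the diagonal structure.

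Next I would pass to the quotient $\overline M:=M/N$. As $M=\nor{G}{N}$ is of type $\mathrm{SD}$, $\overline M\le \mathrm{Out}(T)\times\Sym(k)$, and since $N\le H$ every element of $[H,M]$ contains $N$, so $[H,M]\cong\mathcal{O}_{\overline M}(\overline H)$ with $\overline H:=H/N$. Recalling that the blocks of an overgroup $N.\overline R$ correspond to the partitions of $\{1,\dots,k\}$ left invariant by the image $\pi(\overline R)\le\Sym(k)$ of the projection $\pi\colon\overline M\to\Sym(k)$, such an overgroup is primitive on $\Omega$ if and only if $\pi(\overline R)$ is primitive on $\{1,\dots,k\}$; as all elements of $[H,M]$ are primitive, all of them project to primitive subgroups of $\Sym(k)$. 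Letting $\overline M_0$ be the largest element of $[\overline H,\overline M]$ with $\pi(\overline M_0)=\pi(\overline H)$, the Boolean interval splits as $[\overline H,\overline M]\cong[\overline H,\overline M_0]\times[\overline M_0,\overline M]$: the first factor is the subgroup lattice of the section $\overline M_0/\overline H$ of $\ker\pi\le\mathrm{Out}(T)$, hence, being Boolean, is cyclic of squarefree order by Ore's theorem~\cite{or}; the second factor (everything there containing $\ker\pi$) is isomorphic to an interval of primitive subgroups of $\Sym(k)$ between $\pi(\overline H)$ and $\pi(\overline M)$.

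It remains to bound the total rank of these two factors by $1$, and this matching is the crux of the argument. To feed in external information I would use that every coatom of $\mathcal{O}_G(H)$ other than $M$ fails to normalise $N$: if it did, it would lie in the maximal subgroup $M=\nor{G}{N}$, contradicting its own maximality. Thus each such coatom is a primitive overgroup of the $\mathrm{SD}$-type group $H$ in which $N$ is not normal, and Aschbacher's description of the overgroups of a primitive group of $\mathrm{SD}$ type in~\cite{1,2} severely restricts these, so that apart from $M$ only $\Alt(\Omega)$ (when $G=\Sym(\Omega)$) and at most one further exceptional subgroup can occur. The main obstacle, which I expect to be the delicate part, is to show that a nontrivial contribution to the rank from $\mathrm{Out}(T)$ (the first factor) or from a chain of primitive subgroups of $\Sym(k)$ (the second factor) would force strictly more coatoms of $\mathcal{O}_G(H)$ than this external classification allows. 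Once this matching yields that $[H,M]$ has rank at most $1$, we obtain $\ell\le 2$, as required.
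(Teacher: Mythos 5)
Your first step is correct, and it is a genuinely nice observation: each coatom of the Boolean interval $[H,M]$ is a maximal subgroup of $M$ containing the primitive group $H$, so Lemma~\ref{l:orcoboia} forces each of them to have socle $N=\mathrm{soc}(M)$; since the meet of the coatoms of a Boolean lattice is its bottom element, intersecting them gives $N\le H$. But from that point on the proposal does not prove the lemma. The reduction to the section $\overline M\le \mathrm{Out}(T)\times\Sym(k)$, the splitting $[\overline H,\overline M]\cong[\overline H,\overline M_0]\times[\overline M_0,\overline M]$, and the intended ``matching'' of rank contributions against the coatoms allowed by external classification results is exactly where the content of the lemma lies, and you explicitly leave it open (``the main obstacle, which I expect to be the delicate part''). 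Nothing in the proposal shows that a chain of primitive subgroups of $\Sym(k)$ above $\pi(\overline H)$, or a nontrivial Boolean factor inside $\mathrm{Out}(T)$, would force extra coatoms of $\mathcal{O}_G(H)$; without that, the bound $\ell\le 2$ is not obtained. There are also smaller inaccuracies along the way: for $k=2$ a subgroup $N.\overline R$ can be primitive with trivial projection to $\Sym(2)$ (type HS), so your primitivity criterion is incomplete; and $[\overline H,\overline M_0]$ is the subgroup lattice of a quotient group (to which Ore's theorem applies) only if $\overline H\trianglelefteq\overline M_0$, which you have not established.

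The irony is that after your first step the lemma is one application of the inclusion theorem away, with no analysis of $[H,M]$ needed. Since $N\le H$ and $C_{\Sym(\Omega)}(N)=1$ (any further minimal normal subgroup of $H$ would centralize $N$), the primitive group $H$ has socle $N$ and is of type SD, or HS when $k=2$. Now take any coatom $M'\ne M$ of $\mathcal{O}_G(H)$ and apply \cite[Proposition~8.1]{18} to the inclusion $H\le M'$: the only primitive proper overgroups of such an $H$ are groups with socle $N$ -- which lie in $\nor G N=M$ and are therefore excluded by maximality of $M'$ -- and $\Alt(\Omega)$. Hence every coatom other than $M$ equals $\Alt(\Omega)$, so there are at most two coatoms and $\ell\le 2$. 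This is essentially the paper's own proof, except the paper does not even need $N\le H$: for each coatom $M'\ne M$ it applies Lemma~\ref{l:orcoboia} to $H'=M\cap M'$ (which is maximal in $M$ by the Boolean structure) to get $\mathrm{soc}(H')=N$, and then applies \cite[Proposition~8.1]{18} to $H'\le M'$ to conclude $M'=\Alt(\Omega)$.
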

\begin{proof}
Let $V$ be the socle of $M$. From the structure of primitive groups of SD type, we deduce $V\cong T^\kappa$ and $|\Omega|=|T|^{\kappa-1}$, for some non-abelian simple group $T$ and for some integer $\kappa\ge 2$. 

If $\ell=1$, then we have nothing to prove, therefore we suppose $\ell\ge 2$ and we let $M'\in\mathcal{O}_G(H)$ be a maximal element with $M'\ne M$. Set $H':=M\cap M'$. Since $\mathcal{O}_G(H)$ is Boolean, $H'$ is maximal in $M$ and since $H\le H'$, $H'$ acts primitively on $\Omega$. From Lemma~\ref{l:orcoboia} applied with $H$ there replaced by $H'$ here, we obtain that $H'$  has socle $V$. From the O'Nan-Scott theorem and in particular from the structure of the socles of primitive groups, we deduce that $H'$ has type HS or SD, where the type HS can arise only when $\kappa=2$.  Now, from~\cite[Proposition~$8.1$]{18}, we obtain that  either $M'$ is a primitive group of SD type having socle $V$, or $M'=\Alt(\Omega)$.  In the first case, $M'=\nor G V= M$, which is a contradiction. Therefore $M'=\Alt(\Omega)$. Thus $G=\Sym(\Omega)$ and $\Alt(\Omega)$ and $M$ are the only maximal members in $\mathcal{O}_G(H)$. This gives $\ell= 2$.
\end{proof}

\begin{lemma}\label{l:orcoboiaboia}
Let $M$ be a maximal subgroup of $G$ of O'Nan-Scott type $\mathrm{HA}$ with socle $V$ and let $H$ be a maximal subgroup of $M$ acting primitively on $\Omega$. Then either
\begin{enumerate}
\item $V\le H$, or
\item $|\Omega|=8$, $G=\Alt(\Omega)$, $H\cong \mathrm{PSL}_2(7)$ and $M\cong \mathrm{AGL}_3(2)$.
\end{enumerate}
\end{lemma}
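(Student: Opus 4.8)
The plan is to make the affine structure of $M$ explicit and then read off the possibilities for $H$. Since $M$ is a maximal subgroup of $G$ of O'Nan--Scott type $\mathrm{HA}$, the degree is a prime power, say $|\Omega|=n=p^d$, the socle $V$ is elementary abelian of order $p^d$ acting regularly on $\Omega$, and $M=V\rtimes M_0$ where $M_0$ is a point stabiliser acting irreducibly on $V$. Concretely $M=\mathrm{AGL}_d(p)$ and $M_0=\mathrm{GL}_d(p)$ when $G=\Sym(\Omega)$, while $M=\mathrm{AGL}_d(p)\cap\Alt(\Omega)$ and $M_0=\mathrm{GL}_d(p)\cap\Alt(\Omega)$ when $G=\Alt(\Omega)$; in either case $M_0\le\mathrm{GL}_d(p)$.

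First I would dispose of the case $V\le H$, which is exactly alternative~(1). So assume $V\not\le H$. Since $H$ is maximal in $M$ and $HV$ strictly contains $H$, we get $HV=M$; then $H\cap V$ is normalised by $H$ and, as $V$ is abelian, by $V$, hence by $HV=M$, so $H\cap V$ is an $M_0$-submodule of $V$. Irreducibility of the $M_0$-action forces $H\cap V\in\{1,V\}$, and $V\not\le H$ gives $H\cap V=1$. Thus $H$ is a complement to $V$ in $M$, so $H\cong M_0$ and $H$ acts on $V$ by conjugation as $M_0$ does.

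Next I would translate primitivity into cohomology. Up to $V$-conjugacy, the complements to $V$ in $M=V\rtimes M_0$ are classified by $H^1(M_0,V)$, the zero class being that of $M_0$ itself; moreover the $V$-conjugates of $M_0$ are exactly the point stabilisers $M_\omega$ ($\omega\in\Omega$), because $V$ is transitive (if $\omega=\omega_0^{\,v}$ with $v\in V$ then $M_\omega=M_0^{\,v}$). Hence a complement fixes a point of $\Omega$ if and only if it lies in the zero class. Since $H$ is primitive and $|\Omega|=p^d>1$, it is transitive and fixes no point; therefore its class in $H^1(M_0,V)$ is nonzero, and in particular $H^1(M_0,V)\ne 0$. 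Transitivity also gives that $p^d=|\Omega|$ divides $|H|=|M_0|$; since $|M_0|$ divides $|\mathrm{GL}_d(p)|$, whose $p$-part is $p^{d(d-1)/2}$, we obtain $d\le d(d-1)/2$, that is $d\ge 3$. For $d\ge 3$ the group $\mathrm{SL}_d(p)$ is perfect, hence contained in $\Alt(\Omega)$, so $\mathrm{SL}_d(p)\le M_0$ with $[M_0:\mathrm{SL}_d(p)]$ dividing $p-1$ and thus coprime to $p$. Restriction then embeds $H^1(M_0,V)$ into $H^1(\mathrm{SL}_d(p),V)$, via the transfer identity $\mathrm{cor}\circ\mathrm{res}=[M_0:\mathrm{SL}_d(p)]\cdot\mathrm{id}$, which is invertible on the $p$-group $H^1(M_0,V)$.

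The crux, and the step I expect to be the main obstacle, is the cohomological input: for the natural module $V=\mathbb{F}_p^d$ one has $H^1(\mathrm{SL}_d(p),V)=0$ whenever $d\ge 3$ and $(d,p)\ne(3,2)$, while $H^1(\mathrm{SL}_3(2),\mathbb{F}_2^3)\cong\mathbb{F}_2$; this is the classical computation of the first cohomology of a special linear group on its natural module, which I would quote from the literature rather than reprove. Combined with the previous paragraph, $H^1(M_0,V)\ne 0$ forces $(d,p)=(3,2)$, so $|\Omega|=8$, $M\cong\mathrm{AGL}_3(2)$ and $H\cong M_0=\mathrm{GL}_3(2)\cong\mathrm{PSL}_2(7)$. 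Finally I would pin down the ambient group: $\mathrm{AGL}_3(2)\le\Alt(8)$, since its nontrivial translations are fixed-point-free involutions, hence products of four transpositions and even, while $\mathrm{GL}_3(2)$ is simple and so lies in $\Alt(8)$; thus $M\le\Alt(\Omega)$ and maximality of $M$ in $G$ forces $G=\Alt(\Omega)$. It remains to confirm that the fixed-point-free complement $H$ is genuinely primitive: its point stabiliser has order $168/8=21$ and is the Frobenius group $7{:}3$, which is maximal in $\mathrm{PSL}_2(7)$, so $H$ is primitive, realising the $2$-transitive action of $\mathrm{PSL}_2(7)$ on the projective line over $\mathbb{F}_7$. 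This is precisely alternative~(2).
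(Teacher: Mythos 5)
Your proposal is correct, and after the shared opening move it takes a genuinely different route from the paper's proof. Both arguments make the identical reduction to the complement case: maximality of $H$ in $M$ gives $HV=M$, and minimality of $V$ as a normal subgroup of $M$ (equivalently, irreducibility of $M_0$ on $V$) gives $H\cap V=1$, so $H\cong M_0$. From there the paper argues structurally: primitivity forces ${\bf Z}(H)=1$ or ${\bf Z}(H)=H$, hence ${\bf Z}(H)=1$; this rules out $G=\Sym(\Omega)$, forces $p\in\{2,3\}$, $M=\mathrm{ASL}_d(p)$ and $H\cong\mathrm{SL}_d(p)\cong\mathrm{PSL}_d(p)$ simple, and then transitivity exhibits a subgroup of prime-power index $p^d$ in this simple group, so Guralnick's classification \cite{hoffman} yields $(d,p)=(3,2)$. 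You instead extract $d\ge 3$ directly from transitivity by comparing $p$-parts of $|\Omega|$ and $|\mathrm{GL}_d(p)|$ (which also dispatches small $n$ uniformly, where the paper treats $n\le 4$ separately), observe that a transitive complement fixes no point and hence is not $V$-conjugate to $M_0$, conclude $H^1(M_0,V)\ne 0$, transfer this to $H^1(\mathrm{SL}_d(p),V)\ne 0$ by restriction--corestriction (legitimate, since $\mathrm{SL}_d(p)$ is perfect for $d\ge3$, hence lies in $\Alt(\Omega)$ and so in $M_0$ with index dividing $p-1$), and invoke the classical vanishing of $H^1$ of $\mathrm{SL}_d(p)$ on its natural module for $d\ge3$, $(d,p)\ne(3,2)$, with the one-dimensional exception at $(3,2)$ --- a correctly quoted fact going back to Cline--Parshall--Scott. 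Each route rests on one serious citation, but of different kinds: Guralnick's theorem depends on the classification of finite simple groups, while the cohomological computation does not, so your argument is CFSG-free for this lemma; moreover, after the complement reduction you use only transitivity of $H$, not primitivity, so you in fact prove the slightly stronger statement with ``primitively'' weakened to ``transitively''. The price is the heavier cohomological input, where the paper stays within elementary permutation-group reasoning up to its single citation. Your side claims all check out: the zero class of $H^1(M_0,V)$ consists exactly of the point stabilizers because $V$ is regular; $\mathrm{AGL}_3(2)\le\Alt(8)$ forces $G=\Alt(\Omega)$ by maximality of $M$; and the closing verification that the transitive complement is genuinely primitive, while not logically required for the implication, confirms consistency with conclusion~(2).
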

\begin{proof}
Here, $n=|\Omega|=p^d$, for some prime number $p$ and some positive integer $d$. The result is clear when $n\le 4$ and hence we suppose $n\ge 5$.
In what follows, we assume $V\nleq H$ and we show that $n=8$, $G=\Alt(\Omega)$, $H\cong \mathrm{PSL}_2(7)$ and $M\cong \mathrm{AGL}_3(2)$. 

The maximality of $H$ in $M$ yields $VH=M$. Since $V\cap H\unlhd \langle V,H\rangle=M$, we deduce $V\cap H=1$, that is, $H$ is a complement of $V$ in $M$ and hence $H\cong M/V$. Since $\nor {\Sym(n)}V\cong \mathrm{AGL}_d(p)$, we deduce $M/V$ and $H$ are isomorphic to $\mathrm{GL}_d(p)$ or to an index $2$ subgroup of $\mathrm{GL}_d(p)$.

Since $H$ acts primitively on $\Omega$, we deduce  ${\bf Z}(H)=1$ or ${\bf Z}(H) =H$.   Clearly, the second case cannot arise here because $M/V$ is non-abelian being $n\ge 5$. Suppose then ${\bf Z}(H)=1$. 

If $G=\Sym(\Omega)$, then $M/V\cong \mathrm{GL}_d(p)$ has trivial centre only when $p=2$. It is easy to verify (using the fact that $\mathrm{GL}_d(2)$ is generated by transvections) that $\mathrm{AGL}_d(2)$ is contained in $\Alt(\Omega)$, when $d\ge 3$. Thus $M<\Alt(\Omega)<G$, contradicting the hypothesis that $M$ is maximal in $G$. This shows that $G=\Alt(\Omega)$. In particular, when  $p=2$, we have $M/V\cong \mathrm{GL}_d(2)$ and when $p>2$, $M/V$ is isomorphic to a subgroup of $\mathrm{GL}_d(p)$ having index $2$. 

Since $\mathrm{GL}_d(p)$ has centre of order $p-1$ and since ${\bf Z}(H)=1$, we deduce that either $p=2$ or $(p-1)/2=1$, that is, $p\in \{2,3\}$. In both cases, a simple computation reveals that $M=\textrm{ASL}_d(p)$ and hence $H\cong M/V\cong \mathrm{SL}_d(p)$. Observe that, when $p=3$, $d$ is odd  because $1=|{\bf Z}(H)|=|{\bf Z}(\mathrm{SL}_d(3))|=\gcd(d,2)$. In particular, in both cases, $H\cong M/V\cong \mathrm{SL}_d(p)\cong \mathrm{PSL}_d(p)$ is a non-abelian simple group. Given $\omega\in \Omega$, $|H:H_\omega|=p^d$ is a power of the prime $p$ and hence, from~\cite[(3.1)]{hoffman}, 
we deduce $(d,p)=(3,2)$. Thus $n=p^d=8$, $H\cong \mathrm{SL}_3(2)\cong \mathrm{PSL}_2(7)$.
\end{proof}

\begin{lemma}\label{c:orcoboiaboia}
Let $H$ be a primitive subgroup of $G$ with $\mathcal{O}_G(H)$ Boolean of rank $\ell$. Suppose that there exists a maximal element $M\in\mathcal{O}_G(H)$ of O'Nan-Scott type $\mathrm{HA}$. Then, every maximal element $M'$ in $\mathcal{O}_G(H)$ with $M'\ne M$ is either $\Alt(\Omega)$ or the stabilizer in $G$ of a regular product structure on $\Omega$.
\end{lemma}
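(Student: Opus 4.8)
The plan is to analyse each maximal element $M'\ne M$ of $\mathcal{O}_G(H)$ through its intersection with $M$, using the Boolean structure to force this intersection to be maximal, and then to apply Lemma~\ref{l:orcoboiaboia}. First I would fix a maximal element $M'\in\mathcal{O}_G(H)$ with $M'\ne M$ and set $H':=M\cap M'$. In a Boolean lattice the meet of two distinct coatoms is covered by each of them; since any subgroup of $G$ lying strictly between $H'$ and $M$ contains $H$ and hence belongs to $\mathcal{O}_G(H)$, this covering means that $H'$ is a genuine maximal subgroup of $M$ (and symmetrically of $M'$). As $H\le H'$ and $H$ is primitive, $H'$ is primitive.

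Next I would feed the pair $(M,H')$ into Lemma~\ref{l:orcoboiaboia}, where $M$ has type $\mathrm{HA}$ with socle $V$. Either $V\le H'$, or we are in the exceptional configuration $n=|\Omega|=8$, $G=\Alt(\Omega)$, $H'\cong\mathrm{PSL}_2(7)$, $M\cong\mathrm{AGL}_3(2)$; the latter is the octal situation on eight points, which I would dispose of directly, so assume $V\le H'$. Since $V\unlhd M$ and $V\le H'\le M$, we obtain $V\unlhd H'$, so $H'$ is a primitive group carrying an abelian regular normal subgroup, i.e.\ $H'$ is itself of affine type with socle $V$. Moreover $V\not\unlhd M'$: otherwise $M'\le\nor{\Sym(\Omega)}{V}\cong\mathrm{AGL}_d(p)$, and since $M\le\nor{\Sym(\Omega)}{V}$ with $M$ maximal we would force $M=\nor G V$ and $M'\le M$, contradicting that $M$ and $M'$ are distinct coatoms.

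It then remains to pin down $M'$. Being an overgroup of the primitive group $H$, $M'$ is primitive, and being maximal in $G$ it is either $\Alt(\Omega)$ (the first allowed outcome) or a maximal primitive proper subgroup, hence of O'Nan--Scott type $\mathrm{HA}$, $\mathrm{SD}$, $\mathrm{PA}$ or $\mathrm{AS}$. Writing $n=p^d$ (forced by $M$ being of type $\mathrm{HA}$), type $\mathrm{SD}$ is impossible: an $\mathrm{SD}$-group has degree $|T|^{\kappa-1}$ with $T$ nonabelian simple, an integer divisible by at least three primes, whereas $n=p^d$. Type $\mathrm{HA}$ is impossible by uniqueness of the socle: if $M'$ had type $\mathrm{HA}$ with socle $V'$, then maximality of $H'$ in $M'$ would give either $V'\le H'$, forcing $V'=V$ (the affine group $H'$ has a unique abelian regular normal subgroup) and hence $V\unlhd M'$, against the previous paragraph; or $V'\cap H'=1$, whence Lemma~\ref{l:orcoboiaboia} applied to $(M',H')$ returns $H'\cong\mathrm{PSL}_2(7)$, which cannot contain the regular elementary abelian subgroup $V\cong C_2^3$ since its Sylow $2$-subgroup is dihedral.

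This leaves types $\mathrm{AS}$ and $\mathrm{PA}$, and I expect ruling out $\mathrm{AS}$ to be the main obstacle. For this I would invoke Aschbacher's description of the primitive overgroups of a primitive affine group \cite{1,2}: $M'$ is a maximal primitive group containing the affine primitive group $H'$ with socle $V$, where $V$ is not normal in $M'$ and $n=p^d$. This machinery, built precisely around regular product structures, shows that the only maximal such overgroups are $\Alt(\Omega)$ and the stabilizers of regular product structures, so no almost simple $M'$ survives the constraint that it contain a regular elementary abelian subgroup of prime-power order. Since the maximal subgroups of $\Sym(\Omega)$ of type $\mathrm{PA}$ are exactly the stabilizers $\nor G{\mathcal{F}}$ of regular product structures, we conclude that $M'=\nor G{\mathcal{F}}$ for some regular product structure $\mathcal{F}$, which is the second allowed outcome, completing the proof.
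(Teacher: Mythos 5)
Your reduction up to the O'Nan--Scott dichotomy is sound and runs parallel to the paper's own argument: Booleanness makes $H'=M\cap M'$ maximal in both coatoms, Lemma~\ref{l:orcoboiaboia} gives $V\le H'$ (modulo the exceptional $n=8$, $\mathrm{PSL}_2(7)<\mathrm{AGL}_3(2)$ configuration, which the paper also removes by a rank computation), the HA possibility for $M'$ is excluded because a primitive group has at most one abelian regular normal subgroup, and SD is excluded since $n=p^d$ is a prime power. All of this is correct and is essentially the paper's first page of the proof.

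The genuine gap is the AS case, which you correctly identify as the main obstacle and then dispose of with an assertion that is false. It is not true that no almost simple primitive group besides $\Alt(\Omega)$ and $\Sym(\Omega)$ contains a regular elementary abelian subgroup of prime-power order: when $d=1$, i.e.\ $n=p$ is prime, $V\cong C_p$ is cyclic, and there are genuine almost simple primitive groups of prime degree containing a regular $C_p$ --- for instance $M_{11}$ in degree $11$, $M_{23}$ in degree $23$, and any $M'$ with $\mathrm{PSL}_{d'}(q')\le M'\le \mathrm{P}\Gamma\mathrm{L}_{d'}(q')$ of degree $p=(q'^{d'}-1)/(q'-1)$, where a Singer cycle acts regularly. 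Such groups really are primitive overgroups of the affine group $\mathrm{AGL}_1(p)\cap G$, so no classification of overgroups (Aschbacher's or otherwise) can make them vanish; what has to be shown is that they cannot occur as a coatom of a \emph{Boolean} lattice together with the HA coatom $M$, and this is precisely where the bulk of the paper's proof lies. The paper invokes Li's classification~\cite{li} of primitive groups containing an abelian regular subgroup (not Aschbacher's Theorem~A, which concerns overgroups of \emph{almost simple} groups), kills $M_{11}$ and $M_{23}$ by computer, and in the projective case exploits the lattice structure: $s':=|M:H'|$ is prime because $M/V$ is cyclic and $H'$ is maximal in $M$, while $|H':V|=d'\kappa'$, yielding the equation $p-1=q'(q'^{d'-1}-1)/(q'-1)=s'd'\kappa'$; ruling this out requires a Burnside/Jones~\cite{gareth} analysis of $M'\cap M''$ for a further coatom $M''$, Fermat/Zsigmondy-style number theory, and computer checks for the residual cases $p=7,17$. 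None of this content is supplied by, or follows from, your appeal to ``machinery built around regular product structures,'' so the proposal as written does not prove the lemma.
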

\begin{proof}
If $\ell=1$, then we have nothing to prove, therefore we suppose that $\ell\ge 2$ and we let $M'\in\mathcal{O}_G(H)$ be a maximal element of $\mathcal{O}_G(H)$ with $M'\ne M$. Set $H':=M\cap M'$. Since $\mathcal{O}_G(H)$ is Boolean, $H'$ is maximal in $M$ and since $H\le H'$, $H'$ acts primitively on $\Omega$. From Lemma~\ref{l:orcoboiaboia} applied with $H$ replaced by $H'$, we obtain that either $H'$ contains the socle $V$ of $M$, or $n=8$, $G=\Alt(\Omega)$, $H'\cong \mathrm{PSL}_2(7)$ and $M\cong\mathrm{AGL}_3(2)$. In the second case, a computer computation reveals that the largest Boolean lattice  $\mathcal{O}_{\Alt(8)}(H)$ with $H$ primitive has rank $2$.  Therefore, for the rest of the proof, we suppose $V\le M'$. In particular, $M'$ is a primitive permutation group containg an abelian regular subgroup. Thus $M'$ is one of the groups classified in~\cite[Theorem~$1.1$]{li}: we apply this classification here and the notation therein. 

Assume  $M'$ is as in~\cite[Theorem~$1.1$~(1)]{li}, that is, $M'$ is a maximal primitive subgroup of $G$ of O'Nan-Scott type HA. Let $V'$ be the socle of $M'$. From Lemma~\ref{l:orcoboiaboia}, we deduce $V'\le M$ and hence $VV'\le H'$. Since $V\unlhd M$ and $V'\unlhd M'$, we deduce that $VV'\unlhd H'$. As $H'$ acts primitively on $\Omega$, we deduce that $VV'$ is the socle of $H'$ and hence $|VV'|=|V|$. Therefore $V=V'$. Thus $M'=\nor G V=M$, which is a contradiction. Therefore $M'$ is one of the groups listed in~\cite[Theorem~$1.1$~(2)]{li}.

Suppose first that $l=1$ (the positive integer $l$ is defined in~\cite[Theorem~$1.1$]{li}). An inspection in the list in~\cite[Theorem~$1.1$~(2)]{li} (using the maximality of $M'$ in $G$) yields 
\begin{enumerate}
\item\label{again:1} $M'\cong M_{11}$, $n=11$ and $G=\Alt(\Omega)$, or
\item\label{again:2} $M'\cong M_{23}$, $n=23$  and $G=\Alt(\Omega)$, or
\item\label{again:3} $M'\cong \nor G {\mathrm{PSL}_{d'}(q')}$ for some integer $d'\ge 2$ and some prime power $q'$ with $n=p=(q'^{d'}-1)/(q'-1)$, or
\item\label{again:4} $M'=\Alt(\Omega)$ and $G=\Sym(\Omega)$.
\end{enumerate}
A computer computation shows that in \eqref{again:1} and~\eqref{again:2}, $M=\nor G{V}\le M'$, which is a contradiction. Assume that $M'$ is as in~\eqref{again:3}. Write $q'=r'^{\kappa'}$, for some prime number $r'$ and for some positive integer $\kappa'$. Then $V$ is a Singer cycle in $\mathrm{PGL}_{d'}(q')$. As $H'=M\cap M'=\nor G V\cap M'=\nor {M'}V$, we obtain
\[
|H':V|=
\begin{cases}
d'\kappa',& \textrm{when }\nor{\Sym(p)}{\mathrm{PGL}_{d'}(q')}\le G,\\
d'\kappa'/2,& \textrm{when } \nor{\Sym(p)}{\mathrm{PGL}_{d'}(q')}\nleq G.
\end{cases}
\] We claim that $d'$ is prime. If $d'$ is not prime, then $d'=d_1d_2$, for some positive integers $d_1,d_2>1$. Thus $H'<\nor G{\mathrm{PSL}_{d_1}(q'^{d_2})}<M'$, contradicting the fact that $H'$ is maximal in $M'$.  Therefore, $d'$ is a prime number. Moreover, since $H'$ is maximal in $M$ and since $M/V$ is cyclic (of order $p-1$ or $(p-1)/2$), we deduce that $s':=|M:H'|$ is a prime number.

 Let $M''$ be a maximal element in $\mathcal{O}_G(H)$ with $M\ne M''\ne M'$ and let $H'':=M\cap M''$. Arguing as in the previous paragraph (with $M'$ replaced by $M''$), $M''$ cannot be as in~\eqref{again:1} or as in~\eqref{again:2}. Suppose that $M''$ is as in~\eqref{again:3}. Thus $M''\cong \nor G {\mathrm{PSL}_{d''}(q'')}$ for some integer $d''\ge 2$ and some prime power $q''$ with $n=p=(q''^{d''}-1)/(q''-1)$. Write $q''=r''^{\kappa''}$, for some prime number $r''$ and for some positive integer $\kappa''$. Arguing as in the previous paragraph, we obtain that $d''$ and $s'':=|M:H''|$ are prime numbers. Now, $M'\cap M''$ acts primitively on $\Omega$ with $n=|\Omega|=p$ prime and hence, from a result of Burnside, $M'\cap M''$ is either soluble (and $V\unlhd M'\cap M''$) or $M'\cap M''$ is $2$-transitive. In the first case, $M\cap M'=\nor {M'}V\ge M'\cap M''$; however, this contradicts the fact that $\mathcal{O}_G(H)$ is Boolean. Therefore $M'\cap M''$ is $2$-transitive and non-soluble. From~\cite[Theorem~$3$]{gareth}, we deduce that one of the following holds:
\begin{enumerate}
\item $M'\cap M''=\mathrm{PSL}_2(11)$ and $n=p=11$, or
\item $M'\cap M''=M_{11}$ and $n=p=11$, or
\item $M'\cap M''=M_{23}$ and $n=p=23$, or 
\item $M'\cap M''\unlhd M'$ and $M'\cap M''\unlhd M''$.
\end{enumerate}
The last case cannot arise because $M'\cap M''\unlhd \langle M',M''\rangle=G$ implies $M'\cap M''=1$, which is a contradiction.  Also none of the first three cases can arise here because $p$ is not of the form $(q'^{d'}-1)/(q'-1)$. This final contradiction shows that, if $M''$ is a maximal element of $\mathcal{O}_G(H)$ with $M''\notin\{M,M'\}$, then $M''=\Alt(\Omega)$. Thus $\ell=3$, $|\Omega|=p$, $G=\Sym(\Omega)$ and the maximal elements in $\mathcal{O}_G(H)$ are $\Alt(\Omega)$, $\mathrm{AGL}_1(p)$ and $\mathrm{P}\Gamma\mathrm{L}_{d'}(q')$. 

Since $M\cong \mathrm{AGL}_1(p)$, $|H':V|=d'\kappa'$ and $|M:H'|=s'$ is prime, we obtain 
\begin{equation}\label{eq:number}
q'\frac{q'^{d'-1}-1}{q'-1}=p-1=|M:V|=|M:H'||H':V|=s'd'\kappa'.
\end{equation}
Suppose first that $d'=2$ and hence $p=q'+1=r'^{\kappa'}+1$. We get the equation $r'^{\kappa'}=2s'\kappa'$ and hence $r'=2$. Therefore $2^{\kappa'-1}=s'\kappa'$. Therefore, $s'=2$ and hence $2^{\kappa'-2}=\kappa'$. Thus $\kappa'=4$ and hence $n=p=17$. A computer computation shows that this case does not arise because $\Alt(17)\cap \mathrm{AGL}_1(17)=\mathrm{AGL}_1(17)\cap\mathrm{P}\Gamma\mathrm{L}_2(16)$. Suppose now $d'>2$. 

Assume $\kappa'=1$. Then~\eqref{eq:number} yields $s'=2$ because $p-1$ is even. A computation shows that the equation $$q'\frac{q'^{d'-1}-1}{q'-1}=2d'$$
has solution only when $d'=3$ and $q'=2$. Thus $n=p=7$. A computer computation shows that this case does not arise because $\Alt(7)\cap \mathrm{AGL}_1(7)\le \Alt(7)\cap\mathrm{PGL}_2(7)$. Therefore $\kappa'>1$.

Now, we first show $d'\ne r'$. To this end, we argue by contradiction and we suppose $d'=r'$. Then,~\eqref{eq:number} yields
\begin{equation}\label{eq:number1}
\frac{q'}{r'}\frac{q'^{d'-1}-1}{q'-1}=s'\kappa'.
\end{equation}
Since $q'/r'=r'^{\kappa'-1}$ and $(q'^{d'-1}-1)/(q'-1)$ are relatively prime and since $s'$ is prime, we have either $s'=r'$ or $s'$ divides $(q'^{d'-1}-1)/(q'-1)$. In the first case, 
$$\frac{q'}{r'^2}\frac{q'^{d'-1}-1}{q'-1}=\kappa',$$
hence, for $d'>3$, $\kappa'\ge (q'^{d'-1}-1)/(q'-1)\ge q'^{2}=r'^{2\kappa'}$, which is impossible. It is not difficult to observe that~\eqref{eq:number1} is not satisfied also for $d'=3$. In the second case, 
$\kappa'\ge q'/r'=r'^{\kappa'-1}$, which is possible only when $\kappa'=2.$
When $\kappa'=2$,~\eqref{eq:number1} becomes
$$r'\frac{(r'^{d-1}-1)(r'^{d-1}+1)}{r'^2-1}=2s',$$
which has no solution with $s'$ prime. Therefore $d'\ne r'$.

Since $d'$ is a prime number and since $d'\ne r'$, from Fermat's little theorem we have $q'^{d'-1}\equiv 1\pmod {d'}$, that is, $d'$ divides $q'^{d'-1}-1$. If $q'\equiv 1\pmod {d'}$, then $$p=\frac{q'^{d'}-1}{q'-1}=q'^{d'-1}+q'^{d'-2}+\cdots +\cdots +q'+1\equiv 0\pmod{d'}$$
and hence $p=d'$, however this is clearly a contradiction because $p>d'$. Thus $d'$ does not divide $q'-1$. This proves that $d'$ divides $(q'^{d'-1}-1)/(q'-1)$ and hence $(q'^{d'-1}-1)/d'(q'-1)$ is an integer. From~\eqref{eq:number}, we get
$$q'\frac{q'^{d'-1}-1}{d'(q'-1)}=\kappa's'.$$  
 Since $s'$ is prime and $q'>\kappa'$, this equality might admit a solution only when $(q'^{d'-1}-1)/(d'(q'-1))=1$, that is, $q'^{d'-1}-1=d'(q'-1)$. This happens only when $q'=2$ and $d'=3$, but this contradicts $\kappa'>1$.

\medskip

For the rest of the argument we may suppose $l\ge 2$. In particular, from~\cite[Theorem~$1.1$]{li}, we obtain that either  $M'=\Alt(\Omega)$, or $M'$ is the stabilizer in $G$ of a regular product structure on $\Omega$. Since this argument does not depend upon $M'$, the result follows.
\end{proof}

\begin{lemma}\label{l:orcoboia3}
Let $M$ be a maximal subgroup of $G$ of O'Nan-Scott type $\mathrm{AS}$ with $M\ne \Alt(\Omega)$ and let $H$ be a maximal subgroup of $M$ acting primitively on $\Omega$. Then 
\begin{enumerate}
\item $M$ and $H$ have the same socle, or 
\item $H$ has O'Nan-Scott type $\mathrm{AS}$ and the pair $(H,M)$ appears in Tables~$3$--$6$ of~\cite{16}, or
\item $H$ has O'Nan-Scott type $\mathrm{HA}$ and the pair $(H,M)$ appears in Table~$2$ of~\cite{18}.
\end{enumerate}
\end{lemma}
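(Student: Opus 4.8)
The plan is to exploit the classification of maximal factorizations of almost simple groups, since the statement that $H$ is a maximal subgroup of the almost simple maximal subgroup $M$ acting primitively on $\Omega$ is precisely the information needed to invoke the work of Liebeck, Praeger and Saxl. First I would record that $M$ has socle $V:=\mathbf{F}^*(M)$, a non-abelian simple group $T$, and that $M$ acts primitively on $\Omega$ with point stabilizer $M_\omega$; since $H\le M$ acts primitively too, $H$ is transitive, so $M=HM_\omega$ is a factorization of $M$. The key dichotomy is whether $V\le H$ or not. If $V\le H$, then since $H$ is maximal in $M$ and $V\unlhd M$, the socle of $H$ contains $V$; because $H$ acts primitively and $V$ is the unique minimal normal subgroup available, one concludes that $H$ and $M$ have the same socle $V$, landing in conclusion (1).

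Next I would treat the case $V\nleq H$. Here $V\cap H$ is a proper subgroup of $V$ and $VH=M$ by maximality, so $H$ supplements $V$ in $M$; equivalently, the primitive group $H$ does not contain the socle of the primitive overgroup $M$. This is exactly the situation analysed in the Liebeck--Praeger--Saxl work on primitive subgroups of primitive groups, or, viewed through factorizations, the situation $M=HM_\omega$ with $H$ not containing $V$. I would then split according to the O'Nan--Scott type of $H$. Since $H$ itself acts primitively on $\Omega=V$-set, $H$ is an almost simple primitive group (type AS) or an affine primitive group (type HA), these being the only types compatible with being a maximal subgroup of an almost simple primitive group whose socle it fails to contain. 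In the AS case, the pair $(H,M)$ of almost simple primitive groups with $H$ maximal in $M$ and not sharing the socle is exactly what is tabulated in Tables~3--6 of~\cite{16}, giving conclusion (2). In the HA case, $H$ is affine, and the corresponding pairs are tabulated in Table~2 of~\cite{18}, giving conclusion (3).

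The heart of the argument is therefore a bookkeeping reduction to the existing tables rather than any new combinatorics: one must verify that the hypotheses of the cited theorems of~\cite{16,18} are met and that no other O'Nan--Scott type for $H$ can occur. I would justify the exclusion of the remaining types (SD, PA, TW, and the refined HS, HC, CD) by noting that a primitive group $H$ of one of these types has a socle which is a non-trivial direct power $T_0^r$ with $r\ge 2$; embedding such an $H$ as a maximal subgroup of the almost simple group $M$ of type AS forces the socle of $H$ to be contained in $V=T$, which is simple, contradicting $r\ge 2$ unless the socle of $H$ already equals $V$ (returning us to case (1)).

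The main obstacle I anticipate is matching the precise normalization and hypotheses of the Liebeck--Praeger--Saxl tables to the present setup: the tables in~\cite{16,18} are stated for maximal factorizations or for maximal primitive subgroups with specified socle actions, and one must check that ``$H$ maximal in $M$, acting primitively, with $V\nleq H$'' corresponds exactly to an entry, including the degree constraint coming from the fixed action on $\Omega$. Care is needed to ensure that the action of $M$ on $\Omega$ (which is fixed in advance, not free to vary) is the one appearing in the table, and that one is not inadvertently double-counting entries that collapse into case (1). Once this correspondence is pinned down, the three conclusions follow directly by reading off the tables.
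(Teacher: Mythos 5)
Your overall route is the same as the paper's: the paper's entire proof is a one-line appeal to Praeger's inclusion theorem (\cite[Proposition~6.2]{18}), which states exactly that if $H$ and $M$ do not share a socle then $H$ has type AS with $(H,M)$ in Tables~3--6 of \cite{16}, or type HA with $(H,M)$ in Table~2 of \cite{18}. Your treatment of the case $V\le H$ (socle containment forces equal socles, via $C_M(V)=1$) is fine and matches the ``same socle'' branch.

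However, there is a genuine gap in the one place where you try to do work yourself rather than cite it: the exclusion of the types SD, PA, TW, HS, HC, CD for $H$. Your argument is that for such $H$ the socle is $T_0^r$ with $r\ge 2$, that $\mathrm{soc}(H)$ is forced into $V$, and that this contradicts simplicity of $V$ ``unless $\mathrm{soc}(H)=V$''. The first containment is in fact provable ($\mathrm{soc}(H)$ is perfect and $M/V$ is solvable by Schreier's conjecture, so $\mathrm{soc}(H)\le V$), but the conclusion you draw from it is false: a nonabelian simple group happily contains direct powers $T_0^r$ with $r\ge 2$ as (non-normal) subgroups --- e.g.\ $\Alt(5)\times\Alt(5)\le \Alt(10)$ --- so ``$T_0^r\le V$ with $V$ simple'' is no contradiction at all. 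You are implicitly using normality of $\mathrm{soc}(H)$ in $V$, which you do not have: $\mathrm{soc}(H)$ is normal in $H$ only. Ruling out primitive subgroups of composite-socle type inside an almost simple primitive group (equivalently, establishing that only AS and HA occur) is precisely the nontrivial, CFSG-dependent content of Praeger's Proposition~6.2 and of the Liebeck--Praeger--Saxl factorization analysis; it cannot be recovered by this socle-containment reasoning. To repair the proposal, delete the purported elementary exclusion and instead quote \cite[Proposition~6.2]{18} in full, checking (as you correctly flag in your last paragraph) that its hypotheses --- $H<M$ primitive on the same set $\Omega$, $\mathrm{soc}(M)\nleq H$ --- are met; the trichotomy of the lemma is then literally its conclusion.
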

\begin{proof}
Suppose that $H$ and $M$ do not have the same socle. It follows from~\cite[Proposition~$6.2$]{18} that  either $H$ has O'Nan-Scott type AS and the pair $(H,M)$ appears in Tables~$3$--$6$ of~\cite{16}, or  $H$ has O'Nan-Scott type HA and the pair $(H,M)$ appears in Table~$2$ of~\cite{18}.
\end{proof}

\begin{lemma}\label{c:orcoboiaboia3}
Let $H$ be a primitive subgroup of $G$ with $\mathcal{O}_G(H)$ Boolean of rank $\ell$. Suppose that there exists a maximal element $M\in\mathcal{O}_G(H)$ of O'Nan-Scott type $\mathrm{AS}$ with $M\ne \Alt(\Omega)$. Then $\ell\le 2$.
\end{lemma}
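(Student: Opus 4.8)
The plan is to mirror the structure of the preceding two corollaries (Lemma~\ref{c:orcoboia} and Lemma~\ref{c:orcoboiaboia}). As before, if $\ell=1$ there is nothing to prove, so we assume $\ell\ge 2$ and pick a maximal element $M'\in\mathcal{O}_G(H)$ with $M'\ne M$. Set $H':=M\cap M'$. Since $\mathcal{O}_G(H)$ is Boolean, $H'$ is a maximal subgroup of $M$, and since $H\le H'$, the subgroup $H'$ acts primitively on $\Omega$. The whole argument will then hinge on applying Lemma~\ref{l:orcoboia3} with $H$ replaced by $H'$ to analyse the possibilities for the socle of $H'$ relative to that of $M$.

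The main case split is whether $H'$ and $M$ share the same socle. First I would handle the generic case, where $H'$ and $M$ have the \emph{same} socle, say $V={\bf F}^*(M)={\bf F}^*(H')$. Since $M$ has O'Nan-Scott type AS and $M\ne\Alt(\Omega)$, the socle $V$ is a non-abelian simple group acting primitively on $\Omega$, and $M=\nor G V$. The key observation is that any maximal element $M'$ distinct from $M$ cannot also have socle $V$: otherwise $M'\le\nor G V=M$, a contradiction. So I would argue that $M'$ either equals $\Alt(\Omega)$, or has a different socle $V'$. If $M'$ is itself of AS type with socle $V'\ne V$, then $H'={\bf F}^*(M)$-containing would force $VV'\unlhd H'$ with $H'$ primitive, so $VV'$ is the socle of $H'$, contradicting $V={\bf F}^*(H')$ unless $V=V'$. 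This pins down $M'=\Alt(\Omega)$, giving at most two maximal elements $M$ and $\Alt(\Omega)$, hence $\ell\le 2$.

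The harder case, which I expect to be the main obstacle, is when $H'$ and $M$ do \emph{not} share a socle: then Lemma~\ref{l:orcoboia3} forces $(H',M)$ into the explicit tables (Tables~$3$--$6$ of~\cite{16} when $H'$ is of type AS, or Table~$2$ of~\cite{18} when $H'$ is of type HA). Here I would proceed by a finite case analysis over those table entries, exactly as Remarks~\ref{rem:1} treated the sporadic configurations of Theorem~\ref{thrm:A}. For each entry I would compute the index $|M:H'|$ and, crucially, examine the full overgroup lattice $\mathcal{O}_M(H')$: by the Boolean hypothesis this interval must itself be Boolean (of rank $\ell-1$), so any table entry for which $|M:H'|$ is a prime power of a single prime, or for which $\mathcal{O}_M(H')$ is visibly non-Boolean, can be discarded via Lemma~\ref{l:-01}. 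The expectation is that all surviving configurations either collapse to the same-socle case or force a second distinct maximal element to be $\Alt(\Omega)$, again bounding $\ell\le 2$.

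The delicate point throughout is ensuring the analysis covers \emph{both} $G=\Sym(\Omega)$ and $G=\Alt(\Omega)$, since passing between them changes which subgroups are maximal and whether $\Alt(\Omega)$ appears as a maximal element of $\mathcal{O}_G(H)$; I would track this using Lemma~\ref{l:2} (to decide containment in $\Alt(\Omega)$ of the relevant 2-transitive actions) just as in Remarks~\ref{rem:1}. A few table entries may require a direct \texttt{magma} verification of the structure of $\mathcal{O}_M(H')$, in the spirit of the computer checks already invoked in the excerpt, and these ad hoc computations are where the real work lies.
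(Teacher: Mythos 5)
There is a genuine gap, and it is exactly at the point you flag as ``where the real work lies.'' Two local problems first. In your ``generic case'' the claim that $VV'\unlhd H'$ is unjustified: Lemma~\ref{l:orcoboia3} provides no socle-containment statement for AS type (unlike Lemma~\ref{l:orcoboiaboia} in the HA case, where the socle of the overgroup genuinely lands in the maximal subgroup). To get $V'\le H'$ you would need the same-socle alternative to hold for the pair $(H',M')$, which is precisely what is in question --- and if it did hold, then ${\bf F}^*(M')={\bf F}^*(H')=V$ immediately, so the $VV'$ manoeuvre is circular rather than a proof. Moreover your case split for $M'$ (``$\Alt(\Omega)$ or AS with a different socle'') silently omits the PA possibility: since $H$ is primitive, maximal elements of $\mathcal{O}_G(H)$ can be stabilizers of regular product structures, and excluding the coexistence of an AS-type $M$ with such subgroups in rank $\ge 3$ is the hard content of the lemma. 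Finally, $\mathcal{O}_M(H')$ is Boolean of rank $1$, not $\ell-1$ (it is $\mathcal{O}_M(H)$ that has rank $\ell-1$, since $H'$ is the meet of two coatoms), so your proposed discarding criterion via Lemma~\ref{l:-01} does not apply as stated.

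More fundamentally, the table-driven case analysis you propose cannot be carried out as described: Tables~$3$--$6$ of~\cite{16} contain infinite families (e.g.\ classical group inclusions), so this is not a finite check, and you have no tool to reduce it. The paper's proof supplies that tool, and it is the one idea absent from your proposal: take a \emph{third} maximal element $M''$ and work with $H'':=M\cap M'\cap M''$, whose overgroup lattice $\mathcal{O}_G(H'')$ is Boolean of rank $3$ --- this is where $\ell\ge 3$ is actually used. After HA-type possibilities for $H'$ and $H''$ are excluded (via the Theorem of~\cite{16} plus computer checks for $n\in\{7,11,17,23\}$), $H''$ is an AS-type primitive group, and Corollary~\ref{cor:1} (the consequence of Aschbacher's Theorem~\ref{thrm:A}) forces $H''$ to be product decomposable or octal, since otherwise its Boolean overgroup lattice would have rank at most $2$. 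That collapses everything to the short list of~\cite[Table~II]{16}: $\Alt(6).2$ on $36$ points, $M_{12}.2$ on $144$ points, and ${\bf F}^*(H'')=\mathrm{Sp}_4(q)$ with $q>2$ even; the first two die by inspection of the tables, and the last because both $M$ and $M'$ would have to have socle ${\bf F}^*(H')=\mathrm{Sp}_4(q)$, contradicting $G=\langle M,M'\rangle\le \nor G{{\bf F}^*(H')}$. Without this reduction, your strategy stalls at the infinite table families, and the concluding ``expectation'' in your proposal is hope rather than argument.
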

\begin{proof}
If $\ell\le 2$, then we have nothing to prove, therefore we suppose $\ell\ge 3$. Since $M$ is a maximal element in $\mathcal{O}_G(H)$ of O'Nan-Scott type AS and $M\ne \Alt(\Omega)$, from Lemma~\ref{c:orcoboiaboia}, we deduce that no maximal element in $\mathcal{O}_G(H)$ is of O'Nan-Scott type HA. Similarly, from Lemma~\ref{c:orcoboia}, no maximal element in $\mathcal{O}_G(H)$ is of O'Nan-Scott type SD. As $H$ acts primitively on $\Omega$, all elements in $\mathcal{O}_G(H)$ are primitive and hence, the maximal elements in $\mathcal{O}_G(H)$ have O'Nan-Scott type AS or PA. Since $\ell\ge 3$, we let $M'\in\mathcal{O}_G(H)$ be a maximal element with $\Alt(\Omega)\ne M'\ne M$. Moreover, we let $M''$ be any maximal element in $\mathcal{O}_G(H)$ with $M\ne M''\ne M'$. Set $H':=M\cap M'$ and $H'':=M\cap M'\cap M''$. See Figure~\ref{fig3}.
\begin{figure}[!ht]
\begin{tikzpicture}[node distance=1cm]
\node(A0){$G$};
\node(A1)[below of=A0]{$M$};
\node(A2)[left of=A1]{$M'$};
\node(A3)[right of=A1]{$M''$};
\node(A4)[below of=A1]{$H'$};
\node(A5)[below of=A2]{$M'\cap M''$};
\node(A6)[below of=A3]{$M\cap M''$};
\node(A7)[below of=A4]{$M\cap M'\cap M''$};
\draw(A0)--(A1);
\draw(A0)--(A2);
\draw(A0)--(A3);
\draw(A1)--(A5);
\draw(A1)--(A6);
\draw(A2)--(A5);
\draw(A2)--(A4);
\draw(A3)--(A6);
\draw(A3)--(A4);
\draw(A7)--(A6);
\draw(A7)--(A5);
\draw(A7)--(A4);
\draw(A1)--(A4);
\end{tikzpicture}
\caption{The Boolean lattice in the proof of Lemma~\ref{c:orcoboiaboia3}} \label{fig3}
\end{figure}
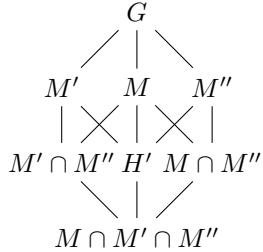

 Since $\mathcal{O}_G(H)$ is Boolean, $H'$ is maximal in $M$ and hence we are in the position to apply Lemma~\ref{l:orcoboia3} with $H$ there replaced by $H'$ here. We discuss the three possibilities in turn.

Suppose first that $H'$ has O'Nan-Scott type HA and let $V'$ be the socle of $H'$. Since in $\mathcal{O}_G(H)$ there are no maximal members of HA type, $\nor G {V'}$ is not a maximal subgroup of $G$. It follows from~\cite[Theorem]{16} that $n\in \{7,11,17,23\}$ and $G=\Alt(\Omega)$. A computer computation shows that none of these cases gives rise to a Boolean lattice of rank $3$ or larger.

Suppose now that $H'$ and $M$ have the same socle, or that the pair $(H',M)$ appears in Tables~$3$--$6$ of~\cite{16}. In these cases, $H'$ has O'Nan-Scott type AS. Since $\mathcal{O}_G(H)$ is Boolean, $H''$ is maximal in $H'$ and hence,  from Lemma~\ref{l:orcoboia3}, either 
\begin{itemize}
\item $H''$ and $H'$ have the same socle, 
\item or $H''$ has O'Nan-Scott type AS and the pair $(H'',H')$ appears in Tables~$3$--$6$ of~\cite{16}, 
\item or  $H''$ has O'Nan-Scott type HA and the pair $(H'',H')$ appears in Table~$2$ of~\cite{18}. 
\end{itemize}Suppose first that $H''$ has O'Nan-Scott type HA and let $V''$ be the socle of $H''$. Since in $\mathcal{O}_G(H)$ there are no maximal members of HA type, $\nor G {V''}$ is not a maximal subgroup of $G$, as above. It follows from~\cite[Theorem]{16} that $n\in \{7,11,17,23\}$ and $G=\Alt(\Omega)$. The same computer computation as above shows that none of these cases gives rise to a Boolean lattice of rank $3$ or larger. Therefore, $H''$ has O'Nan-Scott type AS.

As $\mathcal{O}_G(H'')$ has rank $3$ and $H''$ has type AS, Corollary~\ref{cor:1} implies that $H''$ is either product decomposable or octal. If $H''$ is octal, then $n=8$ and $H''\cong \mathrm{PSL}_2(7)$, however the largest Boolean lattice containing $H''$ has rank $2$. Thus $H''$ is product decomposable. 

From~\cite[Table~II]{16}, one of the following holds:
\begin{enumerate}
\item $n=36$, $H''=\Alt(6).2$,
\item $n=144$, $H''=M_{12}.2$, 
\item $n=q^4(q^2-1)^2/4$ and ${\bf F}^*(H'')=\mathrm{Sp}_4(q)$, where $q>2$ is even.
\end{enumerate} 
When $n=144$ and $H''=M_{12}.2$, we see that $H'$ cannot have the same socle as $H''$ because $H''\cong \Aut(M_{12})$ and hence $(H'',H')$ is one of the pairs in Tables~$3$--$6$ of~\cite{16}. However, there is no such pair satisfying $n=144$ and ${\bf F}^*(H'')\cong M_{12}$. When $n=36$ and $H''=\Alt(6).2$, we see with a computer computation that 
$\nor {\Sym(36)}{H''}=H''$ and hence $H'$ cannot have the same socle as $H''$. Therefore $(H'',H')$ is one of the pairs in Tables~$3$--$6$ of~\cite{16}. However, there is no such pair satisfying $n=36$ and ${\bf F}^*(H'')\cong \Alt(6)$. Finally, suppose $n=q^4(q^2-1)^2/4$ and ${\bf F}^*(H'')=\mathrm{Sp}_4(q)$, where $q>2$ is even. Since there is no pair $(H'',H')$ in Tables~$3$--$6$ of~\cite{16} satisfying these conditions for $n$ and ${\bf F}^*(H'')$ as above, we deduce that $H''$ and $H'$ have the same socle. Therefore ${\bf F}^*(H')=\mathrm{Sp}_4(q)$, with $q>2$ even. 

Summing up, we have two inclusions $H'\le M$ and $H'\le M'$, with $H'$ maximal in both $M$ and $M'$, with ${\bf F}^*(H')=\mathrm{Sp}_4(q)$ and with $n=q^4(q^2-1)^2/4$. Using again Tables~$3$--$6$ of~\cite{16}, we deduce that both $M$ and $M'$ must have the same socle of $H'$. However, this is a contradiction because $G=\langle M,M'\rangle \le \nor G{{\bf F}^*(H')}$.
\end{proof}

\begin{corollary}\label{cor:2}
Let $H$ be a primitive subgroup of $G$ with $\mathcal{O}_G(H)$ Boolean of rank $\ell\ge 3$ and let $G_1,\ldots,G_\ell$ be the maximal members in $\mathcal{O}_G(H)$. Then one of the following holds:
\begin{enumerate}
\item\label{accent1} $n=|\Omega|$ is odd. For every $i\in \{1,\ldots,\ell\}$, there exists a non-trivial regular product structure $\mathcal{F}_i$ with $G_i=\nor G{\mathcal{F}_i}$; moreover, relabeling the indexed set $\{1,\ldots,\ell\}$ if necessary, $\mathcal{F}_1< \cdots < \mathcal{F}_\ell$.
\item\label{accent2}$n=|\Omega|$ is odd and $G=\Sym(\Omega)$. Relabeling the indexed set $\{1,\ldots,\ell\}$ if necessary, $G_\ell=\Alt(\Omega)$, for every $i\in \{1,\ldots,\ell-1\}$, there exists a non-trivial regular product structure $\mathcal{F}_i$ with $G_i=\nor G{\mathcal{F}_i}$; moreover, relabeling the indexed set $\{1,\ldots,\ell-1\}$ if necessary, $\mathcal{F}_1< \cdots < \mathcal{F}_{\ell-1}$.
\item\label{accent3}$n=|\Omega|$ is an odd prime power. Relabeling the indexed set $\{1,\ldots,\ell\}$ if necessary, $G_\ell$ is maximal subgroup of O'Nan-Scott type HA, for every $i\in \{1,\ldots,\ell-1\}$, there exists a non-trivial regular product structure $\mathcal{F}_i$ with $G_i=\nor G{\mathcal{F}_i}$; moreover, relabeling the indexed set $\{1,\ldots,\ell-1\}$ if necessary, $\mathcal{F}_1< \cdots < \mathcal{F}_{\ell-1}$.
\item\label{accent4} $n=|\Omega|$ is an odd prime power and $G=\Sym(\Omega)$. Relabeling the indexed set $\{1,\ldots,\ell\}$ if necessary, $G_\ell=\Alt(\Omega)$ and $G_{\ell-1}$ is a maximal subgroup of O'Nan-Scott type HA, for every $i\in \{1,\ldots,\ell-2\}$, there exists a non-trivial regular product structure $\mathcal{F}_i$ with $G_i=\nor G{\mathcal{F}_i}$; moreover, relabeling the indexed set $\{1,\ldots,\ell-2\}$ if necessary, $\mathcal{F}_1< \cdots < \mathcal{F}_{\ell-2}$.
\end{enumerate}
\end{corollary}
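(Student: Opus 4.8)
The plan is to assemble the preceding lemmas to pin down the O'Nan--Scott types of the coatoms $G_1,\dots,G_\ell$, and then to feed the surviving product-action cases into Aschbacher's description of the poset $\bm{\mathcal{F}}(\Omega)$ of regular product structures. First, since $H$ is primitive, every element of $\mathcal{O}_G(H)$ is primitive; in particular each coatom $G_i$ is a \emph{maximal} subgroup of $G$ (nothing lies strictly between $G_i$ and $G$), so $G_i$ is either $\Alt(\Omega)$ or a maximal primitive subgroup, hence of O'Nan--Scott type $\mathrm{HA}$, $\mathrm{SD}$, $\mathrm{PA}$ or $\mathrm{AS}$. Because $\ell\ge 3$, Lemma~\ref{c:orcoboia} rules out any coatom of type $\mathrm{SD}$ and Lemma~\ref{c:orcoboiaboia3} rules out any coatom of type $\mathrm{AS}$ other than $\Alt(\Omega)$. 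Thus every coatom is of type $\mathrm{HA}$, of type $\mathrm{PA}$, or equals $\Alt(\Omega)$.

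Next I would count the exceptional coatoms. At most one coatom equals $\Alt(\Omega)$, and this forces $G=\Sym(\Omega)$ (when $G=\Alt(\Omega)$, $\Alt(\Omega)$ is the top of the lattice, not a coatom). Similarly, if some coatom $M$ has type $\mathrm{HA}$, then Lemma~\ref{c:orcoboiaboia} says every other coatom is either $\Alt(\Omega)$ or the stabilizer of a regular product structure; in particular no second coatom can have type $\mathrm{HA}$, so there is at most one, and its presence forces $n=|\Omega|=p^d$ to be a prime power. All remaining coatoms are then of type $\mathrm{PA}$, i.e. $G_i=\nor G{\mathcal{F}_i}$ for regular product structures $\mathcal{F}_i$. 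The four alternatives of the statement correspond exactly to the four possibilities for whether an $\mathrm{HA}$ coatom and/or an $\Alt(\Omega)$ coatom is present.

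It remains to prove the two geometric assertions: that the $\mathcal{F}_i$ form a chain in $\bm{\mathcal{F}}(\Omega)$ and that $n$ is odd. For the chain, I would use that in a Boolean lattice any two coatoms $\nor G{\mathcal{F}_i}$, $\nor G{\mathcal{F}_j}$ meet in a subgroup that is maximal in each; by Aschbacher's analysis of intersections of product-structure stabilizers in~\cite{1,2} this maximality can only occur when $\mathcal{F}_i$ and $\mathcal{F}_j$ are comparable, so the $\mathcal{F}_i$ are pairwise comparable and hence totally ordered, yielding $\mathcal{F}_1<\dots<\mathcal{F}_{t}$ with $t\in\{\ell,\ell-1,\ell-2\}$ after relabelling. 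For the parity I would invoke Lemma~\ref{l:aschbacher}: writing $\mathcal{F}_i$ as an $(m_i,k_i)$-structure, comparability forces every $m_i$ to be a power of the $m$ of the finest structure, so the $m_i$ share a common parity, and a chain of length $\ge 2$ produces a finest structure with $k>2$. If $m$ is even then $\nor{\Sym(\Omega)}{\mathcal{F}}\le\Alt(\Omega)$ by Lemma~\ref{l:aschbacher}\eqref{asch2}, which when $G=\Sym(\Omega)$ contradicts the maximality of this coatom; and in the $\mathrm{HA}$ case $p=2$ is excluded because $\mathrm{AGL}_d(2)\le\Alt(\Omega)$ (as in the proof of Lemma~\ref{l:orcoboiaboia}), again destroying maximality when $G=\Sym(\Omega)$, while the prime-power constraint ties $m$ to $p$.

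The hard part will be the remaining case $G=\Alt(\Omega)$ with $m$ even, where each $\nor{\Alt(\Omega)}{\mathcal{F}_i}=\nor{\Sym(\Omega)}{\mathcal{F}_i}$ is genuinely maximal in $\Alt(\Omega)$, so the obstruction is not local maximality but the global Boolean structure. Here I expect to argue through Lemma~\ref{l:aschbacher}\eqref{asch1}--\eqref{asch3} together with the index bookkeeping of Lemma~\ref{l:-01}: the coarsest structure in the chain has $k=2$, where the dichotomy $m\equiv 2$ versus $m\equiv 0\pmod 4$ (primitivity of $\nor{\Alt(\Omega)}{\mathcal{F}}$ versus the $\Alt/\Sym$ doubling) must be reconciled with the prime covering-indices demanded by the Boolean lattice, and this is precisely where even $m$ is eliminated and $n$ is forced to be odd. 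Finally I would collect the conclusions into the four displayed alternatives, relabelling so that $\mathcal{F}_1<\dots<\mathcal{F}_{t}$, which matches cases~\eqref{accent1}--\eqref{accent4}.
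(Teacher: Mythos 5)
Your first three steps (eliminating SD via Lemma~\ref{c:orcoboia} and AS via Lemma~\ref{c:orcoboiaboia3}, observing that at most one coatom is $\Alt(\Omega)$ and, by Lemma~\ref{c:orcoboiaboia}, at most one has type HA, and deriving the chain $\mathcal{F}_1<\cdots<\mathcal{F}_\kappa$ from pairwise maximality of the intersections via Aschbacher's analysis) coincide with the paper's proof, and your parity argument is correct as far as it goes: when $G=\Sym(\Omega)$ and $\kappa\ge 2$, the top structure has $k_\kappa>2$, so even $m_\kappa$ would force $G_\kappa\le\Alt(\Omega)$ by Lemma~\ref{l:aschbacher}\eqref{asch2}, contradicting maximality, and the case of an HA coatom with $p=2$ is likewise killed by $\mathrm{AGL}_d(2)\le\Alt(\Omega)$ when $G=\Sym(\Omega)$.

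The genuine gap is exactly the case you defer with ``I expect to argue'': $G=\Alt(\Omega)$ with all $m_i$ even. This is the crux of the corollary, and the toolkit you propose (Lemma~\ref{l:aschbacher} plus the index bookkeeping of Lemma~\ref{l:-01}) does not reach it. Indeed, in that case each $\mathcal{F}_i$ with $k_i>2$ has $\nor {\Sym(\Omega)}{\mathcal{F}_i}\le\Alt(\Omega)$, so every coatom is the \emph{full} wreath product $\Sym(m_i)\mathrm{wr}\Sym(k_i)$ and is maximal in $\Alt(\Omega)$: there is no local obstruction left for Lemma~\ref{l:aschbacher} to detect, and the covering indices in the lattice are not controlled by Lemma~\ref{l:-01}. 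The paper instead invokes a deeper result, \cite[(5.12)]{2}: for a comparable pair $\mathcal{F}_i<\mathcal{F}_{i+1}$ (with $m_i=m_{i+1}^{s_i}$, $k_{i+1}=k_is_i$), the interval $\mathcal{O}_G(\nor G{\mathcal{F}_i}\cap\nor G{\mathcal{F}_{i+1}})$ can be Boolean of rank $2$ only if $m_{i+1}$ is odd, or $s_i=2$ and $m_{i+1}\equiv 2\pmod 4$. This propagates down the chain: if $m_\kappa$ is even then $s_{\kappa-1}=2$ and $m_\kappa\equiv 2\pmod 4$, whence $m_{\kappa-1}=m_\kappa^{2}\equiv 0\pmod 4$, contradicting the same condition applied to the pair $(\mathcal{F}_{\kappa-2},\mathcal{F}_{\kappa-1})$; this needs $\kappa\ge 3$ but works uniformly for $G=\Alt(\Omega)$ and $\Sym(\Omega)$. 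Your sketch also misplaces the source of the contradiction: the bottom structure of the chain need not have $k=2$, and the argument is not about the coarsest structure at all but about two \emph{consecutive} pairs and the incompatibility of $m\equiv 2\pmod 4$ with $m$ being a square. Finally, your treatment of the HA case is incomplete for the same reason: when $G=\Alt(\Omega)$ and $n=2^d$, the subgroup $\mathrm{AGL}_d(2)$ \emph{is} maximal in $\Alt(\Omega)$, so your maximality argument gives nothing; the paper again uses \cite[(5.12)]{2} to force $m_2\equiv 2\pmod 4$, hence $m_2=2$, contradicting $m\ge 5$ in the definition of a regular product structure.
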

\begin{proof}
As $\ell\ge 3$, from Lemmas~\ref{c:orcoboia},~\ref{c:orcoboiaboia} and~\ref{c:orcoboiaboia3}, all the elements in $\{G_1,\ldots,G_\ell\}$ are stabilizers of regular product structures, except possibly that one of these elements might be $\Alt(\Omega)$ or  a maximal subgroup of type HA. Relabelling the indexed set $\{1,\ldots,\ell\}$, suppose that $\{G_1,\ldots,G_\kappa\}$ are stabilizers of regular product structures, that is, $G_i:=\nor G{\mathcal{F}_i}$. Thus $\kappa\ge \ell-2$.

Observe that, for every $i,j\in \{1,\ldots,\kappa\}$ with $i\ne j$, $G_i\cap G_j$ is a maximal subgroup of both $G_i$ and $G_j$. It follows from~\cite[Section~$5$]{2} that either $\mathcal{F}_i<\mathcal{F}_j$ or $\mathcal{F}_j< \mathcal{F}_i$. Therefore $\{\mathcal{F}_1,\ldots,\mathcal{F}_\kappa\}$ forms a chain. Relabeling the indexed set $\{1,\ldots,\kappa\}$ if necessary, we may suppose $\mathcal{F}_1< \mathcal{F}_2< \cdots <\mathcal{F}_\kappa$.

Assume that $\mathcal{F}_i$ is a regular $(m_i,k_i)$-product structure. Since $\mathcal{F}_i\le \mathcal{F}_{i+1}$, there exists an integer $s_i>1$ with $m_i=m_{i+1}^{s_i}$ and $k_{i+1}=k_is_i$. From~\cite[(5.12)]{2}, $
\mathcal{O}_G(
\nor G{\mathcal{F}_i}
\cap 
\nor G{\mathcal{F}_{i+1}}
)$ is Boolean of rank $2$ only when $$(\dag)\quad m_{i+1} \textrm{ is odd},\, \textrm{or }s_i=2\textrm{ and }m_{i+1}\equiv 2\pmod 4.$$

Suppose that $\kappa\ge 3$. Applying the previous paragraph with $i:=\kappa-1$, we deduce that, if $m_\kappa$ is even, then $s_{\kappa-1}=2$ and $m_{\kappa}\equiv 2\pmod 4$. In turn, since $m_{\kappa-1}=m_\kappa^{s_{\kappa-1}}$ is even, we have $s_{\kappa-2}=2$ and $m_{\kappa-1}\equiv 2\pmod 4$. However, $m_{\kappa-1}=m_\kappa^{s_{\kappa-1}}\equiv 0\pmod 4$, contradicting the fact that $m_{\kappa-1}\equiv 2\pmod 4$.
Therefore, when $\kappa\ge 3$, $m_i$ is odd, for every $i\in\{1,\ldots,\kappa\}$, that is, $n=|\Omega|$ is odd. In particular, when $\kappa=\ell$, we obtain part~\eqref{accent1}.

Suppose that $G=\Sym(\Omega)$, $\kappa=\ell-1$ and $G_\ell=\Alt(\Omega)$. If $|\Omega|$ is odd, we obtain part~\eqref{accent2}. Suppose then $n=|\Omega|$ is even. In particular, $\kappa=\ell-1\le 2$ and hence $\ell=3$. Clearly, $m_2$ is even and hence~$(\dag)$ applied with $i=1$ yields $s_1=2$. Thus $m_1=m_2^{s_1}=m_2^2\equiv 0\pmod 4$. Lemma~\ref{l:aschbacher}~\eqref{asch2} yields $G_1\le \Alt(\Omega)=G_3$, which is a contradiction.

Suppose that $\kappa=\ell-1$ and $G_\ell$ is a primitive group of HA type. If $|\Omega|$ is odd, we obtain part~\eqref{accent3}. Suppose then $n=|\Omega|$ is even, that is, $n=2^d$, for some positive integer $d\ge 3$. In particular, $\kappa=\ell-1\le 2$ and hence $\ell=3$. Clearly, $m_2$ is even and hence~$(\dag)$ applied with $i=1$ yields $m_2\equiv 2\pmod 4$. Therefore $m_2=2$, however this contradicts the fact that in a regular $(m,k)$-product struction we must have $m\ge 5$.

Finally suppose that $\kappa=\ell-2$, $G=\Sym(\Omega)$, $G_\ell=\Alt(\Omega)$ and $G_{\ell-1}$ is a primitive group of HA type. If $|\Omega|$ is even, then $|\Omega|=2^d$ for some $d\ge 3$. As $G_2\cong\mathrm{AGL}_d(2)\le \Alt(\Omega)=G_3$, we obtain a contradiction. Therefore $|\Omega|$ is odd and we obtain~\eqref{accent4}.
\end{proof}


\section{Boolean intervals containing a maximal imprimitive subgroup}\label{sec:impri1}
The scope of this section is to gather some information on Boolean lattices $\mathcal{O}_G(H)$ containing a maximal element that is imprimitive. Our main tool in this task is a result of Aschbacher and Shareshian~\cite[Theorem~$5.2$]{3}.
\begin{hypothesis}\label{hyp:51}{\em Let $G$ be either $\Sym(\Omega)$ or $\Alt(\Omega)$ with $n:=|\Omega|$, let $\Sigma$ be a non-trivial regular partition, let $G_1:=\nor G{\Sigma}$, let $G_2$ be a maximal subgroup of $G$ distinct from $\Alt(\Omega)$ and let $H:=G_1\cap G_2$. Assume that
\begin{itemize}
\item $\mathcal{O}_G(H)$ is a Boolean lattice of rank $2$ with maximal elements $M_1$ and $M_2$, and 
 \item $H$ acts transitively on $\Omega$.
\end{itemize}}
\end{hypothesis}

\begin{theorem}\label{thrm:AS}{{\cite[Theorem~$5.2$]{3}}}
Assume Hypothesis~$\ref{hyp:51}$. Then one of the following holds:
\begin{enumerate}
\item\label{enu1} For every $i\in \{1,2\}$, there exists a non-trivial regular partition $\Sigma_i$ with $G_i=\nor G{\Sigma_i}$; moreover, for some $i\in \{1,2\}$, $\Sigma_i< \Sigma_{3-i}$. Further, $n\ge 8$ and, if $n=8$, then $G=\Sym(\Omega)$.
\item\label{enu2}$G=\Alt(\Omega)$, $n=2^{a+1}$ for some positive integer $a>1$, $G_2$ is an affine primitive group, $V={\bf F}^*(G_2)\le H$, $V_\Sigma$ is a hyperplane of $V$, the elements of $\Sigma$ are the two orbits of $V_\Sigma$ on $\Omega$, and $H=\nor {G_2}{V_\Sigma}$. 
\item\label{enu3}$G=\Alt(\Omega)$, $n\equiv 0\pmod 4$, $n>8$ and, for every $i\in \{1,2\}$, there exists a non-trivial regular partition $\Sigma_i$ such that
\begin{description}
\item[(a)]$G_i=\nor G{\Sigma_i}$,
\item[(b)]$\Sigma_1$ and $\Sigma_2$ are lattice complements in the poset of partitions of $\Omega$, and
\item[(c)]one of $\Sigma_1$, $\Sigma_2$ is $(2,n/2)$-regular and the other is $(n/2,2)$-regular.
\end{description}
\end{enumerate}
\end{theorem}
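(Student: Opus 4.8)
The plan is to reduce the theorem to a classification of the maximal subgroup $G_2$, organised by its permutation type on $\Omega$. Since $\mathcal{O}_G(H)$ is Boolean of rank $2$, it consists of exactly four subgroups $H<M_1,M_2<G$ with $M_1\cap M_2=H$ and $\langle M_1,M_2\rangle=G$; in particular each $M_i$ is maximal in $G$. Because $G_1=\nor G\Sigma$ is a partition stabiliser, Fact~\ref{fact2} guarantees that $G_1$ is maximal in $G$ in every case except $G=\Alt(\Omega)$, $n=8$ with $\Sigma$ a $(2,4)$-partition, which is disposed of directly (Lemma~\ref{l:1} and Figure~\ref{fig1}). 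Thus, after relabelling, $M_1=G_1$ and $M_2=G_2$, and the problem becomes: determine the maximal subgroups $G_2\ne\Alt(\Omega)$ for which $H=G_1\cap G_2$ is transitive, $H$ is simultaneously maximal in $G_1$ and in $G_2$, and $\langle G_1,G_2\rangle=G$. The three possible types of $G_2$ then feed, respectively, the outcome ``$G_2$ intransitive'' (excluded), the outcomes \eqref{enu1}/\eqref{enu3}, and the outcome \eqref{enu2}.

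Since $H\le G_2$ and $H$ is transitive, $G_2$ is transitive, so $G_2$ is \emph{not} intransitive, and I would record this first. Suppose then that $G_2$ is imprimitive, so $G_2=\nor G{\Sigma_2}$ for a non-trivial regular partition $\Sigma_2$ (Fact~\ref{fact2}). Now both maximal elements are partition stabilisers, and the argument becomes combinatorics in the partition lattice of $\Omega$: the intersection $\nor G\Sigma\cap\nor G{\Sigma_2}$ stabilises both partitions, hence lies in the stabiliser of their join and of their meet, whereas $\langle\nor G\Sigma,\nor G{\Sigma_2}\rangle$ is constrained by any common coarsening of $\Sigma$ and $\Sigma_2$. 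Maximality of $H$ in each $G_i$ together with $\langle G_1,G_2\rangle=G$ then forces $\Sigma$ and $\Sigma_2$ into a rigid position, leaving exactly two possibilities. Either $\Sigma$ and $\Sigma_2$ are comparable, one refining the other: this is conclusion~\eqref{enu1}, and counting prime indices along a maximal chain (Lemma~\ref{l:aschbacher}) yields $n\ge 8$ and the parity restriction $n=8\Rightarrow G=\Sym(\Omega)$. Or $\Sigma$ and $\Sigma_2$ are incomparable, and one shows by sign computations that a Boolean rank-$2$ lattice survives only inside $\Alt(\Omega)$, only when the two partitions are lattice complements with one $(2,n/2)$-regular and the other $(n/2,2)$-regular, forcing $4\mid n$ and $n>8$; this is conclusion~\eqref{enu3}.

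Finally suppose $G_2$ is primitive. Then $H\le G_2$ is a transitive, maximal, but (since $H\le\nor G\Sigma$) \emph{imprimitive} subgroup of the primitive group $G_2$. Running through the O'Nan--Scott types of $G_2$, and using both $\langle G_1,G_2\rangle=G$ and the fact that a maximal imprimitive subgroup of a primitive group is severely restricted, one is pushed to the affine case: $G_2$ has type HA with socle $V={\bf F}^*(G_2)$ an elementary abelian $2$-group acting regularly, $V\le H$, and the $H$-invariant block system $\Sigma$ is the pair of orbits of an index-$2$ subgroup $V_\Sigma$ of $V$. A hyperplane has exactly two orbits, so $\Sigma$ is $(n/2,2)$-regular with $n=2^{a+1}$; the evenness of the relevant generators forces $G=\Alt(\Omega)$ and $H=\nor{G_2}{V_\Sigma}$, which is conclusion~\eqref{enu2}.

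The technical heart of the theorem is the incomparable imprimitive case. Controlling $\langle\nor G\Sigma,\nor G{\Sigma_2}\rangle$ when $\Sigma$ and $\Sigma_2$ are refinement-incomparable requires a careful analysis of the subgroup of $\Sym(\Omega)$ generated by two partition stabilisers, and proving that \emph{only} the complementary $(2,n/2)/(n/2,2)$ configuration yields a Boolean lattice hinges on delicate alternating-versus-symmetric parity bookkeeping; this same bookkeeping explains why the degenerate value $n=8$ and the exceptions of Fact~\ref{fact2} must be separated out by hand. The primitive branch is easier but still needs the O'Nan--Scott machinery to exclude the non-affine types, so that the entire weight of that case falls on the affine hyperplane geometry.
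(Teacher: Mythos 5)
You should first be clear about what you are measuring yourself against: the paper does \emph{not} prove this theorem. It is quoted verbatim from Aschbacher and Shareshian \cite[Theorem~5.2]{3} and used as an external input (it is announced as the ``main tool'' of Section~\ref{sec:impri1}). So a blind proof attempt here amounts to reproving a substantial result from the literature, not reconstructing a short in-paper argument. Your reduction steps are sound as far as they go: the coatoms of the rank-$2$ Boolean lattice are indeed $\{G_1,G_2\}$ (modulo the $\Alt(8)$ exception of Fact~\ref{fact2}), $G_2$ cannot be intransitive because $H$ is transitive, and the trichotomy on the permutation type of $G_2$ correctly matches the three conclusions.

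However, the two cases that carry all the weight are asserted rather than proved, so there is a genuine gap. In the imprimitive case with $\Sigma$ and $\Sigma_2$ refinement-incomparable, everything that makes conclusion~\eqref{enu3} true --- that the configuration survives only inside $\Alt(\Omega)$, only when the two partitions are lattice complements, one $(2,n/2)$- and the other $(n/2,2)$-regular, with $4\mid n$ and $n>8$ --- is compressed into ``one shows by sign computations''; no computation is given, and controlling $\nor G\Sigma\cap\nor G{\Sigma_2}$ and $\langle \nor G\Sigma,\nor G{\Sigma_2}\rangle$ for incomparable regular partitions is exactly where Aschbacher--Shareshian work hardest. The primitive case is in worse shape: you invoke ``the fact that a maximal imprimitive subgroup of a primitive group is severely restricted'', but no such fact exists --- maximal imprimitive subgroups of primitive almost simple groups are abundant (for most primes $p$ the normalizer of a nonsplit torus in $\mathrm{PGL}_2(p)$, dihedral of order $2(p+1)$, is maximal and imprimitive on the projective line, and groups such as $M_{12}$ or $\mathrm{Sp}_{2m}(2)$ in their natural actions also have maximal imprimitive subgroups). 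Eliminating the types AS, PA and SD for $G_2$, and then deriving the precise affine picture ($V\le H$, $p=2$, $V_\Sigma$ a hyperplane whose two orbits are the blocks of $\Sigma$, $H=\nor{G_2}{V_\Sigma}$, $n=2^{a+1}$ with $a>1$) is the deep part of the cited proof; nothing in your text does this, and note that Lemma~\ref{l:orcoboiaboia} of the paper cannot be borrowed for the claim $V\le H$, since it assumes the maximal subgroup of the affine group acts \emph{primitively}, whereas your $H$ is imprimitive by construction. A smaller slip: you cite Lemma~\ref{l:aschbacher} for the prime-index count giving $n\ge 8$ in conclusion~\eqref{enu1}, but that lemma concerns regular product structures, not partitions; the count itself is elementary (two comparable non-trivial regular partitions force $n=ceb$ with $c,e,b\ge 2$), yet the reference is wrong.
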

(Observe that two partitions $\Sigma_1$ and $\Sigma_2$ of $\Omega$ are lattice complements if, the smallest partition $\Sigma$ of $\Omega$ with $\Sigma_1\le \Sigma$ and $\Sigma_2\le \Sigma$ and the largest partition $\Sigma'$ of $\Omega$ with $\Sigma'\le\Sigma_1$ and $\Sigma'\le \Sigma_2$ are the two trivial partitions of $\Omega$. Futher $V_\{\Sigma\}$ denote the poitwise stabilizer of $\Sigma$ in $V$)

\begin{hypothesis}\label{hyp:52}{\em Let $G$ be either $\Sym(\Omega)$ or $\Alt(\Omega)$ with $n:=|\Omega|$, let $\Sigma$ be a non-trivial regular partition, let $G_1:=\nor G{\Sigma}$, let $G_2$ and $G_3$ be maximal subgroups of $G$  and let $H:=G_1\cap G_2\cap G_3$. Assume that
\begin{itemize}
\item $\mathcal{O}_G(H)$ is a Boolean lattice of rank $3$ with maximal elements $G_1$, $G_2$ and $G_3$, and 
 \item $H$ acts transitively on $\Omega$.
\end{itemize}}
\end{hypothesis}

\begin{theorem}\label{thrm:AS1}
Assume Hypothesis~$\ref{hyp:52}$. Then one of the following holds:
\begin{enumerate}
\item\label{enu11} For every $i\in \{1,2,3\}$, there exists a non-trivial regular partition $\Sigma_i$ with $G_i=\nor G{\Sigma_i}$; moreover, relabeling the indexed set $\{1,2,3\}$ if necessary, $\Sigma_1< \Sigma_2< \Sigma_3$. 
\item\label{enu12}$G=\Sym(\Omega)$. Relabeling the indexed set $\{1,2,3\}$ if necessary, $G_3=\Alt(\Omega)$ and, for every $i\in \{1,2\}$, there exists a non-trivial regular partition $\Sigma_i$ with $G_i=\nor G{\Sigma_i}$, moreover, for some $i\in \{1,2\}$, $\Sigma_i< \Sigma_{3-i}$.
\item\label{enu13}$G=\Alt(\Omega)$, $|\Omega|=8$ and the Boolean lattice $\mathcal{O}_G(H)$ is in Figure~$\ref{fig1}$.
\end{enumerate}
\end{theorem}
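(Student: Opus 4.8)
The plan is to exploit the self-similarity of a rank-$3$ Boolean lattice. Writing $\{G_1,G_2,G_3\}$ for the three maximal elements, the three atoms are the pairwise intersections $A_{ij}:=G_i\cap G_j$, and $H=G_1\cap G_2\cap G_3$. For each pair $\{i,j\}$ the interval $[A_{ij},G]=\mathcal{O}_G(A_{ij})$ is itself a Boolean lattice of rank $2$ whose maximal elements are $G_i$ and $G_j$; since $H\le A_{ij}$ and $H$ is transitive, each $A_{ij}$ is transitive. Because $G_1=\nor G{\Sigma}$ is the stabiliser of a non-trivial regular partition, the pair $\{G_1,G_j\}$ satisfies Hypothesis~\ref{hyp:51} as soon as $G_j\ne \Alt(\Omega)$, so the whole argument is driven by feeding the pairs $\{G_1,G_2\}$ and $\{G_1,G_3\}$ (and, once a second partition stabiliser appears, also $\{G_2,G_3\}$) into Theorem~\ref{thrm:AS} and collating the possible outcomes. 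A small combinatorial lemma in the partition lattice will be used repeatedly: a non-trivial regular partition cannot be comparable to both members of a complement pair $\{(2,n/2),(n/2,2)\}$, because comparability with the $2$-part partition forces $(n/2,2)\le\Sigma$ while comparability with the partition into pairs forces $\Sigma\le(2,n/2)$, whence $(n/2,2)\le(2,n/2)$, against incomparability; likewise two regular partitions of the same type are comparable only if equal.

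First suppose $G=\Sym(\Omega)$. At most one coatom equals $\Alt(\Omega)$, and it is never $G_1$. If some $G_j=\Alt(\Omega)$, say $G_3$, then $G_1,G_2\ne\Alt(\Omega)$ and Theorem~\ref{thrm:AS} applied to $\{G_1,G_2\}$ can only return its conclusion~\eqref{enu1} (the other two require $G=\Alt(\Omega)$); this is exactly conclusion~\eqref{enu12}. If no coatom is $\Alt(\Omega)$, then Theorem~\ref{thrm:AS} applied to $\{G_1,G_2\}$ and to $\{G_1,G_3\}$ again returns only conclusion~\eqref{enu1}, so $G_2=\nor G{\Sigma_2}$ and $G_3=\nor G{\Sigma_3}$ are partition stabilisers with $\Sigma_1$ comparable to each of $\Sigma_2,\Sigma_3$. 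Applying Theorem~\ref{thrm:AS} to $\{G_2,G_3\}$ (now both partition stabilisers) forces $\Sigma_2,\Sigma_3$ comparable unless its conclusion~\eqref{enu3} holds, which needs $G=\Alt(\Omega)$; hence all three partitions are pairwise comparable, form a chain, and we reach conclusion~\eqref{enu11}.

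Now suppose $G=\Alt(\Omega)$, so no coatom is $\Alt(\Omega)$ and each of $\{G_1,G_2\}$, $\{G_1,G_3\}$ falls into one of~\eqref{enu1}, \eqref{enu2}, \eqref{enu3}. I would list the six unordered profiles and eliminate all but two. If both pairs give~\eqref{enu1}, the previous paragraph applies verbatim: conclusion~\eqref{enu3} for $\{G_2,G_3\}$ is killed by the combinatorial lemma (it would make $\Sigma_1$ comparable to both members of a complement pair), so we reach conclusion~\eqref{enu11}. Every mixed profile is impossible: whenever a pair falls in~\eqref{enu2} the affine socle forces the companion partition to be of type $(n/2,2)$, and whenever it falls in~\eqref{enu3} the two partitions form a complement pair $\{(2,n/2),(n/2,2)\}$; running the third pair through Theorem~\ref{thrm:AS} then yields a contradiction in each case — either a partition required to be of the two incompatible types $(2,n/2)$ and $(n/2,2)$, or two comparable regular partitions of equal type (hence $G_i=G_j$), or a partition comparable to both members of a complement pair (against the lemma). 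This leaves only the profile in which both $\{G_1,G_2\}$ and $\{G_1,G_3\}$ give conclusion~\eqref{enu2}: then $G=\Alt(\Omega)$, $n=2^{a+1}$ with $a>1$, and $G_2,G_3$ are both affine of O'Nan--Scott type $\mathrm{HA}$.

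The main obstacle is this last ``doubly affine'' configuration, which must collapse to $n=8$ and to the lattice of Figure~\ref{fig1} (conclusion~\eqref{enu13}). Here $\{G_2,G_3\}$ cannot be fed to Theorem~\ref{thrm:AS}, so one argues directly with the affine structure. The atom $A_{23}=G_2\cap G_3$ is maximal in the $\mathrm{HA}$-group $G_2$ and, by Lemma~\ref{c:orcoboiaboia}, cannot act primitively (otherwise the other maximal element $G_3$ of $\mathcal{O}_G(A_{23})$ would have to be $\Alt(\Omega)$ or a product-structure stabiliser, not affine); hence $A_{23}$ is transitive but imprimitive and therefore preserves the coset partition of a proper non-zero subspace of the socle $V_2={\bf F}^*(G_2)$. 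Matching this block structure against the hyperplane partition $\Sigma_1$ supplied by conclusion~\eqref{enu2}, and using that $A_{23}$ is simultaneously maximal and imprimitive in both copies of $\mathrm{AGL}_{a+1}(2)$, should pin the dimension to $a+1=3$; the exceptional coincidence $\mathrm{GL}_3(2)\cong\mathrm{PSL}_2(7)$ (the ``octal'' behaviour already visible in Lemma~\ref{l:orcoboiaboia} and Fact~\ref{fact2}) is precisely what makes $n=8$ admissible. Once $n=8$ is forced, a direct (or computer-assisted) identification of the overgroups reproduces Figure~\ref{fig1}. I expect the bookkeeping of this affine case — controlling how two distinct regular elementary abelian subgroups of $\Alt(2^{a+1})$ can share the partition data while meeting maximally — to be the genuinely delicate part, everything else reducing cleanly to Theorem~\ref{thrm:AS} and the comparability lemma.
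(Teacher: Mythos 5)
Your architecture is the paper's: run the pairs $\{G_1,G_2\}$, $\{G_1,G_3\}$, and then $\{G_2,G_3\}$ through Theorem~\ref{thrm:AS} and collate outcomes. For $G=\Sym(\Omega)$, for the profile where both pairs return partition stabilizers, and for all mixed profiles, your comparability lemma correctly reproduces the paper's contradictions (its Cases A, B, C, E, F); the only quibble is that in the profile ``(1) with (3)'' versus ``(2) with (3)'' eliminations you silently need $n\neq 4$ to call the types $(2,n/2)$ and $(n/2,2)$ incompatible, which is fine since Theorem~\ref{thrm:AS}~\eqref{enu2}/\eqref{enu3} force $n\geq 8$. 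Your use of Lemma~\ref{c:orcoboiaboia} to exclude a \emph{primitive} $A_{23}=G_2\cap G_3$ in the doubly affine profile is also valid, and is a genuine shortcut relative to the paper, which instead invokes Lemma~\ref{l:orcoboiaboia} (equal socles would give $G_2=\nor G{V_2}=\nor G{V_3}=G_3$) plus a computation for the $\mathrm{PSL}_2(7)<\mathrm{AGL}_3(2)$ exception.

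The genuine gap is the rest of the doubly affine case, which you yourself present as a plan rather than a proof, and it is exactly where conclusion~\eqref{enu13} has to come from. Concretely: (i) your assertion that an imprimitive $A_{23}$ ``preserves the coset partition of a proper non-zero subspace of $V_2$'' presupposes $V_2\le A_{23}$, which is not automatic. If $V_2\nleq A_{23}$, maximality gives $A_{23}V_2=G_2$, and $A_{23}\cap V_2$ is normalized by $A_{23}$ and by the abelian $V_2$, hence normal in $G_2$; minimality of the socle then forces $A_{23}\cap V_2=1$, so $A_{23}$ is a transitive complement, i.e.\ a copy of $\mathrm{GL}_d(2)$ with a subgroup of index $2^d$ --- excluding this needs Guralnick's prime-power-index theorem (the engine of Lemma~\ref{l:orcoboiaboia}, which is stated only for \emph{primitive} subgroups, so you cannot just quote it), and the patch (it forces $d=3$ and $A_{23}\cong\mathrm{PSL}_2(7)$, which is primitive, already excluded) is never made. (ii) Far more seriously, the step ``should pin the dimension to $a+1=3$'' is not carried out, and matching hyperplane partitions of two regular elementary abelian subgroups is not a mechanism that visibly does it. The paper closes this case in one stroke with Lemma~\ref{l:1}/Fact~\ref{fact2}: an imprimitive $A_{23}$ lies in $\nor G{\Sigma'}$ for some non-trivial regular partition $\Sigma'$; this imprimitive overgroup cannot be maximal in $G$, because the rank-$2$ Boolean interval $\mathcal{O}_G(A_{23})$ has exactly the two coatoms $G_2,G_3$ and both are primitive; and by Fact~\ref{fact2} a stabilizer of a non-trivial regular partition fails to be maximal in $G$ only when $G=\Alt(\Omega)$, $|\Omega|=8$ and $\Sigma'$ is $(2,4)$-regular. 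That forces $n=8$ immediately, after which a direct (computer) check identifies the lattice of Figure~\ref{fig1}. Without this step or a worked-out substitute, your case ``(2) with (2)'' is not closed and the theorem is not proved.
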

\begin{proof}
If none of $G_1$, $G_2$ and $G_3$ is $\Alt(\Omega)$ and if $G=\Sym(\Omega)$, then the result follows directly from Theorem~\ref{thrm:AS} and we obtain~\eqref{enu11}.
Suppose $G=\Sym(\Omega)$ and one of $G_2$ or $G_3$ is $\Alt(\Omega)$. Without loss of generality we may assume that $G_3=\Alt(\Omega)$. Now, the result follows directly from Theorem~\ref{thrm:AS} applied to $\{G_1,G_2\}$; we obtain~\eqref{enu12}.

It remains to consider the case $G=\Alt(\Omega)$. In particular, we may apply Theorem~\ref{thrm:AS} to the pairs $\{G_1,G_2\}$ and $\{G_1,G_3\}$. Relabeling the indexed set $\{1,2,3\}$ if necessary, we have to consider in turn one of the following cases:
\begin{description}
\item[case A] Theorem~\ref{thrm:AS} part~\eqref{enu1} holds for both pairs $\{G_1,G_2\}$ and $\{G_1,G_3\}$;
\item[case B]Theorem~\ref{thrm:AS} part~\eqref{enu1} holds for $\{G_1,G_2\}$ and Theorem~\ref{thrm:AS} part~\eqref{enu2} holds for $\{G_1,G_3\}$;
\item[case C]Theorem~\ref{thrm:AS} part~\eqref{enu1} holds for $\{G_1,G_2\}$ and Theorem~\ref{thrm:AS} part~\eqref{enu3} holds for $\{G_1,G_3\}$;
\item[case D]Theorem~\ref{thrm:AS} part~\eqref{enu2} holds for both pairs $\{G_1,G_2\}$ and $\{G_1,G_3\}$;
\item[case E]Theorem~\ref{thrm:AS} part~\eqref{enu2} holds for $\{G_1,G_2\}$ and Theorem~\ref{thrm:AS} part~\eqref{enu3} holds for $\{G_1,G_3\}$;
\item[case F]Theorem~\ref{thrm:AS} part~\eqref{enu3} holds for both pairs $\{G_1,G_2\}$ and $\{G_1,G_3\}$. 
\end{description}

\smallskip

\noindent\textsc{Case A:} In particular, $G_2$ and $G_3$ are stabilizers of non-trivial regular partitions and hence we are in the position to apply Theorem~\ref{thrm:AS} also to the pair $\{G_2,G_3\}$. It is not hard to see that Theorem~\ref{thrm:AS} part~\eqref{enu1} holds for $\{G_2,G_3\}$ and that  the conclusion~\eqref{enu11} in the statement of Theorem~\ref{thrm:AS1} holds. 

\smallskip

\noindent\textsc{Case B: }Since $G_1$ is the stabilizer of a non-trivial regular partition and since $\{G_1,G_3\}$ satisfies Theorem~\ref{thrm:AS} part~\eqref{enu2}, we deduce that $G_3$ is an affine primitive group and $\Sigma_1$ is an $(n/2,2)$-regular partition.

Since $G_2$ is the stabilizer of the non-trivial regular partition $\Sigma_2$, we deduce that we may apply Theorem~\ref{thrm:AS} to the pair $\{G_2,G_3\}$. In particular, as $G_3$ is primitive, Theorem~\ref{thrm:AS} part~\eqref{enu2} must hold for $\{G_2,G_3\}$ and hence $G_2$ is the stabilizer of an $(n/2,2)$-regular partition. However, this contradicts the fact that $\{G_1,G_2\}$ satisfies Theorem~\ref{thrm:AS} part~\eqref{enu1}, that is, $\Sigma_1< \Sigma_2$ or $\Sigma_2< \Sigma_1$.

\smallskip

\noindent\textsc{Case C: }We have either
\begin{description}
\item[(a)]\label{aa} $\Sigma_1< \Sigma_2$, $\Sigma_1$ is a $(2,n/2)$-regular partition, $\Sigma_3$ is a $(n/2,2)$-regular partition and $\Sigma_1$, $\Sigma_3$ are lattice complements, or
\item[(b)]\label{bb} $\Sigma_2< \Sigma_1$, $\Sigma_1$ is a $(n/2,2)$-regular partition, $\Sigma_3$ is a $(2,n/2)$-regular partition and $\Sigma_1$, $\Sigma_3$ are lattice complements.
\end{description}
In case~{\bf (b)}, $\Sigma_2< \Sigma_1$ and hence $\Sigma_1$ is a refinement of $\Sigma_2$; however, as $\Sigma_1$ is a $(n/2,2)$-regular partition, this is not possible. Therefore, case~{\bf (b)} does not arise.
 As $G_2$ and $G_3$ are stabilizers of non-trivial regular partitions of $\Omega$, we are in the position to apply Theorem~\ref{thrm:AS} also to the pair $\{G_2,G_3\}$. If Theorem~\ref{thrm:AS} part~\eqref{enu1} holds for $\{G_2,G_3\}$, then either $\Sigma_2< \Sigma_3$ or $\Sigma_3< \Sigma_2$. However, both possibilities lead to a contradiction. Indeed, if $\Sigma_2< \Sigma_3$ and {\bf (a)} holds, then $\Sigma_1< \Sigma_2< \Sigma_3$, contradicting the fact that $\Sigma_1$ and $\Sigma_3$ are lattice complements. The argument when $\Sigma_3< \Sigma_2$ is analogous. Similarly, if Theorem~\ref{thrm:AS} part~\eqref{enu3} holds for $\{G_2,G_3\}$, then $\Sigma_2$ and $\Sigma_3$ are lattice complements and either
\begin{description}
\item[(a)']$\Sigma_2$ is a $(2,n/2)$-regular partition and $\Sigma_3$ is a $(n/2,2)$-regular partition, or
\item[(b)']$\Sigma_2$ is a $(n/2,2)$-regular partition and $\Sigma_3$ is a $(2,n/2)$-regular partition.
\end{description} 
However, an easy case-by-case analysis shows that {\bf (a)'} and {\bf (b)'} are incompatible with {\bf (a)}. 

\smallskip

\noindent\textsc{Case D: }In particular, $G_2$ and $G_3$ are both primitive groups of affine type. Let $V_2$ be the socle of $G_2$ and let $V_3$ be the socle of $G_3$. From Lemma~\ref{l:1} applied to  $\mathcal{O}_{G}(G_2\cap G_3)$, we deduce that either $G_2\cap G_3$ is primitive, or $G=\Alt(\Omega)$, $|\Omega|=8$ and $G_2\cap G_3$ is the stabilizer of a $(2,4)$-regular partition. In the latter case, we see with a direct computation that part~\eqref{enu13} holds. Suppose then that $G_2\cap G_3$ is primitive.   From Lemma~\ref{l:orcoboiaboia} applied to the inclusions $G_2\cap G_3<G_2$ and $G_2\cap G_3<G_3$, we deduce that either 
\begin{description}
\item[(a)'']$G_2\cap G_3$, $G_2$ and $G_3$ have the same socle, or
\item[(b)''] $n=8$, $G_2\cap G_3\cong \mathrm{PSL}_2(7)$ and $G_2\cong G_3\cong \mathrm{AGL}_3(2)$.
\end{description} In the former case, we have $V_2=V_3$ and hence $G_2=\nor G {V_2}=\nor G {V_3}=G_3$, contradicting the fact that $G_2\ne G_3$. In the latter case, we have checked with the invaluable help of the computer algebra system \texttt{magma}~\cite{magma} that $\mathcal{O}_{\Alt(8)}(\mathrm{PSL}_2(7))=\{\mathrm{PSL}_2(7)<\mathrm{AGL}_3(2)<\Alt(8)\}$, contradicting the fact that it is a Boolean lattice.

\smallskip

\noindent\textsc{Case E: }In this case, $\Sigma_1$ is a $(n/2,2)$-regular partition, $\Sigma_3$ is a $(2,n/2)$-regular partition and $\Sigma_1$, $\Sigma_3$ are lattice complements. As $G_3$ is the stabilizer of a non-trivial regular partition, we are in the position to apply Theorem~\ref{thrm:AS} to the pair $\{G_2,G_3\}$. As $G_2$ is primitive, we see that Theorem~\ref{thrm:AS} part~\eqref{enu2} holds for $\{G_2,G_3\}$ and hence $\Sigma_3$ is a $(n/2,2)$-regular partition, which implies $(n/2,2)=(2,n/2)$, that is, $n=4$. However this contradicts $a>1$ in Theorem~\ref{thrm:AS} part~\eqref{enu2}. 

\smallskip

\noindent\textsc{Case F: }In particular, both $\Sigma_2$ and $\Sigma_3$ are either  $(n/2,2)$-regular partitions or $(2,n/2)$-regular partitions. As $G_2$ and $G_3$ are stabilizers of non-trivial regular partitions, we may apply Theorem~\ref{thrm:AS} also to the pair $\{G_2,G_3\}$. Clearly, none of parts~\eqref{enu1},~\eqref{enu2} and~\eqref{enu3} in Theorem~\ref{thrm:AS} holds for $\{G_2,G_3\}$, which is a contradiction. 
\end{proof}

\begin{corollary}\label{cor:111}
Let $H$ be a transitive subgroup of $G$ and  suppose that $\mathcal{O}_G(H)$ is Boolean of rank $\ell\ge 3$ and that $\mathcal{O}_G(H)$ contains a maximal element which is imprimitive. Let $\{G_1,\ldots,G_\ell\}$ be the maximal elements of $\mathcal{O}_G(H)$. Then one of the following holds:
\begin{enumerate}
\item\label{enu11a} For every $i\in \{1,\ldots,\ell\}$, there exists a non-trivial regular partition $\Sigma_i$ with $G_i=\nor G{\Sigma_i}$; moreover, relabeling the indexed set $\{1,\ldots,\ell\}$ if necessary, $\Sigma_1< \cdots < \Sigma_\ell$. 
\item\label{enu12a}$G=\Sym(\Omega)$. Relabeling the indexed set $\{1,\ldots,\ell\}$ if necessary, $G_\ell=\Alt(\Omega)$, for every $i\in \{1,\ldots,\ell-1\}$, there exists a non-trivial regular partition $\Sigma_i$ with $G_i=\nor G{\Sigma_i}$, and  $\Sigma_1< \cdots<  \Sigma_{\ell-1}$.
\item\label{enu13a}$G=\Alt(\Omega)$, $|\Omega|=8$, $\ell=3$ and the Boolean lattice $\mathcal{O}_G(H)$ is in Figure~$\ref{fig1}$.
\end{enumerate}
\end{corollary}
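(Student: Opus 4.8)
The plan is to reduce the whole statement to the rank-$3$ classification already established in Theorem~\ref{thrm:AS1}, by running it over all triples of maximal elements. First I would record the basic structural facts. Since $H$ is transitive and $H\le G_i$ for every $i$, each $G_i$ is transitive, so by the intransitive/imprimitive/primitive trichotomy no $G_i$ is intransitive; relabelling, the hypothesis lets me assume $G_1$ is imprimitive, whence $G_1=\nor G{\Sigma_1}$ for a non-trivial regular partition $\Sigma_1$ by Fact~\ref{fact2}. The crucial lattice-theoretic observation is that, writing $\mathcal{O}_G(H)\cong\mathcal{P}(X)$ with $|X|=\ell$, the maximal elements $G_1,\dots,G_\ell$ are the coatoms, and the meet of any three of them, say $G_i\cap G_j\cap G_k$, sits at the bottom of an interval that is Boolean of rank $3$ with maximal elements exactly $G_i,G_j,G_k$; moreover this meet contains $H$ and is therefore transitive. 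Consequently, as soon as one of three chosen coatoms is a partition stabilizer, the triple satisfies Hypothesis~\ref{hyp:52}, so Theorem~\ref{thrm:AS1} is applicable to it.

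I would then dispose of the exceptional case first. Suppose some triple $\{G_1,G_i,G_j\}$ falls into Theorem~\ref{thrm:AS1}~\eqref{enu13}. Then $G=\Alt(\Omega)$ and $|\Omega|=8$; since the largest Boolean lattice inside $\Alt(8)$ has rank $3$ (as recorded around Figure~\ref{fig1}), this forces $\ell=3$, so $\mathcal{O}_G(H)$ is itself that rank-$3$ lattice and is identified with Figure~\ref{fig1}, giving conclusion~\eqref{enu13a}. From now on I assume no triple containing $G_1$ yields~\eqref{enu13}.

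Next I would show every $G_i$ is a partition stabilizer or equals $\Alt(\Omega)$. For a fixed $i\ge 2$, choose $j\in\{2,\dots,\ell\}\setminus\{i\}$ (available since $\ell\ge3$) and apply Theorem~\ref{thrm:AS1} to $\{G_1,G_i,G_j\}$: in both remaining cases~\eqref{enu11} and~\eqref{enu12} every maximal element of the triple is a partition stabilizer or $\Alt(\Omega)$, so in particular $G_i$ is. Since the $G_i$ are distinct, at most one equals $\Alt(\Omega)$, and that can occur only when $G=\Sym(\Omega)$. It then remains to organize the partition stabilizers into a chain. Let $\mathcal{G}$ be the collection of partition stabilizers among $G_1,\dots,G_\ell$, so $|\mathcal{G}|\in\{\ell,\ell-1\}$. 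Any three members of $\mathcal{G}$ satisfy Hypothesis~\ref{hyp:52}; cases~\eqref{enu12} (which needs an $\Alt(\Omega)$ among the three) and~\eqref{enu13} (which needs a primitive maximal element among the three) are impossible for such a triple, so Theorem~\ref{thrm:AS1}~\eqref{enu11} applies and the three partitions form a chain. When $|\mathcal{G}|\ge3$, extending any pair to a triple shows every two of the $\Sigma_i$ are comparable, so the whole family is totally ordered; when $|\mathcal{G}|=2$ (forcing $\ell=3$ and one coatom equal to $\Alt(\Omega)$), the unique triple $\{G_1,G_2,\Alt(\Omega)\}$ lies in case~\eqref{enu12}, which already asserts comparability of the two partitions. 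Sorting this chain yields~\eqref{enu11a} when no coatom is $\Alt(\Omega)$ and~\eqref{enu12a} otherwise.

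The routine part is the verification of Hypothesis~\ref{hyp:52} for each triple, which is immediate from the Boolean structure (transitivity of the meet because it contains the transitive $H$, and the rank-$3$ Boolean interval property of $\mathcal{P}(X)$). The hard part, and the place demanding the most care, is the bookkeeping that upgrades the local comparabilities handed back by Theorem~\ref{thrm:AS1} on individual triples into one global chain, together with cleanly separating the degenerate $\ell=3$ configurations (one coatom equal to $\Alt(\Omega)$, and the $n=8$ affine exception) from the generic chains; the latter relies on the externally computed fact that $\Alt(8)$ supports no Boolean overgroup lattice of rank exceeding $3$.
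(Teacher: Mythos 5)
Your proposal is correct, and in substance it is the paper's own argument: all the real work is delegated to the rank-$3$ classification, Theorem~\ref{thrm:AS1}, exactly as in the paper. The only divergence is organizational. The paper's proof is a single sentence -- induction on $\ell$ with Theorem~\ref{thrm:AS1} as base case -- so the chain-assembly you carry out by hand is done there by invoking an inductive hypothesis on rank-$(\ell-1)$ upper intervals; you instead apply Theorem~\ref{thrm:AS1} to every triple of coatoms (legitimate, since the interval above the meet of any three coatoms of a Boolean lattice is Boolean of rank $3$ with precisely those coatoms as maximal elements, and that meet is transitive because it contains $H$) and then upgrade the pairwise comparabilities of the partitions to a single global chain. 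Both routes lean on the same external input for the degenerate case, namely the fact recorded in Section~\ref{sec:basic} that the largest Boolean lattice in $\Alt(8)$ has rank $3$, which is what confines conclusion~\eqref{enu13a} to $\ell=3$. Your version has the modest merit of making explicit the bookkeeping the paper leaves to the reader: verifying Hypothesis~\ref{hyp:52} for each triple, and ruling out cases~\eqref{enu12} and~\eqref{enu13} of Theorem~\ref{thrm:AS1} for triples consisting entirely of partition stabilizers (no such triple can contain the primitive group $\Alt(\Omega)$, nor match Figure~\ref{fig1}, whose maximal elements include primitive copies of $\mathrm{AGL}_3(2)$).
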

\begin{proof}
It follows arguing inductively on $\ell$; the base case $\ell=3$ is Theorem~\ref{thrm:AS1}.
\end{proof}


\section{Boolean intervals containing a maximal intransitive subgroup}
The scope of this section is to gather some information on Boolean lattices $\mathcal{O}_G(H)$ containing a maximal element that is intransitive. Some of the material in this section can be also traced back to the PhD thesis~\cite{basile}.
\begin{hypothesis}\label{hyp:53}{\em Let $G$ be either $\Sym(\Omega)$ or $\Alt(\Omega)$ with $n:=|\Omega|$, let $\Gamma$ be a subset of $\Omega$ with $1\le |\Gamma|<|\Omega|/2$, let $G_1:=\nor G{\Gamma}$, let $G_2$ be a maximal subgroup of $G$ and let $H:=G_1\cap G_2$. Assume that $\mathcal{O}_G(H)$ is Boolean of rank $2$ with maximal elements $G_1$ and $G_2$.
}
\end{hypothesis}

\begin{theorem}\label{thrm:AS2}
Assume Hypothesis~$\ref{hyp:53}$. Then one of the following holds:
\begin{enumerate}
\item\label{enu111}$G=\Sym(\Omega)$ and $G_2=\Alt(\Omega)$. 
\item\label{enu222}$G_2$ is an imprimitive subgroup having $\Gamma$ as a block of imprimitivity. 
\item\label{enu222HELL}$G=\Alt(\Omega)$, $n=7$, $|\Gamma|=3$ and $G_2\cong\mathrm{SL}_3(2)$ acts primitively on $\Omega$.
\item\label{enu333}$|\Gamma|=1$ and one of the following holds:
\begin{description}
\item[(a)] $G=\Alt(\Omega)$, $G_2\cong\mathrm{AGL}_d(2)$ with $d\ge 3$;
\item[(b)] $G=\Alt(\Omega)$, $G_2\cong \mathrm{Sp}_{2m}(2)$ and $|\Omega|\in \{2^{m-1}(2^m+1),2^{m-1}(2^m-1)\}$; 
\item[(c)]$G=\Alt(\Omega)$, $G_2\cong HS$ and $|\Omega|=176$;
\item[(d)]$G=\Alt(\Omega)$, $G_2\cong Co_3$ and $|\Omega|=276$;
\item[(e)]$G=\Alt(\Omega)$, $G_2\cong M_{12}$ and $|\Omega|=12$;
\item[(f)]$G=\Alt(\Omega)$, $G_2\cong M_{24}$ and $|\Omega|=24$;
\item[(g)]$G=\Sym(\Omega)$, $G_2\cong \mathrm{PGL}_2(p)$ with $p$ prime and $|\Omega|=p+1$;
\item[(h)]$G=\Alt(\Omega)$, $G_2\cong \mathrm{PSL}_2(p)$ with $p$ prime and $|\Omega|=p+1$.  
\end{description}
\end{enumerate}
\end{theorem}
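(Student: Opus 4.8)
The plan is to run a case analysis on the permutation type of the maximal subgroup $G_2$, exploiting throughout that, since $\mathcal{O}_G(H)$ is Boolean of rank $2$, the subgroup $H=G_1\cap G_2$ is maximal in $G_1=\nor G\Gamma$ and the two maximal elements $G_1,G_2$ are the only proper overgroups of $H$. The first move is to dispose of the case $G_2=\Alt(\Omega)$, which can only occur when $G=\Sym(\Omega)$ and is exactly conclusion~\eqref{enu111}. So from now on I assume $G_2\ne\Alt(\Omega)$ and split according to whether $G_2$ is intransitive, imprimitive or primitive (Facts~\ref{fact1},~\ref{fact2},~\ref{fact3}). The recurring structural fact I would use is that $G_1\cong G\cap(\Sym(\Gamma)\times\Sym(\Omega\setminus\Gamma))$ and that, since $|\Gamma|<|\Omega|/2$, the two direct factors are non-isomorphic symmetric groups; hence the maximal subgroups of $G_1$ are of one of three shapes: full in the $\Gamma$-factor, full in the $(\Omega\setminus\Gamma)$-factor, or a Goursat/diagonal subgroup coming from a common quotient ($C_2$ via the sign maps, or $C_3$ when both $|\Gamma|,|\Omega\setminus\Gamma|\in\{3,4\}$).

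For $G_2$ intransitive, $G_2=\nor G{\Gamma'}$ for some subset $\Gamma'\ne\Gamma$ with $1\le|\Gamma'|<|\Omega|/2$. Then $H$ stabilises setwise every member of the set algebra generated by $\Gamma$ and $\Gamma'$. Since $\Gamma\ne\Gamma'$ and $|\Gamma|,|\Gamma'|<|\Omega|/2$ (so neither equals the complement of the other), this algebra has at least three nonempty atoms, and one of its atoms---or a union of atoms---is a subset $A$ with $1\le|A|<|\Omega|/2$ and $A\notin\{\Gamma,\Gamma'\}$; then $\nor G A$ is a maximal element of $\mathcal{O}_G(H)$ different from $G_1$ and $G_2$, contradicting that a rank-$2$ Boolean lattice has exactly two coatoms. (The finitely many boundary configurations, e.g. $|\Gamma\cup\Gamma'|=|\Omega|/2$, are checked directly and produce an extra imprimitive overgroup instead.) Hence this case does not occur. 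For $G_2$ imprimitive I would write $G_2=\nor G\Sigma$ with $\Sigma$ a non-trivial regular partition (Fact~\ref{fact2}; the $\Alt(8)$ exception is excluded because $G_2$ is maximal) and analyse the partitions that $\Sigma$ induces on $\Gamma$ and on $\Omega\setminus\Gamma$. Using that $H$ is the stabiliser of $\Sigma$ inside $G_1$ and that $H$ must be maximal in $G_1$, a short argument with the shapes above forces $\Sigma$ to restrict to a single full block on $\Gamma$ and to a block system on $\Omega\setminus\Gamma$; equivalently $\Gamma$ is a single block of $\Sigma$, which is conclusion~\eqref{enu222}.

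The primitive case is the heart of the matter and I expect it to be the main obstacle. Here $H=G_1\cap G_2$ is maximal in the primitive group $G_2$. When the maximal subgroup $H$ of $G_1$ is full in one factor, $H$ contains either $\Alt(\Omega\setminus\Gamma)$ (acting on $\Omega\setminus\Gamma$, which has more than $|\Omega|/2$ points) or $\Alt(\Gamma)$; in the first situation Jordan's theorem forces the primitive group $G_2\ge\Alt(\Omega)$, contradicting $G_2\ne\Alt(\Omega)$, while in the second, if $|\Gamma|\ge 3$ (resp.\ $|\Gamma|\ge 2$ for $G=\Sym(\Omega)$) the group $H$ contains a $3$-cycle (resp.\ a transposition) and Jordan's theorem again gives $G_2\ge\Alt(\Omega)$. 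This leaves only $|\Gamma|=1$ together with the diagonal maximal subgroups of $G_1$; the $C_3$-diagonal forces $|\Gamma|=3$, $|\Omega\setminus\Gamma|=4$, hence $n=7$ and $G=\Alt(\Omega)$, and a direct check identifies $G_2\cong\mathrm{SL}_3(2)$ acting primitively, which is conclusion~\eqref{enu222HELL}. Finally, for $|\Gamma|=1$ the subgroup $H=G_1\cap G_2$ is the stabiliser of the point $\omega$ in $G_2$, automatically maximal in $G_2$ (as $G_2$ is primitive), and maximality of $H$ in $G_1=G_\omega\in\{\Sym(\Omega\setminus\omega),\Alt(\Omega\setminus\omega)\}$ says precisely that this point stabiliser, in its action on the remaining $n-1$ points, is a maximal subgroup of $\Sym(n-1)$ or $\Alt(n-1)$. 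Invoking the classification of finite $2$-transitive groups (and, when the point stabiliser is intransitive or imprimitive on $\Omega\setminus\omega$, the description of maximal subgroups of $\Sym(n-1)$/$\Alt(n-1)$) and checking the maximality of the point stabiliser case by case yields exactly the list~\eqref{enu333}(a)--(h). The two places where genuine care is needed---and which I flag as the crux---are the enumeration of the diagonal (Goursat) maximal subgroups of subset stabilisers in the alternating group, which is what isolates the sporadic $n=7$ example, and the CFSG-dependent classification of the degree-$n$ primitive groups whose one-point stabiliser is maximal in the symmetric or alternating group of degree $n-1$.
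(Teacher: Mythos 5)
Your skeleton (trichotomy on whether $G_2$ is intransitive, imprimitive or primitive; Jordan's theorem as the workhorse in the primitive case; the CFSG list of $2$-transitive groups when $|\Gamma|=1$) is the same as the paper's, and your intransitive and imprimitive cases are essentially the paper's arguments. The genuine gap is in the primitive case, and it stems from your Goursat taxonomy of the maximal subgroups of $G_1$: the description ``full in one factor, or a fiber product over a common simple quotient'' is valid only when $G_1$ is an honest direct product, i.e.\ when $G=\Sym(\Omega)$. When $G=\Alt(\Omega)$, the group $G_1=\Alt(\Omega)\cap(\Sym(\Gamma)\times\Sym(\Omega\setminus\Gamma))$ is an index-$2$ subgroup of the product and its maximal subgroups escape your three shapes. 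The concrete casualty is $|\Gamma|=2$ with $G=\Alt(\Omega)$, a case your proof never treats: here the sign-twisted projection gives $G_1\cong\Sym(\Omega\setminus\Gamma)$, so every maximal subgroup of $\Sym(n-2)$ --- in particular every \emph{primitive} one --- pulls back to a maximal subgroup of $G_1$, and such a pullback contains no $3$-cycle, no transposition and (generically) no double transposition of $\Omega$, so Jordan's theorem and its refinements give nothing; your assertion that ``this leaves only $|\Gamma|=1$ together with the diagonal maximal subgroups'' is therefore false for $G=\Alt(\Omega)$. This omitted configuration is precisely the paper's Case 2, which is eliminated only by a substantial argument: one shows $H$ acts primitively on $\Omega\setminus\Gamma$, deduces that $G_2$ is $3$-transitive, invokes the classification of $3$-transitive groups, and then kills ${\bf F}^*(G_2)=\mathrm{PSL}_2(q)$ by a solvability-plus-Zsigmondy divisibility computation and the Mathieu candidates by machine checks. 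Without this you cannot conclude that the theorem's list, which contains no case with $|\Gamma|=2$, is complete. A secondary inaccuracy of the same origin: $C_3$ is not a quotient of $\Sym(3)$ or of $\Sym(4)$; the object isolating the $n=7$ example is the graph of $\Sym(4)\twoheadrightarrow\Sym(3)$ (the fiber product over the common, non-simple, quotient $\Sym(3)$), and it is maximal only inside the even part of $\Sym(\Gamma)\times\Sym(\Omega\setminus\Gamma)$, i.e.\ only when $G=\Alt(7)$ --- so your mechanism needs repair even though the conclusion it points to is correct.

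Even in the case $|\Gamma|=1$, which you rightly flag as the crux, two things are missing rather than merely compressed. First, before the $2$-transitive classification can be invoked you must prove that $H=(G_2)_\omega$ is transitive on $\Omega\setminus\{\omega\}$; maximality of $H$ in $G_1$ does not give this (a maximal subgroup of $\Sym(n-1)$ may perfectly well be intransitive), and the paper obtains it from the Boolean hypothesis: an intransitive $H$ would produce a second intransitive coatom of $\mathcal{O}_G(H)$. The paper then also forces $H$ to be \emph{primitive} on $\Omega\setminus\{\omega\}$ (again via Jordan-type results plus the Boolean structure), and it is this primitivity that drives the subsequent eliminations. Second, ``checking the maximality of the point stabiliser case by case'' is where almost all of the remaining work lies: ruling out $\mathrm{PSU}_3(q)$, $\mathrm{Sz}(q)$ and $\mathrm{Ree}(q)$ by order/divisibility estimates, cutting $\mathrm{PSL}_d(q)$ down to $d=2$ and $q=p$ prime, and disposing of $M_{11}$, $M_{22}$, $M_{22}.2$ and $M_{23}$ by computation; none of this is outlined, so the list in conclusion~\eqref{enu333} is asserted rather than derived. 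In short: same route as the paper, but with one whole case ($G=\Alt(\Omega)$, $|\Gamma|=2$) missing because the structural lemma it rests on fails for alternating groups, and with the core of the $|\Gamma|=1$ analysis left unproved.
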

\begin{proof}

Suppose that $G_2$ is intransitive. Thus $G_2=G\cap (\Sym(\Gamma')\times \Sym(\Omega\setminus \Gamma'))$, for some subset $\Gamma'\subseteq \Omega$ with $1\le |\Gamma'|<|\Omega|/2$. In particular, 
$$H=G_1\cap G_2=G\cap (\Sym(\Gamma\cap \Gamma')\times \Sym(\Gamma\setminus \Gamma')\times \Sym(\Gamma'\setminus\Gamma)\times \Sym(\Omega\cup (\Gamma\cup \Gamma'))).$$ Thus $H$ is contained in 
\begin{itemize}
\item$G\cap (\Sym(\Gamma\cap \Gamma')\times \Sym(\Omega\setminus(\Gamma\cap\Gamma')))$, 
\item $G\cap (\Sym(\Gamma\setminus \Gamma')\times \Sym(\Omega\setminus(\Gamma\setminus\Gamma')))$,
\item$G\cap (\Sym(\Gamma'\setminus \Gamma)\times \Sym(\Omega\setminus(\Gamma'\setminus\Gamma)))$,
\item $G\cap (\Sym(\Gamma\cup \Gamma')\times \Sym(\Omega\setminus(\Gamma\cup\Gamma')))$.   
\end{itemize}
Since the only overgroups of $H$ are $H,G_1,G_2$ and $G$, each of the previous four subgroups must be one of 
$H,G_1,G_2$ and $G$. This immediately implies $G=G\cap (\Sym(\Gamma\cap \Gamma')\times \Sym(\Omega\setminus(\Gamma\cap\Gamma')))$, that is, $\Gamma\cap \Gamma'=\emptyset$. However, $G\cap (\Sym(\Gamma\cup \Gamma')\times \Sym(\Omega\setminus(\Gamma\cup\Gamma')))$   is neither $H$, nor $G_1$, nor $G_2$, nor $G$, because $1\le |\Gamma|,|\Gamma'|<|\Omega|/2$.

\smallskip

Suppose that $G_2$ is imprimitive. 
In particular, $G_2$ is the stabilizer of a non-trivial $(a,b)$-regular partition of $\Omega$, that is, $G_2$ is the stabilizer of  a partition $\Sigma_2:=\{X_1,\ldots,X_b\}$ of the set $\Omega$ into $b$ parts each having cardinality $a$, for some positive integers $a$ and $b$ with $a,b\ge 2$. Thus $$G_2\cong G\cap(\Sym(a)\mathrm{wr}\Sym(b)).$$ The group $H=G_1\cap G_2$ is intransitive. Since $G_1$ is the only proper overgroup of $H$ that is intransitive, we deduce that $H$ has only two orbits on $\Omega$, namely $\Gamma$ and $\Omega\setminus \Gamma$. From this it follows that, for every $i\in \{1,\ldots,b\}$, either $X_i\subseteq \Gamma$ or $X_i\subseteq \Omega\setminus \Gamma$. Therefore the group $H=G_1\cap G_2$ is isomorphic to 
$$G\cap (\Sym(a)\wr\Sym(b_1)\times \Sym(a)\wr\Sym(b_2)),$$
where $b_1$ is the number of parts in $\Sigma_2$ contained in $\Gamma$ and $b_2$ is the number of parts in $\Sigma_2$ contained in $\Omega\setminus \Gamma$. Therefore, $H$ is contained in subgroups isomorphic to  
$$(\dag)\qquad G\cap (\Sym(ab_1)\times \Sym(a)\wr\Sym(b_2))\quad\textrm{and}\quad G\cap(\Sym(a)\wr\Sym(b_1)\times \Sym(ab_2)).$$
Since $H$ and $G_1$ are the only intransitive overgroup of $H$, we deduce that the two subgroups in~$(\dag)$ are $H$ or $G_1$. However this happens if and only if $b_1=1$. In other words, this happens if and only if $\Gamma\in\Sigma_2$ and we obtain part~\eqref{enu222}.

\smallskip

Suppose that $G_2$ is primitive. We divide our analysis in various cases.

\smallskip

\noindent\textsc{Case 1:} $|\Gamma|\ge 3$, or $|\Gamma|=2$ and $G=\Sym(\Omega)$.

\smallskip

\noindent Now, $H=G_1\cap G_2$ is a maximal subgroup of $G_1$. Moreover, $G_1=\Sym(\Gamma)\times \Sym(\Omega\setminus \Gamma)$ when $G=\Sym(\Omega)$ and $G_1=\Alt(\Omega)\cap (\Sym(\Gamma)\times \Sym(\Omega\setminus \Gamma))$ when $G=\Alt(\Omega)$. Consider $\pi_a:G_1\to \Sym(\Gamma)$ and $\pi_b:G_1\to \Sym(\Omega\setminus\Gamma)$ the natural projections.  
Oberve that these projections are surjective.

Assume $\pi_a(G_1\cap G_2)$ is a proper subgroup of $\Sym(\Gamma)$. Then, from the maximality of $G_1\cap G_2$ in $G_1$, we have
$$G_1\cap G_2=G\cap (\pi_a(G_1\cap G_2)\times \Sym(\Omega\setminus\Gamma)).$$ As $|\Omega\setminus\Gamma|\ge 3$, we deduce that $G_1\cap G_2$ contains a $2$-cycle or a $3$-cycle. In particular, the primitive group $G_2$ contains a $2$-cycle or a $3$-cycle. By a celebrated result of Jordan~\cite[Theorem~$3.3$A]{DM}, we  obtain $\Alt(\Omega)\le G_2$. Thus $G=\Sym(\Omega)$ and $G_2=\Alt(\Omega)$ and we obtain part~\eqref{enu111}. Suppose then $\pi_a(G_1\cap G_2)=\Sym(\Gamma)$ and let $K_a:=\mathrm{Ker}(\pi_a)\cap G_1\cap G_2$.

If $\pi_b(G_1\cap G_2)$ is a proper subgroup of $\Sym(\Omega\setminus\Gamma)$, using the same argument of the previous paragraph we obtain part~\eqref{enu111}.


Suppose then $\pi_b(G_1\cap G_2)=\Sym(\Omega\setminus\Gamma)$ and let $K_b:=\mathrm{Ker}(\pi_b)\cap G_1\cap G_2$. In the rest of the proof of this case the reader might find useful to see Figure~\ref{fig2}.

\begin{figure}
\begin{tikzpicture}[node distance=2cm]
\node(A0){$H=G_1\cap G_2$};
\node(A1)[below of=A0]{$K_aK_b$};
\node(A2)[below of=A1]{$K_a$};
\node(A3)[right of=A2]{$K_b$};
\node(A4)[below of=A3]{$K_a\cap K_b=1$};
\draw(A0)--(A1);
\draw(A1)--(A2);
\draw(A1)--(A3);
\draw(A2)--(A4);
\draw(A3)--(A4);
\path[-] (A0) edge [in=120, out=230] node [left, rotate=0]{$\Sym(\Gamma)\cong$}(A2);
\path[-] (A0) edge [in=100, out=-50] node [right, rotate=0]{$\cong\Sym(\Omega\setminus\Gamma)$}(A3);
\end{tikzpicture}
\caption{Structure of $H=G_1\cap G_2$}\label{fig2}
\end{figure}
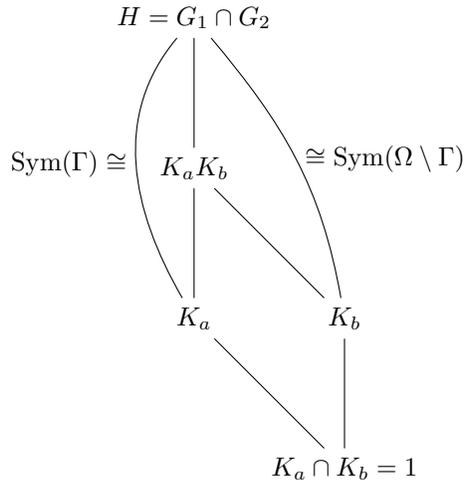

Now, $K_aK_b$ is a subgroup of $G_1\cap G_2$, moreover $(G_1\cap G_2)/(K_aK_b)$ is an epimorphic image of both $\Sym(\Gamma)$ and $\Sym(\Omega\setminus \Gamma)$. Assume $|\Omega\setminus \Gamma|\ge 5$. Then the only epimorphic image of both $\Sym(\Gamma)$ and $\Sym(\Omega\setminus \Gamma)$ is either the identity group or the cyclic group of order $2$. Therefore $|G_1\cap G_2:K_aK_b|\le 2$. Moreover, $K_aK_b/K_b\cong K_a/(K_a\cap K_b)=K_a$ is isomorphic to either $\Alt(\Omega\setminus\Gamma)$ or to $\Sym(\Omega\setminus \Gamma)$. In both cases, $\Alt(\Omega\setminus\Gamma)\le K_a\le G_2$ and hence $G_2$ contains a $3$-cycle. As above, this implies $G=\Sym(\Omega)$ and $G_2=\Alt(\Omega)$ and part~\eqref{enu111} holds. Assume $|\Omega\setminus \Gamma|\le 4$. As $1\le |\Gamma|<|\Omega|/2$, we deduce $|\Omega|\le 7$. When $|\Gamma|=3$, we obtain $|\Omega|=7$ and we can verify with a direct analysis that part~\eqref{enu1111} holds when $G=\Sym(\Omega)$ and part~\eqref{enu222HELL} holds when $G=\Alt(\Omega)$. Finally, if $|\Gamma|=2$, we have $|\Omega|\in \{5,6\}$ and $G=\Sym(\Omega)$. A direct inspection in each of these cases reveals that every maximal subgroup of $G_1$ contains either a $2$-cycle or a $3$-cycles. Therefore $G_2=\Alt(\Omega)$ and part~\eqref{enu111} holds.

\smallskip

\noindent\textsc{Case 2:} $|\Gamma|=2$ and $G=\Alt(\Omega)$.

\smallskip

\noindent In this case, $G_1=\Alt(\Omega)\cap (\Sym(\Gamma)\times \Sym(\Omega\setminus\Gamma))\cong \Sym(\Omega\setminus\Gamma)$.

Assume that $H=G_1\cap G_2$ acts intransitively on $\Omega\setminus\Gamma$ and let $\Delta$ be one of its smallest orbits. In particular, $H$ fixes setwise $\Gamma$, $\Delta$ and $\Omega\setminus(\Gamma\cup\Delta)$. Now, $\Alt(\Omega)\cap (\Sym(\Gamma\cup\Delta)\times\Sym(\Omega\setminus(\Gamma\cup \Delta)))$ is a proper overgroup of $H$ which is intransitive and is different from $G_1$, which is a contradiction. Therefore $H$ acts transitively on $\Omega\setminus \Gamma$. Suppose that $H$ acts imprimitively on $\Omega\setminus\Gamma$. Since $H$ is maximal in $G_1\cong\Sym(\Omega\setminus \Gamma)$, we deduce $H=\nor {G_1}\Sigma$, where $\Sigma$ is a non-trivial $(a,b)$-regular partition of $\Omega\setminus \Gamma$. If $a\ge 3$, then $H$ contains a $3$-cycle and hence so does $G_2$. Since $G_2$ is primitive, we deduce from~\cite[Theorem~$3.3$A]{DM} that $G_2=\Alt(\Omega)=G$, which is a contradiction. 
If $a=2$, then $H$ contains a permutation that is the product of two disjoint transpositions. Since $G_2$ is primitive, we deduce from~\cite[Theorem~$3.3$D and Example~$3.3.1$]{DM} that either $G_2=\Alt(\Omega)=G$ or $|\Omega|\le 8$. The first possibility is clearly impossible and hence $|\Omega|\in \{6,8\}$. However, a computation in $\Alt(6)$ and in $\Alt(8)$ reveals that no case arises.  Therefore $H$ acts primitively on $\Omega\setminus \Gamma$.
 
Let $\Gamma=\{\gamma,\gamma'\}$. As $|\Gamma|=2$, the group $(G_1\cap G_2)_\gamma=H_\gamma$ has index at most $2$ in $G_1\cap G_2=H$ and hence $H_\gamma\unlhd H$. Since $H$ acts primitively on $\Omega\setminus\Gamma$ and $H_\gamma\unlhd H$, $H_\gamma$ acts transitively on  $\Omega\setminus \Gamma$ or $H_\gamma$ is trivial. The second possibility is clearly a contradiction because it implies $|H|= 2$ and hence $|\Omega|=4$. Thus $H_\gamma$ acts transitively on $\Omega\setminus \Gamma$ and the orbits of $H_\gamma$ on $\Omega$ are $\{\gamma\},\{\gamma'\},\Omega\setminus \Gamma$ and have cardinality $1,1,|\Omega|-2$. Since $G_2$ is primitive and not regular, from Lemma~\ref{l:4}, we deduce that $\gamma$ is the only fixed point of $(G_2)_\gamma$.
 Since $H_\gamma$ is a subgroup of $(G_2)_\gamma$ from the cardinality of the orbits of $H_\gamma$, we deduce that 
$(G_2)_\gamma$ acts transitively on $\Omega\setminus\{\gamma\}$, that is, $G_2$ is $2$-transitive. Similarly, since $H_\gamma\le (G_2)_{\gamma}\cap (G_2)_{\gamma'}$, we deduce also that $G_2$ is $3$-transitive.  

From the classification of the finite $3$-transitive groups, we deduce that 
\begin{enumerate}
\item $G_2$ equals the Mathieu group $M_{n}$ and $n=|\Omega|\in \{11,12,22,23,24\}$, or
\item $G_2=M_{11}$ and $|\Omega|=12$, or
\item ${\bf F}^*(G_2)=\mathrm{PSL}_2(q)$ and $|\Omega|=q+1$.
\end{enumerate}
Using this information, a computation with the computer algebra system \texttt{magma} shows that the cases~(1) and~(2) do not arise because $\mathcal{O}_G(H)$ is not Boolean of rank $2$. In case~(3), from the structure of $\mathrm{PSL}_2(q)$, we deduce that $G_1\cap G_2$ is solvable and hence $G_1\cap G_2$ is a solvable group acting primitively on $|\Omega|-2$ points. This yields that $q-1$ is a prime power, say $q-1=x^y$, for some prime $x$ and for some positive integer $y$. Write $q=p^f$, for some prime power $p$ and some positive integer $f$. Since $p^f-1$ is a power of a prime, we deduce that $p^f-1$ has no primitive prime divisors. From a famous result of Zsigmondy~\cite{zi}, this yields 
\begin{description}
\item[(a)]$f=1$, $x=2$ and $q-1=2^y$, or
\item[(b)] $q=9$, $x=2$ and $y=3$ or
\item[(c)] $p=2$, $f$ is prime and $q-1=2^f-1=x$ is a prime.
\end{description} We can now refine further our argument above. Indeed, recall that $G_1\cap G_2$ is a maximal subgroup of $G_1\cong \Sym(\Omega\setminus\Gamma)$. Since $G_1\cap G_2$ is solvable, we deduce that $G_1\cap G_2$ is isomorphic to the general linear group $\mathrm{AGL}_y(x)$ and hence $|G_1\cap G_2|=x^y|\mathrm{GL}_y(x)|=(q-1)|\mathrm{GL}_y(x)|$. Since $G_2=\nor {\Alt(q+1)}{\mathrm{PSL}_2(q)}$ and $|\Aut(\mathrm{PSL}_2(q))|=fq(q^2-1)$, we deduce that $|G_1\cap G_2|$ divides $2f(q-1)$. Therefore $|\mathrm{GL}_y(x)|$ divides $2f$. Cases~{\bf (a)} and~{\bf (b)} are readily seen to be impossible and in case~{\bf (c)} we have $|\mathrm{GL}_1(x)|=2^f-2=2(2^{f-1}-1)$ divides $2f$, which is possible only when $f=3$. A computation reveals that in this latter case $\mathcal{O}_G(H)$ has 5 elements and hence it is not Boolean.

\smallskip

\noindent\textsc{Case 3:} $|\Gamma|=1$.

\smallskip

\noindent We assume that the conclusion in part~\eqref{enu111} of this lemma does not hold and hence $G_2$ is a primitive subgroup of $G$ with $\Alt(\Omega)\nleq G_2$.

Assume that $H=G_1\cap G_2$ acts intransitively on $\Omega\setminus\Gamma$ and let $\Delta$ be one of its smallest orbits. In particular, $H$ fixes setwise $\Gamma$, $\Delta$ and $\Omega\setminus(\Gamma\cup\Delta)$. Now, $\Alt(\Omega)\cap (\Sym(\Gamma\cup\Delta)\times\Sym(\Omega\setminus(\Gamma\cup \Delta)))$ is a proper overgroup of $H$ which is intransitive and is different from $G_1$, which is a contradiction. Therefore $H$ acts transitively on $\Omega\setminus \Gamma$. Suppose that $H$ acts imprimitively on $\Omega\setminus\Gamma$. Since $H$ is maximal in $G_1\cong\Sym(\Omega\setminus \Gamma)$, we deduce $H=\nor {G_1}\Sigma$, where $\Sigma$ is a non-trivial $(a,b)$-regular partition of $\Omega\setminus \Gamma$. If $a\ge 3$, then $H$ contains a $3$-cycle and hence so does $G_2$. Since $G_2$ is primitive, we deduce from~\cite[Theorem~$3.3$A]{DM} that $\Alt(\Omega)\le G_2$, which is a contradiction. If $a=2$, then $H$ contains a permutation that is the product of two disjoint transpositions. Since $G_2$ is primitive, we deduce from~\cite[Theorem~$3.3$D and Example~$3.3.1$]{DM} that either $\Alt(\Omega)\le G_2$ or $|\Omega|\le 8$. The first possibility is clearly impossible. In the second case, as $a=2$, we have that $|\Omega\setminus \Gamma|$ is even and hence $|\Omega|\in \{5,7\}$. However, a computation in $\Alt(5)$, $\Sym(5)$, $\Alt(7)$  and $\Sym(7)$ reveals that no case arises.  Therefore $$H \textrm{ acts primitively on }\Omega\setminus \Gamma.$$

In particular, $G_2$ is $2$-transitive on $\Omega$. One of the first main applications on the Classification of the Finite Simple Groups is the classification of the finite $2$-transitive groups, see~\cite{peter}. These groups are either affine or almost simple. For the rest of the proof we go through this classification for  investigating $G_2$ further; we assume that the reader is broadly familiar with this classification and  for this part we refer the reader to Section~$7.7$ in~\cite{DM}.

\smallskip

\noindent\textsc{Case 3A:} $G_2$ is affine.

\smallskip

\noindent Since $G_2$ is a maximal subgroup of $G$, we deduce that $G_2\cong G\cap \mathrm{AGL}_d(p)$, for some prime number $p$ and some positive integer $d$. Now, $G_1\cap G_2\cong G\cap\mathrm{GL}_d(p)$ and the action of $G_1\cap G_2$ on $\Omega\setminus\Gamma$ is permutation isomorphic to the natural action of a certain subgroup of index at most $2$ of the linear group $\mathrm{GL}_d(p)$ acting  on the non-zero vectors of a $d$-dimensional vector space over the field with $p$-elements. Clearly, this action is primitive if and only if $d=1$ and $p-1$ is prime, or $p=2$. Indeed, if $V$ is the $d$-dimensional vector space over the field $\mathbb{F}_p$  with $p$ elements, then $\mathrm{GL}_d(p)$ preserves the partition $\{\{av\mid a\in \mathbb{F}_p,a\ne 0\}\mid v\in V,v\ne 0\}$ of $V\setminus\{0\}$. This partition is the trivial partition only when $p=2$ or $d=1$.  When $d=1$, the group $\mathrm{GL}_1(p)$ is cyclic of order $p-1$ and it acts primitively on $V\setminus\{0\}$ if and only if $p-1$ is a prime number. Since the only two consecutive primes are $2$ and $3$, in the latter case we obtain $|\Omega|=3$ and no case arises here. Thus $p=2$.

 If $d\le 2$, then $\Alt(\Omega)\le G_2$, which is a contradiction. Therefore $d\ge 3$. With a computation (using the fact that $\mathrm{GL}_d(2)$ is generated by transvectoins for example) we see that, when $d\ge 3$, the group $\mathrm{AGL}_d(2)$ consists of even permutations and hence $\mathrm{AGL}_d(2)\le \Alt(\Omega)$. This implies $G=\Alt(\Omega)$ and we obtain one of the examples stated in the theorem, namely part~\eqref{enu333}~{\bf (a)}.

\smallskip

\noindent\textsc{Case 3B:} $G_2\cong\mathrm{Sp}_{2m}(2)$ and $|\Omega|=2^{m-1}(2^m+1)$ or $|\Omega|=2^{m-1}(2^m-1)$. 

\smallskip

\noindent The group $G_1\cap G_2$ is isomorphic to either $\mathrm{O}_{2m}^+(2)$ or to $\mathrm{O}_{2m}^-(2)$ depending on whether $|\Omega|=2^{m-1}(2^m+1)$ or $|\Omega|=2^{m-1}(2^m-1)$. Since $G_2$ is a simple group, we deduce $G_2\le\Alt(\Omega)$ and hence $G=\Alt(\Omega)$. We obtain part~\eqref{enu333}~{\bf (b)}.

\smallskip

\noindent\textsc{Case 3C:} ${\bf F}^*(G_2)\cong\mathrm{PSU}_{3}(q)$ and $|\Omega|=q^3+1$.

\smallskip

\noindent Let $q=p^f$, for some prime number $p$ and for some positive integer $f$. Observe that $G_1\cap G_2$ is solvable,  it is a maximal subgroup of $G_1$ and  it acts primitively on $\Omega\setminus\Gamma$. From this  we deduce that $G_1\cap G_2$ is isomorphic to $G\cap \mathrm{AGL}_{3f}(p)$. Since $|\Aut(\mathrm{PSU}_3(q))|=2f(q^3+1)q^3(q^2-1)$ and $|\Omega|=q^3+1$, we deduce that $G_1\cap G_2$ has order a divisor of $2fq^3(q^2-1)$. Therefore $|\mathrm{AGL}_{3f}(p)|=q^3|\mathrm{GL}_{3f}(p)|$ divides $4fq^3(q^2-1)$ (observe that the extra ``2'' in front of $2fq^3(q^2-1)$ takes in account the case that $G=\Alt(\Omega)$ and $G\cap\mathrm{AGL}_{3f}(p)$ has index $2$ in $\mathrm{AGL}_{3f}(p)$). Therefore $|\mathrm{GL}_{3f}(p)|$ divides $4f(q^2-1)$. However the inequality $|\mathrm{GL}_{3f}(p)|\le 4f(p^{2f}-1)$ is never satisfied.

\smallskip

\noindent\textsc{Case 3D:} ${\bf F}^*(G_2)\cong\mathrm{Sz}(q)$, $q=2^f$ for some odd positive integer $f\ge 3$ and $|\Omega|=q^2+1$.

\smallskip

\noindent  Since $\Aut(\mathrm{Sz}(q))\cong \mathrm{Sz}(q).f$ and since $f$ is odd, we deduce $G_2\le \Alt(\Omega)$. In particular, $G=\Alt(\Omega)$. As in the case above, $G_1\cap G_2$ is solvable,  $G_1\cap G_2$ is a maximal subgroup of $G_1$ and  $G_1\cap G_2$ acts primitively on $\Omega\setminus\Gamma$. From this  we deduce that $G_1\cap G_2$ is isomorphic to $G\cap\mathrm{AGL}_{2f}(2)$. Since $|\Aut(\mathrm{Sz}(q))|=f(q^2+1)q^2(q-1)$ and $|\Omega|=q^2+1$, we deduce that $G_1\cap G_2$ has order a divisor of $fq^2(q-1)$. Therefore $|\mathrm{AGL}_{2f}(2)|=q^2|\mathrm{GL}_{2f}(2)|$ divides $4fq^2(q-1)$. Therefore $|\mathrm{GL}_{2f}(2)|$ divides $4f(q-1)$. However the inequality $|\mathrm{GL}_{2f}(2)|\le 4f(2^{f}-1)$ is never satisfied.

\smallskip

\noindent\textsc{Case 3E:} ${\bf F}^*(G_2)\cong\mathrm{Ree}(q)$, $q=3^f$ for some odd positive integer $f\ge 1$ and $|\Omega|=q^3+1$.

\smallskip

\noindent  Since $\Aut(\mathrm{Ree}(q))\cong \mathrm{Ree}(q).f$ and since $f$ is odd, we deduce $G_2\le \Alt(\Omega)$. In particular, $G=\Alt(\Omega)$. As in the case above, $G_1\cap G_2$ is solvable,  $G_1\cap G_2$ is a maximal subgroup of $G_1$ and  $G_1\cap G_2$ acts primitively on $\Omega\setminus\Gamma$. From this  we deduce that $G_1\cap G_2$ is isomorphic to $G\cap\mathrm{AGL}_{3f}(3)$. Since $|\Aut(\mathrm{Ree}(q))|=f(q^3+1)q^3(q-1)$ and $|\Omega|=q^3+1$, we deduce that $G_1\cap G_2$ has order a divisor of $fq^3(q-1)$. Therefore $|\mathrm{AGL}_{3f}(3)|=q^3|\mathrm{GL}_{3f}(3)|$ divides $4fq^3(q-1)$. Therefore $|\mathrm{GL}_{3f}(3)|$ divides $4f(q-1)$. However the inequality $|\mathrm{GL}_{3f}(3)|\le 4f(3^{f}-1)$ is never satisfied.

\smallskip

\noindent\textsc{Case 3F:} $(G_2,|\Omega|)\in \{(HS,176),(Co_3,276),(\Alt(7),15),(\mathrm{PSL}_2(11),11),(M_{11},12)\}$.

\smallskip

\noindent  Since $\mathrm{PSL}_2(11)<M_{11}$ in their degree $11$ actions, $\Alt(7)<\mathrm{PSL}_4(2)$ in their degree $15$ actions and $M_{11}<M_{12}$ in their degree $12$ actions, we see that $\mathrm{PSL}_2(11)$, $\Alt(7)$ and $M_{12}$ are not maximal in $G$ and hence cannot be $G_2$. Therefore, we are left with $(G_2,|\Omega|)\in \{(HS,176),(Co_3,276)\}$. We obtain part~\eqref{enu333}~{\bf (c)} and~{\bf (d)}.

\smallskip

\noindent\textsc{Case 3G:} $(G_2,|\Omega|)\in \{(M_{11},11),(M_{12},12),(M_{22},22),(M_{22}.2,22),(M_{23},23),(M_{24},24)\}$.

\smallskip

\noindent With a computer computation we see that when $G_2\cong M_{11}$ the lattice $\mathcal{O}_{G}(H)$ is not Boolean. The cases $M_{22}$ and $M_{22}.2$ do not arise because in these two cases $G_1\cap G_2$ is isomorphic to either $\mathrm{PSL}_3(4)$ (when $G=\Alt(\Omega)$) or to $\mathrm{P}\Sigma\mathrm{L}_3(4)$ (when $G=\Sym(\Omega)$). However, these two groups are not maximal subgroups of $G_1$ because they are contained respectively in $\mathrm{PGL}_3(4)$ and in $\mathrm{P}\Gamma\mathrm{L}_3(4)$. Therefore, we are left with $(G_2,|\Omega|)\in \{(M_{12},12),(M_{23},23),(M_{24},24)\}$. The case $(G_2,|\Omega|)=(M_{23},23)$ also does not arise because with a computation computation we see that $\mathcal{O}_G(H)$ consists of five elements. Thus we are only left with part~\eqref{enu333}~{\bf (e)} and~{\bf (f)}.

\smallskip

\noindent\textsc{Case 3I:} ${\bf F}^*(G_2)\cong \mathrm{PSL}_d(q)$ for some prime power $q$ and some positive integer $d\ge 2$ and $|\Omega|=(q^d-1)/(q-1)$.

\smallskip

\noindent Since the group $G_2$ is acting on the points of a $(d-1)$-dimensional projective space, we deduce that $G_1\cap G_2$ acts primitively on $\Omega\setminus\Gamma$ only when $G_2$ is acting on the projective line, that is, $d=2$. (Indeed, consider the action of $X:=\mathrm{P}\Gamma\mathrm{L}_d(q)$  on the points of the projective space $\mathcal{P}$, consider a point $p$ of $\mathcal{P}$ and consider the stabilizer $Y$ of the point $p$ in $X$. Then $Y$ preserves a natural partition on $\mathcal{P}\setminus\{p\}$, where two points $p_1$ and $p_2$ are declared to be in the same part if the lines $\langle p,p_1\rangle$ and $\langle p,p_2\rangle$ are equal. This partition is trivial only when $\mathcal{P}$ is a line, that is, $d=2$.) Let $q=p^f$, for some prime number $p$ and for some positive integer $f$. Observe that $G_1\cap G_2$ is solvable,  it is a maximal subgroup of $G_1$ and  it acts primitively on $\Omega\setminus\Gamma$. From this  we deduce that $G_1\cap G_2$ is isomorphic to $G\cap \mathrm{AGL}_{f}(p)$. Since $|\Aut(\mathrm{PSL}_2(q))|=f(q^2-1)q$ and $|\Omega|=q+1$, we deduce that $G_1\cap G_2$ has order a divisor of $f(q-1)q$. Therefore $|\mathrm{AGL}_{f}(p)|=q|\mathrm{GL}_{f}(p)|$ divides $2f(q-1)q$ (observe that the extra ``2'' in front of $fq(q-1)$ takes in account the case that $G=\Alt(\Omega)$ and $G\cap\mathrm{AGL}_{f}(p)$ has index $2$ in $\mathrm{AGL}_{f}(p)$). Therefore $|\mathrm{GL}_{f}(p)|$ divides $2f(q-1)$.  The inequality $|\mathrm{GL}_{f}(p)|\le 2f(p^{f}-1)$ is satisfied only when $f=1$ or $p=f=2$. When $p=f=2$, we have $|\Omega|=5$ and hence $G_2=\Alt(\Omega)$, which is not the case. Thus $q=p$ and $f=1$. In particular, ${\bf F}^*(G_2)=\mathrm{PSL}_2(p)$, for some prime number $p$. Now, we obtain part~{\bf (g)} and~{\bf (h)} depending on whether $G=\Sym(\Omega)$ or $G=\Alt(\Omega)$.
\end{proof}

\begin{hypothesis}\label{hyp:54}{\em Let $G$ be either $\Sym(\Omega)$ or $\Alt(\Omega)$, let $\Gamma$ be a subset of $\Omega$ with $1\le |\Gamma|<|\Omega|/2$, let $G_1:=\nor G{\Gamma}$, let $G_2$ and $G_3$ be maximal subgroups of $G$ and let $H:=G_1\cap G_2\cap G_3$. Assume that
$\mathcal{O}_G(H)$ is Boolean of rank $3$ with maximal elements $G_1,G_2$ and $G_3$.}
\end{hypothesis}

\begin{theorem}\label{thrm:AS3}
Assume Hypothesis~$\ref{hyp:54}$. Then, relabeling the indexed set $\{1,2,3\}$ if necessary,
one of the following holds:
\begin{enumerate}
\item\label{enu1111}$G=\Sym(\Omega)$, $G_2$ is an imprimitive group having $\Gamma$ as a block of imprimitivity and $G_3=\Alt(\Omega)$.
\item\label{enu2222}$G=\Sym(\Omega)$, $|\Gamma|=1$, $G_3=\Alt(\Omega)$, $G_2\cong\mathrm{PGL}_2(p)$ for some prime $p$ and $|\Omega|=p+1$.
\item\label{enu4444}$G=\Alt(\Omega)$, $|\Gamma|=1$, $G_2\cong G_3\cong M_{24}$ and $|\Omega|=24$.
\end{enumerate}
\end{theorem}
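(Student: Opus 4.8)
The plan is to mirror the proof of Theorem~\ref{thrm:AS1}, descending to the rank-$2$ classification of Theorem~\ref{thrm:AS2}. Since $\mathcal{O}_G(H)$ is Boolean of rank $3$ with coatoms $G_1,G_2,G_3$, each pairwise intersection $G_i\cap G_j$ is an atom, is maximal in both $G_i$ and $G_j$, and the interval $\mathcal{O}_G(G_i\cap G_j)=\{G_i\cap G_j,G_i,G_j,G\}$ is Boolean of rank $2$ with maximal elements $G_i,G_j$. Hence both pairs $\{G_1,G_2\}$ and $\{G_1,G_3\}$ satisfy Hypothesis~\ref{hyp:53} (with $H$ there replaced by $G_1\cap G_2$, respectively $G_1\cap G_3$), so I may apply Theorem~\ref{thrm:AS2} to each. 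Thus each of $G_2,G_3$ falls into exactly one of four types: \textbf{(I)} $G=\Sym(\Omega)$ and the subgroup equals $\Alt(\Omega)$ (part~\eqref{enu111}); \textbf{(II)} it is imprimitive with $\Gamma$ a block (part~\eqref{enu222}); \textbf{(III)} $G=\Alt(\Omega)$, $n=7$, $|\Gamma|=3$ and it is $\cong\mathrm{SL}_3(2)$ (part~\eqref{enu222HELL}); \textbf{(IV)} $|\Gamma|=1$ and it is one of the $2$-transitive groups of part~\eqref{enu333}.

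The bulk of the proof is a case analysis over the unordered pair of types of $(G_2,G_3)$, where I first discard the internally inconsistent combinations. The value of $|\Gamma|$ and the ground group $G$ must agree for both pairs, and this alone kills most mixed types: type (III) forces $|\Gamma|=3$ while type (IV) forces $|\Gamma|=1$, so (III,IV) is impossible; type (I) forces $G=\Sym(\Omega)$ while type (III) forces $G=\Alt(\Omega)$, so (I,III) is impossible; (I,I) is impossible since $G_2\ne G_3$; in (II,III) one would need a $\Gamma$ of size $3$ to be a block of a regular partition of a $7$-set, contradicting $3\nmid 7$; and in (II,IV) one would need a $\Gamma$ of size $1$ to be a non-trivial block, which is absurd. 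What then remains is: (I,II), giving conclusion~\eqref{enu1111}; the combination of type (I) with type (IV), in which matching the ground group $G=\Sym(\Omega)$ forces the type (IV) member into the unique part of~\eqref{enu333} with $G=\Sym(\Omega)$, namely $\mathrm{PGL}_2(p)$ with $n=p+1$, giving conclusion~\eqref{enu2222}; and the genuinely hard combinations (II,II), (III,III) and (IV,IV).

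To eliminate (II,II) I would apply Theorem~\ref{thrm:AS} to the pair $\{G_2,G_3\}$: both are stabilizers of non-trivial regular partitions, and $G_2\cap G_3$ is transitive (it cannot lie in a maximal intransitive subgroup, since its only proper overgroups are the transitive groups $G_2,G_3$), so Hypothesis~\ref{hyp:51} holds. As $G_2,G_3$ are both imprimitive, part~\eqref{enu2} of Theorem~\ref{thrm:AS} is excluded, leaving either comparable partitions (part~\eqref{enu1}) or a complementary pair of $(2,n/2)$- and $(n/2,2)$-regular partitions (part~\eqref{enu3}). Both are incompatible with $\Gamma$ being simultaneously a block of $\Sigma_2$ and of $\Sigma_3$: comparability together with the equal part sizes $|\Gamma|$ would force $\Sigma_2=\Sigma_3$ and hence $G_2=G_3$, while the complementary case would require $\Gamma$ to be a part of an $(n/2,2)$-regular partition, contradicting $|\Gamma|<n/2$. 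The small configuration (III,III), with $n=7$ and $G_2,G_3\cong\mathrm{SL}_3(2)$ inside $\Alt(7)$, I would dispose of by a direct, computer-assisted inspection showing that no Boolean lattice of rank $3$ arises.

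The main obstacle is the case (IV,IV), where $|\Gamma|=1$, say $\Gamma=\{\omega\}$, and $G_2,G_3$ are $2$-transitive. Here I would descend to the point stabilizer $G_1=\nor G\Gamma\cong\Alt(n-1)$ or $\Sym(n-1)$: the atoms $A:=(G_2)_\omega$ and $B:=(G_3)_\omega$ are maximal in $G_1$, act primitively on $\Omega\setminus\{\omega\}$ (as established inside Case~$3$ of the proof of Theorem~\ref{thrm:AS2}), and satisfy that $\mathcal{O}_{G_1}(A\cap B)$ is Boolean of rank $2$ with maximal elements $A,B$. By Lemma~\ref{l:1}, $A\cap B$ is primitive (the exceptional degree $n-1=8$ being checked separately), so the primitive-subgroup machinery of Sections~\ref{sec:AS} and~\ref{sec:primitive}---in particular Corollary~\ref{cor:1} and the almost-simple overgroup dichotomy of Theorem~\ref{thrm:A}---constrains the pair $(A,B)$. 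Matching the admissible primitive pairs against the point stabilizers of the $2$-transitive groups listed in part~\eqref{enu333}, I expect only the configuration arising from $M_{24}$ on $24$ points to survive: there $A\cong B\cong M_{23}$ and $G_2\cap G_3\cong\mathrm{PSL}_2(23)$, exactly the triple $(\mathrm{PSL}_2(23),M_{24},24)$ of Theorem~\ref{thrm:A}~\eqref{thrmAenu2}~(c), which forces $G=\Alt(\Omega)$ and yields conclusion~\eqref{enu4444}. The delicate points, where I expect most of the work to concentrate, are ruling out the affine and remaining sporadic point stabilizers (types (a)--(f),(h)) and eliminating the $G=\Sym(\Omega)$ subcase in which both $G_2,G_3\cong\mathrm{PGL}_2(p)$---the latter handled by analysing the solvable primitive point stabilizers $\mathrm{AGL}_1(p)$ of $\Sym(p)$ acting on a prime number of points.
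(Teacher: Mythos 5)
Your reduction is the same as the paper's: pass to the rank-$2$ intervals $\mathcal{O}_G(G_1\cap G_2)$ and $\mathcal{O}_G(G_1\cap G_3)$, apply Theorem~\ref{thrm:AS2} to each, and run a case analysis on the resulting pair of types. Your treatment of the combinations involving $\Alt(\Omega)$ and the imprimitive type is essentially correct and parallels the paper's Cases A--C (your $|\Gamma|$-mismatch eliminations are a legitimate, slightly more direct variant of the paper's use of Theorem~\ref{thrm:AS} on $\{G_2,G_3\}$), up to one imprecision: from intransitivity of $G_2\cap G_3$ you cannot conclude that it lies in a \emph{maximal} intransitive subgroup, since both its orbits could have size $|\Omega|/2$; this is exactly the situation Lemma~\ref{l:0} is designed for, and that residual case still needs the $|\Gamma|<|\Omega|/2$ contradiction to be closed.

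The genuine gap is your case (IV,IV), which is the mathematical heart of the theorem, and your proposal does not prove it. The setup is fine ($A=G_1\cap G_2$ and $B=G_1\cap G_3$ primitive and maximal in $G_1$, and $A\cap B$ primitive by Lemma~\ref{l:1}), but the machinery you then invoke --- Corollary~\ref{cor:1} and Theorem~\ref{thrm:A} --- applies only to \emph{almost simple} primitive subgroups, and in precisely the critical subcases the relevant intersections are not almost simple: for $G_2\cong\mathrm{AGL}_d(2)$ the group $G_2\cap G_3$ can be affine, and for $G_2\cong G_3\cong\mathrm{PGL}_2(p)$, as well as in the surviving $M_{24}$ configuration, the group $A\cap B$ is solvable (of $\mathrm{AGL}_1(p)$ type, respectively $C_{23}\rtimes C_{11}$). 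So that machinery cannot deliver the eliminations you ``expect'', and your sketch defers exactly the part where all the work lies. What the paper actually does is different in substance: it first shows $H=G_1\cap G_2\cap G_3$ is transitive on $\Omega\setminus\Gamma$, hence $G_2\cap G_3$ is $2$-transitive on $\Omega$ (its displayed condition~\eqref{eq:1}), and then eliminates each candidate of Theorem~\ref{thrm:AS2}\eqref{enu333} individually: the affine case by the socle argument of Lemma~\ref{l:orcoboiaboia}; $\mathrm{Sp}_{2m}(2)$, $HS$ and $Co_3$ by the maximal subgroup and factorization tables of \cite{LPS}; $M_{12}$ and $M_{24}$ by explicit computer computations (showing $\mathcal{O}_{\Alt(12)}(\mathrm{PSL}_2(11))$ is not Boolean, while $\mathcal{O}_{\Alt(24)}(C_{23}\rtimes C_{11})$ is Boolean of rank $3$); and $\mathrm{PSL}_2(p)$, $\mathrm{PGL}_2(p)$ by their subgroup structure together with the parity observation that $G_2\cap G_3\cong\mathrm{PSL}_2(p)\le\Alt(\Omega)$ would force $\Alt(\Omega)$ to be a fourth coatom. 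None of these arguments, nor workable substitutes for them, appear in your proposal, so as it stands the classification in the statement is not established.
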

\begin{proof}
A computation shows that the largest Boolean lattice in $\Alt(\Omega)$ when $|\Omega|=7$ has rank $2$. Hence, in the rest of our argument we suppose that $|\Omega|\ne 7$; in particular, part~\eqref{enu222HELL} in Theorem~\ref{thrm:AS2} does not arise.

We apply Theorem~\ref{thrm:AS2} to the pairs $\{G_1,G_2\}$ and $\{G_1,G_3\}$. Relabeling the indexed set $\{2,3\}$ if necessary, we have to consider in turn one of the following case:
\begin{description}
\item[case A] $G_2$ and $G_3$ are imprimitive (hence $G_2$ and $G_3$ are stabilizers of  non-trivial regular partitions having $\Gamma$ as one block);
\item[case B]$G_2$ is imprimitive and $G_3$ is primitive;
\item[case C]$G_2$ and $G_3$ are primitive.
\end{description}

\smallskip

\noindent\textsc{Case A: }Since $\mathcal{O}_{G}(G_2\cap G_3)$ is Boolean of rank $2$, from Lemma~\ref{l:0}, we deduce that either $G_2\cap G_3$ is transitive or $G_2$ or $G_3$ is the stabilizer of a $(|\Omega|/2,2)$-regular partition. As $|\Gamma|\ne |\Omega|/2$, we deduce that $G_2\cap G_3$ is transitive. Therefore, we are in the position to apply Theorem~\ref{thrm:AS} to the pair $\{G_2,G_3\}$. However, none of the possibilities there can arise here because both $G_2$ and $G_3$ have $\Gamma$ as a block of imprimitivity  and $1\le |\Gamma|<|\Omega|/2$.

\smallskip

\noindent\textsc{Case B: } From Theorem~\ref{thrm:AS2}, we have that $\Gamma$ is a block of imprimitivity for $G_2$. If $G_3=\Alt(\Omega)$, then we obtain~\eqref{enu1111}. Suppose then $G_3\ne \Alt(\Omega)$.  As $|\Gamma|\ne |\Omega|/2$, Lemma~\ref{l:0} implies that $G_2\cap G_3$ is transitive and hence we may apply Theorem~\ref{thrm:AS} to the pair $\{G_2,G_3\}$. In particular, Theorem~\ref{thrm:AS} part~\eqref{enu2} holds and hence $G_3$ is an affine primitive group and $G_2$ is the stabilizer of a $(n/2,2)$-regular partition. Thus $|\Gamma|=|\Omega|/2$, which is a contradiction.

\smallskip

\noindent\textsc{Case C: } Suppose that either $G_2$ or $G_3$ equals $\Alt(\Omega)$. Relabeling the indexed set $\{2,3\}$ if necessary, we may suppose that $G_3=\Alt(\Omega)$. In particular, $G=\Sym(\Omega)$. Now, Theorem~\ref{thrm:AS2} implies that $|\Gamma|=1$, $G_2\cong\mathrm{PGL}_2(p)$ for some prime $p$ and $|\Omega|=p+1$. Therefore, we obtain~\eqref{enu2222}. 

It remains to consider the case that $G_2$ and $G_3$ are both primitive and both different from $\Alt(\Omega)$. As $|\Omega|\ne 7$, Theorem~\ref{thrm:AS2} implies that $|\Gamma|=1$, $G_2$ and $G_3$ are one of the groups described in part~\eqref{enu333}. Now, $G_1\cong \Sym(\Omega\setminus\Gamma)$ or $G_1\cong\Alt(\Omega\setminus\Gamma)$, depending on whether $G=\Sym(\Omega)$ or $G=\Alt(\Omega)$. Moreover, $\mathcal{O}_{G}(G_2\cap G_3)$ is a Boolean lattice of rank $2$ having $G_2$ and $G_3$ as maximal elements. From Lemma~\ref{l:1}, we deduce that either $G_2\cap G_3$ acts primitively on $\Omega$, or $G=\Alt(\Omega)$, $G_2\cap G_3=\nor G \Sigma$ for some $(2,4)$-regular partition $\Sigma$. In the latter case, we see with a computation that the lattice $\mathcal{O}_G(G_1\cap G_2\cap G_3)$ is not Boolean (see also Figure~\ref{fig1}). Therefore 
\begin{align*}G_2\cap G_3 \textrm{  acts primitively on }\Omega.
\end{align*} 

Consider then $H:=G_1\cap G_2\cap G_3$ and suppose that $H$ is intransitive on $\Omega\setminus \Gamma$. Since $|\Omega\setminus\Gamma|=|\Omega|-1$, $H$ has an orbit $\Delta\subseteq \Omega\setminus\Gamma$ with $1\le |\Delta|<|\Omega|/2$. Then $\nor G \Delta\in\mathcal{O}_G(H)$ and $\nor G\Delta$ is a maximal element of $\mathcal{O}_G(H)$, contradicting the fact that $G_1$ is the only intransitive element in $\mathcal{O}_G(H)$. Thus $H$ is transitive on $\Omega\setminus\Gamma$. Therefore
\begin{align}\label{eq:1}G_2\cap G_3 \textrm{  acts 2-transitively on }\Omega.
\end{align}

Suppose that $G_2$ is as in Theorem~\ref{thrm:AS2}~\eqref{enu333}~{\bf (a)}, that is, $G_2\cong \mathrm{AGL}_d(2)$ for some $d\ge 3$.  Let $V_2$ be the socle of $G_2$. From Lemma~\ref{l:orcoboiaboia} applied with applied with $H$ there replaced by $G_2\cap G_3$ here , we have either $V_2\le G_2\cap G_3$ or $|\Omega|=8$, $G=\Alt(\Omega)$ and $G_2\cap G_3\cong\mathrm{PSL}_2(7)$. In the second case, $G_1\cap G_2\cap G_3\cong C_7\rtimes C_3$; however, a computation yields that $\mathcal{O}_{\Alt(8)}(C_7\rtimes C_3)$ is not Boolean of rank $3$. Therefore, $V_2\le G_2\cap G_3$. The only primitive groups in Theorem~\ref{thrm:AS2}~\eqref{enu333} with $|\Omega|$ a power of a prime are $\mathrm{AGL}_d(2)$ or $\mathrm{PSL}_2(p)$ when $p+1=2^d$. In particular, either $G_3\cong \mathrm{AGL}_d(2)$, or $G_3\cong \mathrm{PSL}_2(p)$ and $p+1=2^d$. In the second case, since the elementary abelian $2$-group $V_2$ is contained in $G_2\cap G_3$, we deduce that $\mathrm{PSL}_2(p)$ contains an elementary abelian $2$-group of order $2^d$, which is impossible. Therefore, $G_3\cong \mathrm{AGL}_d(2)$. Let $V_3$ be the socle of $G_3$. From Lemma~\ref{l:orcoboiaboia}, we deduce $V_3\le G_2\cap G_3$. In particular, $V_2\unlhd G_2\cap G_3$ and $V_3\unlhd G_2\cap G_3$. Since $G_2\cap G_3$ is primitive, we infer $V_2=V_3$ and hence $G_2=\nor G{V_2}=\nor G{V_3}=G_3$, which is a contradiction. 

Suppose that $G_2$ is as in Theorem~\ref{thrm:AS2}~\eqref{enu333}~{\bf (b)}, that is, $G_2\cong \mathrm{Sp}_{2m}(2)$. To deal with both actions simultaneously we set $\Omega^+:=\Omega$ when $|\Omega|=2^{m-1}(2^m+1)$ and $\Omega^-:=\Omega$ when $|\Omega|=2^{m-1}(2^m-1)$. We can read off from~\cite[Table~1]{LPS}, the maximal subgroups of $G_2$ transitive on either $\Omega^+$ or $\Omega^-$ (this is our putative $G_2\cap G_3$). Comparing these candidates with the list of $2$-transitive groups, we see that none of these groups is $2$-transitive, contradicting~\eqref{eq:1}.

Suppose that $G_2$ is as in Theorem~\ref{thrm:AS2}~\eqref{enu333}~{\bf (c)}, that is, $G_2\cong HS$. The only maximal subgroup of $G_2$ primitive on $\Omega$ is $M_{22}$ in its degree $176$ action. Thus $G_2\cap G_3\cong M_{22}$ in its degree $176$ action. However, this action is not $2$-transitive, contradicting~\eqref{eq:1}.

Suppose that $G_2$ is as in Theorem~\ref{thrm:AS2}~\eqref{enu333}~{\bf (d)}, that is, $G_2\cong Co_3$. From~\cite[Table~$6$]{LPS}, we see that $Co_3$ has no proper subgroup acting primitively on $\Omega$. Therefore this case does not arise in our investigation.

Suppose that $G_2$ is as in Theorem~\ref{thrm:AS2}~\eqref{enu333}~{\bf (e)}, that is, $G_2\cong M_{12}$. In particular, $G_1\cap G_2\cong M_{11}$. We have computed the maximal subgroups of $M_{11}$ with the help of a computer, up to conjugacy, we have five maximal subgroups of $M_{11}$: one of them is our putative $G_1\cap G_2\cap G_3$. For each of these five subgroups, we have computed the orbits on $\Omega$. Observe that one of this orbit is $\Gamma$. If $G_1\cap G_2\cap G_3$ was intransitive on $\Omega\setminus\Gamma$, then $\mathcal{O}_G(H)$ contains a maximal intransitive subgroup which is not $G_1$, contradicting our assumptions. Among the five choices, there is only one (isomorphic to $\mathrm{PSL}_2(11)$) which is transitive on $\Omega\setminus\Gamma$. Thus $G_1\cap G_2\cap G_3\cong \mathrm{PSL}_2(11)$. Next, we have computed $\mathcal{O}_{\Alt(12)}(\mathrm{PSL}_2(11))$ and we have checked that it is not Boolean (it is a lattice of size $6$).

Suppose that $G_2$ is as in Theorem~\ref{thrm:AS2}~\eqref{enu333}~{\bf (f)}, that is, $G_2\cong M_{24}$. The only maximal subgroup of $M_{24}$ acting primitively is $\mathrm{PSL}_2(23)$. Thus $G_2\cap G_3\cong \mathrm{PSL}_2(23)$, and $G_1\cap G_2\cap G_3\cong C_{23}\rtimes C_{11}$.  Now, $\mathcal{O}_{G_1}(G_1\cap G_2\cap G_3)\cong \mathcal{O}_{\Alt(23)}(C_{23}\rtimes C_{11})$. Since $\mathcal{O}_{G_1}(G_1\cap G_2\cap G_3)$ is Boolean of rank $2$, so is $\mathcal{O}_{\Alt(23)}(C_{23}\rtimes C_{11})$. We have checked with the help with a computer that $\mathcal{O}_{\Alt(24)}(C_{23}\rtimes C_{11})$ is Boolean of rank $3$ and this gives rise to the marvellous example in~\eqref{enu4444}.

Using the subgroup structure of $\mathrm{PSL}_2(p)$ and $\mathrm{PGL}_2(p)$ with $p$ prime, we see that $\mathrm{PSL}_2(p)$ does not contain a proper subgroup acting primitively on the $p+1$ points of the projective line, whereas the only proper primitive subgroup of $\mathrm{PGL}_2(p)$ acting primitively on the projective line is $\mathrm{PSL}_2(p)$. Thus part~\eqref{enu333}~{\bf (h)} in Theorem~\ref{thrm:AS2} does not arise and if part~\eqref{enu333}~{\bf (g)} in Theorem~\ref{thrm:AS2} does arise, then $G_2\cap G_3\cong \mathrm{PSL}_2(p)$. However this is impossible because this implies that $G_2\cap G_3\le \Alt(\Omega)$ and hence $\Alt(\Omega)$ must be a maximal element of $\mathcal{O}_G(H)$, but we have dealt with this situation already.
\end{proof}

\begin{corollary}\label{thrm:AS4}
Let $H$ be a subgroup of $G$ and suppose that $\mathcal{O}_G(H)$ is Boolean of rank $\ell\ge 3$ and that $\mathcal{O}_G(H)$ contains a maximal element which is intransitive. Then $\ell=3$; moreover,  relabeling the indexed set $\{1,2,3\}$ if necessary, $G_1=\nor G \Gamma$ for some $\Gamma\subseteq\Omega$ with $1\le |\Gamma|<|\Omega|/2$ and
one of the following holds:
\begin{enumerate}
\item\label{enu11111}$G=\Sym(\Omega)$, $G_2$ is an imprimitive group having $\Gamma$ as a block of imprimitivity and $G_3=\Alt(\Omega)$.
\item\label{enu22222}$G=\Sym(\Omega)$, $|\Gamma|=1$, $G_3=\Alt(\Omega)$, $G_2\cong\mathrm{PGL}_2(p)$ for some prime $p$ and $|\Omega|=p+1$.
\item\label{enu33333}$G=\Alt(\Omega)$, $|\Gamma|=1$, $G_2\cong G_3\cong M_{24}$ and $|\Omega|=24$.
\end{enumerate}
\end{corollary}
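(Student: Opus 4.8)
The plan is to reduce everything to the rank-$3$ classification of Theorem~\ref{thrm:AS3} by exploiting the self-similarity of Boolean lattices, and then to show that rank $\ge 4$ is impossible. First I would record that $\mathcal{O}_G(H)$ has \emph{at most one} intransitive maximal element: if two coatoms $G_i,G_j$ were both intransitive, then $\mathcal{O}_G(G_i\cap G_j)$ is Boolean of rank $2$ and Theorem~\ref{thrm:AS2} applies with $G_1:=G_i=\nor G\Gamma$ and $G_2:=G_j$, but none of its conclusions allows the second maximal subgroup to be intransitive, a contradiction. So the intransitive coatom is unique; relabel it $G_1=\nor G\Gamma$. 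The key structural remark is that for any two further coatoms $G_i,G_j$ (with $i,j\ge 2$, $i\ne j$) the principal filter above $G_1\cap G_i\cap G_j$ is Boolean of rank exactly $3$ with maximal elements $G_1,G_i,G_j$, and, since $\mathcal{O}_G(H)$ contains every subgroup between $H$ and $G$, this filter is precisely $\mathcal{O}_G(G_1\cap G_i\cap G_j)$. Thus Hypothesis~\ref{hyp:54} holds for every triple $\{G_1,G_i,G_j\}$ and Theorem~\ref{thrm:AS3} applies to each. For $\ell=3$ this is the assertion, so the whole content is to rule out $\ell\ge 4$.

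For $G=\Sym(\Omega)$ I would use a uniqueness argument. In the admissible conclusions \eqref{enu1111} and \eqref{enu2222} of Theorem~\ref{thrm:AS3} exactly one of the two non-$G_1$ maximal elements equals $\Alt(\Omega)$, while \eqref{enu4444} requires $G=\Alt(\Omega)$ and is excluded. Since $\Alt(\Omega)$ is a single subgroup, it coincides with at most one coatom $G_{i_0}$. If $\ell\ge 4$, then $\{2,\dots,\ell\}\setminus\{i_0\}$ contains two distinct indices $i,j$, and for the triple $\{G_1,G_i,G_j\}$ neither $G_i$ nor $G_j$ is $\Alt(\Omega)$; hence none of the three conclusions of Theorem~\ref{thrm:AS3} can hold, a contradiction. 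Therefore $\ell=3$ and we are in \eqref{enu11111} or \eqref{enu22222}.

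For $G=\Alt(\Omega)$ only \eqref{enu4444} survives, so if $\ell\ge 4$ every triple forces $|\Gamma|=1$, $|\Omega|=24$, and $G_2,\dots,G_\ell\cong M_{24}$; writing $\gamma$ for the fixed point, $G_1\cap G_i$ is the stabiliser of $\gamma$ in $M_{24}$, so $G_1\cap G_i\cong M_{23}$, and Theorem~\ref{thrm:AS3}\eqref{enu4444} gives $G_1\cap G_i\cap G_j\cong C_{23}\rtimes C_{11}=\nor{M_{23}}{P}$ for a Sylow $23$-subgroup $P$ of $M:=G_1\cap G_2\cong M_{23}$. The plan is to localise inside $M$. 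For $\ell\ge 4$ the distinct coatoms $G_3,G_4$ yield two distinct Sylow $23$-normalisers $M\cap G_3$ and $M\cap G_4$ of $M$; since an element of order $23$ lies in a unique Sylow $23$-subgroup, two distinct such normalisers intersect in a subgroup of order prime to $23$, and by the Boolean covering relations $M\cap G_3\cap G_4$ is maximal in $C_{23}\rtimes C_{11}$, hence $M\cap G_3\cap G_4\cong C_{11}$, a Sylow $11$-subgroup of $M$. Then $\mathcal{O}_M(M\cap G_3\cap G_4)$ is Boolean of rank $2$, i.e. a four-element diamond whose two maximal elements are copies of $C_{23}\rtimes C_{11}$. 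The contradiction comes from the embedding $M_{11}<M_{23}$: a Sylow $11$-normaliser of $M_{11}$ has order $55$, so $C_{11}$ is properly contained in some $C_{11}\rtimes C_5\le M_{23}$, which lies strictly between $C_{11}$ and $M$ and differs from both copies of $C_{23}\rtimes C_{11}$. This extra overgroup makes the interval have more than four elements, contradicting Boolean rank $2$; hence $\ell=3$ and conclusion~\eqref{enu33333} holds.

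The routine parts are the Boolean-lattice bookkeeping and the repeated invocation of Theorems~\ref{thrm:AS2} and~\ref{thrm:AS3}. The genuine obstacle is the $\Alt(\Omega)$ case: one must pin down the precise isomorphism types ($M_{23}$, $C_{23}\rtimes C_{11}$, and the Sylow $11$-subgroup $C_{11}$) along the chain of intersections and then produce the forbidden intermediate subgroup $C_{11}\rtimes C_5$ from the subgroup structure of $M_{23}$. This is exactly where the rigidity of the sporadic example $\mathcal{O}_{\Alt(24)}(C_{23}\rtimes C_{11})$ — already verified to be Boolean of rank $3$ in the proof of Theorem~\ref{thrm:AS3} — prevents any extension to higher rank.
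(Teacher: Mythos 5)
Your proposal is correct, and its skeleton is the paper's: you verify Hypothesis~\ref{hyp:54} for each triple $\{G_1,G_i,G_j\}$ (using that an interval in a Boolean overgroup lattice is itself a full overgroup lattice), invoke Theorem~\ref{thrm:AS3}, and your treatment of $G=\Sym(\Omega)$ --- choosing two coatoms distinct from $\Alt(\Omega)$ so that no conclusion of Theorem~\ref{thrm:AS3} can hold --- is the same contradiction the paper reaches by applying Theorem~\ref{thrm:AS3} to $\{G_1,G_2,G_4\}$ and deducing $G_4=\Alt(\Omega)=G_3$. (Your preliminary uniqueness of the intransitive coatom via Theorem~\ref{thrm:AS2} is not in the paper and not needed, but harmless.) Where you genuinely diverge is the $M_{24}$ endgame. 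The paper argues that $G_1\cap G_2\cap G_3\cap G_4$ is transitive on $\Omega\setminus\Gamma$ and asserts that $M_{23}$ would then contain a maximal chain $M_{23}>A>B>C$ with $C$ transitive, which does not exist; note, however, that when $\ell=4$ the interval $[G_1\cap G_2\cap G_3\cap G_4,\,G_1\cap G_2]$ has rank $2$, so the lattice literally supplies only a chain $M_{23}>A>B$ with $B$ transitive --- and such a chain \emph{does} exist, namely $M_{23}>C_{23}\rtimes C_{11}>C_{23}$ --- so the paper's contradiction really rests on the two distinct atoms $G_1\cap G_2\cap G_3\ne G_1\cap G_2\cap G_4$ both being forced to coincide with $\nor{M_{23}}{C_{23}}$, a point left implicit. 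Your argument supplies exactly this missing distinctness step, in dual form: the two atoms are distinct Sylow $23$-normalizers of $M\cong M_{23}$, so their intersection (the bottom) has order prime to $23$ and, being maximal in $C_{23}\rtimes C_{11}$, must be a Sylow $11$-subgroup $C_{11}$; then $C_{11}\rtimes C_5=\nor{M_{11}}{C_{11}}\le M_{23}$ (which contains your $C_{11}$ after conjugating $M_{11}$, by Sylow) is a fifth element of the four-element rank-$2$ interval $[C_{11},M]$, a contradiction. A slightly shorter finish, once you know the bottom is $C_{11}$, is to observe that $C_{11}$ cannot act transitively on $23$ points, clashing with the paper's ``usual'' transitivity argument; this avoids quoting the order of $\nor{M_{11}}{C_{11}}$. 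Both routes lean on the subgroup structure of $M_{23}$ and on the identification $G_1\cap G_i\cap G_j\cong C_{23}\rtimes C_{11}$, which comes from the \emph{proof} (not the statement) of Theorem~\ref{thrm:AS3}, as you correctly acknowledge; what your version buys is a contradiction that is explicit and checkable precisely at the critical case $\ell=4$, where the paper's chain argument is too terse to verify literally.
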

\begin{proof}
Let $G_1,G_2,\ldots,G_\ell$ be the maximal elements of $\mathcal{O}_G(H)$. Relabeling the indexed set if necessary, we may suppose that $G_1=\nor G\Gamma$, for some $\Gamma\subseteq \Omega$ with $1\le |\Gamma|<|\Omega|/2$. From Theorem~\ref{thrm:AS3} applied to $\mathcal{O}_{G}(G_1\cap G_2\cap G_3)$, we obtain that $G_1,G_2,G_3$ satisfy one of the cases listed there. We consider these cases in turn. Suppose $G_3=\Alt(\Omega)$ and $G_2$ is an imprimitive group having $\Gamma$ as a block of imprimitivity. If $\ell\ge 4$, then we may apply Theorem~\ref{thrm:AS3} to $\{G_1,G_2,G_4\}$ and we deduce that $G_4=\Alt(\Omega)=G_3$, which is a contradiction. Suppose then $|\Gamma|=1$, $G_3=\Alt(\Omega)$, $G_2\cong\mathrm{PGL}_2(p)$ for some prime $p$ and $|\Omega|=p+1$. If $\ell\ge 4$, then we may apply Theorem~\ref{thrm:AS3} to $\{G_1,G_2,G_4\}$ and we deduce that $G_4=\Alt(\Omega)=G_3$, which is a contradiction.

Finally, suppose that $G=\Alt(\Omega)$, $|\Omega|=24$, $|\Gamma|=1$, $G_2\cong G_2\cong M_{24}$. If $\ell\ge 4$, then we may apply Theorem~\ref{thrm:AS3} to $\{G_1,G_2,G_4\}$ and we deduce that $G_4\cong M_{24}$. In particular, $\mathcal{O}_{G_1}(G_1\cap G_2\cap G_3\cap G_4)$ is a Boolean lattice of rank $3$ having three maximal subgroups $G_1\cap G_2,G_1\cap G_3,G_1\cap G_4$ all isomorphic to $M_{23}$. Arguing as usual $G_1\cap G_2\cap G_3\cap G_4$ acts transitively on $\Omega\setminus \Gamma$. Therefore, $M_{23}$ has a chain $M_{23}>A>B>C$ with $C$ maximal in $B$, $B$ maximal in $A$, $A$ maximal in $M_{23}$, with $C$ transitive. However, there is no such a chain.
\end{proof}

\section{Proof of Theorem~$\ref{thrm:main}$}

We use the notation and the terminology in the statement of Theorem~\ref{thrm:main}. If, for some $i\in \{1,\ldots,\ell\}$, $G_i$ is intransitive, then the proof follows from Corollary~\ref{thrm:AS4}. In particular, we may assume that $G_i$ is transitive, for every $i\in \{1,\ldots,\ell\}$. If, for some $i\in \{1,\ldots,\ell\}$, $G_i$ is imprimitive, then the proof follows from Corollary~\ref{cor:111}.  In particular, we may assume that $G_i$ is primitive, for every $i\in \{1,\ldots,\ell\}$. Now, the proof follows from Corollary~\ref{cor:2}.


\section{Large Boolean lattices arising from imprimitive maximal subgroups}\label{sec:construction}

In this section, we prove that $G$ admits Boolean lattices $\mathcal{O}_G(H)$ of arbitrarily large rank, arising from Theorem~\ref{thrm:main} part~\eqref{main1}. Let $\ell$ be a positive integer with $\ell\ge 2$ and let $\Sigma_1,\ldots,\Sigma_\ell$ be a family of non-trivial regular partitions of $\Omega$ with
$$\Sigma_1<\Sigma_2<\cdots<\Sigma_\ell.$$
For each $i\in \{1,\ldots,\ell\}$, we let $$M_i:=\nor G {\Sigma_i}=\{g\in G\mid X^g\in \Sigma_i,\forall X\in \Sigma_i\}$$
be the stabiliser of the partition $\Sigma_i$ in $G$. More generally, for every $I\subseteq\{1,\ldots,\ell\}$, we let
$$M_I:=\bigcap_{i\in I}M_i.$$
When $I=\{i\}$, we have $M_{\{i\}}=M_i$. Moreover, when  $I=\emptyset$, we are implicitly setting $G=M_\emptyset$. We let $H:=M_{\{1,\ldots,\ell\}}$.

 Here we show that, except when $|\Omega|=8$ and $G=\Alt(\Omega)$,
\begin{equation}\label{eq:eqeqeqeqeq}\mathcal{O}_G(H)=\{M_I\mid I\subseteq \{1,\ldots,\ell\}\}
\end{equation}
and hence $\mathcal{O}_G(H)$ is isomorphic to the Boolean lattice of rank $\ell$. As usual, the case $|\Omega|=8$ and $G=\Alt(\Omega)$ is exceptional because of Figure~\ref{fig1}. To prove~\eqref{eq:eqeqeqeqeq}, it suffices to show that, if $M\in\mathcal{O}_G(H)$, then there exists $I\subseteq \{1,\ldots,\ell\}$ with $M=M_I$. 

We start by describing the structure of the groups $M_I$, for each $I\subseteq \{1,\ldots,\ell\}$. Let $i\in \{1,\ldots,\ell\}$. Since $M_i$ is the stabilizer of a non-trivial regular partition $\Sigma_i$, we have
$$M_i\cong G\cap (\Sym(n/n_i)\mathrm{wr}\Sym(n_i)),$$
where $\Sigma_i$ is a $(n/n_i,n_i)$-regular partition. Since $\{\Sigma_i\}_{i=1}^\ell$ forms a chain, we deduce that $n_{i}$ divides $n_{i+1}$, for each $i\in \{1,\ldots,\ell-1\}$. Now, let $i,j\in \{1,\ldots,\ell\}$ with $i<j$. The group $M_{\{i,j\}}=\nor G{\Sigma_i}\cap \nor G{\Sigma_j}$ is the stabiliser in $G$ of $\Sigma_i$ and $\Sigma_j$. Since $\Sigma_i<\Sigma_j$, we deduce that
$$M_{\{i,j\}}\cong G\cap\left(\Sym(n/n_j)\mathrm{wr} \Sym(n_j/n_i) \mathrm{wr} \Sym(n_i)\right).$$
The structure of an arbitrary element $M_I$ is analogous. Let $I=\{i_1,\ldots,i_\kappa\}$ be a subset of $I$ with $i_1<i_2<\ldots <i_\kappa$. Since $\Sigma_{i_1}<\Sigma_{i_{2}}<\cdots <\Sigma_{\kappa}$, we deduce that
$$M_{I}\cong G\cap\left(\Sym(n/n_{i_\kappa})\mathrm{wr} \Sym(n_{i_{\kappa}}/n_{i_{\kappa-1}}) \mathrm{wr}\cdots \mathrm{wr}
\Sym(n_{i_{2}}/n_{i_1})\mathrm{wr} \Sym(n_{i_1})\right).$$
In particular,
$$H\cong G\cap\left(\Sym(n/n_{\ell})\mathrm{wr} \Sym(n_{\ell}/n_{\ell-1}) \mathrm{wr}\cdots \mathrm{wr}\Sym(n_{2}/n_{1})\mathrm{wr} \Sym(n_{1})\right).$$

\smallskip

We show~\eqref{eq:eqeqeqeqeq} arguing by induction on $\ell$. When $\ell=1$, $H=M_1=\nor G{\Sigma_1}$ and $\mathcal{O}_G(H)=\{H,G\}$ because $H$ is a maximal subgroup of $G$ by Fact~\ref{fact2} (recall that we are excluding the case $G=\Alt(\Omega)$ and $|\Omega|=8$ in the discussion here). For the rest of the proof, we suppose $\ell\ge 2$.

\smallskip

Let $M\in\mathcal{O}_G(H)$. Suppose $M$ is primitive. As $H\le M$, we deduce that $M$ contains a $2$-cycle or a $3$-cycle (when $G=\Sym(\Omega)$ or when $n/n_1\ge 3$), or a product of two transpositions (when $G=\Alt(\Omega)$ and $n/n_1=2$). From~\cite[Theorem~$3.3$D and Example~$3.3.1$]{DM}, either $\Alt(\Omega)\le M$ or $|\Omega|\le 8$. In the first case, $M=M_\emptyset$. When $|\Omega|\in \{6,8\}$, we see with a direct inspection that no exception arises (recall that we are excluding the case $G=\Alt(\Omega)$ and $|\Omega|=8$ in the discussion here). Therefore,  $M$ is not primitive. 

Since $M$ is imprimitive, $H\le M$ and $\Sigma_1,\ldots,\Sigma_\ell$ are the only systems of imprimitivity left invariant by $H$, we deduce that $M$ leaves invariant one of these systems of imprimitivity. Let $i\in\{1,\ldots,\ell\}$ be maximum such that $M$ leaves invariant $\Sigma_i$, that is, $M\le M_i$. Fix $X\in \Sigma_{i}$ and consider $\nor M{X}=\{g\in M\mid X^g=X\}$. Consider also the natural projection $$\pi:\nor {M_{i}}X\to \Sym(X)\cong\Sym(n/n_{i}).$$This projection is  surjective. For each $j\in \{1,\ldots,\ell\}$ with $i < j$ consider $\Sigma_{j}':=\{Y\in\Sigma_j\mid Y\subseteq X\}$. By construction $\Sigma_j'$ is a non-trivial regular partition of $X$ and $$\Sigma_{i+1}'< \Sigma_{i+2}'< \cdots <\Sigma_{\ell}'.$$ Moreover,
$$\pi(\nor {M_j}X)=\nor {\Sym(X)}{\Sigma_{j}'}.$$
In particular, as $\cap_{j=i+1}^\ell\nor {\Sym(X)}{\Sigma_j'}=\pi(\nor H X)\le \pi(\nor M X)$, by induction on $\ell$, $$\pi(\nor M X)=\bigcap_{j\in I'}\nor {\Sym(X)}{\Sigma_j'},$$
for some $I'\subseteq\{i+1,\ldots,\ell\}$. Now, if $I'\ne \emptyset$, then the action of $\nor MX$ on $X$ leaves invariant some $\Sigma_{j}'$, for some $j\in I'$. Since $\Sigma_i< \Sigma_j$ and since $M$ leaves invariant $\Sigma_i$, it is not hard to see that $M$ leaves invariant $\Sigma_j$. However, as $i<j$, we contradict the maximality of $i$. Therefore $I'=\emptyset$ and hence $$\pi(\nor M X)=\Sym(X).$$

Let $H_{(\Omega\setminus X)}$ and $M_{(\Omega\setminus X)}$ be the point-wise stabilizer of $\Omega\setminus X$ in $H$ and in $M$, respectively. Thus $H_{(\Omega\setminus X)}\le M_{(\Omega\setminus X)}\le \Sym(X)$. From the definition of $H$ and from the fact that $X$ is a block of $\Sigma_i$, we deduce 
\[
H_{(\Omega\setminus X)}\cong
\begin{cases}
 \Sym(n/n_\ell)\mathrm{wr}\Sym(n_\ell/n_{\ell-1})\mathrm{wr}\cdots \mathrm{wr}\Sym(n_{i+1}/n_{i})&\textrm{when }G=\Sym(\Omega),\\
\Alt(n/n_i)\cap (\Sym(n/n_\ell)\mathrm{wr}\Sym(n_\ell/n_{\ell-1})\mathrm{wr}\cdots \mathrm{wr}\Sym(n_{i+1}/n_{i}))&\textrm{when }G=\Alt(\Omega).\\
\end{cases}
\] 
We claim that 
\begin{equation}\label{eqbasta!}\Alt(X)\le M_{(\Omega\setminus X)}.
\end{equation} When $i=\ell$, this is clear because in this case $\Alt(X)\le H_{(\Omega\setminus X)}$ from the structure of $H_{(\Omega\setminus X)}$. Suppose then $i\le \ell-1$. Assume first that either $n/n_\ell\ge 3$ or $n/n_i=|X|\ge 5$.
From the description of $H_{(\Omega\setminus X)}$ and from $i\le \ell-1$, it is clear that $H_{(\Omega\setminus X)}$ contains a permutation $g$ which is either a cycle of length $3$ or the product of two transpositions. Define $V:=\langle g^m\mid m\in \nor M X\rangle$. As $H\le M$, we deduce that $g\in M_{(\Omega\setminus X)}$ and hence $V\le M_{(\Omega\setminus X)}$. Since $\pi(\nor M X)=\Sym(X)$, we get $V\unlhd \Sym(X)$ and hence $V=\Alt(X)$. In particular, our claim is proved in this case. It remains to consider the case that $n/n_\ell=2$ and $|X|<5$. As $i\le \ell-1$, this yields $i=\ell-1$, $n/n_\ell=n_\ell/n_{\ell-1}=2$ and $|X|=4$. Observe that in this case, the group $V$ has order $4$ and is the Klein subgroup of $\Alt(X)$. When $G=\Sym(\Omega)$, $H_{(\Omega\setminus X)}$ contains a transposition and hence we may repeat this argument replacing $g$ with this transposition; in this case, we deduce $M_{(\Omega\setminus X)}=\Sym(X)$ and hence our claim is proved also in this case. Assume then $G=\Alt(\Omega)$, $i=\ell-1$, $n/n_\ell=n_\ell/n_{\ell-1}=2$ and $|X|=4$. Among all elements $h\in \nor M X$ with $\pi(h)$ a cycle of length $3$, choose $h$ with the maximum number of fixed points on $\Omega$. Assume that $h$ fixes point-wise $X'$, for some $X'\in \Sigma_i$. From the structure of $H$, we see that $H$ contains a permutation $g$ normalizing both $X$ and $X'$, acting on both sets as a transposition and fixing point-wise $\Omega\setminus (X\cup X')$. Now, a computation shows that $g^{-1}h^{-1}gh$ acts as a cycle of length $3$ on $X$ and fixes point-wise $\Omega\setminus X$, that is, $g^{-1}h^{-1}gh\in M_{(\Omega\setminus X)}$. In particular, $\Alt(X)\le M_{(\Omega\setminus X)}$ in this case. Therefore, we may suppose that $h$ fixes point-wise no block $X'\in\Sigma_i$. Assume that $h$ acts as a cycle of length $3$ on three blocks $X_1,X_2,X_3\in\Sigma_{i}$, that is, $X_1^h=X_2$, $X_2^h=X_3$ and $X_3^h=X_1$. From the structure of $H$, we see that $H$ contains a permutation $g$ normalizing both $X$ and $X_1$, acting on both sets as a transposition and fixing point-wise $\Omega\setminus(X\cup X_1)$. Now, a computation shows that $g^{-1}h^{-1}gh$ acts as a cycle of length $3$ on $X$, as a transposition on $X_1$, as a transposition on $X_2$ and fixes point-wise $\Omega\setminus (X\cup X_1\cup X_2)$. In particular, $(g^{-1}h^{-1}gh)^2$ acts as a cycle of length $3$ and fixes point-wise $\Omega \setminus X$. Thus $(g^{-1}h^{-1}gh)^2\in M_{(\Omega\setminus X)}$ and $\Alt(X)\le M_{(\Omega\setminus X)}$ also in this case. Finally, suppose that $h$ fixes set-wise but not point-wise each block in $\Sigma_i$. In particular, for each $X'\in\nor MX$, we have $X'^h=X'$ and $h$ acts as a cycle of length $3$ on $X'$. Let $X'\in \Sigma_i$ with $X'\ne X$. From the structure of $H$, we see that $H$ contains a permutation $g$ normalizing both $X$ and $X'$, acting on both sets as a transposition and fixing point-wise $\Omega\setminus (X\cup X')$. Now, a computation shows that $g^{-1}h^{-1}gh$ acts as a cycle of length $3$ on $X$ and on $X'$ and fixes point-wise $\Omega\setminus (X\cup X')$. As $h$ was choosen with the maximum number of fixed points with $\pi(h)$ having order $3$, we deduce that $\Omega=X\cup X'$, that is, $n=8$. In particular, we end up with the exceptional case in Figure~\ref{fig1}, which we are excluding in our discussion. Therefore,~\eqref{eqbasta!} is now proved.

Let $K_i$ be the kernel of the action of $M_i$ on $\Sigma_i$. Thus $$K_i=G\cap\prod_{X\in\Sigma_i}\Sym(X)\cong G\cap \Sym(n/n_i)^{n_i}.$$ From~\eqref{eqbasta!}, we deduce $$\Alt(n/n_i)^{n_i}\cong \prod_{X\in \Sigma_i}\Alt(X)\le M.$$ 
As $H\le M$, we obtain $K_i=H(\prod_{X\in \Sigma_i}\Alt(X))\le M$.
%
%
 
\smallskip

Since $\Sigma_1< \Sigma_{2}< \cdots < \Sigma_i$, for every $j\in \{1,\ldots,i\}$, we may consider $\Sigma_j$ as a regular partition of $\Sigma_i$. More formally, define $\Omega'':=\Sigma_i$ and define $\Sigma_j'':=\{\{Y\in \Sigma_i\mid Y\subseteq Z\}\mid Z\in\Sigma_j\}$. Thus $\Sigma_j''$ is the quotient partition of $\Sigma_j$ via $\Sigma_i$.
  Clearly, $M_j/K_i=\nor {M_i}{\Sigma_j''}$.  Applying our  induction hypothesis to the chain $\Sigma_{1}''<\cdots <\Sigma_{i}''$, we have $M/K_i=M_I/K_i$, for some subset $I$ of $\{1,\ldots,i\}$. Since $K_i\le M$, we deduce $M=M_I$.

\section{Large Boolean lattices arising from primitive maximal subgroups}\label{sec:construction2}

\begin{lemma}\label{proprioquelle}Let $\Sigma$ be a $(c,d)$-regular partition of $\Omega$. Given a transitive subgroup $U$ of $\Sym(d)$, we identify the group $X=\Sym(c) \mathrm{wr }\, U$ with a subgroup of $\nor {\Sym(\Omega)}{\Sigma}$.
	If $X$ normalizes a regular partition $\tilde \Sigma$ of $\Omega$, then $\tilde \Sigma\le \Sigma.$ 
\end{lemma}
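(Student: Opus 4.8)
The plan is to reduce the statement to a purely local claim about the blocks of $\Sigma$ and then to extract it from the transpositions sitting inside the base group of $X$. Write $\Sigma=\{B_1,\dots,B_d\}$, so that each $B_i$ has cardinality $c$ and $|\Omega|=cd$. Recall that $\nor{\Sym(\Omega)}{\Sigma}\cong \Sym(c)\,\mathrm{wr}\,\Sym(d)$, and that under the identification of $X=\Sym(c)\,\mathrm{wr}\,U$ with a subgroup of $\nor{\Sym(\Omega)}{\Sigma}$ the base group of $X$ is the full base group $\prod_{i=1}^{d}\Sym(B_i)$; in particular $\Sym(B_i)\le X$ for every $i$, where $\Sym(B_i)$ acts as the full symmetric group on $B_i$ and fixes $\Omega\setminus B_i$ pointwise. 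The key reduction is the observation that $\tilde\Sigma\le\Sigma$ is equivalent to asserting that every block of $\Sigma$ lies inside a single block of $\tilde\Sigma$: if each $B_i$ is contained in one block of $\tilde\Sigma$, then, since the $B_i$ partition $\Omega$, every block of $\tilde\Sigma$ is the union of those $B_i$ it meets, which is exactly the refinement condition defining $\tilde\Sigma\le\Sigma$.

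I would first dispose of the degenerate cases: if $\tilde\Sigma=\{\Omega\}$ the conclusion is immediate, and I will assume $\tilde\Sigma$ is non-trivial (the finest partition into singletons must be excluded here, since for $c\ge 2$ it is normalized by $X$ yet fails $\tilde\Sigma\le\Sigma$; this is the one genuine boundary case to keep in mind). So suppose every block of $\tilde\Sigma$ has size $\tilde c\ge 2$. Fix $i$, let $\omega\in B_i$ and let $\tilde B$ be the block of $\tilde\Sigma$ containing $\omega$. For an arbitrary $\omega'\in B_i$ set $g:=(\omega\ \omega')\in\Sym(B_i)\le X$. Since $X$ normalizes $\tilde\Sigma$, the image $\tilde B^{\,g}$ is again a block of $\tilde\Sigma$, and it contains $\omega^{\,g}=\omega'$.

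The heart of the argument is then a short incidence computation. Suppose, for contradiction, that $\omega'\notin\tilde B$. The transposition $g$ fixes every point of $\Omega$ except $\omega$ and $\omega'$; as $\omega'\notin\tilde B$, it fixes every point of $\tilde B$ other than $\omega$, so $\tilde B^{\,g}=(\tilde B\setminus\{\omega\})\cup\{\omega'\}$. This set is a block of $\tilde\Sigma$ distinct from $\tilde B$ (it contains $\omega'\notin\tilde B$), hence disjoint from $\tilde B$; but it also contains $\tilde B\setminus\{\omega\}$, forcing $\tilde B\setminus\{\omega\}=\emptyset$, i.e.\ $\tilde B=\{\omega\}$, contradicting $\tilde c\ge 2$. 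Therefore $\omega'\in\tilde B$, and since $\omega'\in B_i$ was arbitrary we conclude $B_i\subseteq\tilde B$. As this holds for every $i$, every block of $\Sigma$ lies in a single block of $\tilde\Sigma$, which by the reduction above yields $\tilde\Sigma\le\Sigma$.

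Finally I would remark that the transitivity of $U$ plays no role in this argument — only the base group $\prod_{i=1}^{d}\Sym(B_i)$ of $X$ is used — so the conclusion in fact holds for any $U\le\Sym(d)$. The main obstacle is thus not any deep structural input but rather correctly isolating and excluding the finest partition, which is the unique regular partition escaping the transposition argument; every other regular partition is pinned down block-by-block by the elementary computation above.
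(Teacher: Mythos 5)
Your proof is correct and takes essentially the same route as the paper's own: both arguments hinge on the transpositions lying in the base group $\prod_{i=1}^{d}\Sym(B_i)\le X$, which must send blocks of $\tilde\Sigma$ to blocks of $\tilde\Sigma$ and thereby force each block of $\Sigma$ inside a single block of $\tilde\Sigma$. Your explicit exclusion of the partition into singletons is in fact slightly more careful than the paper's write-up, whose claim that the transposition $(a,z)$ ``fixes at least one element of $\tilde A$'' fails exactly when $\tilde A\subseteq\{a,z\}$, so the lemma is implicitly (and, in its application to non-trivial partitions, harmlessly) read with $\tilde\Sigma$ non-trivial --- precisely the reading you adopt.
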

\begin{proof}
Let $A$ and $\tilde A$ be blocks, respectively, 	of $\Sigma$ and $\tilde \Sigma$ with $A\cap \tilde A \neq\emptyset$ and  let $a\in A\cap \tilde A.$  Then, for every $z\in A\setminus \{a\},$ the transposition $(a,z)\in X$ fixes at least one element of $\tilde A$ and therefore $(a,z)$ normalizes $\tilde A$ and consequently $z\in \tilde A.$ Therefore, either $A\subseteq \tilde A$ or $\tilde A\subseteq A$. From this, it follows that either $ \Sigma \leq \tilde \Sigma$ 
or $\tilde \Sigma \leq \Sigma.$ We can exclude the first possibility, because $\nor X A$ acts on $A$ as the symmetric group $\Sym(A).$
\end{proof}
	
Since we aim to prove that there exist Boolean lattices of arbitrarily large rank of the type described in Thereom~\ref{thrm:main}~\eqref{main3}, we suppose  $n=|\Omega|$ is odd. Let $\ell$ be an integer with $\ell\ge 3$ and let 
$$\mathcal F_1<\cdots < \mathcal  F_{\ell}$$
be a chain of regular product structures on $\Omega$. In particular, $\mathcal{F}_{\ell}$ is a regular $(a,b)$-product structure for some integers $a\ge 5$ and $b\ge 2$ with $a$ odd and $n=a^b$. From the partial order  in the poset of regular product structures, we deduce that we may write $b=b_1\cdots b_\ell$ such that, if we set $d_i:=b_i\cdots b_\ell $ and $c_i:=b/d_i$, then  $\mathcal F_{\ell+1-i}$ is a regular $(a^{c_i},d_i)$-product structure, for every $i\in \{1,\ldots,\ell\}$. 

Let $M_i:=\nor {\Sym(\Omega)}{\mathcal F_i} \cong \Sym(a^{c_i})\mathrm{wr} \Sym(d_i)$ and let $H:=M_1\cap \dots \cap M_{\ell}.$ We have
$$H:=\Sym(a) \mathrm{wr} \Sym(b_1) \mathrm{wr} \Sym(b_2) \mathrm{wr} \cdots \mathrm{wr} \Sym(b_\ell)$$ as a permutation group of degree $n$. 
Moreover, if $I$ is a  subset of $\{1,\ldots,\ell\}$, we let $M_I:=\cap_{i\in I}M_i$, where we are implicitly setting $M_\emptyset=\Sym(n)$. In particular, if $I=\{r_1,\ldots,r_s\}$, then 
 $M_{I}$  is isomorphic to
$$\Sym(a^{b_1\cdots b_{r_1-1}})\mathrm{wr} \Sym(b_{r_1}\cdots b_{r_2-1})\mathrm{wr} \cdots \mathrm{wr}  \Sym(b_{r_s}\cdots b_{\ell})
.$$

For proving that $\mathcal{O}_G(H)$ is Boolean of rank $\ell$, we need to show that, for every $K\in\mathcal{O}_G(H)$, there exists $I\subseteq\{1,\ldots,\ell\}$  with $K=M_I$.

We may identity $H$ with the wreath product $\Sym(a)\mathrm{wr}\, X$ with $X=\Sym(b_1) \mathrm{wr}\, \Sym(b_2) \mathrm{wr}\, \cdots \mathrm{wr}\, \Sym(b_\ell)$, where $X$ has degree $b$ and is endowed of  the imprimitive action of the itereted wreath product and $\Sym(a) \mathrm{wr}\, X$ is primitive of degree $n=a^b$ and  is endowed of the primitive action of the wreath product.

\begin{lemma}\label{soloquelle}
If $H$ normalizes a regular product structure $\mathcal F,$ then $\mathcal F\in \{\mathcal F_1,\ldots,\mathcal F_\ell\}$.
\end{lemma}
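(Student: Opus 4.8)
The plan is to reduce the statement to a transparent question about partitions of the coordinate set. Identify $\Omega$ with $\Gamma^I$, where $\Gamma$ is an $a$-set and $I=\{1,\dots,b\}$, so that $H=\Sym(\Gamma)\,\mathrm{wr}\,X$ acts in product action and $X=\Sym(b_1)\mathrm{wr}\cdots\mathrm{wr}\Sym(b_\ell)\le\Sym(I)$ permutes the coordinates. For $S\subseteq I$ write $\Pi_S$ for the regular partition of $\Omega$ two points of which lie in a common block precisely when they agree in every coordinate belonging to $S$; thus $\Pi_S$ has $a^{|S|}$ blocks. A direct check matches $\mathcal F_{\ell-t}$ with the product structure $\{\Pi_B\mid B\in\mathcal B_t\}$, where $\mathcal B_0<\mathcal B_1<\cdots<\mathcal B_{\ell-1}$ is the chain of block systems of $X$ on $I$ (with $\mathcal B_t$ having blocks of size $b_1\cdots b_t$). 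Hence it suffices to prove that, given a regular $(m,k)$-product structure $\mathcal F=\{\Psi_1,\dots,\Psi_k\}$ normalised by $H$, each $\Psi_j$ is a coordinate partition $\Pi_{S_j}$ and $\{S_1,\dots,S_k\}$ is a system of imprimitivity for $X$ on $I$.

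First I would show that the socle $N:=\Alt(\Gamma)^I=T_1\times\cdots\times T_b$ of $H$ (with each $T_i\cong\Alt(a)$ nonabelian simple, as $a\ge5$) normalises every $\Psi_j$. Let $\rho\colon H\to\Sym(k)$ be the action of $H$ on the factors of $\mathcal F$; its kernel is the base group of $\nor{\Sym(\Omega)}{\mathcal F}\cong\Sym(m)\,\mathrm{wr}\,\Sym(k)$ intersected with $H$. Since $X$ permutes the factors $T_i$ transitively, $\rho$ is either trivial or faithful on $N$; in the faithful case $\Alt(a)^b$ embeds in $\Sym(k)$, so $k\ge ab$ because the minimal faithful permutation degree of $\Alt(a)^b$ equals $ab$. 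But $2^k\le m^k=a^b$ forces $k\le b\log_2 a$, and then $ab\le b\log_2 a$ gives $a\le\log_2 a$, which is absurd. Therefore $\rho$ is trivial on $N$, that is $N\le\ker\rho$, and $N$ fixes each $\Psi_j$.

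Each $\Psi_j$ is then an $N$-invariant partition, hence, as the socle $N$ is transitive on $\Omega$, a system of imprimitivity for $N$, and these I can list exactly. A point stabiliser of $N$ is the product of the point stabilisers $(T_i)_\omega\cong\Alt(a-1)$, each maximal in $T_i$ since $a\ge5$; and any subgroup of $N=\prod_iT_i$ containing such a product of maximal subgroups is a direct product $\prod_iL_i$ with $L_i\in\{\Alt(a-1),\Alt(a)\}$, because a diagonal subgroup of a direct product of nonabelian simple groups cannot contain a nontrivial element supported on a single coordinate. The block system attached to $\prod_iL_i$ is exactly $\Pi_S$ with $S=\{i\mid L_i=\Alt(a-1)\}$, so $\Psi_j=\Pi_{S_j}$ for some $S_j\subseteq I$. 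Now the product-structure axioms pin the $S_j$ down: equality of the numbers of blocks gives $a^{|S_j|}=m$, so all $|S_j|$ equal some $c$; the separation axiom says the common refinement of the $\Pi_{S_j}$, namely $\Pi_{\bigcup_jS_j}$, is the partition into singletons, whence $\bigcup_jS_j=I$; and $\sum_j|S_j|=ck=b=|I|$ then forces the $S_j$ to be pairwise disjoint. Hence $\{S_1,\dots,S_k\}$ is a regular partition of $I$ into $k$ parts of size $c$.

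It remains to see that $\{S_1,\dots,S_k\}$ is one of the $\mathcal B_t$. An element $(\sigma;x)\in H$ sends $\Pi_S$ to $\Pi_{S^x}$, so the fact that $H$ permutes $\{\Pi_{S_j}\}$ means $X$ permutes $\{S_1,\dots,S_k\}$; as $X$ is transitive on $I$ this partition is a block system for $X$. Finally, the block systems of $X$ are precisely the chain $\mathcal B_0<\cdots<\mathcal B_\ell$: this is where Lemma~\ref{proprioquelle} enters, applied with $\Sigma=\mathcal B_1$ (so that $X$ contains $\Sym(b_1)\,\mathrm{wr}\,U$ acting as the full symmetric group on each $\mathcal B_1$-block), which shows that any nontrivial $X$-invariant regular partition is coarser than $\mathcal B_1$, after which induction on $\ell$ passing to the quotient $I/\mathcal B_1$ finishes the classification. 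Since $k\ge2$ excludes the one-block partition $\mathcal B_\ell$, we obtain $\{S_j\}=\mathcal B_t$ for some $0\le t\le\ell-1$, and thus $\mathcal F=\{\Pi_B\mid B\in\mathcal B_t\}=\mathcal F_{\ell-t}$. I expect the genuine obstacle to be the second step: a priori $\mathcal F$ could sit \emph{transverse} to the coordinate directions, and it is exactly the minimal-degree inequality (forcing $k\ge ab$, which is incompatible with $m^k=a^b$) that forbids this and legitimises the whole reduction to partitions of $I$.
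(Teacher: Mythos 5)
Your proof is correct, but it follows a genuinely different route from the paper's. The paper's own argument is a direct application of Aschbacher's machinery: since $a$ is odd, $H=\Sym(a)\mathrm{wr}\,X$ is product indecomposable, and \cite[Proposition~5.9(5)]{2} identifies the poset of regular product structures preserved by $H$ with the dual of $\mathcal{O}_H(J)\setminus\{H\}$, where $J$ is the normalizer in $H$ of a component; this lattice is then computed to be $\mathcal{O}_X(Y)\setminus\{X\}$ (with $Y$ a point stabilizer of $X$ in its degree-$b$ action), i.e.\ the poset of block systems of $X$, which is finally pinned down by iterating Lemma~\ref{proprioquelle} --- exactly your last step. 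You replace the appeal to Aschbacher by a self-contained socle argument: the minimal-faithful-degree bound $\mu(\Alt(a)^b)=ab$ played against $m^k=a^b$ shows that $N=\Alt(a)^b$ acts trivially on the $k$ partitions of $\mathcal{F}$; the subgroups of $N$ containing $\Alt(a-1)^b$ are the $2^b$ products (your diagonal remark is the right idea, though to be fully rigorous one should finish it by observing that whenever $\pi_i(K)=T_i$, the subgroup $K\cap T_i\trianglelefteq K$ projects onto a nontrivial subgroup normalized by $T_i$, hence equals $T_i$ by simplicity); therefore each partition in $\mathcal{F}$ is a coordinate partition $\Pi_S$, and the product-structure axioms force the supports $S_j$ to form an $X$-invariant partition of $I$, i.e.\ a block system of $X$. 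What each approach buys: the paper's proof is short because the substance of your first three steps is precisely what \cite[Proposition~5.9(5)]{2} encapsulates, and that machinery is used throughout the paper anyway; yours is elementary and self-contained, never invokes product indecomposability (so the oddness of $a$ plays no role in this particular lemma), and makes explicit the classification of $N$-invariant partitions that the paper leaves hidden inside the citation.
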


\begin{proof}The group  $H=\Sym(a)\mathrm{wr}\, X$ is semisimple and not almost simple. Since the components of $H$  are isomorphic to $\Alt(a)$ and $a$ is odd, according with the definition in~\cite[Section~$2$]{2}, $H$ is product indecomposable.
	From~\cite[ Proposition 5.9 (5)]{2}, we deduce 
	$\bm{\mathcal{F}}(H)$ is isomorphic to the dual of $\mathcal{O}_H(J)\setminus \{H\}$, 
	where $J:=\nor H L $ is the normalizer of a component $L$ of $H$.
	Since ${\bf F}^*(H)=(\Alt(a))^b$, we have $$J=\Sym(a)\times(\Sym(a)\mathrm{wr}\, Y) =\Sym(a)\times(\Sym(a)^{b-1}\rtimes Y),$$ with $Y$ the stabilizer of a point in the imprimitive action of $X$ of degree $b.$	In particular  $\mathcal{O}_H(J)\setminus\{H\}\cong \mathcal{O}_X(Y)\setminus \{X\}.$

	The proper subgroups of $X$ containing the point-stabilizer $Y$ are in one-to-one  correspondence with the regular partitions $\Sigma$ of $\{1,\dots,b\}$ normalized by $X$ and with at least two blocks. Notice that, for any $i\in\{1,\ldots,\ell\},$ there is an embedding of $X$ in $\Sym(c_i)\mathrm{wr}\, \Sym(d_i),$ and therefore $X$ normalizes a  regular $(c_i,d_i)$-partition, which we call it $\Sigma_{\ell+1-i}.$ A iterated application of Lemma \ref{proprioquelle} implies that $\Sigma_{1}<\dots < \Sigma_{\ell}$ are the unique non-trivial regular partitions normalized by $X.$
	\end{proof}

\begin{theorem}If $H\leq K \leq \Sym(n),$ then $K=M_I$ for some subset 
$I$ of $\{1,\ldots,\ell\}.$
\end{theorem}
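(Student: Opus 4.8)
The plan is to show that, for the set
$$I:=\{\,i\in\{1,\ldots,\ell\}\mid K\le M_i\,\}=\{\,i\mid K\textrm{ normalizes }\mathcal F_i\,\},$$
one has $K=M_I$; surjectivity of $I\mapsto M_I$ is exactly the assertion, and Booleanity of $\mathcal O_{\Sym(n)}(H)$ then follows since distinct subsets give non-isomorphic wreath decompositions. Since $H\le K$, every regular product structure normalized by $K$ is normalized by $H$, so by Lemma~\ref{soloquelle} the structures normalized by $K$ are precisely $\{\mathcal F_i\mid i\in I\}$ and hence $K\le M_I$; it remains to prove $M_I\le K$. First I would settle the top of the lattice. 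As $H$ is primitive (product action of $\Sym(a)\mathrm{wr}\,X$), so is every overgroup $K$; and because $a$ is odd, a transposition of $\Sym(\Delta)$ acting in one coordinate of $\Omega=\Delta^b$ is a product of $a^{b-1}$ (an odd number of) transpositions, hence an odd permutation, so $H\not\le\Alt(\Omega)$ and $\Alt(\Omega)\notin\mathcal O_{\Sym(n)}(H)$. If $K=\Sym(\Omega)$ then $I=\emptyset$ and we are done; so from now on $K\ne\Sym(\Omega)$ and $K$ lies in some maximal subgroup $M$ of $\Sym(\Omega)$, which is necessarily primitive.

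The first and main obstacle is to prove $I\ne\emptyset$, i.e.\ that $M$ is the stabilizer of a regular product structure. I would argue through the O'Nan--Scott type of $M$, which is one of HA, SD, PA, AS. Since $\mathrm{soc}(H)=T^b$ with $T=\Alt(a)$ is perfect and the outer automorphism group of the socle of $M$ is soluble, $T^b\le\mathrm{soc}(M)$; this excludes HA (abelian socle). For SD one has $\mathrm{soc}(M)=T_0^{\,k}$ with $n=|T_0|^{k-1}$ and $k\ge 2$; matching the simple factors forces $T_0=\Alt(a)$, and a prime $p$ with $a/2<p<a$ (Bertrand) divides $|T_0|=a!/2$ but not $a$, contradicting $n=a^b=|T_0|^{k-1}$. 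The AS case is the most delicate, and I expect this to be where the real difficulty lies: here I would invoke Aschbacher's theorem that every member of $\mathcal O_{\Sym(\Omega)}(H)$ is again product indecomposable and not octal (as $H$ is, by the proof of Lemma~\ref{soloquelle}), so that an AS-type $M$ would be almost simple with simple socle $S\ge T^b$ acting on only $a^b$ points; the list in Theorem~\ref{thrm:A} together with its analogue for non-almost-simple groups in~\cite{1,2} contains no such configuration with $b\ge 2$. Hence $M$ has type PA, $M=\nor{\Sym(\Omega)}{\mathcal F}$ for a regular product structure $\mathcal F$, and since $H\le M$ normalizes $\mathcal F$, Lemma~\ref{soloquelle} gives $\mathcal F\in\{\mathcal F_1,\ldots,\mathcal F_\ell\}$; thus $I\ne\emptyset$.

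With $I\ne\emptyset$ in hand I would finish by induction on $n$, mirroring Section~\ref{sec:construction}. Let $\mathcal F_j$ be the finest structure normalized by $K$ (the largest $j\in I$), so $K\le M_j\cong \Sym(a^{c})\mathrm{wr}\,\Sym(d)$ acting in product action on $\Gamma^{d}$, where $\Gamma=\Delta^{c}$, $|\Gamma|=a^{c}$ and $d=d_j\ge 2$. Set $D:=K\cap\Sym(\Gamma)^{d}$; it contains the corresponding base $(\Sym(\Delta)\mathrm{wr}\,X')^{d}$ of $H$, where $X'$ is the bottom $\ell-j$ layers of the coordinate group, so its projection $Y$ to a factor is an overgroup of $\Sym(\Delta)\mathrm{wr}\,X'$ in $\Sym(\Gamma)$. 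As $a^{c}<n$, the inductive hypothesis identifies $Y$ with a stabilizer of product structures on $\Gamma$; were $Y\ne\Sym(\Gamma)$, the structure it preserves would, via the transitive factor action $\bar K:=\rho_j(K)\ge\rho_j(H)$, assemble into a regular product structure on $\Omega$ strictly finer than $\mathcal F_j$ and normalized by $K$, contradicting the maximality of $j$. Hence $Y=\Sym(\Gamma)$ in every factor. Then the single-factor part $\pi_i(D\cap\Sym(\Gamma)_i)$ is a normal subgroup of $\Sym(\Gamma)$ containing the nontrivial $\Sym(\Delta)\mathrm{wr}\,X'$ supplied by $H$, hence contains $\Alt(\Gamma)$ (as $a^{c}\ge 5$), so $\Alt(\Gamma)^{d}\le D$; adjoining the element of $H$ acting on one factor as a transposition of $\Sym(\Delta)$ in a single $\Delta$-coordinate (odd on $\Gamma$ since $a$ is odd) and conjugating by $\bar K$ upgrades this to the full base $\Sym(\Gamma)^{d}\le K$. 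Therefore $K=\rho_j^{-1}(\bar K)$, and since $\rho_j(H)$ is the iterated wreath product given by the top $j$ layers of $X$, the already established imprimitive case (Section~\ref{sec:construction}) shows that $\bar K$ is the stabilizer of a subchain of its invariant coordinate partitions; pulling this back through $\rho_j$ yields $K=M_I$, completing the induction.
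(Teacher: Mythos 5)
Your overall skeleton (show every maximal overgroup of $K$ is the stabilizer of a regular product structure, then induct) is a reasonable target, but the step that carries all of the weight — ruling out an AS-type maximal overgroup — is not actually proved, and this is a genuine gap. Theorem~\ref{thrm:A} cannot be invoked here: it applies to an \emph{almost simple} primitive subgroup, whereas ${\bf F}^*(H)=\Alt(a)^b$ with $b\ge 2$, so $H$ is not almost simple; and the ``analogue for non-almost-simple groups in~\cite{1,2}'' that you appeal to is never identified. What is needed at exactly this point is the solution of the inclusion problem for primitive groups, and that is precisely how the paper proceeds: it applies Praeger's theorem \cite[Proposition~7.1]{18} \emph{directly to the pair $(H,K)$}, bypassing maximal overgroups altogether. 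Since the primitive components of $H$ are $\Alt(a)$ with $a$ odd, only cases (ii,a) and (ii,b) of that proposition can occur; case (ii,a) gives $K\le \Sym(a)\mathrm{wr}\,\Sym(b)$, so $\Sym(a)^b\le K$ forces $K=\Sym(a)\mathrm{wr}\,Y$ and the result follows from Section~\ref{sec:construction} applied to $X\le Y\le\Sym(b)$, while case (ii,b) exhibits $K$ inside a blow-up, which Lemma~\ref{soloquelle} pins to one of the $\mathcal{F}_i$, and one concludes by induction on $\ell$. Your route through O'Nan--Scott types of a maximal overgroup re-creates the need for this CFSG-based inclusion theorem without supplying it; there is no elementary substitute, because one must exclude the existence of a simple primitive group on $a^b$ points containing $\Alt(a)^b$.

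The cases you do argue also contain errors. For HA type, $M/{\bf F}^*(M)\cong\mathrm{GL}_d(p)$ is not soluble, so your claim that ``the outer automorphism group of the socle of $M$ is soluble'' fails, and $T^b\le {\bf F}^*(M)$ does not follow (the Schreier argument is valid only in the AS case); similarly for SD type, $M/{\bf F}^*(M)$ involves $\Sym(k)$. Moreover, in the SD case, ``matching the simple factors'' is not automatic: an embedding $\Alt(a)^b\le T_0^{\,k}$ does not force $T_0\cong\Alt(a)$, since factors may embed diagonally or into proper simple subgroups of $T_0$. Finally, in your closing induction, the product structure you wish to ``assemble'' is only known to be invariant under $\pi_1(K\cap\Sym(\Gamma)^d)$, whereas the assembly requires invariance under the projection of the full factor stabilizer $\nor K{\Gamma_1}$; this particular defect is repairable (the invariant structures form a chain by Lemma~\ref{soloquelle}, and a group normalizing $\pi_1(K\cap\Sym(\Gamma)^d)$ must then fix each member of the chain), but the AS gap above is not.
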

\begin{proof}Clearly, without loss of generality we may suppose that $H<K< \Sym(n)$.
We apply \cite[Proposition 7.1]{18} to the inclusion $(H,K)$.  Since $H$ has primitive components isomorphic to $\Alt(a)$, with $a$ odd, only cases (ii,a) and (ii,b) can occur.

Assume that $(H,K)$ is an inclusion of type (ii,a). In this case we  have
$H<K\leq \Sym(a)\mathrm{wr}\, \Sym(b)$. Since $\Sym(a)^b\leq H\leq K$ we deduce that $K=\Sym(a) \mathrm{wr}\, Y;$ with $X\leq Y\leq \Sym(b).$ So it suffices to notice that the only subgroups of $\Sym(b)$ containing $X$ are those of the kind $\Sym(b_1\cdots b_{t_1})\mathrm{wr}\, \Sym(b_{t_1+1}\cdots b_{t_2})\mathrm{wr}\, \cdots \mathrm{wr}\,  \Sym(b_{t_s+1}\cdots b_{\ell}\}$, for some subset $\{t_1,\ldots,t_s\}$ of $\{1,\ldots,\ell\}$. Indeed, this fact follows from Section~\ref{sec:construction}.

Assume that $(H,K)$ is an inclusion of type (ii,b). In this case $n=a^b=\alpha^{\gamma\delta}$, $H$ is a blow-up of a subgroup $Z$ of $\Sym(\alpha^\gamma)$ and $(H,K)$ is a blow up of a natural inclusion $(Z,L)$ where $\Alt(\alpha^\gamma)\leq L\leq \Sym(\alpha^\gamma)$. In particular $H$ normalizes a regular $(\alpha^\gamma,\delta)$-product structure $\mathcal F$. By Lemma~\ref{soloquelle}, we have $\mathcal F\in\{\mathcal F_1,\ldots,\mathcal F_\ell\}$. In particular, $\alpha^\gamma=a^{c_i}$ and $\delta=d_i$ and $Z=\Sym(a) \mathrm{wr}\, \Sym(b_1) \mathrm{wr}\, \Sym(b_2) \mathrm{wr}\, \cdots \mathrm{wr}\, \Sym(b_i).$ Since $a$ is odd, $Z\not\leq \Alt(a^{c_i})$ so $L=\Sym(a^{c_i})$
and  $(\Sym(a^{c_i}))^{d_i}\leq K\leq \Sym(a^{c_i})\mathrm{wr}\, \Sym(d_i)$. If $H$ is maximal in $K$, then $i=1$ and $K=\Sym(a^{b_1})\mathrm{wr}\, \Sym(b_2)\mathrm{wr}\, \cdots \mathrm{wr}\, \Sym(b_\ell)=M_{\{1,\ldots,\ell-1\}}$; otherwise, we can proceed by induction on $\ell$.
\end{proof}


\section{Application to Brown's problem} \label{sec:brown}
In this section we will prove Theorem \ref{thrm:bound} (where \eqref{bound3} is a direct application of Theorem \ref{thrm:main}) which proves the conjecture explained in Introduction and provides a positive answer to the relative Brown's problem in this case. 

\subsection{Some general lemmas}
In this subsection we will prove some lemmas working for every finite group. Let $G$ be a finite group and $H$ a subgroup such that the overgroup lattice $\mathcal{O}_{G}(H)$ is Boolean of rank $\ell$, and let $M_1, \dots, M_{\ell}$ be its coatoms. For any $K$ in $\mathcal{O}_{G}(H)$, let us note $K^{\complement}$ its \emph{lattice-complement}, i.e. $K \wedge K^{\complement} = H$ and $K \vee K^{\complement} = G$. 

\begin{lemma} \label{lem:d.d}
If $\mathcal{O}_{G}(H)$ is Boolean of rank $2$ and if $H$ is normal in $M_i$ ($i=1,2$), then $|M_1:H| \neq |M_2:H|$.
\end{lemma}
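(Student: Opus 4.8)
The plan is to pass to the quotient $G/H$ and exploit the fact that a Boolean lattice of rank $2$ is the four-element ``diamond'', which pins down $G/H$ completely.

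First I would use the two normality hypotheses to force $H \unlhd G$. Since $\mathcal{O}_G(H)$ is Boolean of rank $2$, its two coatoms $M_1,M_2$ satisfy $M_1\wedge M_2=H$ and $M_1\vee M_2=G$; in the subgroup lattice the join is the generated subgroup, so $\langle M_1,M_2\rangle=G$. By hypothesis $H\unlhd M_1$ and $H\unlhd M_2$, whence $M_1,M_2\le \nor G H$ and therefore $G=\langle M_1,M_2\rangle\le \nor G H$, i.e. $H\unlhd G$. This is the only step that genuinely requires normality in \emph{both} $M_i$: without it $H$ need not be normal in $G$ and the clean lattice correspondence below would not be available.

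Next I would transport the problem across the correspondence theorem. As $H\unlhd G$, the overgroups of $H$ correspond bijectively and lattice-isomorphically to the subgroups of $Q:=G/H$, so the subgroup lattice of $Q$ is Boolean of rank $2$. Concretely, $Q$ has exactly two proper non-trivial subgroups, namely $A:=M_1/H$ and $B:=M_2/H$, with $A\cap B=1$ and $\langle A,B\rangle=Q$, and the claim $|M_1:H|\ne|M_2:H|$ becomes $|A|\ne|B|$.

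Finally I would identify $Q$. If $Q$ were not cyclic, then every non-identity element would generate a \emph{proper} cyclic subgroup, necessarily equal to $A$ or $B$; thus $Q=A\cup B$, which is impossible, since a group is never the union of two proper subgroups (for $a\in A\setminus B$ and $b\in B\setminus A$ one checks $ab\notin A\cup B$). Hence $Q$ is cyclic, and a cyclic group has diamond subgroup lattice exactly when its order is a product $pq$ of two \emph{distinct} primes. Then $A$ and $B$ are its subgroups of orders $p$ and $q$, so $|M_1:H|$ and $|M_2:H|$ equal the distinct primes $p$ and $q$ (in some order) and in particular are unequal, as required. The argument is essentially obstacle-free; the only point demanding care is the reduction to $Q=G/H$, for which the two-sided normality hypothesis is exactly what is needed.
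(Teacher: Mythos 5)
Your proof is correct, and its skeleton matches the paper's: both arguments first observe that $H\unlhd M_1$ and $H\unlhd M_2$ force $H\unlhd M_1\vee M_2=G$, then pass to the quotient $Q=G/H$, whose subgroup lattice is Boolean of rank $2$, and conclude by showing $Q$ is cyclic. The difference lies in how cyclicity is obtained. The paper is terse here: Boolean implies distributive, and then it invokes Ore's theorem (a finite group is cyclic if and only if its subgroup lattice is distributive) to get $Q$ cyclic, after which distinct subgroups of a cyclic group have distinct orders, so $|M_1:H|\ne|M_2:H|$. You instead avoid Ore's theorem entirely with an elementary argument tailored to rank $2$: if $Q$ were not cyclic, every nonidentity element would generate one of the only two proper nontrivial subgroups $A=M_1/H$, $B=M_2/H$, forcing $Q=A\cup B$, which is impossible since no group is the union of two proper subgroups; hence $Q$ is cyclic of order $pq$ with $p\ne q$ distinct primes, giving the conclusion. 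What each buys: the paper's citation is a one-line proof that fits naturally into a paper whose whole theme is Ore-type theorems, and it would extend verbatim to any distributive interval; your version is self-contained and uses nothing beyond first principles, at the cost of being specific to the rank-$2$ (diamond) case. Both are complete and correct.
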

\begin{proof}
As an immediate consequence of the assumption, $H$ is normal in $M_1 \vee M_2 = G$, but then $G/H$ is a group and $\mathcal{L}(G/H)$ is Boolean, so distributive, and $G/H$ is cyclic by Ore's theorem, thus $|M_1/H| \neq |M_2/H|$.
\end{proof}

\begin{lemma} \label{lem:2.2}
If $\mathcal{O}_{G}(H)$ is Boolean of rank $2$ then $(|M_1:H|,|M_2:H|) \neq (2,2)$.
\end{lemma}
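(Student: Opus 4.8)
The plan is to reduce this statement immediately to the preceding Lemma~\ref{lem:d.d}. The key observation is that a subgroup of index $2$ is always normal. So I would argue by contradiction: suppose $(|M_1:H|,|M_2:H|)=(2,2)$. Then $H$ has index $2$ in $M_1$ and index $2$ in $M_2$, and hence $H$ is normal in $M_1$ and normal in $M_2$.

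Having established that $H\trianglelefteq M_i$ for $i=1,2$, I am exactly in the hypothesis of Lemma~\ref{lem:d.d}, which asserts that $|M_1:H|\neq |M_2:H|$. This directly contradicts the assumption $|M_1:H|=|M_2:H|=2$, completing the argument.

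If one prefers not to invoke Lemma~\ref{lem:d.d} as a black box, the same contradiction can be obtained by unwinding its proof: since $H\trianglelefteq M_1$ and $H\trianglelefteq M_2$, the subgroup $H$ is normal in $\langle M_1,M_2\rangle = M_1\vee M_2 = G$. Then $G/H$ is a genuine group whose subgroup lattice is isomorphic to $\mathcal{O}_G(H)$ (via the correspondence theorem), hence Boolean of rank $2$, hence distributive. By Ore's theorem $G/H$ is cyclic, so $G/H\cong C_{pq}$ for two distinct primes $p,q$, and the two atoms $M_1/H,M_2/H$ have coprime orders $p$ and $q$. In particular they cannot both have order $2$.

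There is essentially no obstacle here: the whole content lies in the elementary fact that index-$2$ subgroups are normal, which places us in the situation already handled by Lemma~\ref{lem:d.d}. The only point requiring a moment's care is making the reduction explicit, namely verifying that $|M_i:H|=2$ really does force $H\trianglelefteq M_i$ before citing the previous lemma.
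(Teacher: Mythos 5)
Your proof is correct and is essentially identical to the paper's own argument: the paper also notes that index $2$ forces $H\trianglelefteq M_1$ and $H\trianglelefteq M_2$ and then invokes Lemma~\ref{lem:d.d} for the contradiction. Your optional unwinding of Lemma~\ref{lem:d.d} (normality in $G=M_1\vee M_2$, the correspondence theorem, and Ore's theorem forcing $G/H$ cyclic of order $pq$ with distinct primes) likewise matches the paper's proof of that lemma, so there is nothing to add.
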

\begin{proof}
If $(|M_1:H|,|M_2:H|) = (2,2)$ then $H$ is normal in $M_i$ ($i=1,2$), contradiction by Lemma \ref{lem:d.d}.
\end{proof}

\begin{lemma} \label{lem:r1r2}
If $\mathcal{O}_{G}(H)$ is Boolean of rank $\ell \le 2$. Then $\hat{\varphi}(H,G) \ge 2^{\ell-1}$.
\end{lemma}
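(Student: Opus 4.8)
The plan is to reduce everything to an explicit evaluation of $\hat{\varphi}(H,G)$ using the fact that in a Boolean lattice the Möbius function from the bottom is $\mu(H,K)=(-1)^{r(K)}$, where $r(K)$ is the rank of $K$. The cases $\ell=0$ (where $H=G$ and $\hat{\varphi}(H,G)=1$) and $\ell=1$ (where $H$ is maximal in $G$, so $|G:H|\ge 2$ and $\hat{\varphi}(H,G)=|G:H|-1\ge 1=2^{0}$) are immediate, so the real content is the case $\ell=2$.

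For $\ell=2$, with coatoms $M_1,M_2$ and $H=M_1\cap M_2$, I would record the four Möbius values $\mu(H,H)=1$, $\mu(H,M_1)=\mu(H,M_2)=-1$ and $\mu(H,G)=1$. Writing $N:=|G:H|$, $a:=|M_1:H|$ and $b:=|M_2:H|$, and using $|G:M_i|=N/|M_i:H|$, this gives the identity
$$\hat{\varphi}(H,G)=N-|G:M_1|-|G:M_2|+1=N\Big(1-\tfrac1a-\tfrac1b\Big)+1.$$
The key inequality I would then establish is $N\ge ab$: from the product formula $|M_1M_2|=|M_1|\,|M_2|/|M_1\cap M_2|$ together with $M_1M_2\subseteq G$ and $H=M_1\cap M_2$, dividing by $|H|^{2}$ yields $|G:H|\ge |M_1:H|\,|M_2:H|$, that is, $N\ge ab$. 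As coatoms of a rank-$2$ Boolean lattice, $M_1$ and $M_2$ are distinct proper overgroups of $H$, so $a,b\ge 2$.

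The decisive point is the sign of the coefficient $1-\tfrac1a-\tfrac1b$. For integers $a,b\ge 2$ the only way this coefficient fails to be strictly positive is $a=b=2$, and this is precisely the pair excluded by Lemma \ref{lem:2.2}. In every remaining case one has $1/a+1/b<1$, so the coefficient is positive, and feeding $N\ge ab$ into the displayed identity gives
$$\hat{\varphi}(H,G)\ge ab\Big(1-\tfrac1a-\tfrac1b\Big)+1=(a-1)(b-1).$$
A one-line check shows $(a-1)(b-1)\ge 2=2^{\ell-1}$ whenever $a,b\ge 2$ and $(a,b)\ne(2,2)$, completing the proof.

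The main obstacle is not the computation but controlling this sign: the estimate $N\ge ab$ is useful only when the coefficient $1-\tfrac1a-\tfrac1b$ is nonnegative, and it degenerates exactly at the boundary $a=b=2$. The whole argument therefore hinges on invoking Lemma \ref{lem:2.2} to rule out $(a,b)=(2,2)$; once that single case is removed, positivity is automatic and the factorization into $(a-1)(b-1)$ delivers the bound with no further case analysis.
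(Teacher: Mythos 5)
Your proposal is correct and follows essentially the same route as the paper: both rest on the identity $\hat{\varphi}(H,G)=|G:H|\bigl(1-|M_1:H|^{-1}-|M_2:H|^{-1}\bigr)+1$, the product-formula bound $|G:H|\ge |M_1:H|\,|M_2:H|$, and Lemma~\ref{lem:2.2} to exclude the case $(2,2)$. The only difference is cosmetic: you package the final estimate as the factorization $(a-1)(b-1)\ge 2$, whereas the paper plugs in the extremal values $a\ge 3$, $b\ge 2$ numerically.
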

\begin{proof}
If $\ell=1$ then $$\hat{\varphi}(H,G) = |G:H| - |G:G| \ge 2-1 = 2^{\ell-1}.$$
If $\ell=2$, by Lemma \ref{lem:2.2}, there is $i$ with $|M_i:H| \ge 3$. Then
\begin{align*}
\hat{\varphi}(H,G) 
& = |G:H| - |G:M_1| - |G:M_2| + |G:G| \\
&= |G:H|(1 - |M_1:H|^{-1} - |M_2:H|^{-1}) + 1 \\
&\ge 6(1-1/3-1/2)+1 = 2^{\ell-1}. \qedhere
\end{align*}  
\end{proof}

\begin{remark}[Product Formula] \label{profor}
Let $A$ be a finite group and $B,C$ two subgroups, then  $|B| \cdot |C| = |BC| \cdot |B \cap C|$, so $$|B| \cdot |C| \le |B \vee C| \cdot |B \wedge C| \  \text{ and } \ |B:B \wedge C| \le |B \vee C : C|. $$
\end{remark}

\begin{lemma} \label{lem:ind2sub}
Let $A$ be a finite group and $B,C$ two subgroups. If $|A:C|=2$ and $B \not \subseteq C$ then $|B : B \wedge C | = 2$.
\end{lemma}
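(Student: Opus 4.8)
The plan is to exploit the fact that a subgroup of index $2$ is automatically normal, which turns the set product $BC$ into a genuine subgroup and thereby lets me compute the index $|B:B\wedge C|$ directly from the product formula of Remark~\ref{profor}. The whole argument is essentially forced once normality is in hand.

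First I would observe that $|A:C|=2$ forces $C\trianglelefteq A$, so that $BC$ is a subgroup of $A$ with $C\le BC\le A$. By the correspondence between subgroups of $A$ containing $C$ and subgroups of the order-$2$ quotient $A/C$, the only possibilities are $BC=C$ and $BC=A$. The hypothesis $B\not\subseteq C$ rules out $BC=C$, leaving $BC=A$; equivalently, $B\vee C=A$.

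Next I would invoke the product formula $|B|\cdot|C|=|BC|\cdot|B\wedge C|$ from Remark~\ref{profor}, which holds here with genuine equality precisely because $BC$ is a subgroup (so that $B\vee C=BC$, rather than merely containing it). Rearranging yields $|B:B\wedge C|=|BC:C|=|A:C|=2$, which is the desired conclusion.

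Since every step is forced, I do not anticipate any real obstacle. The only point requiring a moment of care is the passage from the inequality $|B:B\wedge C|\le|B\vee C:C|$ recorded in Remark~\ref{profor} to the equality used above; this is exactly where the normality of the index-$2$ subgroup $C$ enters, guaranteeing $B\vee C=BC$ and hence equality in the product formula.
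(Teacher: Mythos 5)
Your proof is correct, but it reaches the conclusion by a slightly different mechanism than the paper. You use the group-theoretic fact that an index-$2$ subgroup is normal, so that $BC$ is an honest subgroup; the correspondence theorem then forces $BC=A$, and the product formula gives the exact equality $|B:B\wedge C|=|BC:C|=2$. The paper never invokes normality at all: it squeezes, observing that $B\not\subseteq C$ gives the trivial lower bound $2\le|B:B\wedge C|$, while the inequality form of Remark~\ref{profor} gives the upper bound $|B:B\wedge C|\le|B\vee C:C|=|A:C|=2$, where $B\vee C=A$ simply because $C$ is maximal in $A$ (having index $2$) and $B\vee C$ properly contains $C$. Your route buys the stronger structural fact that $A=BC$ as a set product (i.e.\ $C$ and $B$ permute), which is more than the lemma asks for; the paper's route is purely lattice-theoretic plus counting, and the same squeeze pattern is what it reuses elsewhere (e.g.\ in Lemma~\ref{decre}), which is presumably why the authors phrase it that way. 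Both arguments are complete and essentially one line, so this is a difference of flavor rather than of substance.
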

\begin{proof}
By Product Formula, $2 \le |B : B \wedge C | \le |A:C| = 2$ because $A = B \vee C$.
\end{proof}

\begin{lemma} \label{decre} 
Let $A$ be an atom of $\mathcal{O}_{G}(H)$. If $K_1, K_2 \in \mathcal{O}_{A^{\complement}}(H)$ with $ K_1 < K_2$, then $$|K_1 \vee A : K_1| \le |K_2 \vee A : K_2|.$$
Equivalently, if $K_1, K_2 \in \mathcal{O}_{G}(A)$ with $ K_1 < K_2$, then $$|K_1 : K_1 \wedge A^{\complement}| \le |K_2 : K_2 \wedge A^{\complement}|.$$
Moreover if $|G:A^{\complement}| = 2$ then  $|K \vee A : K| = 2$, for all $K$ in $\mathcal{O}_{A^{\complement}}(H)$.
\end{lemma}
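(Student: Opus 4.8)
The plan is to derive both displayed inequalities from a single application of the Product Formula (Remark~\ref{profor}), using the distributivity of the Boolean lattice $\mathcal{O}_{G}(H)$ to compute the relevant joins and meets, and to treat the two equivalent forms by transporting one into the other along the canonical isomorphism between the sublattices $\mathcal{O}_{A^{\complement}}(H)$ and $\mathcal{O}_G(A)$.

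First I would record the isomorphism. Since $A$ is an atom, $A^{\complement}$ is a coatom, and in any Boolean lattice the map $\phi\colon \mathcal{O}_{A^{\complement}}(H)\to\mathcal{O}_G(A)$, $\phi(K)=K\vee A$, is an order isomorphism with inverse $L\mapsto L\wedge A^{\complement}$ (writing $\mathcal{O}_G(H)\cong\mathcal{P}(\{1,\dots,\ell\})$ with $A=\{1\}$, this is just $S\mapsto S\cup\{1\}$ on subsets of $\{2,\dots,\ell\}$). Indeed, distributivity together with $K\le A^{\complement}$ and $A\wedge A^{\complement}=H$ gives $\phi(K)\wedge A^{\complement}=(K\wedge A^{\complement})\vee(A\wedge A^{\complement})=K\vee H=K$. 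Consequently, for $L\in\mathcal{O}_G(A)$ and $K:=L\wedge A^{\complement}$ one has $|L:L\wedge A^{\complement}|=|L:K|=|K\vee A:K|$, so the two displayed inequalities are literally the same assertion written on the two sides of $\phi$; hence it suffices to prove the first.

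Next comes the core inequality. Fix $K_1<K_2$ in $\mathcal{O}_{A^{\complement}}(H)$ and set $L_i:=K_i\vee A$. I would apply the Product Formula to the subgroups $B:=L_1=K_1\vee A$ and $C:=K_2$, which yields $|B:B\wedge C|\le|B\vee C:C|$, and then evaluate both sides lattice-theoretically. For the join, $B\vee C=(K_1\vee A)\vee K_2=K_2\vee A=L_2$ since $K_1\le K_2$. For the meet, distributivity gives $B\wedge C=(K_1\vee A)\wedge K_2=(K_1\wedge K_2)\vee(A\wedge K_2)=K_1\vee(A\wedge K_2)$, and since $A$ is an atom with $A\wedge A^{\complement}=H$ while $K_2\le A^{\complement}$, we get $A\wedge K_2=H$ and therefore $B\wedge C=K_1$. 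Substituting, the Product Formula reads $|L_1:K_1|\le|L_2:K_2|$, that is, $|K_1\vee A:K_1|\le|K_2\vee A:K_2|$, as required. For the \emph{Moreover} clause, assuming $|G:A^{\complement}|=2$, fix $K\in\mathcal{O}_{A^{\complement}}(H)$ and put $L:=K\vee A$; as above $L\wedge A^{\complement}=K$, and $L\not\le A^{\complement}$ because $A\le L$ but $A\not\le A^{\complement}$. Lemma~\ref{lem:ind2sub} (ambient group $G$, index-$2$ subgroup $A^{\complement}$, non-contained subgroup $L$) then gives $|L:L\wedge A^{\complement}|=2$, i.e. $|K\vee A:K|=2$.

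I do not expect a genuine obstacle here: the argument is short once the right substitution is made. The only delicate point is the bookkeeping, namely choosing $B=K_1\vee A$ and $C=K_2$ in the Product Formula and correctly computing $B\wedge C=K_1$ and $B\vee C=L_2$ via distributivity and the atom property of $A$ — this is exactly what turns one application of the Product Formula into the desired monotonicity statement.
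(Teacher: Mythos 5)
Your proof is correct and takes essentially the same route as the paper: the identical application of the Product Formula with $B=K_1\vee A$, $C=K_2$, together with the same distributivity computations $(K_1\vee A)\wedge K_2=K_1$ and $(K_1\vee A)\vee K_2=K_2\vee A$, yields the monotonicity inequality. The only cosmetic differences are that you make explicit the order isomorphism $K\mapsto K\vee A$ underlying the ``equivalently'' clause (which the paper leaves implicit), and you settle the \emph{Moreover} clause by invoking Lemma~\ref{lem:ind2sub} instead of specializing the monotonicity to $K_2=A^{\complement}$ as the paper does --- both being one-line consequences of the same Product Formula.
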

\begin{proof}
By Product Formula, $$|K_1 \vee A | \cdot |K_2| \le |(K_1 \vee A ) \vee K_2| \cdot |(K_1 \vee A ) \wedge K_2 | $$ but $K_1 \wedge K_2 = K_1$, $K_1 \vee K_2 = K_2$ and $A \wedge K_2 = H$, so by distributivity $$ |K_1 \vee A | \cdot |K_2| \le |K_2 \vee A| \cdot |K_1|. $$
Finally, $A^{\complement} \vee A = G$, so if $H \le K \le A^{\complement}$ and $|G:A^{\complement}| = 2$, then  $$2 \le  |K \vee A : K| \le |A^{\complement} \vee A : A^{\complement}| = 2.$$ It follows that $|K \vee A : K| = 2$.
\end{proof}

\begin{lemma} \label{2<->2}
If $\mathcal{O}_{G}(H)$ is Boolean of rank $2$, then $ |M_1:H| = 2$ if and only if $|G:M_2| = 2$.
\end{lemma}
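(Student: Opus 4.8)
The plan is to prove the two implications separately, since, although the statement looks symmetric, there is no group-theoretic duality exchanging the index $|M_1:H|$ with $|G:M_2|$. Throughout I write the rank-$2$ Boolean lattice as $\mathcal{O}_G(H)=\{H,M_1,M_2,G\}$, where $M_1,M_2$ are simultaneously the atoms and the coatoms, with $M_1\wedge M_2=M_1\cap M_2=H$ and $M_1\vee M_2=\langle M_1,M_2\rangle=G$; in particular $M_1\neq M_2$ and $H<M_2<G$ strictly.

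For the implication $|G:M_2|=2\Rightarrow|M_1:H|=2$ I would simply invoke Lemma~\ref{lem:ind2sub} with $A:=G$, $C:=M_2$ and $B:=M_1$. The hypothesis $M_1\not\subseteq M_2$ holds because $M_1\cap M_2=H$ is a proper subgroup of $M_1$, and $|G:M_2|=2$ is given; the lemma then yields $|M_1:M_1\cap M_2|=|M_1:H|=2$. Alternatively, this also follows from the ``moreover'' clause of Lemma~\ref{decre} applied to the atom $M_1$ with complement $M_1^{\complement}=M_2$, taking $K=H$.

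The substantive direction is $|M_1:H|=2\Rightarrow|G:M_2|=2$. Since a subgroup of index $2$ is normal, $H\trianglelefteq M_1$, so I may choose $t\in M_1\setminus H$ with $M_1=\langle H,t\rangle=H\cup Ht$, $t^2\in H$, and $tHt^{-1}=H$. The crux is to show that $t$ also normalizes $M_2$. Conjugation by $t$ fixes $H$, so $H=tHt^{-1}\le tM_2t^{-1}$, whence $tM_2t^{-1}\in\mathcal{O}_G(H)$; comparing orders (using $|H|<|M_2|<|G|$) rules out $H$ and $G$, so $tM_2t^{-1}\in\{M_1,M_2\}$. If $tM_2t^{-1}=M_1$ then, since $t\in M_1$ gives $tM_1t^{-1}=M_1$, we would obtain $M_2=t^{-1}M_1t=M_1$, contradicting $M_1\neq M_2$; hence $tM_2t^{-1}=M_2$. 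Now $G=\langle M_1,M_2\rangle=\langle t,M_2\rangle$ because $H\le M_2$, and as $t$ normalizes $M_2$, $t^2\in H\le M_2$, and $t\notin M_2$ (otherwise $t\in M_1\cap M_2=H$), we get $G=M_2\langle t\rangle=M_2\cup M_2t$, so $|G:M_2|=2$.

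The main obstacle is precisely the normalization step $tM_2t^{-1}=M_2$ in the forward direction: the clean way through is to exploit that $t\in \mathbf{N}_G(H)$, so conjugation by $t$ is a lattice automorphism of $\mathcal{O}_G(H)$, and then to use the rank-$2$ (diamond) structure together with a cardinality comparison to pin down the image of $M_2$. Everything else reduces to index bookkeeping and the Product Formula of Remark~\ref{profor}.
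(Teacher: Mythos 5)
Your proposal is correct and follows essentially the same route as the paper's own proof: the forward direction via Lemma~\ref{lem:ind2sub}, and the converse by choosing $t\in M_1\setminus H$ normalizing $H$, showing conjugation by $t$ preserves the interval and must fix $M_2$ (ruling out $tM_2t^{-1}=M_1$ since $t\in M_1$), and concluding $G=M_2\cup M_2t$. The only cosmetic difference is that you rule out $H$ and $G$ as images of $M_2$ by comparing orders, while the paper notes directly that conjugation by $t$ maps the open interval $(H,G)$ to itself.
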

\begin{proof}  If $|G:M_2| = 2$ then $|M_1:H| = 2$ by Lemma \ref{lem:ind2sub} . Now if $|M_1:H| = 2$ then $H \triangleleft M_1$ and $M_1 = H \sqcup H \tau$ with $\tau H = H \tau$ and $(H \tau)^2 = H$, so $H \tau^2 = H$ and $\tau^2 \in H$. Now $M_2 \in (H,G)$ open, then $\tau M_2 \tau^{-1} \in (\tau H \tau^{-1},\tau G \tau^{-1}) = (H,G)$, so by assumption $\tau M_2 \tau^{-1} \in \{M_1,M_2\}$.  If $\tau M_2 \tau^{-1} = M_1$, then $M_2 = \tau^{-1} M_1 \tau = M_1$, contradiction. So $\tau M_2 \tau^{-1} = M_2$. Now $\tau^2 \in H < M_2$, so $M_2 \tau^2 = M_2$. It follows that $G = \langle M_2, \tau \rangle = M_2 \sqcup M_2 \tau$, and $|G:M_2| = 2$. \end{proof}

\begin{lemma} \label{top2} If there are $K,L \in \mathcal{O}_{G}(H)$ such that $K < L$ and $|L:K|=2$, then there is an atom $A$ such that $L = K \vee A$ and $|G:A^{\complement}| = 2$. 
\end{lemma}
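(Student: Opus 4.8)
The plan is to pass to the coordinate model of the Boolean lattice, identifying $\mathcal{O}_G(H)$ with $\mathcal{P}(\{1,\dots,\ell\})$ so that $H\leftrightarrow\emptyset$, $G\leftrightarrow\{1,\dots,\ell\}$, the atoms are the singletons, and the coatoms $M_1,\dots,M_\ell$ are the complements of singletons. First I would reduce to the situation where $L$ covers $K$: since $|L:K|=2$ is a prime, the tower law forbids any subgroup strictly between $K$ and $L$, so the interval $[K,L]$ has rank $1$ (this is the content of Lemma~\ref{l:-01}). In the coordinate model $K$ then corresponds to a subset $T$ and $L$ to $T\cup\{j\}$ for a unique $j\notin T$; taking $A$ to be the atom $\{j\}$ gives $L=K\vee A$ and $A\wedge K=H$, while $A^{\complement}$ is the coatom $M_j$ corresponding to $\{1,\dots,\ell\}\setminus\{j\}$. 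Since $A\nleq K$ we also obtain $K\le A^{\complement}$, that is $K\in\mathcal{O}_{A^{\complement}}(H)$. It remains to prove the second assertion, namely $|G:A^{\complement}|=2$.

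Viewing $|G:A^{\complement}|$ as the index of the ``top edge'' $A^{\complement}<G$ obtained by adjoining $A$ at the top, and $|L:K|$ as the index of the edge $K<L$ obtained by adjoining $A$ lower down, the task is to propagate the value $2$ from the given edge up to the top edge. The monotonicity in Lemma~\ref{decre} already yields the easy bound $2=|K\vee A:K|\le|A^{\complement}\vee A:A^{\complement}|=|G:A^{\complement}|$; the main obstacle is precisely that monotonicity gives only $|G:A^{\complement}|\ge2$ and cannot by itself force equality. To upgrade this I would use a transport argument showing that the property ``the edge in direction $A$ at this node has index $2$'' is invariant under each covering step inside $\mathcal{O}_{A^{\complement}}(H)$, in both directions.

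The key step runs as follows. If $B<B'$ is a covering pair in $\mathcal{O}_{A^{\complement}}(H)$, write $B'=B\vee A_i$ with $A_i=\{i\}$ an atom, $i\neq j$, and consider the rank-$2$ interval $\mathcal{O}_{B'\vee A}(B)=[B,\,B\vee A_i\vee A]$, whose two coatoms are $B\vee A$ and $B'$. Applying Lemma~\ref{2<->2} to this rank-$2$ Boolean overgroup lattice (treating $B'\vee A$ as the ambient group, $B$ as the bottom, and taking the two coatoms to be $M_1:=B\vee A$ and $M_2:=B'$) gives
$$|B\vee A:B|=2\iff|B'\vee A:B'|=2,$$
so the index-$2$ property of the direction-$A$ edge transfers between $B$ and $B'$. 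Finally I would fix a maximal chain $K=B_0<B_1<\dots<B_t=A^{\complement}$ in $\mathcal{O}_{A^{\complement}}(H)$ and apply this equivalence along each covering step, transporting the hypothesis $|K\vee A:K|=|L:K|=2$ all the way up to $|A^{\complement}\vee A:A^{\complement}|=2$. Since $A^{\complement}\vee A=G$, this is exactly $|G:A^{\complement}|=2$, completing the argument (the degenerate case $K=A^{\complement}$, where the chain is trivial and the base edge already is the top edge, is immediate).
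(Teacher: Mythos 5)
Your proof is correct and follows essentially the same route as the paper: the paper likewise picks the atom $A$ with $L=K\vee A$, takes a maximal chain $K=K_1<\cdots<K_r=A^{\complement}$, and propagates the index-$2$ property by applying Lemma~\ref{2<->2} to the rank-$2$ Boolean intervals $\mathcal{O}_{K_{i+1}\vee A}(K_i)$, exactly as in your transport step.
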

\begin{proof}
By the Boolean structure and because $K$ must be a maximal subgroup of $L$, there is an atom $A$ of $\mathcal{O}_{G}(H)$ such that $L = K \vee A$. Let $$K=K_1 < K_2 < \dots < K_r = A^{\complement} $$ be a maximal chain from $K$ to $A^{\complement}$. Let $L_i = K_i \vee A$, then the overgroup lattice $\mathcal{O}_{L_{i+1}}(K_i)$ is Boolean of rank $2$, now $|L_1:K_1| = 2$, so by Lemma \ref{2<->2}
\begin{align*} & 2=|L_1:K_1| = |L_2:K_2| = \cdots = |L_r:K_r|=|G:A^{\complement}|. \qedhere
\end{align*}  
\end{proof}
Note that for $B$ an index $2$ subgroup of $A$, if $|B|$ is odd then $A = B \rtimes C_2$, but it's not true in general if $|B|$ is even.

\begin{lemma} \label{lem:split}
If there is $i$ such that for all $K$ in $\mathcal{O}_{M_i}(H)$, $|K \vee M_i^{\complement}:K| = |M_i^{\complement}:H|$ then $$\hat{\varphi}(H,G) = (|M_i^{\complement}:H|-1)\hat{\varphi}(H,M_i).$$
\end{lemma}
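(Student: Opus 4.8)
The plan is to split the sum defining $\hat{\varphi}(H,G)$ over the Boolean lattice $\mathcal{O}_G(H)$ according to whether a given overgroup $K$ contains the atom $M_i^{\complement}$ or not. Recall that the complement of the coatom $M_i$ is an atom $A := M_i^{\complement}$, that the interval $[H,M_i]=\mathcal{O}_{M_i}(H)$ is itself Boolean of rank $\ell-1$, and that $K\mapsto K\vee A$ is a lattice isomorphism from $\mathcal{O}_{M_i}(H)$ onto $\mathcal{O}_G(A)$ (the overgroups of $A$), with inverse $K'\mapsto K'\wedge M_i$. Thus every $K'\in\mathcal{O}_G(H)$ lies in exactly one of the two halves $\mathcal{O}_{M_i}(H)$ or $\mathcal{O}_G(A)$, and the halves are matched by this isomorphism.

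First I would record the M\"obius values. In a Boolean lattice one has $\mu(H,K)=(-1)^{\rho(K)}$, where $\rho(K)$ is the rank of $K$ in $[H,G]$; since $K\vee A$ sits exactly one rank higher than $K$, this gives $\mu(H,K\vee A)=-\mu(H,K)$ for every $K\in\mathcal{O}_{M_i}(H)$. Feeding this into the defining formula and regrouping the two halves along the isomorphism yields
\begin{align*}
\hat{\varphi}(H,G)
&= \sum_{K \in \mathcal{O}_{M_i}(H)} \mu(H,K)\,|G:K| + \sum_{K \in \mathcal{O}_{M_i}(H)} \mu(H, K \vee A)\,|G : K \vee A| \\
&= \sum_{K \in \mathcal{O}_{M_i}(H)} \mu(H,K)\bigl(|G:K| - |G : K \vee A|\bigr).
\end{align*}

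Next I would exploit the hypothesis $|K\vee A:K|=|A:H|$, valid for all $K\in\mathcal{O}_{M_i}(H)$. Multiplicativity of the index gives $|G:K|=|G:K\vee A|\cdot|K\vee A:K|=|G:K\vee A|\cdot|A:H|$, hence $|G:K|-|G:K\vee A|=(|A:H|-1)\,|G:K\vee A|$. It then remains to identify $|G:K\vee A|$ with $|M_i:K|$. Applying the hypothesis to the top element $K=M_i$ (for which $K\vee A=G$) yields $|G:M_i|=|A:H|$, i.e.\ $|G|\,|H|=|M_i|\,|A|$; combined with $|K\vee A|=|K|\,|A|/|H|$ (the hypothesis rewritten, using $K\wedge A=H$), this gives $|G:K\vee A|=|G|\,|H|/(|K|\,|A|)=|M_i|/|K|=|M_i:K|$. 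Substituting back,
$$\hat{\varphi}(H,G) = (|A:H|-1)\sum_{K\in\mathcal{O}_{M_i}(H)}\mu(H,K)\,|M_i:K| = (|M_i^{\complement}:H|-1)\,\hat{\varphi}(H,M_i),$$
as claimed.

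The main obstacle is the clean identification $|G:K\vee A|=|M_i:K|$ uniformly in $K$: a priori the product formula only delivers the inequality $|K\vee A:K|\ge|A:H|$ (Remark~\ref{profor}), so I must invoke the hypothesis both to force equality there and, in the special case $K=M_i$, to pin down $|G:M_i|=|A:H|$. Once these two facts are secured the computation is purely formal. I would also double-check the sign bookkeeping in the M\"obius step, since that is the only place where the full Boolean structure (rather than mere distributivity) is essential.
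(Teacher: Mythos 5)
Your proof is correct and is essentially the paper's argument, written out in full: the paper's (very terse) proof likewise splits $\mathcal{O}_G(H)$ along the atom $M_i^{\complement}$, using the hypothesis to identify $\hat{\varphi}(H,M_i)=\hat{\varphi}(M_i^{\complement},G)$ and to see that the half of the sum over $\mathcal{O}_{M_i}(H)$ equals $|M_i^{\complement}:H|\,\hat{\varphi}(H,M_i)$, which is exactly your termwise computation $|G:K|-|G:K\vee M_i^{\complement}|=(|M_i^{\complement}:H|-1)\,|M_i:K|$ regrouped. Your verification that $|G:K\vee M_i^{\complement}|=|M_i:K|$ fills in the detail the paper leaves implicit.
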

\begin{proof}
By assumption we deduce that $\hat{\varphi}(H,M_i) = \hat{\varphi}(M_i^{\complement},G)$, but by definition, $\hat{\varphi}(H,G) = |M_i^{\complement}:H|\hat{\varphi}(H,M_i)-\hat{\varphi}(H,M_i)$. The result follows.
\end{proof}

\begin{lemma} \label{lem:split2}
If there is $i$ such that $|M_i^{\complement}:H|=2$ then $\hat{\varphi}(H,G) = \hat{\varphi}(H,M_i)$.
\end{lemma}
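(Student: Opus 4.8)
The plan is to obtain Lemma~\ref{lem:split2} as the degenerate case of Lemma~\ref{lem:split}. Since the hypothesis gives $|M_i^{\complement}:H|=2$, the prefactor $(|M_i^{\complement}:H|-1)$ occurring in Lemma~\ref{lem:split} collapses to $1$, so it would suffice to verify that the hypothesis of that lemma holds for this particular index $i$, namely that
$$|K \vee M_i^{\complement}:K| = |M_i^{\complement}:H| = 2 \quad\text{for every } K \in \mathcal{O}_{M_i}(H).$$
Thus the entire problem reduces to establishing this single uniform-index statement.

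The crux is to transfer the index-$2$ information from the \emph{bottom} of the lattice (the atom $M_i^{\complement}$, satisfying $|M_i^{\complement}:H|=2$) to the \emph{top} (the coatom $M_i$). First I would record that $M_i^{\complement}$ is an atom of $\mathcal{O}_{G}(H)$, being the lattice-complement of the coatom $M_i$. Applying Lemma~\ref{top2} to the covering pair $H < M_i^{\complement}$, which has index $2$, produces an atom $A$ with $M_i^{\complement}=H\vee A$ and $|G:A^{\complement}|=2$; but $H\vee A=A$ forces $A=M_i^{\complement}$, whence $A^{\complement}=M_i$ and therefore $|G:M_i|=2$. With this index-$2$ fact now available at the top, I would feed the atom $A=M_i^{\complement}$ (so that $A^{\complement}=M_i$ and $|G:A^{\complement}|=2$) into the final assertion of Lemma~\ref{decre}, which yields precisely $|K \vee M_i^{\complement}:K|=2$ for all $K\in\mathcal{O}_{M_i}(H)=\mathcal{O}_{A^{\complement}}(H)$.

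Having verified the hypothesis of Lemma~\ref{lem:split}, I would conclude
$$\hat{\varphi}(H,G) = (|M_i^{\complement}:H|-1)\,\hat{\varphi}(H,M_i) = (2-1)\,\hat{\varphi}(H,M_i) = \hat{\varphi}(H,M_i),$$
as desired. The main obstacle is the middle step: deducing $|G:M_i|=2$ from $|M_i^{\complement}:H|=2$. This is the ``index-$2$ propagation'' along a maximal chain encoded in Lemma~\ref{top2} (resting ultimately on the rank-$2$ equivalence of Lemma~\ref{2<->2}), and recognizing that Lemma~\ref{top2} applies with the forced identification $A=M_i^{\complement}$ is the only genuinely non-mechanical point. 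Once $|G:M_i|=2$ is in hand, both Lemma~\ref{decre} and Lemma~\ref{lem:split} apply directly and the computation is immediate.
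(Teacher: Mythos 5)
Your proposal is correct and follows exactly the same route as the paper's proof: apply Lemma~\ref{top2} to the index-$2$ pair $H<M_i^{\complement}$ to get $|G:M_i|=2$, then the final assertion of Lemma~\ref{decre} to verify the uniform-index hypothesis, then Lemma~\ref{lem:split} with prefactor $2-1=1$. Your spelled-out justification of the forced identification $A=M_i^{\complement}$ is just a more explicit rendering of the paper's terse first step.
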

\begin{proof}
By assumption and Lemma \ref{top2}, $|G:M_i|=2$, so by Lemma \ref{decre}, if $H \le K \le M_i $ then $|K \vee M_i^{\complement}:K|=2$. Thus, by Lemma \ref{lem:split}, $\hat{\varphi}(H,G) = (2-1)\hat{\varphi}(H,M_i).$ 
\end{proof}

\begin{lemma} \label{lem:2^i}
Let $G$ be a finite group and $H$ a subgroup such that the overgroup lattice $\mathcal{O}_{G}(H)$ is Boolean of rank $\ell$, and let $A_1, \dots, A_{\ell}$ be its atoms. If $|A_i:H| \ge 2^i$ then $\hat{\varphi}(H,G) \ge 2^{\ell-1}$.
\end{lemma}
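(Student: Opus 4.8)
The plan is to argue by induction on the rank $\ell$, proving the self‑sustaining bound $\hat{\varphi}(H,G)\ge 2^{\ell-1}$. For $\ell\le 2$ the statement is exactly Lemma~\ref{lem:r1r2}, which serves as the base case. So assume $\ell\ge 3$ and relabel the atoms so that $A_\ell$ carries the strongest hypothesis $|A_\ell:H|\ge 2^\ell$; let $M:=A_\ell^{\complement}$ be the complementary coatom, so that $\mathcal{O}_M(H)$ is Boolean of rank $\ell-1$ with atoms $A_1,\dots,A_{\ell-1}$, and the map $K\mapsto K\vee A_\ell$ is a lattice isomorphism from $\mathcal{O}_M(H)$ onto $\mathcal{O}_G(A_\ell)$.

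First I would record a splitting identity. Writing $\hat{\varphi}(H,G)=\sum_{K\in\mathcal{O}_G(H)}\mu(H,K)|G:K|$ and separating the subgroups $K\le M$ from the subgroups $K\ge A_\ell$ via the above isomorphism (using $\mu(H,K\vee A_\ell)=-\mu(H,K)$ and $\mu(H,K)=\mu(A_\ell,K\vee A_\ell)$ in the Boolean lattice, together with $|G:K|=|G:M|\,|M:K|$), one obtains
\begin{equation}\label{eq:sketchrec}
\hat{\varphi}(H,G)=|G:M|\,\hat{\varphi}(H,M)-\hat{\varphi}(A_\ell,G).
\end{equation}
The inductive hypothesis applied to $\mathcal{O}_M(H)$ (whose atoms satisfy $|A_i:H|\ge 2^i$ for $i\le\ell-1$) gives $\hat{\varphi}(H,M)\ge 2^{\ell-2}>0$, while the Product Formula of Remark~\ref{profor} (with $B=A_\ell$, $C=M$) gives $|G:M|\ge|A_\ell:H|\ge 2^\ell$.

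The heart of the argument is then the one‑step inequality
\begin{equation}\label{eq:sketchstar}
\hat{\varphi}(A_\ell,G)\le\bigl(|G:M|-|A_\ell:H|+1\bigr)\,\hat{\varphi}(H,M),
\end{equation}
which, substituted into \eqref{eq:sketchrec}, collapses to $\hat{\varphi}(H,G)\ge(|A_\ell:H|-1)\,\hat{\varphi}(H,M)\ge(2^\ell-1)\,2^{\ell-2}\ge 2^{\ell-1}$, finishing the induction. Setting $d:=|G:M|$, $a:=|A_\ell:H|$ and $r_K:=|K\vee A_\ell:K|$ for $H\le K\le M$, and using $|G:K\vee A_\ell|=d\,|M:K|/r_K$, inequality \eqref{eq:sketchstar} is equivalent to the sign statement $\sum_{H\le K\le M}\mu(H,K)\,v_K\ge 0$, where $v_K:=(d-a+1)\,|M:K|-|G:K\vee A_\ell|=|M:K|\bigl(d-a+1-d/r_K\bigr)$. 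Here Lemma~\ref{decre} enters: it shows $r_K$ is nondecreasing along $\mathcal{O}_M(H)$, so $r_K\in[a,d]$, whence $d-a+1-d/r_K\ge(a-1)(d-a)/a\ge 0$ and every $v_K\ge 0$; the same monotonicity is the engine for an internal induction on the rank of $\mathcal{O}_M(H)$ establishing the alternating‑sum positivity.

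I expect \eqref{eq:sketchstar} to be the main obstacle. It must be proved for an \emph{arbitrary} Boolean overgroup lattice, since the interval $\mathcal{O}_G(A_\ell)$ carries no $2^i$‑hypothesis and so cannot be handled by the main bound itself; rather it is a purely order‑theoretic positivity statement relating the two rank‑$(\ell-1)$ intervals $\mathcal{O}_M(H)$ and $\mathcal{O}_G(A_\ell)$ bridged by the monotone indices $r_K$. The base case (rank $1$, i.e.\ $\ell=2$) is a direct Product‑Formula computation, in which \eqref{eq:sketchstar} in fact holds with equality when $|G:M|=|A_\ell:H|$; the inductive step requires peeling a further atom inside both intervals simultaneously and controlling the resulting cross‑terms through Lemma~\ref{decre}. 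Once \eqref{eq:sketchstar} is secured, the remaining arithmetic, namely $(2^\ell-1)2^{\ell-2}\ge 2^{\ell-1}$ (and more generally $\prod_{i=1}^{\ell}(2^i-1)\ge 2^{\ell-1}$), is immediate.
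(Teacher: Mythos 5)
Your reduction is correct as far as it goes: the splitting identity $\hat{\varphi}(H,G)=|G:M|\,\hat{\varphi}(H,M)-\hat{\varphi}(A_\ell,G)$ (with $M=A_\ell^{\complement}$) is valid, the inductive hypothesis does apply to $\mathcal{O}_M(H)$, and granting your key inequality $(\star)$, namely $\hat{\varphi}(A_\ell,G)\le\bigl(|G:M|-|A_\ell:H|+1\bigr)\hat{\varphi}(H,M)$, the arithmetic closes the induction. But $(\star)$ is never proved, and this is a genuine gap rather than a routine omission. Your justification consists of showing each $v_K=|M:K|\bigl(d-a+1-d/r_K\bigr)\ge 0$; this says nothing about the \emph{alternating} sum $\sum_{K\in\mathcal{O}_M(H)}\mu(H,K)v_K$, whose nonnegativity is what $(\star)$ requires, and the rest is deferred to an unspecified ``internal induction''. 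Worse, $(\star)$ cannot be established at the level of generality you intend (``a purely order-theoretic positivity statement'' resting only on Remark~\ref{profor} and Lemma~\ref{decre}): consider an abstract rank-$3$ Boolean lattice with atoms $A_1,A_2,A_3$, distinguished atom $A_3$, $M=A_1\vee A_2$, and indices $|A_1:H|=2$, $|A_2:H|=3$, $|A_3:H|=2$, $|M:A_1|=6$, $|M:A_2|=4$, $|A_1\vee A_3:A_1|=10$, $|A_2\vee A_3:A_2|=2$, $|G:M|=10$. One checks that every instance of the Product Formula and every instance of the monotonicity of Lemma~\ref{decre} (in all three atom directions) holds, and indeed every $v_K\ge0$ ($v_H=48$, $v_{A_1}=48$, $v_{A_2}=16$, $v_M=8$); yet $\hat{\varphi}(A_3,G)=60-6-20+1=35$ while $(|G:M|-|A_3:H|+1)\hat{\varphi}(H,M)=9\cdot3=27$, so $(\star)$ fails (equivalently, $48-48-16+8=-8<0$). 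Such a configuration is not realizable by a group, but your argument cannot detect that: any correct proof of $(\star)$ would have to import either genuine group theory or the $2^i$-hypotheses into the proof of $(\star)$ itself, and you supply neither.

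The paper's proof avoids this trap entirely by never isolating a single atom. It works with the full sum $\hat{\varphi}(H,G)=\sum_{I}(-1)^{|I|}|G:A_I|$, uses Lemma~\ref{decre} in the form $|G:A_I|\ge 2^j|G:A_I\vee A_j|$ for $j\notin I$ to bound each odd term by the average $\frac{1}{|J|}\sum_{j\in J}2^{-j}|G:A_{J\setminus\{j\}}|$ of even terms, and after regrouping obtains $\hat{\varphi}(H,G)\ge\sum_{|I|\text{ even}}|G:A_I|\,\frac{|I|+2^{-\ell}+\sum_{i\in I}2^{-i}}{|I|+1}\ge 2^{-\ell}|G:H|\ge 2^{\ell(\ell-1)/2}$. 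There the alternating signs are genuinely absorbed, because each odd index set is charged against its own even subsets with weights $2^{-j}$ summing to strictly less than $1$ --- exactly the cancellation your termwise bound cannot replicate. If you want to salvage your inductive scheme, you would need to prove $(\star)$ (or a substitute) using the $2^i$-hypotheses, at which point you are essentially forced back to the paper's averaging argument.
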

\begin{proof} 
Let $I$ be a subset of $\{1,\dots, \ell\}$ and let $A_I$ be $\bigvee_{i \in I} A_i$. Then $\mathcal{O}_{G}(H)= \{A_I \ | \ I \subseteq \{1,\dots, \ell\}\}$ and 
$$ \hat{\varphi}(H,G) = \sum_{I \subseteq \{1,\dots, \ell\}} (-1)^{|I|} |G:A_I|. $$
By assumption and Lemma \ref{decre}, if $j \not \in I$ then $|G:A_I| \ge 2^j|G:A_I\vee A_j|$. It follows that $$ |G:A_J| \le \frac{1}{|J|} \sum_{j \in J} 2^{-j} |G:A_{J \setminus \{j\}}|$$ from which we get that
\begin{align*}
\hat{\varphi}(H,G) 
& \ge  \sum_{|I| \text{ even}} |G:A_I| - \sum_{|I| \text{ odd}} \frac{1}{|I|} \sum_{i \in I} 2^{-i} |G:A_{I \setminus \{i\}}| \\
& = \sum_{|I| \text{ even}} |G:A_I| (1-\frac{\sum_{i \not \in I}2^{-i}}{|I|+1}) \\
& = \sum_{|I| \text{ even}}|G:A_I|\frac{|I|+2^{-\ell}+\sum_{i \in I}2^{-i}}{|I|+1} \\
& \ge  |G:A_{\emptyset}|2^{-\ell} = 2^{-\ell} |G:H|  \\
& \ge 2^{-\ell}\prod_{i=1}^{\ell} 2^i = 2^{\ell(\ell-1)/2} \ge 2^{\ell-1}.  \qedhere
\end{align*} 
\end{proof}


\begin{lemma} \label{lem:2^i+}
Let $G$ be a finite group and $H$ a subgroup such that the overgroup lattice $\mathcal{O}_{G}(H)$ is Boolean of rank $\ell$, and let $A_1, \dots, A_{\ell}$ be its atoms. If $|A_i:H| \ge a_i>0$
then $\hat{\varphi}(H,G) \ge (1-\sum_i a_i^{-1})\prod_i a_i$.
\end{lemma}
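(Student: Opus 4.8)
The plan is to expand $\hat{\varphi}(H,G)$ over the Boolean lattice and feed it the index inequalities furnished by Lemma~\ref{decre}. Writing $A_I := \bigvee_{i \in I} A_i$ for $I \subseteq \{1,\dots,\ell\}$, we have $\mathcal{O}_{G}(H) = \{A_I \mid I \subseteq \{1,\dots,\ell\}\}$ and
\[
\hat{\varphi}(H,G) = \sum_{I \subseteq \{1,\dots,\ell\}} (-1)^{|I|} |G:A_I|.
\]
For every $I$ and every $j \notin I$, Lemma~\ref{decre} gives $|A_{I \cup \{j\}}:A_I| = |A_I \vee A_j : A_I| \ge |A_j:H| \ge a_j$. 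Iterating over the missing coordinates yields the lower bound $|G:A_I| \ge \prod_{j \notin I} a_j$, and in particular $|G:H| \ge \prod_i a_i$; averaging instead over $i\in I$ yields the upper bound $|G:A_I| \le \tfrac{1}{|I|}\sum_{i \in I} a_i^{-1}\,|G:A_{I \setminus \{i\}}|$ for $I\neq\emptyset$. These are exactly the ingredients of the proof of Lemma~\ref{lem:2^i}.

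My preferred route is to prove the sharper, scale-free inequality $\hat{\varphi}(H,G) \ge \prod_{i=1}^{\ell}\big(|A_i:H| - 1\big)$ and deduce the lemma from it. We may assume $a_i \ge 1$ for all $i$: if some $a_i<1$ then either $\sum_{j\neq i} a_j^{-1}\le 1$, in which case raising $a_i$ to $1$ does not decrease the right-hand side of the lemma (its derivative in $a_i$ has the sign of $1-\sum_{j\neq i}a_j^{-1}$), so we reduce to the case $a_i\ge1$; or $\sum_{j\neq i}a_j^{-1}>1$, in which case $\sum_j a_j^{-1}>2$ makes the right-hand side negative while the sharper bound gives $\hat{\varphi}(H,G)\ge 1>0$. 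Granting $a_i\ge 1$ and the sharper bound, the estimate $|A_i:H|-1 \ge a_i-1 \ge 0$ gives $\prod_i(|A_i:H|-1) \ge \prod_i(a_i-1) = \big(\prod_i a_i\big)\prod_i(1-a_i^{-1})$, and the Weierstrass--Bonferroni inequality $\prod_i(1-a_i^{-1}) \ge 1 - \sum_i a_i^{-1}$ finishes the proof. (In the frequently used regime $\sum_i a_i^{-1}\le 1$ one can shortcut all of this: repeating the manipulation of Lemma~\ref{lem:2^i} verbatim with $a_i$ in place of $2^i$ gives $\hat{\varphi}(H,G) \ge \sum_{|I'|\text{ even}} |G:A_{I'}|\big(1-\tfrac{\sum_{i\notin I'}a_i^{-1}}{|I'|+1}\big)$, where every coefficient is now nonnegative, so keeping only $I'=\emptyset$ and using $|G:H|\ge\prod_i a_i$ already yields $\hat{\varphi}(H,G)\ge(1-\sum_i a_i^{-1})\prod_i a_i$.)

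I would prove the sharper bound by induction on $\ell$, peeling off one atom $A_\ell$. The finite difference
\[
|G:A_I| - |G:A_{I \cup \{\ell\}}| = \big(|A_{I\cup\{\ell\}}:A_I|-1\big)\,|G:A_{I\cup\{\ell\}}|
\]
rewrites $\hat{\varphi}(H,G)$ as an alternating sum over the rank-$(\ell-1)$ sub-box $[A_\ell,G]$, weighted by the factors $w(I):=|A_{I\cup\{\ell\}}:A_I|-1$, which by Lemma~\ref{decre} are nondecreasing in $I$. The hard part is precisely this inductive step: because the weights are not constant, the weighted alternating sum is not a multiple of $\hat{\varphi}$ over $[A_\ell,G]$, and a naive recursion is lossy — induction only lower-bounds the subtracted rank-$(\ell-1)$ contribution, the wrong direction. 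The way through is to apply the difference operators one coordinate at a time and, in each resulting rank-$2$ sub-box $\{A_I, A_{I\cup\{i\}}, A_{I\cup\{j\}}, A_{I\cup\{i,j\}}\}$, exploit the multiplicativity of the index along its two maximal chains,
\[
|A_{I\cup\{i\}}:A_I|\,|A_{I\cup\{i,j\}}:A_{I\cup\{i\}}| = |A_{I\cup\{j\}}:A_I|\,|A_{I\cup\{i,j\}}:A_{I\cup\{j\}}|,
\]
together with the integrality $|A_{I\cup\{i\}}:A_I|\ge 2$ (and Lemma~\ref{lem:2.2}). A short extremal analysis of the resulting one-variable expression shows each pair of difference steps contributes a factor at least $|A_i:H|-1$, the minimum being attained exactly in the direct-product situation $|G:H|=\prod_i|A_i:H|$; this both closes the induction and exhibits the sharpness of the bound.
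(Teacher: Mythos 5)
Your parenthetical shortcut is, in essence, the paper's entire proof: the paper disposes of this lemma with the single sentence that ``it works exactly as for the proof of Lemma~\ref{lem:2^i}'', i.e.\ expand $\hat{\varphi}(H,G)$ over the Boolean lattice, use Lemma~\ref{decre} to get $|G:A_J| \le \tfrac{1}{|J|}\sum_{j\in J} a_j^{-1}|G:A_{J\setminus\{j\}}|$, regroup over even subsets, discard every term except $I=\emptyset$, and finish with $|G:H|\ge\prod_i a_i$. As you note, the discarding step requires the coefficients $1-\tfrac{1}{|I|+1}\sum_{i\notin I}a_i^{-1}$ to be nonnegative, which holds when $\sum_i a_i^{-1}\le 1$ --- the only regime in which the asserted bound is positive and the only regime in which Section~\ref{sec:brown} ever invokes the lemma. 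Had you led with that argument and stopped, you would have reproduced the paper's proof.

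Your preferred route, however, has a genuine gap: everything (including your reduction to $a_i\ge 1$) hinges on the inequality $\hat{\varphi}(H,G)\ge\prod_i\bigl(|A_i:H|-1\bigr)$, which you never prove --- your own sketch concedes the inductive step is ``the hard part'' and resolves it by an unspecified ``short extremal analysis''. That analysis cannot exist at the level of generality you work in. The only inputs you allow yourself (the product formula, Lemma~\ref{decre}, integrality of indices, and Lemma~\ref{lem:2.2} in rank-$2$ sub-boxes) do not determine even the sign of the alternating sum: at rank $3$, take atom indices $(|A_1:H|,|A_2:H|,|A_3:H|)=(2,3,3)$, give all three pairwise joins index $|A_i\vee A_j:H|=2m$ with $3\mid m$, and set $|G:H|=2m^2$; then every one of those constraints is satisfied (in particular $|A_i\vee A_j:H|\cdot|A_j\vee A_k:H|\le |G:H|\cdot|A_j:H|$, with equality when $|A_j:H|=2$), yet $\sum_I(-1)^{|I|}|G:A_I| = -\tfrac{m^2}{3}+3m-1\to-\infty$. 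So no manipulation of these index inequalities can yield your sharper bound, nor, for that matter, the lemma itself in the regime $\sum_i a_i^{-1}>1$ where its right-hand side is negative (a caveat that applies equally to the paper's one-line proof, but is harmless there since all applications sit in the safe regime). Worse, since $|A_i:H|\ge 2$ always, your sharper bound would give $\hat{\varphi}(H,G)\ge 1>0$ for every Boolean interval of every finite group, i.e.\ it would settle \cite[Conjecture~1.5]{bpJCTA} --- the open conjecture motivating this whole section, which Theorem~\ref{thrm:bound} confirms only in special cases (and which holds with equality, by Lemma~\ref{lem:grpcpted}, exactly in your ``direct-product'' situation). You are thus claiming, inside a sketch, something at least as strong as the open problem. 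The repair is simply to promote your parenthetical remark to be the proof.
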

\begin{proof}
It works exactly as for the proof of Lemma \ref{lem:2^i}.
\end{proof}

\subsection{Proof of Theorem \ref{thrm:bound} \eqref{bound1}}
\begin{proof}
The case $n \le 2$ is precisely Lemma \ref{lem:r1r2}. It remains the case $n=3$. 

If there is $i$ such that $|M_i^{\complement}:H|=2$, then by Lemma \ref{lem:2.2} and the Boolean structure, for all $j \neq i$, $|M_j^{\complement}:H| \ge 3$, and by Lemma \ref{lem:split2}, $\hat{\varphi}(H,G) = \hat{\varphi}(H,M_i)$. But as for the proof of Lemma \ref{lem:r1r2}, we have that $$\hat{\varphi}(H,M_i)\ge 9(1-1/3-1/3)+1 = 2^{n-1}.$$

Else, for all $i$ we have $|M_i^{\complement}:H| \ge 3$. Then (using Lemma \ref{decre})
\begin{align*}
\hat{\varphi}(H,G) 
& = |G:H| - \sum_i|G:M_i^{\complement}| + \sum_i|G:M_i| - |G:G|  \\
&\ge |G:H|(1 - \sum_i|M_i^{\complement}:H|^{-1}) + \sum_i|M_i^{\complement}:H| -1 \\
&\ge 27(1-\sum_i 1/3) + \sum_i (3) - 1 = 8>2^{n-1}. \qedhere
\end{align*} 
\end{proof}

\subsection{Proof of Theorem \ref{thrm:bound} \eqref{bound2'}\eqref{bound2}} Let $M_1, \dots, M_{\ell}$ be the coatoms of $\mathcal{O}_{G}(H)$. 

\noindent The Boolean lattice $\mathcal{O}_{G}(H)$ is called \emph{group-complemented} if $KK^{\complement} = K^{\complement}K$ for every $K \in \mathcal{O}_{G}(H)$.  

\begin{lemma} \label{lem:grpcpted}
If the Boolean lattice $\mathcal{O}_{G}(H)$ is group-complemented then $\hat{\varphi}(H,G) = \prod_i (|G:M_i|-1)$.
\end{lemma}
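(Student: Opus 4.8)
The plan is to reduce everything to the M\"obius expansion
$$\hat{\varphi}(H,G) = \sum_{I \subseteq \{1,\dots,\ell\}} (-1)^{|I|}\, |G:A_I|,$$
where $A_1,\dots,A_\ell$ are the atoms of $\mathcal{O}_{G}(H)$ (so $A_i = M_i^{\complement}$, complementation being an order-reversing bijection that carries coatoms to atoms), and $A_I := \bigvee_{i \in I} A_i$; here I use that $\mathcal{O}_{G}(H) \cong \mathcal{P}(\{1,\dots,\ell\})$ forces $\mu(H,A_I) = (-1)^{|I|}$. Expanding the target product gives $\prod_{i}(|G:M_i|-1) = \sum_{I}(-1)^{|I|}\prod_{i\notin I}|G:M_i|$, so the whole statement will follow once I prove the single index identity $|G:A_I| = \prod_{i\notin I}|G:M_i|$ for every $I$.

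First I would extract from the group-complemented hypothesis that $K K^{\complement} = K^{\complement}K$ is a subgroup, hence equals $K \vee K^{\complement} = G$; by the Product Formula (Remark~\ref{profor}) this yields $|G:H| = |K:H|\,|K^{\complement}:H|$ for every $K$, and in particular $|A_i:H| = |G:M_i|$ and $|G:A_I| = |A_{I^{c}}:H|$, where $I^{c}$ denotes the complement of $I$ in $\{1,\dots,\ell\}$. Thus it suffices to establish the multiplicativity $|A_J:H| = \prod_{i\in J}|A_i:H|$ for all $J$, which I would prove by induction on $|J|$, the cases $|J|\le 1$ being trivial.

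The inductive step is where the real content lies, and it is the step I expect to be the main obstacle, since group-complementedness only constrains complementary pairs and must be transferred to the proper sub-interval $[H,A_J]$. Fixing $j \in J$ and setting $J' := J\setminus\{j\}$, the Boolean structure gives $A_{J'} = A_j^{\complement} \wedge A_J = A_j^{\complement}\cap A_J$ and $A_j \wedge A_{J'} = H$. Because $A_j \le A_J$, Dedekind's modular law applies and gives $A_j(A_j^{\complement}\cap A_J) = (A_jA_j^{\complement})\cap A_J = G \cap A_J = A_J$, using $A_jA_j^{\complement}=G$ from the previous paragraph. Hence $A_j A_{J'} = A_J$ as sets, and since $A_j \cap A_{J'} = H$ the Product Formula yields $|A_J:H| = |A_j:H|\,|A_{J'}:H|$; the induction hypothesis applied to $A_{J'}$ then closes the argument. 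Feeding $|G:A_I| = |A_{I^{c}}:H| = \prod_{i\notin I}|G:M_i|$ back into the M\"obius expansion and comparing with the expanded product gives $\hat{\varphi}(H,G) = \prod_i(|G:M_i|-1)$, as claimed.
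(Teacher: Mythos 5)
Your proof is correct, and while it shares its skeleton with the paper's argument (both first extract from group-complementedness that $KK^{\complement}=K\vee K^{\complement}=G$, hence $|G:K|=|K^{\complement}:H|$ by the Product Formula, then establish an index-multiplicativity statement across the lattice, and finally factor the M\"obius sum), the mechanism you use for the multiplicativity step is genuinely different. The paper proves $|G:M_I|=\prod_{i\in I}|G:M_i|$ by invoking Lemma~\ref{decre}: the relative index $|K:K\wedge M_i|$ is monotone in $K\in\mathcal{O}_G(M_i^{\complement})$, and since its two extreme values $|M_i^{\complement}:H|$ and $|G:M_i|$ coincide under the group-complemented hypothesis, the sandwich collapses and all intermediate indices are equal. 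You instead prove the dual statement $|A_J:H|=\prod_{i\in J}|A_i:H|$ (equivalent to the paper's via $M_I=A_{I^c}$ and the complementation identity) by a direct application of Dedekind's modular law: since $A_j\le A_J$, one gets $A_j(A_j^{\complement}\cap A_J)=(A_jA_j^{\complement})\cap A_J=G\cap A_J=A_J$, so the lattice join $A_J$ is literally the set product $A_jA_{J'}$, and the Product Formula gives exact equality in one shot. Your route is self-contained and yields a slightly stronger structural fact (every join of atoms is the group-theoretic product of those atoms), replacing an inequality-plus-sandwich argument by an identity; the paper's route is economical within its own framework, since Lemma~\ref{decre} is a general tool it needs anyway for the other parts of Theorem~\ref{thrm:bound} (e.g.\ Lemmas~\ref{lem:split} and~\ref{lem:2^i}). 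Both are complete proofs of Lemma~\ref{lem:grpcpted}.
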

\begin{proof}
By assumption, $KK^{\complement} = K^{\complement}K$ which means that $KK^{\complement} = K \vee K^{\complement} = G$, which also means (by Product Formula) that $|G:K| = |K^{\complement}:H|$. Then by Lemma \ref{decre}, for all $i$ and for all $K$ in $\mathcal{O}_{G}(M_i^{\complement})$, $|K : K \wedge M_i| = |G:M_i|$. Now for all $K$ in $\mathcal{O}_{G}(H)$ there is $I \subseteq \{1, \dots, \ell\}$ such that $K=M_I=\bigwedge_{i \in I}M_i$, it follows that $|G:K| = \prod_{i \in I} |G:M_i|$ and then 
$$ \hat{\varphi}(H,G) = (-1)^{\ell}\sum_{I \subseteq \{1,\dots, \ell\}} (-1)^{|I|} |G:M_I| = (-1)^{\ell} \sum_{I \subseteq \{1,\dots, \ell\}} \prod_{i \in I} (-|G:M_i|) =   \prod_i (|G:M_i|-1). \qedhere $$  
\end{proof}
Theorem \ref{thrm:bound} \eqref{bound2'} follows from Lemmas \ref{lem:grpcpted} and \ref{lem:2.2}. Moreover, if $G$ is solvable and if $\mathcal{O}_{G}(H)$ is Boolean then it is also group-complemented by \cite[Theorem 1.5]{Luc} and the proof of Lemma \ref{lem:grpcpted}. The proof of Theorem \ref{thrm:bound} \eqref{bound2} follows.

\subsection{Proof of Theorem \ref{thrm:bound} \eqref{bound3}}
\begin{proof}
By Theorem \ref{thrm:bound} \eqref{bound1}, we are reduced to consider $\ell \ge 4$ on the cases (1)-(6) of Theorem \ref{thrm:main} where we take the notations:
\begin{enumerate}
\item Take $G = Sym(\Omega)$. By Section \ref{sec:construction}, the rank $\ell$ Boolean lattice $\mathcal{O}_{G}(H)$ is made of $$M_{I}\cong \Sym(n/n_{i_1})\mathrm{wr} \Sym(n_{i_1}/n_{i_2}) \mathrm{wr}\cdots \mathrm{wr}
\Sym(n_{i_{\kappa-1}}/n_{i_\kappa})\mathrm{wr} \Sym(n_{i_\kappa}),$$
with $I = \{ i_1,i_2,\dots, i_\kappa \} \subseteq \{1,\dots,\ell\}$, but $$ |M_I| = \left(\frac{n}{n_{i_1}}!\right)^{n_{i_1}}\left(\frac{n_{i_1}}{n_{i_2}}!\right)^{n_{i_2}} \cdots \left(\frac{n_{i_{\kappa-1}}}{n_{i_{\kappa}}}!\right)^{n_{i_{\kappa}}}n_{i_{\kappa}}! $$   In particular, with $n_0=n$, $n_{\ell+1} = 1$, $H=M_{\{1,\dots, \ell \}}$ and $A_i = M_i^{\complement}$, we have that $$|H|= \prod_{i=0}^{\ell}\left(\frac{n_{i}}{n_{i+1}}!\right)^{n_{i+1}}, \ \ |A_j| = \left(\frac{n_{j-1}}{n_{j+1}}!\right)^{n_{j+1}} \prod_{i \neq j,j+1 }\left(\frac{n_{i}}{n_{i+1}}!\right)^{n_{i+1}}.$$
It follows that 
$$|A_j:H| = \frac{\left(\frac{n_{j-1}}{n_{j+1}}!\right)^{n_{j+1}}}{\left(\frac{n_{j-1}}{n_{j}}!\right)^{n_{j}}\left(\frac{n_{j}}{n_{j+1}}!\right)^{n_{j+1}}}  = \left[ \frac{\left(\frac{n_{j-1}}{n_{j+1}}!\right)}{\left(\frac{n_{j-1}}{n_{j}}!\right)^{\frac{n_j}{n_{j+1}}}\left(\frac{n_{j}}{n_{j+1}}!\right)} \right]^{n_{j+1}} \ge 3^{n_{j+1}}.$$
Take the atom $B_i := A_{\ell + 1 - i}$ and $m_i := n_{\ell + 1 - i}$, then $$|B_i:H| \ge 3^{m_{i-1}} \ge 3^{2^{i-1}} > 2^i.$$
It follows by Lemma \ref{lem:2^i} that $\hat{\varphi}(H,G) \ge 2^{\ell-1}$. 

Next, if $B_i \subseteq \Alt(\Omega)$ then so is $H$ and obviously $|\Alt(\Omega) \cap B_i : \Alt(\Omega) \cap H| = |B_i : H|$, else by Lemma \ref{lem:ind2sub} $|B_i : \Alt(\Omega) \cap  B_i| = 2$, now $|H : \Alt(\Omega) \cap H| = 1$ or $2$ whether $H \subseteq \Alt(\Omega)$ or not. In any case, $$|\Alt(\Omega) \cap  B_i : \Alt(\Omega) \cap H| \ge |B_i : H|/2 > 3^{2^{i-1}-1},$$ 
and we can also apply Lemma \ref{lem:2^i}.
 
\item Let $A_{\ell} = G_{\ell}^{\complement}$, then $|A_{\ell}:H|=2$. Next, we can order, as above, the remaining atoms $A_1, \dots, A_{\ell-1}$ such that $|A_i:H| \ge 3^{2^{i-1}}$ because by assumption $|A_{\ell} : \Alt(\Omega) \cap A_{\ell}|=2$. The result follows by Lemma \ref{lem:2^i+} because 
$$1-(\frac{1}{2}+\sum_{i=1}^{\ell-1}3^{-2^{i-1}}) \ge \frac{1}{2} - \sum_{i=1}^{\ell-1}3^{-i} = \sum_{i=\ell}^{\infty}3^{-i} =  \frac{3}{2}3^{-\ell}.$$

\item Following the notations of Section \ref{sec:construction2}, for $I=\{r_1,r_2,\dots,r_s\}$ we have that
$$ |M_I| = (a^{b_1 \cdots b_{r_1-1}}!)^{b_{r_1}\cdots b_{\ell}} \prod_{i=1}^s ((b_{r_i}\cdots b_{r_{i+1}-1})!)^{b_{r_{i+1}}\cdots b_{\ell}}.$$
\noindent The atom $A_i=M_i^{\complement}$ is of the form $M_{\{i\}^{\complement}}$, whereas, $H=M_{\{1,\dots, \ell\}}$, then (with $b_0=1$)
$$|H| = (a!)^{b_1\cdots b_{\ell}} \prod_{i=1}^{\ell} (b_i!)^{b_{i+1}\cdots b_{\ell}} \ \text{ and } \  A_j= (a^{b_1^{\delta_{1,j}}}!)^{b_1^{-\delta_{1,j}}\prod_{i}b_i} ((b_{j-1}b_{j})!)^{\delta_{1,j}b_{j+1}\cdots b_{\ell}} \prod_{i \neq j-1,j}(b_i!)^{b_{i+1}\cdots b_{\ell}}.$$
Let $j>1$, it follows that
$$ |A_j : H| =  \left[\frac{(b_{j-1}b_{j})!}{((b_{j-1})!)^{b_j}b_{j}!} \right]^{b_{j+1}\cdots b_{\ell}}  \ \text{ and } \   |A_1:H| = \left[\frac{a^{b_1}!}{(a!)^{b_1}b_1!}\right]^{b_{2}\cdots b_{\ell}}.$$
The rest is similar to (1).
\item Similar to (2). 
\item Here $n=a^b$ is a prime power $p^d$ so that $a=p^{d'}$ with $bd'=d$, $b=b_1 \cdots b_{\ell-1}$ and $G_{\ell} = AGL_d(p)$. We can deduce, by using \cite[Theorem 13~(3)]{1}, that
$$AGL_d(p) \cap (\Sym(a^{b_1\cdots b_{r_1}})\mathrm{wr} \Sym(b_{r_1+1}\cdots b_{r_2})\mathrm{wr} \cdots \mathrm{wr} \Sym(b_{r_s+1}\cdots b_{\ell-1}))$$ $$= AGL_{d'b_1\cdots b_{r_1}}(p)\mathrm{wr} \Sym(b_{r_1+1}\cdots b_{r_2})\mathrm{wr} \cdots \mathrm{wr}  \Sym(b_{r_s+1}\cdots b_{\ell-1}).$$
But $|AGL_{k}(p)| = p^k \prod_{i=0}^{k-1} (p^k - p^i)$. The rest is similar to (3).
\item Similar to (2). \qedhere  
\end{enumerate}
\end{proof}

\thebibliography{10}
\bibitem{1}M.~Aschbacher, Overgroups of primitive groups, \textit{J.
Aust. Math. Soc.} \textbf{87} (2009), 37--82.

\bibitem{2}M.~Aschbacher, Overgroups of primitive groups II,
\textit{J. Algebra} \textbf{322} (2009), 1586--1626.

\bibitem{3}M.~Aschbacher, J.~Shareshian, Restrictions on the structure
of subgroup lattices of finite alternating and symmetric groups,
\textit{J. Algebra} \textbf{322} (2009), 2449--2463.

\bibitem{bpJCTA}M.~Balodi, S.~Palcoux, On Boolean intervals of finite
groups, \textit{J. Comb. Theory, Ser. A}, \textbf{157} (2018), 49--69.

\bibitem{basile}A.~Basile, \textit{Second maximal subgroups of the finite alternating and symmetric groups}, PhD thesis, Australian National
Univ., 2001.

\bibitem{magma} W.~Bosma, J.~Cannon, C.~Playoust, The Magma algebra
system. I. The user language, \textit{J. Symbolic Comput.} \textbf{24}
(3-4) (1997), 235--265.

\bibitem{br} K.~S.~Brown, The coset poset and probabilistic zeta
function of a finite group, \textit{J. Algebra} \textbf{225} (2)
(2000), 989--1012.

\bibitem{peter}P.~J.~Cameron, Finite permutation groups and finite
simple groups, \textit{Bull. Lond. Math. Soc.} \textbf{13} (1981),
1--22.

\bibitem{DM}J.~D.~Dixon, B.~Mortimer, \textit{Permutation Groups},
Graduate Texts in Mathematics \textbf{163}, Springer-Verlag, New York,
1996.

\bibitem{hoffman}R.~M.~Guralnick, Subgroups of prime power index in a simple group, \textit{J. Algebra} \textbf{81} (1983, 304--311.


\bibitem{hal}P.~Hall, The Eulerian functions of a group, \textit{Q. J.
Math., Oxf. Ser.}, \textbf{7} (1936), 134--151.

\bibitem{gareth}G.~A.~Jones, Cyclic regular subgroups of primitive permutation groups, \textit{J. Group Theory} \textbf{5} (2002), 403--407. 

\bibitem{li}C.~H.~Li, The finite primitive permutation groups containing an abelian regular subgroup, \textit{Proc. London Math. Soc. (3)} \textbf{87} (2003), 725--747.

\bibitem{16}M.~W.~Liebeck, C.~E.~Praeger, J.~Saxl, A classification of
the maximal subgroups of the finite alternating and symmetric groups,
\textit{J. Algebra} \textbf{111} (1987), 365--383.

\bibitem{LPSLPS}M.~W.~Liebeck, C.~E.~Praeger, J.~Saxl, On  the
O'Nan-Scott theorem for finite primitive permutation groups,
\textit{J. Australian Math. Soc. (A)} \textbf{44} (1988), 389--396

\bibitem{LPS}M.~W.~Liebeck, C.~E.~Praeger, J.~Saxl, The maximal
factorizations of the finite simple groups and their automorphism
groups, \textit{Mem. Am. Math. Soc.} \textbf{432} (1990)

\bibitem{Luc}A.~Lucchini, Subgroups of solvable groups with non-zero M\" obius function. \textit{J. Group Theory} \textbf{10} (2007), no. 5, 633--639.

\bibitem{or}O.~Ore, Structures and group theory. II, \textit{Duke
Math. J.} \textbf{4} (2) (1938), 247--269.

\bibitem{pPJM}S.~Palcoux, Ore's theorem for cyclic subfactor planar
algebras and beyond, \textit{Pacific J. Math.} \textbf{292} (1)
(2018), 203--221.

\bibitem{pQT}S.~Palcoux, Ore's theorem on subfactor planar algebras,
\textit{To appear in Quantum Topology} (2019), arXiv:1704.00745.

\bibitem{pJA}S.~Palcoux, Dual Ore's theorem on distributive intervals
of finite groups, \textit{J. Algebra} \textbf{505} (2018), 279--287.

\bibitem{pPAMS}S.~Palcoux, Euler totient of subfactor planar algebras,
\textit{Proc. Am. Math. Soc.} \textbf{146} (11) (2018), 4775--4786.

\bibitem{PaPu}P.~P\'{a}lfy, P. Pudl\'{a}k, Congruence lattices of
finite algebras and intervals in subgroup lattices of finite groups,
\textit{Algebra
Universalis} \textbf{11} (1980), 22--27.

\bibitem{18}C.~E.~Praeger, The inclusion problem for finite primitive
permutation groups, \textit{Proc. London Math. Soc. (3)} \textbf{60}
(1990), 68--88.

\bibitem{C3}C.~E.~Praeger, Finite quasiprimitive graphs, in
\textit{Surveys in combinatorics}, London Mathematical Society Lecture
Note Series, vol. 24 (1997), 65--85.

\bibitem{PS}C.~E.~Praeger, C.~Schneider, Permutation groups and
Cartesian decompositions, \textit{Lond. Math. Soc. Lect. Note Ser.}
\textbf{449} (2018).

\bibitem{sw}J.~Shareshian, R.~Woodroofe, Order complexes of coset
posets of finite groups are not contractible, \textit{Adv. Math.}
\textbf{291} (2016), 758--773.

\bibitem{mine}P.~Spiga, Finite primitive groups and edge-transitive
hypergraphs, \textit{J. Algebr. Comb.} \textbf{43} (3) (2016),
715--734.
\bibitem{boobs}J.~Tits, \textit{The Geometric Vein. The Coxeter Festschrift}, Springer-Verlag, New York, 1982.

\bibitem{zi}K.~Zsigmondy, Zur Theorie der Potenzreste,
\textit{Monatsh. Math. Phys.} \textbf{3} (1892), 265--284.

\end{document}